\title{Ahlfors Regular Conformal Dimension of Metrics on Infinite Graphs and Spectral Dimension of the Associated Random Walks}
\author{K\^{o}hei Sasaya\thanks{Research Institute for Mathematical Sciences, Kyoto University, Kyoto 606-8502, Japan. JSPS Research Fellow (DC1). E-mail: \texttt{ksasaya@kurims.kyoto-u.ac.jp}}}
\newtheorem{lem}{Lemma}[section]
\newtheorem{prop}[lem]{Proposition}
\newtheorem{thm}[lem]{Theorem}
\newtheorem{cor}[lem]{Corollary}
\theoremstyle{definition}
\newtheorem{defi}[lem]{Definition}
\newtheorem{ex}[lem]{Example}
\theoremstyle{remark}
\newtheorem*{rem}{Remark}
\newcommand{\qs}{\underset{\mathrm{QS}}{\sim}}
\newcommand{\gen}{\underset{\mathrm{GE}}{\sim}}
\newcommand{\ard}{\dim_\mathrm{AR}}
\newcommand{\diam}{\mathrm{diam}}
\newcommand{\id}{\mathrm{id}}
\newcommand{\ol}[1]{\overline{#1}}
\newcommand{\ul}[1]{\underline{#1}}
\newcommand{\Mc}[1]{\mathcal{#1}}
\newcommand{\Mb}[1]{\mathbb{#1}}
\newcommand{\Mf}[1]{\mathfrak{#1}}
\newcommand{\Cup}{\bigcup}
\newcommand{\tr}{\triangle}
\newcommand{\epkw}[1]{\mathcal{E}_{p,k,#1}(N_1,N_2,N)}
\newcommand{\etkw}[1]{\mathcal{E}_{2,k,#1}(N_1,N_2,N)}
\newcommand{\ccdd}[5]{\left(\frac{d_{\Mf{C}}(#2,#3)}{d(#1,#3)}\bigvee \frac{d_{\Mf{C}}(#3,#4)}{d(#3,#5)} \right)}
\newcommand{\crt}{\frac{1}{2}+\frac{\sqrt{3}}{2}i}
\newcommand{\kig}[1]{the counterpart of \cite[#1]{Kig18}}
\begin{document}
\maketitle

\begin{abstract}
Quasisymmetry is a well-studied property of homeomorphisms between metric spaces, and Ahlfors regular conformal dimension is a quasisymmetric invariant. In the present paper, we consider the Ahlfors regular conformal dimension of metrics on infinite graphs, and show that this notion coincides with the critical exponent of $p$-energies. Moreover, we give a relation between the Ahlfors regular conformal dimension and the spectral dimension of a graph.
\end{abstract}

\tableofcontents

\section{Introduction}
Quasisymmetry is a well-studied property of homeomorphisms between metric spaces, and roughly speaking, means that the homeomorphism in question preserves ratios of distances. The Ahlfors regular conformal dimension is a quasisymmetric invariant of metric spaces, which gives a measure of the simplest (in a certain sense) quasisymmetrically equivalent space. The purpose of this paper is to study the Ahlfors regular conformal dimension of discrete unbounded metric spaces, and show relations between the Ahlfors regular conformal dimensions and spectral dimensions of such spaces.\\
Quasisymmetry was introduced by Tukia and V\"ais\"ala in~\cite{TV} to generalize the notion of quasiconformal mappings on the complex plane. In~\cite{TV}, quasisymmetry was given as a property of a homeomorphism between two metric spaces. A specialization of this was given by Kigami~\cite{Kig12}, for the comparison of metrics on the same underlying space. This is the definition we will use. 

\begin{defi}[Quasisymmetry, Kigami's]
 Let $X$ be a set and $d,\rho$ be metrics on $X,$ and let $\theta:[0,\infty)\to[0,\infty)$ be a homeomorphism. Then we say $d$ is $\theta$-quasisymmetric to $\rho$ if for any $x,y,z\in X$ with $x\ne z,$
 \[ \frac{\rho(x,y)}{\rho(x,z)}\le\theta\left(\frac{d(x,y)}{d(x,z)}\right ).\]
 Moreover, if $d$ is $\theta$-quasisymmetric to $\rho$ for some $\theta,$ then we say that $d$ is quasisymmetric to $\rho$ and write $d\qs\rho.$
\end{defi}
For example, $d\qs d^\alpha$ for any $\alpha\in(0,1)$ and any metric space $(X,d).$\\
 This notion was also called ``quasisymmetrically related by the identity map'' in~\cite{Hei}, and ``quasisymmetrically equivalent'' in~\cite{Pia13}. Note that if there exists a quasisymmetric map $f:(X,d)\to(Y,\rho)$, then $d$ is quasisymmetric to the pull-back metric $\rho^*$ in the sense of Kigami's definition and we can identify $(Y,\rho)$ with $(X,\rho^*).$  
\\Quasisymmetry has been studied in various fields. For example, a quasi-\hspace{0pt}isometric map (the definition is in~\cite[Definition 3.2.11]{MT}, for example) between Gromov hyperbolic spaces induces a quasisymmetric map (see~\cite[Theorem 3.2.13 and Section 3.6]{MT}, or~\cite{Pau}). There is also much research about quasisymmetry and Gromov hyperbolic spaces (see ~\cite{MT} for example). Quasisymmetry is a weaker notion of bi-Lipschitz equivalence, which has been studied extensively for decades, see~\cite{Hei} or~\cite{See} for example.
From the viewpoint of global analysis, it is notable that quasisymmetric modification preserves the volume doubling property, which plays an important role in heat kernel estimates. This idea is used in~\cite{Kig12},~\cite{Kig14}, and there is a recent application to circle packing graphs in~\cite{Mur}.\\
Ahlfors regular conformal dimension is a relatively new quasisymmetric invariant. It was introduced by Bourdon and Pajot \cite{BP} (see also Bonk and Kleiner~\cite{BK}), and is defined as follows:

\begin{defi}[Ahlfors regularity]
 Let $(X,d)$ be a metric space, $\mu$ be a Borel measure on $(X,d)$ and $\alpha>0$. We say $\mu$ is $\alpha$-Ahlfors regular with respect to $(X,d)$ if there exists $C>0$ such that 
 \[C^{-1}r^\alpha\le \mu(B_d(x,r)) \le Cr^\alpha\hspace{5pt}\text{ for any }x\in X\text{ and }r_x\le r\le\diam(X,d)\]  
 where $r_x=r_{x,d}=\inf_{y\in X\setminus\{x\}}d(x,y),$ and $B_d(x,r)$ is the open ball $\{y\in X\mid d(x,y)<r\}.$ The space $(X,d)$ is called $\alpha$-Ahlfors regular if there exists a Borel measure $\mu$ such that $\mu$ is $\alpha$-Ahlfors regular with respect to $(X,d).$
\end{defi}

\begin{defi}[Ahlfors regular conformal dimension]
  Let $(X,d)$ be a metric space. The Ahlfors regular conformal dimension (or ARC dimension in short) of $(X,d)$ is defined by 
 \begin{multline*}
  \ard(X,d)=\inf\{\alpha\mid \text{ there exists a metric $\rho$ on $X$}\\
  \text{such that $\rho$ is $\alpha$-Ahlfors regular and }d\qs \rho\},  
 \end{multline*}
 where $\inf\emptyset=\infty.$
\end{defi}

The ARC dimension is related to the conformal dimension, another well-known quasisymmetric invariant, as introduced by Pansu~\cite{Pan} in 1989. \\
 In this paper we will extend the notion of ARC dimension to discrete metric spaces. Note that the ARC dimension has mainly been studied on bounded metric spaces without isolated points, in which case $r_x=0$ and $\diam(X,d)<\infty$.\\
ARC dimension is related to the well-known Cannon's conjecture, which claims that for any hyperbolic group $G$ whose boundary is homeomorphic to 2-\hspace{0pt}dimensional sphere, there exists a discrete, cocompact and isometric action of $G$ on the hyperbolic space $\Mb{H}^3.$ Bonk and Kleiner~\cite{BK} proved Cannon's conjecture is equivalent to the following: If $G$ is a hyperbolic group whose boundary is homeomorphic to 2-dimensional sphere, then there exists a metric that attains the value of the ARC dimension of the boundary.\\
It is not easy to calculate the ARC dimension in general. Motivated by~\cite{Pia13,Pia14}, Kigami~\cite{Kig18} gave a method to calculate ARC dimension as a critical exponent of a $p$-energy, which is defined by successive division of the original metric space. Furthermore,~\cite{Kig18} gives inequalities between ARC dimension and $p$-spectral dimensions.\\
In this paper, we extend the results of~\cite{Kig18} to infinite graphs and give a relation between spectral dimensions and ARC dimensions. Our main results need a lot of notions, so we postpone detailed definitions to Sections 2 and 4, and explain the main results through examples.\\
\begin{figure}[tbp]
 \centering
\begin{tikzpicture}
	\foreach \y in {0,...,3}
	\draw (0,\y) -- (5.5,\y);
	\foreach \va in {0,...,5}
	\draw[white] (\va-0.05,1) -- (\va+0.05,1);
	\foreach \vb in {0,2,4}
	\draw[white] (\vb-0.05,2) -- (\vb+0.05,2);
	\foreach \vc in {0,4}
	\draw[white] (\vc-0.05,3) -- (\vc+0.05,3);
	
	\foreach \a in {1,3,5}
	\node (h\a) at (\a,1) {};
	\node (h2) at (2,2) {};
	\node (h4) at (4,3) {};
	\node (h0) at (0,4) {};
	\foreach \x in {0,...,5}
	\draw[dashed] (\x,0) -- (h\x);
	
	\foreach \t in {1,...,5}
	\node (a\t) [label=below:{(0,\t)}, fill=black, draw, circle, inner sep=1] at (\t-0.5,1){};
	\node (b1) [label=above left:{(1,1)}, fill=black, draw, circle, inner sep=1] at (1,2){};
	\node (b2) [label=above right:{(1,2)}, fill=black, draw, circle, inner sep=1] at (3,2){};
	\node (b3) [label=above left:{(1,3)}, fill=black, draw, circle, inner sep=1] at (5,2){};
	\node (c1) [label=above left:{(2,1)}, fill=black, draw, circle, inner sep=1] at (2,3){};
	\foreach \Saisu in {0,...,5} 
	\node (\Saisu) [label=below:\Saisu, fill=white, draw, circle, inner sep=1]at (\Saisu,0){};
	
	\draw (a1) -- (b1) -- (c1) -- (4,4);
	\draw (a2) -- (b1);
	\draw (a3) -- (b2) -- (c1);
	\draw (a4) -- (b2);
	\draw (a5) -- (b3) -- (5.5,2.5);
	\draw (b3) -- (5.45,1.1);
\end{tikzpicture}
 \caption{A partition of $\Mb{Z}_+$}
\label{figintro0}
\end{figure}
In our study, it will be useful to consider partitions of graphs that arise as edges are successively unified. One of the simplest cases is the unification of vertices of $\Mb{Z}_+=\{n\in\Mb{Z}\mid n\ge0\}.$ For $a\in\Mb{N}$ and $n\in\Mb{Z}_+,$ we identify $2^n$ edges $\{(2^n(a-1),2^n(a-1)+1), (2^n(a-1)+1,2^n(a-1)+2),\cdots,(2^na-1,2^na)\}=:K_{(n,a)}$ and consider unified graphs $\{G_n,E_n\}_{n\ge0}$ where $G_n=\{(n,a)\mid a\in\Mb{N}\}$ and $E_n$ is the set of links between $(n,a)$ and $(n,a+1).$
 Let $(n,a)\sim(m,b)$ if $n-m=1$ and $K_{(n,a)}\supseteq K_{(m,b)},$ or $m-n=1$ and $K_{(n,a)}\subseteq K_{(m,b)}.$ Consider $T:=\Cup_{n,a}(n,a)$ as a tree by $\sim,$ then we obtain a correspondence between $\{G_n,E_n\}_{n\ge0}$ and $T$ (see Figure \ref{figintro0}).  We call such a correspondence between unified graphs and a tree, a partition (see Definition \ref{defPG}, and note that we construct $K$ by unification of edges but we treat $K$ as a subsets of vertices because of technical reasons). \\
In this paper, we characterize the ARC dimension with a partition. For a given partition, we can define an upper $p$-energy $\ol{\Mc{E}}_p$ of the partition as a certain limit of $p$-energies on unification graphs, see Definition \ref{ciro}, which is based on definitions of~\cite{Kig18}. The $p$-energy enjoys a phase transition when $p$ varies, that is, there exists a $p_0>0$ such that $\ol{\Mc{E}}_p>0$ if $p<p_0$ and $\ol{\Mc{E}}_p=0$ if $p>p_0.$ We can also define a lower $p$-energy $\ul{\Mc{E}}_p,$ as well.\\
Our main result of this paper is the following. 
   
\begin{thm}[Theorem \ref{Smain1} (1)]\label{thintro1}
Let $(G,E)$ be a graph and $d$ be a metric on $G$. Under some conditions about $d$ and for partitions within a certain class,  
\[\ard(G,d)=\inf\{p\mid\ol{\Mc{E}}_p=0\}=\inf\{p\mid\ul{\Mc{E}}_p=0\}\]
\end{thm}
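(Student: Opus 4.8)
The plan is to prove the cyclic chain of inequalities
\[
\inf\{p\mid\overline{\mathcal{E}}_p=0\}\ \le\ \ard(G,d)\ \le\ \inf\{p\mid\underline{\mathcal{E}}_p=0\}\ \le\ \inf\{p\mid\overline{\mathcal{E}}_p=0\},
\]
which forces the three quantities to coincide. The last inequality is formal: since $\underline{\mathcal{E}}_p\le\overline{\mathcal{E}}_p$ for every $p$, any $p$ with $\overline{\mathcal{E}}_p=0$ also has $\underline{\mathcal{E}}_p=0$, so the set defining the $\underline{\mathcal{E}}$-critical exponent contains that of $\overline{\mathcal{E}}$ and its infimum is no larger; the phase transition in $p$ guarantees these infima are the genuine transition points. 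The content is therefore in the two remaining inequalities.

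\emph{The obstruction direction} $\inf\{p\mid\overline{\mathcal{E}}_p=0\}\le\ard(G,d)$. Let $\rho$ be $\beta$-Ahlfors regular with $d\qs\rho$ and fix $p>\beta$. By the quasisymmetry and the doubling property of a $\beta$-regular measure, the partitions in our admissible class remain ``adapted'' to $\rho$: each cell at level $n$ has $\rho$-diameter and $\mu$-measure comparable, uniformly in the cell, to a single gauge depending only on $n$. The level-$n$ energy appearing in the definition of $\overline{\mathcal{E}}_p$ is comparable up to universal constants to a combinatorial $p$-modulus of a natural path family in $G_n$; by adaptedness it is dominated by the corresponding $\rho$-modulus, and the elementary volume estimate in a $\beta$-Ahlfors regular space (test density constant on each cell) bounds the latter by $(\#\{\text{level-}n\text{ cells meeting the relevant region}\})^{1-p/\beta}$, which tends to $0$ as $n\to\infty$ since $p>\beta$. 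Hence $\overline{\mathcal{E}}_p=0$, so $\inf\{p\mid\overline{\mathcal{E}}_p=0\}\le\beta$; taking the infimum over admissible $\rho$ gives the inequality.

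\emph{The construction direction} $\ard(G,d)\le\inf\{p\mid\underline{\mathcal{E}}_p=0\}$. Let $\underline{\mathcal{E}}_p=0$ and fix $\varepsilon>0$; I would build a $(p+\varepsilon)$-Ahlfors regular metric $\rho$ with $d\qs\rho$, following the scheme of~\cite{Kig18} (itself built on~\cite{Pia13,Pia14}). Near-minimizers of the level-$n$ $p$-energies are glued into a single weight function $g$ on the partition tree $T$; vanishing of $\underline{\mathcal{E}}_p$ along a subsequence of levels, combined with the submultiplicative structure of these energies, lets one arrange that $g$ is ``gentle'' (comparable along neighbouring branches of $T$) and decays at the required subexponential rate. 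One then takes $\rho$ to be the chain metric $\rho(x,y)=\inf\sum_i g(w_i)$ built from $g$: gentleness yields $d\qs\rho$, and the energy decay gives that the natural measure $\mu_g$, for which $\mu_g(K_w)$ is comparable to a fixed power of $g(w)$, is $(p+\varepsilon)$-Ahlfors regular with respect to $\rho$. As $\varepsilon$ and $p$ with $\underline{\mathcal{E}}_p=0$ are arbitrary, the inequality follows.

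\emph{Main obstacle.} The hard step is the construction, carried out here on an \emph{unbounded} graph rather than on the compact boundary of a tree as in~\cite{Kig18}: one must produce $g$ and verify Ahlfors regularity directly in the regime $r_x\le r\le\diam(G,d)=\infty$, so the behaviour of $g$ ``at infinity'' must be controlled, not only at small scales. This is precisely where the hypotheses on $d$ and on the admissible class of partitions enter — uniform bounds on cell degrees, on parent-to-child gauge ratios, and on how the partition refines $d$ at every scale — and where the gap between $\underline{\mathcal{E}}_p$ and $\overline{\mathcal{E}}_p$ is felt, since the construction can exploit energy smallness only along a subsequence of levels yet must deliver regularity uniformly in the radius. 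A secondary point is to verify that the comparison ``$p$-energy $\asymp$ combinatorial modulus $\asymp$ continuous modulus'' used in the obstruction direction survives when the relevant path families are unbounded.
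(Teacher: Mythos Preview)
Your strategy is genuinely different from the paper's. You attempt to run the two-direction Carrasco Piaggio/Kigami argument directly on the discrete graph $(G,d)$. The paper instead proceeds by reduction: it passes to the cable system $\Mf{C}_G$ (each edge replaced by a copy of $[0,1]$) with an induced metric $d_{\Mf{C}}$, and proves (i) $\ard(G,d)=\ard(\Mf{C}_G,d_{\Mf{C}})\ge 1$ (Proposition~\ref{sore}, the technical core, and the place where the fitting hypotheses (F1)--(F2) on $d$ are actually used), and (ii) the horizontal networks $J^h_m$ for the graph partition $K$ and for the induced cable partition $\Mc{K}$ coincide, so all the $p$-energies and hence $\ol{I}_{\Mc{E}},\ul{I}_{\Mc{E}}$ agree. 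Since $\Mf{C}_G$ is $\sigma$-compact with no isolated points and $d_{\Mf{C}}$ inherits the basic framework (Lemma~\ref{junbi}), the $\sigma$-compact extension of Kigami's theorem (Theorem~\ref{sigmain1}) applies to it, and the result for $(G,d)$ follows from (i) and (ii).

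You name unboundedness as the principal obstacle, but in the paper this is handled separately and fairly routinely in Section~3 (rooted trees become bi-infinite trees; most of \cite{Kig18} goes through verbatim). The difficulty specific to graphs is that every point of $G$ is isolated, so Kigami's framework --- which requires a metrizable space \emph{without isolated points} --- cannot be invoked on $(G,d)$ directly; the cable system is precisely the device that restores this hypothesis. Your sketch does not engage with this: in the obstruction direction you appeal to a continuous ``$\rho$-modulus'' whose meaning on a purely discrete space is not clear, and in the construction direction the chain-metric/Ahlfors-regularity step would need a separate treatment at scales near $r_x$ that you have not supplied. The paper's reduction to $\Mf{C}_G$ sidesteps all of this, at the price of the lengthy quasisymmetry computation in Proposition~\ref{sore} showing that Ahlfors-regular metrics can be transported in both directions between $G$ and $\Mf{C}_G$.
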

For detailed conditions, see Theorem \ref{Smain1}. Let us give an interesting example for ARC dimension for an unbounded metric space.

\begin{ex}\label{exintro1}
   Let $f(n):\Mb{Z}_+\to\Mb{Z}_+$ be such that $f(n)\le n$ for any $n.$  For $n\ge0,$ divide $[2^n,2^{n+1}]\times[0,2^n]$ into $2^{f(n)}\times 2^{f(n)}$ blocks and call them $G_n,$ and consider $G=\cup_{n\ge0}G_n\cup\{(0,0)\}$ as a subgraph of $\Mb{Z}^2$ (see Figure \ref{figintro1}, and Example~\ref{exthe1} 
for precise definition). 
\end{ex}

\begin{figure}[tbp]
 \centering
 \includegraphics[width=0.8\columnwidth]{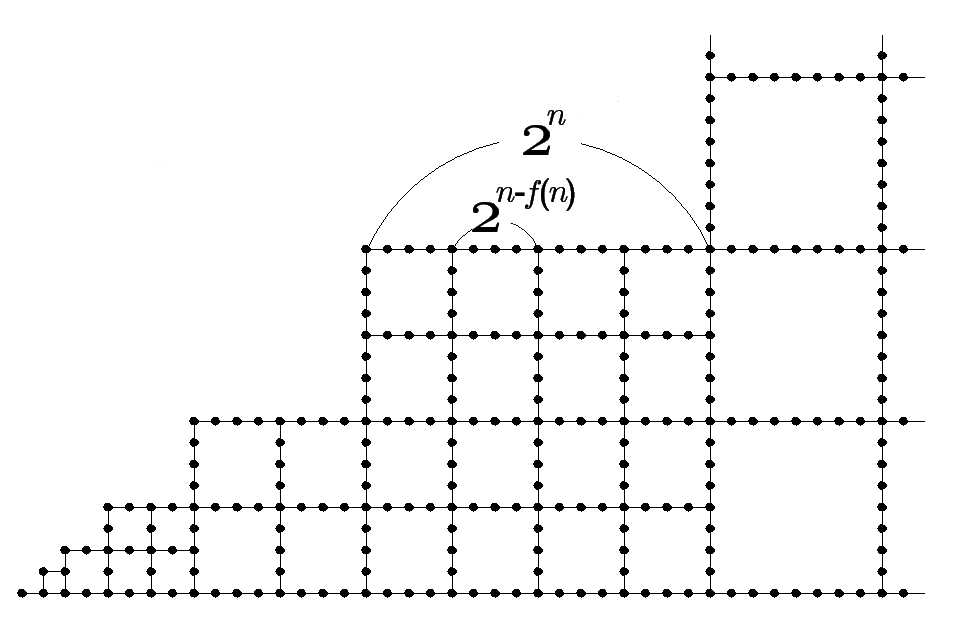}
 \caption{Example \ref{exintro1}}
\label{figintro1}
\end{figure}

Using Theorem \ref{thintro1}, we obtain the following:

\begin{prop}[Proposition \ref{dimgauge}]\hspace{0pt}\samepage
  \begin{enumerate}
  \item If  $\limsup_{n\to\infty} f(n)=\infty,$ then $\ard(G,d)=2.$
  \item If  $\limsup_{n\to\infty} f(n)<\infty,$ then $\ard(G,d)=1.$
  \end{enumerate}
\end{prop}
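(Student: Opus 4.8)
The strategy is to apply Theorem~\ref{thintro1} to the graph $G=\cup_{n\ge0}G_n\cup\{(0,0)\}$ with its natural partition coming from the block decomposition, and to show that the critical exponent $\inf\{p\mid\ol{\Mc{E}}_p=0\}$ equals $2$ in case~(1) and $1$ in case~(2). First I would record the upper bounds: in both cases $G$ is a subgraph of $\Mb{Z}^2$ with the graph metric comparable to the Euclidean metric, so counting measure (or the two-dimensional Lebesgue measure restricted to $G$) is $2$-Ahlfors regular, giving $\ard(G,d)\le 2$ unconditionally; and in case~(2), since $\limsup f(n)<\infty$ the graph looks, up to bounded distortion, like a thickened copy of $\Mb{Z}_+$ (a quasi-line), so one expects $\ard(G,d)\le 1$ there. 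The lower bound $\ard(G,d)\ge 1$ is automatic because any unbounded connected graph contains arbitrarily long geodesics, hence cannot be $\alpha$-Ahlfors regular for $\alpha<1$. So the real content is: (1) the lower bound $\ard(G,d)\ge 2$ when $\limsup f(n)=\infty$, and (2) the upper bound $\ard(G,d)\le 1$ when $\limsup f(n)<\infty$.

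For both of these I would pass through the $p$-energy characterization of Theorem~\ref{thintro1}, after checking that $d$ and the chosen partition satisfy the hypotheses of Theorem~\ref{Smain1} (the metric is the graph metric, which should be in the admissible class, and the block partition should lie in the required class of partitions; this verification is routine but must be done). In case~(1), I would estimate $\ol{\Mc{E}}_p$ from below by restricting attention to those scales $n$ where $f(n)$ is large: at such a scale the region $[2^n,2^{n+1}]\times[0,2^n]$ is subdivided into a genuine $2^{f(n)}\times 2^{f(n)}$ grid, which is a two-dimensional piece, and the $p$-energy of crossing such a grid behaves like that of the square Sierpiński-type carpet at that resolution — in particular the combinatorial modulus estimates show the energy stays bounded below for all $p<2$ and across infinitely many scales, forcing $\ol{\Mc{E}}_p>0$ for $p<2$, hence the critical exponent is $\ge 2$. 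In case~(2), with $f(n)\le M$ for all $n$, each $G_n$ is cut into at most $2^M\times 2^M=4^M$ blocks, so the combinatorial structure along the partition tree has uniformly bounded branching away from a line; then for any $p>1$ one can exhibit a weight function (essentially putting nearly all the mass on a single geodesic path through $G$) making the $p$-energy vanish in the limit, so $\ol{\Mc{E}}_p=0$ for all $p>1$ and the critical exponent is $\le 1$.

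The main obstacle I expect is the lower bound in case~(1): one must extract a quantitative $p$-energy lower bound that persists along the subsequence of scales where $f(n)\to\infty$, and combine contributions from scales of wildly different sizes of two-dimensional grids into a single statement about $\ol{\Mc{E}}_p$. The subtlety is that the grids appear at arbitrarily large scales $n$ but only along a subsequence, and one needs the energy estimate to be uniform in the size $2^{f(n)}$ of the grid (a fixed lower bound independent of how large $f(n)$ is) — this is where the standard fact that the $p$-conformal modulus of crossing an $N\times N$ Euclidean grid is bounded below independently of $N$ for $p<2$ (equivalently, the carpet-type lower bound from \cite{Kig18} adapted to this partition) does the work, but transporting it faithfully through the definition of $\ol{\Mc{E}}_p$ for this concrete partition, and handling the one-dimensional ``connecting'' scales where $f(n)$ may be small, requires care. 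Once that estimate is in place, Theorem~\ref{thintro1} converts it directly into $\ard(G,d)=2$, and the remaining case is comparatively soft.
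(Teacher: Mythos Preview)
Your overall strategy for the lower bound in case~(1) and for case~(2) is sound and close to the paper's, but there is a genuine gap in your treatment of the \emph{upper} bound $\ard(G,d)\le 2$ in case~(1). You claim that since $G$ is a subgraph of $\Mb{Z}^2$, counting measure is $2$-Ahlfors regular. This is false: Ahlfors regularity requires both $V(x,r)\lesssim r^2$ and $V(x,r)\gtrsim r^2$, and only the first follows from being a subgraph of $\Mb{Z}^2$. The hypothesis $\limsup f(n)=\infty$ allows $f(n)=0$ for infinitely many $n$ (take $f(n)=n$ for $n$ even, $f(n)=0$ for $n$ odd); for such $n$ the set $G_n$ is just the boundary of a rectangle, and at a point $x\in G_n$ with $r\ll 2^n$ one has $V(x,r)\asymp r$, not $r^2$. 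So $(G,d)$ with counting measure is \emph{not} $2$-Ahlfors regular, and the upper bound cannot be ``recorded'' in this way. The paper obtains $\ard(G,d)\le 2$ through the energy machinery too: for every $w\in T$ it writes down an explicit piecewise-linear test function on $S^k(\Gamma_2(w))$ (essentially rescaled distance to the complement) and computes $\Mc{E}_{p,k,w}(0,2,1)\le C\,2^{(2-p)k}$, so $\ol{\Mc{E}}_p=0$ for all $p>2$. You should treat both directions in case~(1) via Theorem~\ref{Smain1}.

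A second, more technical point: the ``natural partition coming from the block decomposition'' you intend to use is presumably axis-parallel, with cells $[2^m a,2^m(a{+}1)]\times[2^m b,2^m(b{+}1)]$. The paper explicitly remarks that this partition is \emph{not minimal} (adjacent cells can share an entire edge of $G$), so Theorem~\ref{Smain1} does not apply to it directly. The paper instead uses rotated squares $\Mc{S}_{m,a,b}=\{2^m a\le x+y\le 2^m(a{+}1),\ 2^m b\le x-y\le 2^m(b{+}1)\}$, for which minimality and the remaining basic-framework hypotheses can be verified. This is precisely the ``routine but must be done'' check you flagged, but be aware that the obvious partition fails.

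For case~(2) your energy approach would in principle work, but the paper takes the shorter route your heuristic already suggests: since $f(n)\le M$, each $G_n$ is a union of at most $2(2^M{+}1)$ segments, so a direct count gives $V(x,r)\asymp r$ and $(G,d)$ is itself $1$-Ahlfors regular; the matching lower bound $\ard(G,d)\ge 1$ comes from Proposition~\ref{sore}.
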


It is remarkable that only $\limsup_{n\to\infty} f(n)=\infty$ implies $\ard(G,d)=2$, although the size of boxes $2^{n-f(n)}$ may diverge.\\
We also compare the Ahlfors conformal dimension with the spectral dimension. For $p>0$, we can define the upper and lower $p$-spectral dimension, $\ol{d}^S_p$ and $\ul{d}^S_p$ of a partition (see Definition \ref{ciro2}).
We can further obtain the following.
\begin{thm}[Theorem \ref{Smain1}(2) and (3)]
Let $(G,E)$ be a graph and $d$ is a metric on $G$. Under the same conditions as in Theorem \ref{thintro1},  
\item If $\ard(G,d)<p,$ then
\[\ard(G,d)\le \ul{d}^S_p \le \ol{d}^S_p <p. \]
\item If $\ard(G,d)\ge p,$ then
\[\ard(G,d)\ge \ol{d}^S_p \ge \ul{d}^S_p \ge p. \]
\end{thm}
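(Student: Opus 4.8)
The plan is to follow \cite{Kig18} closely, reducing everything to Theorem \ref{Smain1}(1). That theorem identifies $\ard(G,d)$ with the critical exponent $p_{\mathrm c}:=\inf\{p\mid\ol{\Mc{E}}_p=0\}=\inf\{p\mid\ul{\Mc{E}}_p=0\}$, and, by the monotonicity behind the phase transition, it gives $\ol{\Mc{E}}_p=\ul{\Mc{E}}_p=0$ for $p>\ard(G,d)$ and $\ol{\Mc{E}}_p,\ul{\Mc{E}}_p>0$ for $p<\ard(G,d)$. Since $\ul{d}^S_p\le\ol{d}^S_p$ is immediate from Definition \ref{ciro2} (a $\liminf$ against a $\limsup$ over the scale), the remaining content is one outer inequality in each chain: $\ol{d}^S_p<p$ and $\ard(G,d)\le\ul{d}^S_p$ when $p>\ard(G,d)$, and $\ul{d}^S_p\ge p$ and $\ol{d}^S_p\le\ard(G,d)$ when $p\le\ard(G,d)$.

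First I would unwind Definition \ref{ciro2} and write $\ol{d}^S_p$ and $\ul{d}^S_p$ as the $\limsup$, resp.\ $\liminf$, over the scale $m$ of a quantity that is, schematically, of the form $R_p(m)=\frac{p\,v(m)}{v(m)+e_p(m)}$, where $v(m)$ is the logarithmic scale-$m$ volume growth and $e_p(m)$ the logarithmic growth of the inverse scale-$m$ $p$-energy $\Mc{E}_p^{(m)}$. Two elementary observations then organize the proof. First, $R_p(m)<p$ exactly when $e_p(m)>0$ and $R_p(m)\ge p$ exactly when $e_p(m)\le 0$; so the two ``versus $p$'' inequalities reduce to the sign of $e_p(m)$ for large $m$, which is exactly what the vanishing ($\Mc{E}_p^{(m)}\to 0$, hence eventually $e_p(m)>0$) or non-vanishing of $\ol{\Mc{E}}_p$ and $\ul{\Mc{E}}_p$ provides --- after upgrading those statements about limits to statements about individual scales via the sub/super-multiplicativity of $\Mc{E}_p^{1/(p-1)}$ that underlies the phase transition. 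Second, a H\"older-type inequality between the $p$-energy and the $q$-energy, for ordered exponents, yields a linear relation between $e_p(m)$ and $e_q(m)$ with coefficients of size $v(m)$.

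To obtain $\ard(G,d)\le\ul{d}^S_p$ when $p>\ard(G,d)$: for any $q<\ard(G,d)$ we have $\ol{\Mc{E}}_q>0$, so $\Mc{E}_q^{(m)}$ is bounded below and $e_q(m)$ is bounded, whence the H\"older relation bounds $e_p(m)$ above by $(\tfrac pq-1)v(m)+O(1)$; this forces $R_p(m)\ge q-o(1)$, so $\ul{d}^S_p\ge q$, and letting $q\uparrow\ard(G,d)$ gives $\ul{d}^S_p\ge\ard(G,d)$. Symmetrically, to obtain $\ol{d}^S_p\le\ard(G,d)$ when $p\le\ard(G,d)$: for any $q>\ard(G,d)$ we have $\ol{\Mc{E}}_q=0$, so $\Mc{E}_q^{(m)}\to 0$ and $e_q(m)$ is bounded below, whence the H\"older relation bounds $e_p(m)$ below by $(\tfrac pq-1)v(m)-O(1)$; this forces $R_p(m)\le q+o(1)$, so $\ol{d}^S_p\le q$, and letting $q\downarrow\ard(G,d)$ gives $\ol{d}^S_p\le\ard(G,d)$. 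Combining these with the sign facts from the previous step and with $\ul{d}^S_p\le\ol{d}^S_p$ yields both displayed chains (the case $p=\ard(G,d)$ following from the same comparison, approached from either side).

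The main obstacle is the passage, in the infinite-graph setting, from the qualitative input --- $\ol{\Mc{E}}_p=0$ for $p>\ard(G,d)$ and $\ul{\Mc{E}}_p>0$ for $p<\ard(G,d)$, which are statements about \emph{limits} of energies --- to uniform quantitative control of the \emph{individual} scale-$m$ energies, and in particular of $e_p(m)$ relative to $v(m)$, strong enough to survive the $\limsup$ and $\liminf$ in Definition \ref{ciro2}; carrying this out requires tracking the sub-multiplicativity and H\"older constants, and the comparison between combinatorial cells and metric balls, uniformly in $m$. Unlike in \cite{Kig18} there is no finite diameter to normalize against and the partition is not self-similar, so the number and the diameters of the scale-$m$ cells may grow without bound; it is precisely here that the structural hypotheses inherited from Theorem \ref{Smain1} --- the admissible class of partitions and the regularity assumed on $d$ --- must be invoked to keep every estimate scale-uniform.
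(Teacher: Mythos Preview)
Your route is genuinely different from the paper's, and it also contains a misreading of Definition~\ref{ciro2} that undermines the per-scale argument you outline.

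The paper does not derive (2) and (3) from (1) at all. Its proof of Theorem~\ref{Smain1} is a single reduction: pass to the cable system $\Mf{C}_G$ with the induced partition $\Mc{K}$, check (Lemma~\ref{junbi}) that $d_{\Mf{C}}$ satisfies the basic framework, observe that $J^h_m(K)=J^h_m(\Mc{K})$ so that \emph{all} the combinatorial quantities $\Mc{E}_{p,k}$, $\ol{R}_p$, $\ul{R}_p$, $\ol{N}_*$, $\ol{d}^S_p$, $\ul{d}^S_p$ coincide for $K$ and $\Mc{K}$, and then invoke Theorem~\ref{sigmain1}(2),(3) on $(\Mf{C}_G,d_{\Mf{C}})$ together with $\ard(G,d)=\ard(\Mf{C}_G,d_{\Mf{C}})$ from Proposition~\ref{sore}. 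No new H\"older or submultiplicativity estimate is performed in this section; those are packaged inside Kigami's Theorem~20.8 (via Theorem~\ref{sigmain1}). What you propose is essentially to reprove that packaged result from scratch.

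On the technical side: you write $\ol{d}^S_p$ and $\ul{d}^S_p$ as a $\limsup$/$\liminf$ over the scale $m$ of a single per-scale ratio $R_p(m)=\frac{p\,v(m)}{v(m)+e_p(m)}$, and then argue about the sign of $e_p(m)$ for large $m$. But Definition~\ref{ciro2} is not of this form: both $\ol{d}^S_p$ and $\ul{d}^S_p$ use the \emph{same} $\ol{N}_*$ (a $\limsup$), and the $\limsup$/$\liminf$ appears only in $\ol{R}_p$/$\ul{R}_p$. The correct equivalences are simply $\ol{d}^S_p<p\Leftrightarrow\ol{R}_p<1$ and $\ul{d}^S_p\ge p\Leftrightarrow\ul{R}_p\ge1$, read off directly from the formula; there is no per-scale statement to upgrade. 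Your H\"older comparison between $p$- and $q$-energies is the right idea for the $\ard$ bounds, but to make it work you need the submultiplicativity of $k\mapsto\Mc{E}_{p,k}$ (uniform in $w$) that turns $\ol{\Mc{E}}_p=0$ into $\ol{R}_p<1$; this is exactly the content you would be re-establishing from \cite{Kig18}, and in the present paper it is deliberately obtained by citation through the cable-system reduction rather than redone.
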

When $p=2$, the $p$-spectral dimension coincides in many example with the notion of the spectral dimension of random walks, where the latter is defined as follows:
\begin{align*}
\ol{d}_S(G)&= 2\limsup_{n\to\infty}\frac{-\log p_{2n}(x,x)}{\log n}, &\ul{d}_S(G)&= 2\liminf_{n\to\infty}\frac{-\log p_{2n}(x,x)}{\log n},
\end{align*}
where $p_n(x,y)$ is the transition density of the associated random walk. Hence, where this occurs, the latter theorem will also relate the ARC dimension and the spectral dimension of random walks. See Theorem \ref{Smain2} and Corollary \ref{Smain3} for a sufficient condition that the $2$-spectral dimension and the spectral dimension of random walks coincide. However, we prove that they can also be different - see Example \ref{exthe2} for an example where this is the case.\\

The outline of this paper is as follows. In Section 2, we introduce basic notation, and give the framework and results of ~\cite{Kig18} for compact spaces without isolated points, on which the main result of this paper is based. In Section 3, we extend the results of \cite{Kig18} to perfect $\sigma$-compact spaces. Section 4 is the main part of this paper, devoted to proving our main results. In Section 5, we give examples that illustrate properties of ARC dimension of graphs. In Section 6, we give a  proof of a result of a heat kernel estimate that is used in Section 4.

\subsection*{Acknowledgements}
I would like to thank Professor Takashi Kumagai, my master's thesis supervisor for fruitful discussions and checking earlier versions of this paper. I also thank Professor David Croydon for helpful comments and for checking my English of the introduction.\\
This work was supported by JSPS KAKENHI Grant Number JP20J23120. 

\section{Notation and Kigami's results for compact metric spaces}
As the preparation of this paper, we introduce notation used in this paper, and results of \cite{Kig18} on which the results of this paper are based. For this purpose, we first give notation of graphs.

\subsection{Basic notation}
We use the following notation in this paper.

\begin{itemize}
	\item Let $A$ be a set and $F$ be a map on $A$ to itself. Then $F^n$ denotes $\overbrace{F\circ\cdots\circ F}^{n}$ for any $n>0$ and $\mathrm{id}_A$ for $n=0.$ Moreover, $A^n$ denotes the product $\overbrace{A\times\cdots\times A}^n .$
	\item Let $\{A_\lambda\}_{\lambda\in\Lambda}$ be a family of sets, then $\sqcup_{\lambda\in\Lambda}A_\lambda$ denotes $\cup_{\lambda\in\Lambda}A_\lambda$ with $A_\lambda\cup A_\tau=\emptyset$ for any $\lambda,\tau\in\Lambda$ with $\lambda\ne\tau.$ 
	\item Let $f$ and $g$ be functions with variables $x_1,...,x_n$. We use ``$f\asymp g$ for any $(x_1,...x_n)\in A$'' if there exists $C>0$ such that
	\begin{equation*}
		C^{-1}f(x_1,...,x_n)\le g(x_1,...,x_n) \le C f(x_1,...x_n)
	\end{equation*}
	for any $(x_1,...,x_n)\in A.$
	\item Let $(X,\Mc{O})$ be a topological space and $A\subseteq X,$ then $A^o$ (resp. $A^c$) denote the interior (resp. complement).  
	\item Let $(X,d)$ be a metric space and $\mu$ is a (Borel) measure on $(X,d)$, then we write
	\[ B_d(x,r)=\{y|d(x,y)<r\} \text{ and } V_{d,\mu}(x,r)=\mu(B_d(x,r)). \]
	Moreover, let $\rho$ be a distance on $X$, then we write
	\[ \overline{d}_\rho(x,r)=\sup_{y\in B_\rho(x,r)}d(x,y).\]
	We also use the notation $\diam(X,d)=\sup\{d(x,y)\mid x,y\in X\}.$
	If no confusion may occur, we omit $d,\rho$ or $\mu$ in these notation.
	\item We also use the notation $[n,m]_\Mb{Z}=\{k\in\mathbb{Z}\mid n\le k\le m\}$ for $n,m\in\mathbb{Z},$ $a\vee b=\max\{a,b\}$ and $\ a\wedge b=\min\{a,b\}.$  
\end{itemize}
In the following, whenever $d\qs\rho$ for metrics $d,\rho$ on a space, $\theta$ denotes a homeomorphism such that $d$ is $\theta$-quasisymmetric to $\rho$, if no confusion may occur.\\   
We will use the same notation to a vertex and its equivalent class.

The following definitions are basic notions of graph, but we write that for confirmation of our notations.

\begin{defi}[Graph, tree]
Let $T$ be an (at most) countable set and let $\Mc{A}\subseteq T\times T$ such that 
\begin{itemize}
\item for any $w\in T,$ $(w,w)\not\in \Mc{A}.$
\item $(w,v)\in A$ if $(v,w)\in \Mc{A}.$
\end{itemize}
We call $(T,A)$ a simple graph. We write $w\sim v$ if $(w,v)\in\Mc{A}.$
\begin{enumerate}
\item $(T,A)$ is called locally finite if $\#(\{y\mid y\sim x\})<\infty$ for any $x\in T.$ $(T,A)$ is called bounded degree if $\sup_{x\in T}\#(\{y\mid y\sim x\})<\infty.$ 
\item Let $n\ge0.$ We call $(w_0,w_1,...,w_n)\in T^n$ a $n$-path (between $w_0$ and $w_n$) if $w_i\sim w_{i-1}$ for any $i\in [1,n]_\Mb{Z}.$ Especially, we call $(w_0,w_1,...,w_n)$ a $n$-simple path (between $w_0$ and $w_n$) if it is a $n$-path and $w_i\ne w_j$ whenever $i\ne j.$ \\$(w_0,w_1,...w_n)$ is called a path if it is a $n$-path for some $n\ge0,$ and called a simple path if it is a $n$-simple path for some $n\ge0.$ 
\item We call $(T,\Mc{A})$ connected if there exists a path between $w$ and $v$ for any $w,v\in T.$ Moreover, we call $(T,\Mc{A})$ a tree if there exists an unique simple path between $w$ and $v$ for any $w,v\in T.$ 
\item Let $(T,\Mc{A})$ be a simple graph. We define $l_\Mc{A}$ by 
\[l_\Mc{A}(w,v)=\min\{n\mid\text{ there exists an }n\text{-path between }w\text{ and }v\}.\]
If $(T,\Mc{A})$ is connected, then $l_\Mc{A}$ is called the graph metric of $(T,\Mc{A}).$
\end{enumerate}
\end{defi}
In this paper, we will consider only simple graphs.

\begin{defi}[Rooted tree]
Let $(T,\Mc{A})$ be a tree and $\phi\in T.$ We call the triple $(T,\Mc{A},\phi)$ a rooted tree.
\begin{enumerate}
\item Define $|w|=l_\Mc{A}(\phi,w)$ and $(T)_n=\{w\mid|w|=n\}$ for any $n\ge 0,$ and define $\pi:T\times T$ by  
\[\hspace{-1pt} \pi(w)=\pi_{(T,A,\phi)}(w)=\begin{cases}
 w_{n-1},\hspace{-4pt}& \text{if }w\ne\phi\text{ and }(\phi=w_0,...,w_{n-1},w_n=w)\text{ is}\\
&\text{the unique simple path between }\phi\text{ and }w,\\
 \phi, & \text{if }w=\phi.
\end{cases} \]
S denote the inverse of $\pi$ (excluding $\phi$), namely
\[S(A)=\{w\in T\setminus\{\phi\}\mid\pi(w)\in A\} \]
for any $A\subseteq X,$ and we write $S(w)$ instead of $S(\{w\}).$ Moreover, we define subtree $T_w=\{v\in T\mid\pi^n(v)=w\text{ for some }n\ge0\}.$ 
\item Define geodesics of $T$ (from $\phi$) by  
\[\Sigma=\{\omega=(\omega_n)_{n\ge0}\mid\omega_n\in (T)_n,\ \pi(\omega_{i+1})=\omega_i\text{ for all }i\ge0\},\]
and geodesics passing through $w$ by $\Sigma_w=\{\omega\in\Sigma|\omega_{|w|}=w\}$ for any $w\in T.$
\end{enumerate}
\end{defi}

\begin{rem}
In \cite{Kig18}, Kigami used the terminology "tree with a reference point" instead of "rooted tree". However, we want to use the term ``a reference point'' to distinguish trees, which have no root but have a reference point of height, introduced in Section 3.
\end{rem}

\subsection{Kigami's results for compact metric spaces}
Throughout this section, $\Mc{T}=(T,\Mc{A},\phi)$ is a locally finite rooted tree. 

\begin{defi}[Partition]\label{copa}
Let $(X,\Mc{O})$ be a compact metrizable space having no isolated points, and let $\Mc{C}(X,\Mc{O})$ be the collection of nonempty compact subsets of $(X,\Mc{O})$ without single points. A map $K:T\to\Mc{C}(X,\Mc{O})$, where we write $K_w$ instead of $K(w)$ for ease of notation, is called a partition of $(X,\Mc{O})$ parametrized by $\Mc{T}$ if it satisfies the following conditions.
\begin{itemize}
\item[(P1)] $K_\phi=X$ and for any $w\in T,$
\[ \Cup_{v\in S(w)}K_v=K_w.\]
\item[(P2)] For any $\omega\in\Sigma,$ $\cap_{m\ge0}K_{\omega_m}$ is a single point.
\end{itemize}
\begin{enumerate}
\item Let $K$ be a partition of $X.$ We define $O_w$ by
\[O_w=K_w\setminus\left(\Cup_{v:v\in(T)_{|w|}\setminus\{w\}}K_v\right).\]
$K$ is called minimal if $O_w\ne\emptyset$ for any $w\in T.$ 
\item For $m\ge0,$ we define $E^h_m\subseteq (T)_m\times(T)_m$ by 
\[J^h_m=\{(w,v)\mid w,v\in(T)_m,\ w\ne v\text{ and }K_w\cap K_v\ne\emptyset \} \]
and $\Gamma_n(w)=\{v\in (T)_{|w|}\mid l_{J^h_m}(w,v)\le n\}$ for any $w\in T.$
\end{enumerate}
We simply write $X$ for $(X,\Mc{O})$ in notation if no confusion may occur.
\end{defi}

\begin{defi}[Weight function]\label{cowe}
A function $g:T\to(0,1]$ is called a weight function if it satisfies the following conditions.
\begin{itemize}
\item[(G1)] $g(\phi)=1.$
\item[(G2)] For any $w\in T,$ $g(\pi(w))\ge g(w).$
\item[(G3)] For any $\omega\in\Sigma,$ $\lim_{m\to\infty}g(\omega_m)=0.$
\end{itemize}
\begin{enumerate}
\item For $s>0,$ we define the scale $\Lambda^g_t$ associated $g$ by 
\[ \Lambda^g_s=\begin{cases}
\{w\in T| g(w)\le s< g(\pi(w))\}, & \text{ if }0<s<1, \\
\{\phi\}, & \text{ otherwise.}
\end{cases}\]
We also define $E^g_s\subseteq \Lambda^g_s\times\Lambda^g_s$ by
\[E^g_s=\{(w,v)\mid w,v\in\Lambda^g_s,\ w\ne v\text{ and }K_w\cap K_v\ne\emptyset \}. \]
\item For $x\in X,$ $s>0,$ $M\ge0$ and  $w\in\Lambda^g_s,$ we define
\begin{align*}
\Lambda^g_{s,M}(w)&=\{v\in \Lambda^g_s\mid l_{E^g_s}(w,v)\le M\},&  \Lambda^g_{s,M}(x)&=\Cup_{w\in\Lambda^g_s\text{ and }x\in K_w}\Lambda^g_{s,M}(w),
\end{align*}
and
\[U^g_M(x,s)=\Cup_{w\in\Lambda^g_{s,M}(x)}K_w.\]
\end{enumerate}
\end{defi}

\begin{defi}
 Let $(X,\Mc{O})$ be a compact metrizable space having no isolated point, and $K$ be a partition of $X$. Define
\begin{multline*}
\Mc{D}(X,\Mc{O})=\{d\mid d\text{ is a metric on }X\text{ inducing the topology }\Mc{O}\text{ and }\\
\diam(X,d)=1\}.
\end{multline*}
For $d\in\Mc{D}(X,\Mc{O})$, define $g_d:T\to(0,1]$ by $g_d(w)=\diam(K_w,d)$ for any $w\in T.$
\end{defi}
\begin{prop}[\cite{Kig18},Proposition 5.5(1)]\label{gd}
Let $(X,\Mc{O})$ be a compact metrizable space having no isolated point and $K$ be a partition of $X$. For any $d\in\Mc{D}(X,\Mc{O}),$ $g_d$ is a weight function.
\end{prop}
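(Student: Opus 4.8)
The plan is to verify the three defining conditions (G1), (G2), (G3) of a weight function for $g_d(w)=\diam(K_w,d)$, using the partition axioms (P1), (P2) together with the normalization $\diam(X,d)=1$ and the compatibility of $d$ with the topology $\Mc{O}$.

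First I would check (G1): since $K_\phi=X$ by (P1), we get $g_d(\phi)=\diam(X,d)=1$ directly from the assumption $d\in\Mc{D}(X,\Mc{O})$. Next, for (G2), note that for any $w\in T$ we have $w\in S(\pi(w))$, so by (P1) applied at $\pi(w)$ we obtain $K_w\subseteq\Cup_{v\in S(\pi(w))}K_v=K_{\pi(w)}$; monotonicity of diameter under inclusion then gives $g_d(w)=\diam(K_w,d)\le\diam(K_{\pi(w)},d)=g_d(\pi(w))$. I should also remark that $g_d$ takes values in $(0,1]$: the upper bound $1$ follows since $K_w\subseteq X$ and $\diam(X,d)=1$, while $g_d(w)>0$ because $K_w\in\Mc{C}(X,\Mc{O})$ is not a single point, so it contains at least two distinct points at positive $d$-distance.

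The substantive step is (G3): for any geodesic $\omega\in\Sigma$, show $\lim_{m\to\infty}\diam(K_{\omega_m},d)=0$. By (P2), $\bigcap_{m\ge0}K_{\omega_m}=\{x\}$ for a single point $x$, and by (G2) (just established) the sequence $\diam(K_{\omega_m},d)$ is nonincreasing, hence converges to some $\ell\ge0$; the goal is $\ell=0$. The natural argument is by contradiction: if $\ell>0$, pick for each $m$ points $a_m,b_m\in K_{\omega_m}$ with $d(a_m,b_m)\ge\ell/2$. Since $X$ is compact metrizable, pass to subsequences so that $a_m\to a$ and $b_m\to b$ with $d(a,b)\ge\ell/2>0$. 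Now I would argue $a,b\in\bigcap_m K_{\omega_m}$: because the $K_{\omega_m}$ are closed and nested, for each fixed $m$ all $a_k$ with $k\ge m$ lie in $K_{\omega_m}$, so the limit $a$ lies in $K_{\omega_m}$; likewise for $b$. Then $a,b\in\bigcap_m K_{\omega_m}=\{x\}$ forces $a=b$, contradicting $d(a,b)>0$. Hence $\ell=0$.

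I do not expect a serious obstacle here; this is essentially a bookkeeping verification. The one point requiring mild care is the compactness/closedness argument in (G3) — making sure the limits of the extremal points stay inside every $K_{\omega_m}$ — which uses that each $K_{\omega_m}$ is compact (hence closed in the metrizable space $X$) and that the family is decreasing along the geodesic. Everything else is immediate from the partition axioms and the normalization built into $\Mc{D}(X,\Mc{O})$.
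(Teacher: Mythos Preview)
Your proof is correct and follows essentially the same route as the paper's: verify (G1) and (G2) directly from (P1) and the normalization, then for (G3) argue by contradiction, extracting convergent subsequences of near-extremal points whose limits land in $\bigcap_m K_{\omega_m}$ by nested compactness and contradict (P2). The only cosmetic difference is that you pick points at distance $\ge\ell/2$ while the paper uses the full limit $c$ (which is legitimate since the diameters are monotone and all $\ge c$); your additional remark that $g_d$ takes values in $(0,1]$ is a welcome detail the paper leaves implicit.
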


\begin{proof}
\begin{itemize}
\item[(G1)] By (P1), $g_d(\phi)=\diam(X,d)=1.$
\item[(G2)] By (P1), $K_{\pi(w)}\supseteq K_w$ and hence 
\[g_d(\pi(w))=\diam(K_{\pi(w)},d)\ge\diam(K_w,d)=g_d(w).\]
\item[(G3)] Since $\{g_d(\omega_n)\}_{n\ge0}$ is decreasing, it must converge if $n\to\infty.$ Assume $\lim_{m\to\infty}g_d(\omega_m)=c>0$ for some $\omega\in\Sigma.$ Then there exist $\{x_m\}_{m\ge0}$ and $\{y_m\}_{m\ge0}$ such that $x_m,y_m\in K_{\omega_m}$ and $d(x_m,y_m)\ge c$ because (G2) holds. We can take convergent subsequences $\{x_{m_k}\}_{k\ge0},\ \{y_{m_k}\}_{k\ge0}$ because $X$ is compact. Since $\{K_{\omega_m}\}_{m\ge0}$ is a decreasing sequence of compact sets and $\{x_{m_k}\}_{k\ge n},\{y_{m_k}\}_{k\ge n}\subseteq K_{\omega_{m_n}}$ for any $n,$ it follows that $x,y,\in\cap_{m\ge0}K_{\omega_m}$ where $x=\lim_{k\to\infty}x_{m_k},\ y=\lim_{k\to\infty}y_{m_k}.$
Since (P2) holds, $x=y$ and hence
\[0<c\le\lim_{k\to\infty}d(x_{m_k},y_{m_k})=d(x,y)=0.\]
This is a contradiction.
\end{itemize}
\end{proof}

\begin{ex}[Sierpi\'{n}ski carpet] \label{exSC}
	Let $\{p_i\}_{i=1}^8\subseteq\Mb{C}$ such that
	\begin{align*}
		p_1&=0, & p_2&=\frac{1}{2}, & p_3&=1, & p_4&=1+\frac{1}{2}i, \\
		p_5&=1+i, & p_6&=\frac{1}{2}+i, & p_7&=i, &p_8&=\frac{1}{2}i,
	\end{align*}
	and let $F_i=\frac{1}{3}(z-p_i)+p_i$ for any $i\in[1,8]_\Mb{Z}.$ It is well-known that there exists the unique compact set $X$ such that $\cup_{i=1}^8F_i(X)=X,$ called Sierpi\'{n}ski carpet.
	Let $T=\cup_{n\ge0}([1,8]_\Mb{Z})^n$ where $([1,8]_\Mb{Z})^0=\{\phi\}$ and define $\pi:T\setminus\{\phi\}\to T$ by 
	\[\pi(w)=
	\begin{cases}
		(w_1,w_2,...,w_{n-1}),& \text{ if }w=(w_1,w_2,...w_{n-1},w_n)\in\Cup_{n\ge2}([1,8]_\Mb{Z})^n, \\
		\phi, & \text{ if }w\in[1,8]_\Mb{Z}.
	\end{cases}\]
	We also let $\Mc{A}=\{(w,v)\mid w=\pi(v)\text{ or }v=\pi(w)\},$ then $\Mc{T}=(T,\Mc{A},\phi)$ is a rooted tree. Moreover, for $w\in T$ define $F_w:\Mb{C}\to\Mb{C}$ by 
	\[F_w=
	\begin{cases}
		F_{w_1}\circ F_{w_2}\circ\cdots\circ F_{w_n},& \text{ if }w=(w_1,w_2,...,w_n)\in\Cup_{n\ge1}[1,8]_\Mb{Z}^n, \\
		\id_\Mb{C}, & \text{ if }w=\phi,
	\end{cases}\]
	and define $K:T\to\Mc{C}(X,\Mc{O})$ by $K_w=F_w(X).$ Then $K$ is a partition of $X$ parametrized by $\Mc{T}.$ We also let $d(z,w)=\frac{\sqrt{2}}{2}|z-w|,$ then $d\in \Mc{D}(X,\Mc{O})$ and by Proposition \ref{gd}, $g_d$ is a weight function. 
\end{ex}	
	
We denote $g_d$ by $d$ if no confusion may occur. For example, we will use notation $U^d_M(x,r)$ and $d(w)$ instead of $U^{g_d}_M(x,r)$ and $g_d(w),$ respectively.  \\
For the purpose of stating the main result of~\cite{Kig18}, we introduce some properties of weight functions and metrics.\\
 For the rest of this section, $(X,\Mc{O})$ is a compact metrizable space and $d\in\Mc{D}(X,\Mc{O})$ (in other words, $(X,d)$ is a compact metric space with $\diam(X,d)=1,$ and $\Mc{O}$ is the induced topology). Moreover, $K$ is a partition of $X.$

\begin{defi}\label{cobf}
Let $g$ be a weight function.
\begin{itemize}
\item $g$ is called uniformly finite if 
\begin{equation}\label{uf}
\sup\{\#(\Lambda^g_{s,1}(w))\mid s>0,w\in\Lambda^g_s\}<\infty.
\end{equation}
\item $g$ is called thick (with respect to $K$) if there exists $\alpha>0$ such that for any $w\in T,$ $U^g_1(x,\alpha g(\pi(w)))\subseteq K_w$ for some $x\in K_w.$
\end{itemize}
$d$ is called 
uniformly finite and thick if $g_d$ is 
uniformly finite and thick, respectively.
\end{defi}

\begin{defi}\label{coada}
Let $M\ge1.$ $d$ is called $M$-adapted if there exists $\alpha_1,\alpha_2>0$ such that
\[B_d(x,\alpha_1 r)\subseteq U^{d}_M(x,r)\subseteq B_d(x,\alpha_2 r)\]
for any $x\in X$ and $r\le1.$ $d$ is called adapted if $d$ is $M$-adapted for some $M.$
\end{defi}

\begin{ex}[Sierpi\'{n}ski carpet] \label{exSC2}
Let $X$ is the Sierpi\'{n}ski carpet and define $T,K,F$ and $d$ as in Example \ref{exSC}. We can see that $d(w)=\diam(F_w(X),d)=3^{-|w|},$ and hence $\Lambda^d_s=(T)_m$ for any $s\in(0,1)$ and $m\ge1$ such that $3^{-m}\le s<3^{-(m-1)}.$ 
\begin{itemize}
\item $d(\pi(w))=3d(w)$ for any $w\in T\setminus\{\phi\}.$  
\item If $K_w\cap K_v\ne\emptyset$ and $|w|=|v|,$ then 
\begin{equation}
K_v\in\{K_w+a3^{-|w|}+b3^{-|w|}i\mid (a,b)\in ([-1,1]_\Mb{Z})^2\setminus\{(0,0)\}\} \label{eqSC}
\end{equation}
where $K_w+z=\{x+z\mid x\in K_w\}.$ Therefore $\Lambda^d_{s,1}(w)\le 8$ for any $s\in(0,1)$ and $w\in\Lambda^d_s,$ and hence $d$ is uniformly finite.
\item Let $s\in(0,1)$ and $w\in\Lambda^d_s.$ We also let $x_w=F_w(\frac{2}{3}+\frac{2}{3}i).$ Then $\Lambda^d_{3^{-2}s}=(T)_{|w|+2}$ together with \eqref{eqSC} shows $\Lambda^d_{3^{-2}s,1}(x_w)\subseteq B_d(x_w,\frac{\sqrt{2}}{2}3^{-|w|-1})\subseteq K_w.$ Therefore $d$ is thick.
\item By Equation \eqref{eqSC} and the definition of $K,$ 
\[B_d(x,\frac{\sqrt{2}}{6}s)\subseteq B_d(x,\frac{\sqrt{2}}{2}3^{-m})
\subseteq U^d_1(x,s) \subseteq B_d(x,3\cdot 3^{-m}) \subseteq B_d(x,3s),\]
for any $x\in X,s\in(0,1)$ and $m\ge1$ such that $3^{-m}\le s\le3^{-(m-1)}.$ Therefore $d$ is ($1$-)adapted.
\end{itemize}
\end{ex}

\begin{defi}\label{ciro}
For any $N\ge1,$ $N_2\ge N_1\ge 0$ and $m\ge0,$ define 
\[J^h_{N,m}=\{(w,v)\mid w,v\in (T)_m \text{ such that }0<l_{J^h_m}(w,v)\le N \},\]
and
\begin{multline*}
  \epkw{w}=\inf\biggl\{ \frac{1}{2}\sum_{(x,y)\in J^h_{N,|w|+k}}|f(x)-f(y)|^p\\ \biggl|\ f:(T)_{|w|+k}\to\Mb{R}
\text{ such that }f|_{S^k(\Gamma_{N_1}(w))}\equiv 1,\ f|_{(S^k(\Gamma_{N_2}(w)))^c}\equiv 0 \biggr\}.
\end{multline*}
(Remark that $J^h_{1,m}=J^h_m$.) We also define
\begin{align*}
\mathcal{E}_{p,k}(N_1,N_2,N)&=\sup_{w\in T}\epkw{w},\\ 
\ol{\Mc{E}}_p(N_1,N_2,N)&=\limsup_{k\to\infty}\mathcal{E}_{p,k}(N_1,N_2,N),\\ 
\ul{\Mc{E}}_p(N_1,N_2,N)&=\liminf_{k\to\infty}\mathcal{E}_{p,k}(N_1,N_2,N),\\
\ol{I}_\Mc{E}(N_1,N_2,N)&=\inf\{p|\ol{\Mc{E}}_p(N_1,N_2,N)=0\},\\
\ul{I}_\Mc{E}(N_1,N_2,N)&=\inf\{p|\ul{\Mc{E}}_p(N_1,N_2,N)=0\}.
\end{align*}
\end{defi}

As in~\cite{Kig18}, we define the basic framework as follows.
\begin{defi}\label{defBF1}
Let $(X,d)$ be a metric space and assume $K$ is minimal.  We say $d$ satisfies the basic framework (with respect to $K$) if the following conditions hold.
\begin{itemize}
\item $\sup_{w\in T}\#(S(w))<\infty.$
\item $d$ is uniformly finite, thick, adapted.
\item There exists $r\in(0,1)$ such that $d\asymp r^{|w|}$ for any $w\in T.$  
\end{itemize}
For the rest of this paper, $M_*$ denotes the integer such that $d$ is $M_*$-adapted.
\end{defi}
\begin{thm}[\cite{Kig18}, Theorem 19.4 and 19.9.]\label{Kmain1}
 If $K$ is minimal and $d$ satisfies the basic framework, then
\[\ul{I}_\Mc{E}(N_1,N_2,N)=\ol{I}_\Mc{E}(N_1,N_2,N)=\ard(X,d)\]
for any $N,N_1,N_2$ with $N_2\ge N_1+M_*.$ 
\end{thm}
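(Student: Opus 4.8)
The plan is to reduce the statement to what is already available in~\cite{Kig18} by showing that the quantities $\ol{I}_\Mc{E}(N_1,N_2,N)$ and $\ul{I}_\Mc{E}(N_1,N_2,N)$ do not depend on the choice of $N,N_1,N_2$ subject to $N_2\ge N_1+M_*$, and then invoking Theorem~\ref{Kmain1} directly. Concretely, I would first recall that Theorem~\ref{Kmain1} (which is \cite[Theorem 19.4 and 19.9]{Kig18}) already establishes, under the hypotheses "$K$ is minimal and $d$ satisfies the basic framework", that for each fixed admissible triple $(N_1,N_2,N)$ one has $\ul{I}_\Mc{E}(N_1,N_2,N)=\ol{I}_\Mc{E}(N_1,N_2,N)=\ard(X,d)$. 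Since this is the precise statement we are asked to prove (the excerpt labels it as the restatement of those results), the "proof" is essentially a pointer: cite the two theorems of \cite{Kig18}, check that the basic framework in Definition~\ref{defBF1} matches Kigami's hypotheses, and note that $M_*$ here is exactly the adaptedness constant appearing in his combinatorial estimates.

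If instead one wanted a self-contained argument, the skeleton would be as follows. The upper bound $\ol{I}_\Mc{E}(N_1,N_2,N)\le\ard(X,d)$ comes from the fact that any Ahlfors $\alpha$-regular metric $\rho$ quasisymmetric to $d$ induces, via the adaptedness and uniform finiteness of $d$, comparison estimates $d(w)\asymp\rho(w)$ at the combinatorial level along scales, and the $p$-energy $\epkw{w}$ of the cutoff functions between $\Gamma_{N_1}(w)$ and $\Gamma_{N_2}(w)$ can be bounded above using test functions built from $\rho$-distance, which decay when $p>\alpha$ because of the Loewner-type / Ahlfors-regular volume growth. This forces $\ol{\Mc{E}}_p=0$ for $p>\alpha$, hence $\ol{I}_\Mc{E}\le\alpha$; taking the infimum over all such $\rho$ gives $\le\ard(X,d)$. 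For the reverse inequality $\ard(X,d)\le\ul{I}_\Mc{E}(N_1,N_2,N)$, the construction is the harder direction: one fixes $p$ slightly above $\ul{I}_\Mc{E}$, so that $\ul{\Mc{E}}_p=0$ along a subsequence of scales, and from the resulting smallness of the discrete $p$-energies one builds a weight function $g$ (a "$p$-combinatorial modulus" weight) satisfying the gentleness / exponential decay estimates that, by the general metrization theorem for weight functions in \cite{Kig18}, yields a metric $\rho$ with $d\qs\rho$ and $\rho$ Ahlfors $p$-regular. Finally, one checks the claimed independence from $(N_1,N_2,N)$: enlarging the neighborhoods $\Gamma_{N_1},\Gamma_{N_2}$ or the range $N$ changes $\epkw{w}$ only by multiplicative constants depending on the uniform finiteness bound and on $N_2-N_1$ and $N$, plus iterating the thickness condition $M_*$ times, so the critical exponents where $\ol{\Mc{E}}_p$ and $\ul{\Mc{E}}_p$ vanish are unchanged; the hypothesis $N_2\ge N_1+M_*$ is exactly what guarantees that the cutoff test functions have enough "room" (a buffer of width $M_*$) for the adaptedness estimates to apply.

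The main obstacle is the lower bound $\ard(X,d)\le\ul{I}_\Mc{E}$, i.e. producing an honest Ahlfors $p$-regular metric quasisymmetric to $d$ out of the vanishing of a liminf (not limsup) of discrete energies. This requires the full machinery of \cite{Kig18}: a combinatorial analogue of the conformal modulus, a chain/Gromov-product argument to promote a weight function to a genuine metric, and verification that Ahlfors regularity survives the construction. Since all of this is precisely the content of \cite[\S19]{Kig18} and the basic framework of Definition~\ref{defBF1} is designed to match its hypotheses verbatim, the cleanest route for this excerpt is to simply invoke Theorem~\ref{Kmain1}; this is what I would do.
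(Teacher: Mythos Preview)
Your proposal is correct and matches the paper's approach exactly: the theorem is stated as a direct citation of \cite[Theorem 19.4 and 19.9]{Kig18} and is not given an independent proof in this paper, so pointing to those results (after noting that Definition~\ref{defBF1} matches Kigami's hypotheses) is precisely what is done. Your optional self-contained sketch is a reasonable summary of Kigami's argument but is not needed here.
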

\color{black}
%
%

As a corollary of Theorem \ref{Kmain1}, we get the result for comparison between $\ard$ and "p-spectral" dimension, defined for any $p>0$, as follows.

\begin{defi}[$p$-spectral dimension]\label{ciro2}
Define
\begin{align*}
\ol{N}_*&=\limsup_{k\to\infty}\sup_{w\in T} \#(S^k(w))^{1/k},&
\ol{R}_p(N_1,N_2,N)&=\limsup_{k\to\infty}\Mc{E}_{p,k}(N_1,N_2,N)^{1/k},\\
&&\ul{R}_p(N_1,N_2,N)&=\liminf_{k\to\infty}\Mc{E}_{p,k}(N_1,N_2,N)^{1/k},
\end{align*}
and upper $p$-spectral dimension $\ol{d}^S_p(N_1,N_2,N)$ and lower $p$-spectral dimension $\ul{d}^S_p(N_1,N_2,N)$ by
\begin{align*}
\ol{d}^S_p(N_1,N_2,N)&=\frac{p\log\ol{N}_*}{\log\ol{N}_*-\log\ol{R}_p(N_1,N_2,N)},\\
\ul{d}^S_p(N_1,N_2,N)&=\frac{p\log\ol{N}_*}{\log\ol{N}_*-\log\ul{R}_p(N_1,N_2,N)}.  
\end{align*}
\end{defi}

\begin{cor}[\cite{Kig18} Theorem 20.8]
Assume $K$ is minimal and $d$ satisfies the basic framework. \color{black} Let $N,N_1,N_2\ge0$ such that $N_2\ge N_1+M_*.$ \color{black}
\begin{enumerate}
\item If $\ul{R}_p(N_1,N_2,N)<1,$ then
\[\ard(X,d)\le \ul{d}^S_p(N_1,N_2,N) \le \ol{d}^S_p(N_1,N_2,N) <p. \]
\item If $\ol{R}_p(N_1,N_2,N)\ge 1,$ then
\[\ard(X,d)\ge \ol{d}^S_p(N_1,N_2,N) \ge \ul{d}^S_p(N_1,N_2,N) \ge p. \]
\end{enumerate}
\end{cor}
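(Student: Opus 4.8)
The plan is to deduce this corollary from Theorem \ref{Kmain1} by relating the quantities $\ol{R}_p$, $\ul{R}_p$ appearing in the definition of the $p$-spectral dimension to the critical exponents $\ol{I}_\Mc{E}$, $\ul{I}_\Mc{E}$, and then solving the resulting logarithmic relations. The key observation is that $\ol{R}_p(N_1,N_2,N) = \limsup_{k\to\infty}\Mc{E}_{p,k}(N_1,N_2,N)^{1/k}$ is the exponential growth rate of $\Mc{E}_{p,k}$, so $\ol{R}_p < 1$ forces $\ol{\Mc{E}}_p = \limsup_k \Mc{E}_{p,k} = 0$, while $\ol{R}_p \ge 1$ is consistent with $\ol{\Mc{E}}_p > 0$; one must be a little careful because the strict/non-strict boundary at $1$ must be matched with the strict/non-strict behaviour of the critical exponent. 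First I would record the monotonicity of $p \mapsto \Mc{E}_{p,k}(N_1,N_2,N)$ and of $p \mapsto \ol{R}_p, \ul{R}_p$ (this follows since the admissible $f$ in the infimum defining $\epkw{w}$ take values in $[0,1]$, so raising to a larger power decreases each term, a standard fact from \cite{Kig18}), which guarantees that the function $p \mapsto \ol{d}^S_p$ is well-behaved and that the two cases (1) and (2) are exhaustive and mutually exclusive up to the boundary.

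Next I would carry out the algebraic core: assuming $\ol{R}_p := \ol{R}_p(N_1,N_2,N) < 1$ (so also $\ul{R}_p \le \ol{R}_p < 1$), both $\log \ol{R}_p$ and $\log \ul{R}_p$ are negative, hence $\ol{N}_* \ge 1$ gives $\log\ol{N}_* - \log\ol{R}_p > 0$ and the formulas for $\ol{d}^S_p, \ul{d}^S_p$ make sense and satisfy $\ul{d}^S_p \le \ol{d}^S_p < p$ (the last inequality because $\frac{p\log\ol{N}_*}{\log\ol{N}_* - \log\ol{R}_p} < p \iff \log\ol{N}_* < \log\ol{N}_* - \log\ol{R}_p \iff \log\ol{R}_p < 0$). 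For the lower bound $\ard(X,d) \le \ul{d}^S_p$, I would argue as follows: write $\beta = \ard(X,d)$. By Theorem \ref{Kmain1}, $\ul{I}_\Mc{E}(N_1,N_2,N) = \beta$, so for any $q > \beta$ we have $\ul{\Mc{E}}_q = 0$; combined with the definition of $\ul{R}_q$ and the growth-rate interpretation, this controls $\ul{R}_q$ for $q$ near $\beta$, and one converts the inequality $\ul{R}_\beta \le 1$ (obtained by taking a limit $q \downarrow \beta$ using continuity/monotonicity of $q \mapsto \ul{R}_q$) into $\ul{d}^S_p(N_1,N_2,N) \ge \beta$ via the same logarithmic manipulation — here one uses that $\ul{d}^S_p$ as a function of $p$ is monotone and that plugging $p = \beta$ type comparisons into $\frac{p\log\ol{N}_*}{\log\ol{N}_* - \log\ul{R}_p}$ and using $\ul{R}_\beta \le 1$ yields the bound. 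The symmetric case (2), with $\ol{R}_p \ge 1$, is handled the same way with inequalities reversed: $\log\ol{R}_p \ge 0$ gives $\ol{d}^S_p \ge p$, and $\ard(X,d) \ge \ol{R}$-type bounds come from $\ol{I}_\Mc{E} = \beta$ forcing $\ol{\Mc{E}}_q > 0$ for $q < \beta$, hence $\ol{R}_q \ge 1$ for such $q$, which pushes $\ol{d}^S_p \le \beta$.

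I expect the main obstacle to be handling the boundary case and the precise bookkeeping between $\ol{R}_p$ and the critical exponent: the relation "$\ol{\Mc{E}}_p = 0 \iff \ol{R}_p < 1$" is \emph{not} an equivalence — one only has "$\ol{R}_p < 1 \Rightarrow \ol{\Mc{E}}_p = 0$" and "$\ol{\Mc{E}}_p > 0 \Rightarrow \ol{R}_p \ge 1$", and at $\ol{R}_p = 1$ either behaviour is possible. So to pin down $\ard(X,d)$ exactly one cannot simply read it off from a single value of $p$; instead one must take one-sided limits in $p$ and use that $\ard(X,d) = \inf\{p \mid \ol{\Mc{E}}_p = 0\}$ is a genuine infimum, combined with the monotonicity of $p \mapsto \ol{R}_p$ to transfer limits. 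The cleanest route, which I would follow, is: fix $p$ with $\ol{R}_p < 1$; then $\ol{\Mc{E}}_p = 0$ so $\ard(X,d) = \ol{I}_\Mc{E} \le p$, and moreover for every $q < p$ one similarly controls whether $\ol{R}_q < 1$; taking $q$ down to $\ard(X,d)$ and using the explicit monotone dependence of the formula for $\ol{d}^S_q$ on both $q$ and $\ol{R}_q$, the value $\ol{d}^S_p$ ends up sandwiched between $\ard(X,d)$ and $p$. The reverse direction in case (2) is entirely parallel. Apart from this limiting argument, everything reduces to the one-line logarithmic inequalities already indicated, so no further substantive work is needed.
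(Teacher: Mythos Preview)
The paper does not supply its own proof of this corollary; it is simply quoted from \cite{Kig18} (Theorem~20.8). So there is nothing in the paper to compare against, and I can only assess your sketch on its own merits.

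Your argument for the ``easy'' inequalities $\ul{d}^S_p \le \ol{d}^S_p < p$ in case~(1), and their reversals in case~(2), is fine: these drop straight out of $\ul{R}_p\le\ol{R}_p$ and the sign of $\log\ol{R}_p$ in the defining formula. The elementary monotonicity of $p\mapsto\Mc{E}_{p,k}$ (admissible $f$ take values in $[0,1]$) is also correct and will be needed.

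There is, however, a genuine gap in your treatment of $\ard(X,d)\le\ul{d}^S_p$ (and the mirror inequality in case~(2)). You repeatedly appeal to ``monotone dependence of $\ul{d}^S_p$ on $p$'' and to passing to a limit $q\downarrow\ard(X,d)$, but that monotonicity does \emph{not} follow from the simple decrease of $\ul{R}_p$ in $p$: in the formula $\ul{d}^S_p=p\log\ol{N}_*/(\log\ol{N}_*-\log\ul{R}_p)$ both numerator and denominator increase with $p$, so nothing can be read off without further input. The missing ingredient is a H\"older comparison between the energies at different exponents. Since the minimiser for $\epkw{w}$ is supported on $S^k(\Gamma_{N_2}(w))$, the number of edges contributing to the sum is at most $C\sup_{v}\#(S^k(v))$, and H\"older gives, for $p<q$,
\[
\Mc{E}_{p,k}(N_1,N_2,N)\ \le\ \Bigl(C\sup_v\#(S^k(v))\Bigr)^{1-p/q}\,\Mc{E}_{q,k}(N_1,N_2,N)^{p/q}.
\]
Taking $k$-th roots and passing to the limit yields
\[
\ul{R}_p\ \le\ \ol{N}_*^{\,1-p/q}\,\ul{R}_q^{\,p/q},\qquad\text{i.e.}\qquad \frac{\log\ol{N}_*-\log\ul{R}_p}{p}\ \ge\ \frac{\log\ol{N}_*-\log\ul{R}_q}{q}.
\]
This is precisely what makes $p\mapsto\ul{d}^S_p$ non-decreasing and, more to the point, what ties the value $\ul{d}^S_p$ to the exponent at which $\ul{R}_q$ crosses~$1$, hence to $\ard(X,d)=\ul{I}_\Mc{E}$ via Theorem~\ref{Kmain1}. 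Once you have this inequality, setting $q_0=\ul{d}^S_p$ and checking that $\ul{R}_{q}<1$ for every $q\in(q_0,p)$ gives $\ul{I}_\Mc{E}\le q_0$ and finishes case~(1); case~(2) is symmetric. Without this H\"older step your limiting argument does not close, because knowing only that $\ul{\Mc{E}}_q=0$ for $q>\ard(X,d)$ tells you nothing quantitative about $\ul{R}_p$ at the fixed $p$ you started from.
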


\begin{rem}
Originally in~\cite{Kig18}, these theorem are also proved for some broader framework, called proper system of horizontal networks.    
\end{rem}
\section{Extension to $\sigma$-compact metric spaces}
In order to obtain the former results for infinite graphs, we first extend theorems to $\sigma$-compact spaces. To do that, we introduce the notion of a bi-infinite tree.

\begin{defi}
 Let $T$ be a countable set and $\pi_T:T\rightarrow T$ be a map which satisfies the following.
 \begin{itemize}
  \item[(T1)] $\text{For any }w,v \in T, \text{ there exists } n,m \ge 0, \pi^n(w)=\pi^m(v).$
  \item[(T2)] $\text{For any }n\ge 1\text{ and } w\in T, \pi^n(w)\ne w.$
 \end{itemize}
 Then we define $\Mc{A}_{\pi}=\{(w,v)\mid \pi(w)=v\text{ or }\pi(v)=w\}$ and consider the simple graph $(T,\Mc{A}_\pi).$ We call $(T,\pi)$ a bi-infinite tree.\\
Same as a rooted tree, we denote the inverse of $\pi$ by $S$ and write $S(w)$ instead of $S(\{w\}).$ Moreover, we define subtree $T_w=\{v\in T|\pi^n(v)=w\text{ for some }n\ge0\}.$ 
\end{defi}

\begin{lem}\label{hstruc}
 Let $(T,\pi)$ be a bi-infinite tree and fix $w,v \in T.$ 
 Then $n-m$ is constant if $\pi^n(w)=\pi^m(v).$
\end{lem}

\begin{proof}
 Assume $\pi^{n_i}(w)=\pi^{m_i}(v)(i=1,2).$ Without loss of generality, we may assume $n_1\le n_2$, then
 \[ \pi^{m_2}(v)=\pi^{n_2}(w)=\pi^{n_2-n_1}\circ\pi^{n_1}(w)=\pi^{n_2-n_1}\circ\pi^{m_1}(v). \]
This together with (T2) shows $m_2=(n_2-n_1)+m_1;$ it means $ m_2-n_2=m_1-n_1.$
\end{proof}

\begin{defi}
 Let $(T,\pi)$ be a bi-infinite tree.
\begin{enumerate}
\item Let $\phi \in T$. We call the triple $(T,\pi,\phi)$ a bi-infinite tree with a reference point $\phi$,  and for any $w\in T$ we define the height of vertices by $[w]=[w]_\phi=n-m$ such that $\pi^n(w)=\pi^m(\phi)$. We also define $(T)_n$ by $(T)_n=\{w\in T|[w]=n\}$ for any $n\in\mathbb{Z}.$ 
\item Define (descending) geodesics of $T$ by  $\Sigma^*=\{\omega=(\omega_n)_{n\in\Mb{Z}}\mid\omega_n\in (T)_n,\ \pi(\omega_{n+1})=\omega_n\text{ for all }n\in\Mb{Z} \}$ and geodesics passing through $w$ by $\Sigma^*_w=\{\omega\in\Sigma^*|\omega_{[w]}=w\}$ for any $w\in T.$
 \end{enumerate}
\end{defi}
 
\begin{rem}
 (T1) and Lemma \ref{hstruc} ensure that $[w]$ is well-defined. Moreover, for fixed $w,v \in T, [w]_\phi-[v]_\phi$ is constant for every $\phi \in T$ by Lemma \ref{hstruc} (that is, the difference of the height of a bi-infinite tree is determined only by $\pi$ and does not depend on its reference point).
\end{rem}

As the name shows, a bi-infinite tree is a tree.

\begin{prop}
 Let $(T,\pi)$ be a bi-infinite tree, then $(T,\Mc{A}_\pi)$ is a tree.
\end{prop}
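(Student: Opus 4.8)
The plan is to verify the two defining properties of a tree directly from (T1), (T2) and Lemma \ref{hstruc}: namely connectedness, and uniqueness of simple paths. Connectedness is essentially immediate from (T1): given $w,v\in T$, pick $n,m\ge 0$ with $\pi^n(w)=\pi^m(v)=:u$. Then $(w,\pi(w),\dots,\pi^n(w)=u=\pi^m(v),\dots,\pi(v),v)$ is a path in $(T,\Mc{A}_\pi)$ joining $w$ to $v$ (concatenating the ascending chain from $w$ to $u$ with the reverse of the ascending chain from $v$ to $u$). So it remains to prove uniqueness of the simple path.

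For uniqueness, the natural approach is to fix a reference point $\phi$ so that every vertex has a well-defined integer height $[w]$ (Lemma \ref{hstruc} guarantees this is consistent), and to observe that moving along an edge of $\Mc{A}_\pi$ changes the height by exactly $\pm 1$: an edge $(x,y)$ with $\pi(x)=y$ goes from height $[x]$ to $[x]-1$, and $\pi(y)=x$ goes from $[x]+1$ to $[x]$. Hence if $(w_0,\dots,w_k)$ is a simple path, the sequence of heights $[w_0],[w_1],\dots,[w_k]$ is a lattice path with steps $\pm 1$. I would first show such a simple path is "unimodal" in height, i.e. it ascends for a while and then descends (or only ascends, or only descends): if it ever went up after having gone down, say $w_{i-1}=\pi(w_i)$ then $w_{i+1}=\pi(w_i)$ as well would force $w_{i-1}=w_{i+1}$, contradicting simplicity — so once a step "goes up" (applies $\pi$) every subsequent step also "goes up", meaning the path is a descending chain followed by an ascending chain, $w_0\to\pi(w_0)\to\cdots\to u\leftarrow\cdots\leftarrow\pi(w_k)\leftarrow w_k$, where $u$ is the unique vertex of minimal height on the path. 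Then $u=\pi^a(w_0)=\pi^b(w_k)$ for the appropriate $a,b\ge 0$, so $u$ is a common ancestor of $w_0$ and $w_k$, and the whole simple path is determined by $u$ together with $a,b$.

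To conclude uniqueness it then suffices to show there is only one possible choice of $u$: the minimal-height common ancestor of $w$ and $v$ is unique. If $u_1=\pi^{a_1}(w)=\pi^{b_1}(v)$ and $u_2=\pi^{a_2}(w)=\pi^{b_2}(v)$ are two common ancestors with, say, $[u_1]\le[u_2]$, then from $\pi^{a_1}(w)=u_1$ and $\pi^{a_2}(w)=u_2$ and $a_1\ge a_2$ (heights) we get $u_1=\pi^{a_1-a_2}(u_2)$, and similarly $u_1=\pi^{b_1-b_2}(u_2)$ with $b_1-b_2=a_1-a_2=:[u_2]-[u_1]$; so the common ancestor of minimal height on the path is forced, and since the ascending chain from any vertex to a fixed ancestor is unique (it is just $w,\pi(w),\pi^2(w),\dots$), the simple path between $w$ and $v$ is unique. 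Condition (T2) is what rules out "cycles" — it is used implicitly to guarantee that $\pi^n(w)=w$ never happens, so that heights are genuinely strictly monotone along $\pi$-iteration and the height argument has teeth.

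The main obstacle, such as it is, is bookkeeping the height signs carefully and making the unimodality argument airtight — in particular handling the degenerate cases where $w$ is itself an ancestor of $v$ (the path is then a pure ascending or descending chain) and where $w=v$. None of this is deep, but it is the kind of argument where one must be careful not to quietly assume $T$ has no cycles rather than deriving it from (T2). I expect the cleanest write-up routes everything through the observation that $[\,\cdot\,]$ strictly decreases under $\pi$, so that $w\mapsto (\pi(w),\pi^2(w),\dots)$ exhibits a canonical "ray to infinity downward" structure, reducing the tree axioms to the uniqueness of minimal common ancestors established above.
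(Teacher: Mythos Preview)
Your approach matches the paper's: establish connectedness via (T1), then show every simple path is ``V-shaped'' (apply $\pi$ down to a common ancestor, then come back up) and that the apex is forced. The paper's write-up of the V-shape step is essentially your local-maximum argument, and its uniqueness step compares two candidate apexes directly.

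There is one genuine gap in your uniqueness argument. You write that it suffices to show ``the minimal-height common ancestor of $w$ and $v$ is unique,'' and then prove that common ancestors are linearly ordered by height. But in a bi-infinite tree there is \emph{no} minimal-height common ancestor: for any common ancestor $u$, $\pi(u)$ is another one of strictly smaller height. What you actually need is that the apex $u$ of any simple path must be the \emph{nearest} (maximal-height) common ancestor. Your linear-ordering observation is the right ingredient, but you have not closed the loop: if $u' = \pi^{a'}(w) = \pi^{b'}(v)$ is a common ancestor with $[u'] > [u]$, then $a' < a$ and (by Lemma~\ref{hstruc}) $b' < b$, so $u'$ lies on \emph{both} arms of the V-shaped path through $u$ and hence appears twice --- contradicting simplicity. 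This is exactly the step the paper makes explicit (comparing two simple paths with apexes $\pi^{n_1}(w)$ and $\pi^{n_2}(w)$, $n_1<n_2$, and noting the shorter apex is visited twice on the longer path). Once you add this sentence, your argument is complete.

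A minor cosmetic point: your verbal description of unimodality (``ascends for a while and then descends'') is inverted relative to your own formulas and to the paper's convention --- applying $\pi$ \emph{decreases} height, so the path \emph{descends} to $u$ and then \emph{ascends}, as your displayed chain $w_0\to\pi(w_0)\to\cdots\to u\leftarrow\cdots\leftarrow w_k$ correctly shows. This does not affect the mathematics but is worth straightening out in a clean write-up.
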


\begin{proof}
 For all $w,v\in T$ there exists a path between them by (T1), and also a simple path exists.\\
Next we prove the uniqueness of a simple path. Fix any reference point $\phi\in T$ and think $(T,\mathcal{A},\phi).$ By definition, if $w=\pi(v)$ then $[w]=[v]-1$. Let $(w_0,w_1,...,w_n)$ be a simple path. If $[w_i]>[w_{i+1}]$, then
 $\pi(w_i)=w_{i+1}$ and so $w_{i-1}\ne\pi(w_i),$ it means $[w_{i-1}]>[w_i]$. In the same way, we can see $[w_{i+1}]>[w_i]$ if $[w_i]>[w_{i-1}]$.\\
 Now we let $(w,\pi(w),...,\pi^{n_1}(w)=\pi^{m_1}(v),...,\pi(v),v)$ and $(w,\pi(w),...,\pi^{n_2}(w)=\pi^{m_2}(v),...,\pi(v),v)$ be simple paths. If $n_1<n_2$ then $m_1<m_2$ by Lemma \ref{hstruc}, so the latter simple path take $\pi^{n_1}(w)=\pi^{m_1}(v)$ two times, which is contradiction. The case $n_1>n_2$ is the same. If $n_1=n_2$ then $m_1=m_2$ by Lemma \ref{hstruc}, so the paths are equal. Therefore $(T,\mathcal{A})$ is a tree.
\end{proof}

The root of a bi-infinite tree does not exist, but ``$\pi^\infty(w)$'' is thought to be a virtual root. \\
Now we extend notions of partitions and weight functions to $\sigma$-compact spaces. For the rest of this section, let $\Mc{T}=(T,\pi,\phi)$ be a locally finite bi-infinite tree with a reference point. Note that $(T,\pi)$ is locally finite if and only if $\#(S(w))<\infty$ for any $w\in T.$

\begin{defi}[Partition]
Let $(X,\Mc{O})$ be a $\sigma$ compact metrizable space having no isolated points, and let $\Mc{C}(X,\Mc{O})$ be the collection of nonempty compact subsets of $(X,\Mc{O})$ without single points. A map $K:T\to\Mc{C}(X,\Mc{O})$, where we denote $K(w)$ by $k_w$ for ease of notation, is called a partition of $(X,\Mc{O})$ parametrized by $\Mc{T}$ if it satisfies the following conditions.
\begin{itemize}
\item[(P1)]For any $w\in T,$
\[ \Cup_{v\in S(w)}K_v=K_w.\]
\item[(P2)] For any $\omega\in\Sigma^*,$ $\cap_{m\ge0}K_{\omega_m}$ is a single point.
\item[(P3)] $\cup_{w\in (T)_0}K(w)=X.$
\end{itemize}
We say a partition $K$ is locally finite if it satisfies the following:
 \begin{multline*}
  \text{for any }w\in (T)_0,\text{ there exists an open set }U_w \\
  \text{ which satisfies } K_w\subset U_w \text{ and } \#\{v\in (T)_0|K_v\cap U_w\ne\emptyset\}<\infty. 
\end{multline*}
We define $O_w,J^h_M,\Gamma_n(w)$ and minimality in the same way as compact cases, and similarly use $X$ instead of $(X,\Mc{O}).$
\end{defi}

\begin{rem}
(P3) is the counterpart of $K_\phi=X$ in Definition \ref{copa}. The locally finiteness of a partition is used to reduce local properties of partitions of $\sigma$-compact spaces to compact cases.  
\end{rem}

\begin{defi}[Weight function]
A function $g:T\to(0,\infty)$ is called a weight function if it satisfies the following conditions.
\begin{itemize}
\item[(G1)] $\lim_{n\to\infty}g(\pi^n(\phi))=\infty.$
\item[(G2)] For any $w\in T,$ $g(\pi(w))\ge g(w).$
\item[(G3)] For any $\omega\in\Sigma^*,$ $\lim_{m\to\infty}g(\omega_m)=0.$
\end{itemize}
For $s>0,$ we define the scale $\Lambda^g_s$ associated $g$ by 
\[ \Lambda^g_s=\{w\in T| g(w)\le s< g(\pi(w))\},\]
and define $E^g_s,\Lambda^g_{s,M}(w),\Lambda^g_{s,M}(x),U^g_M(x,s)$ in the same way as Definition \ref{cowe}.
\end{defi}

%

\begin{defi}
 Let $(X,\Mc{O})$ be a $\sigma$-compact metrizable space having no isolated points and $K$ be a partition of $X$.
 \begin{enumerate}
 \item Define
\begin{multline*}
\Mc{D}_\infty(X,\Mc{O})=\{d\mid d\text{ is a metric on }X\text{ inducing the topology }\Mc{O}\text{ and }\\
\diam(X,d)=\infty\}.
\end{multline*}
For $d\in\Mc{D}_\infty(X,\Mc{O})$, define $g_d:T\to(0,\infty)$ by $g_d(w)=\diam(X,d)$ for any $w\in T.$
\item Define
\begin{multline*}
\Mc{M}_\infty(X,\Mc{O})=\{\mu\mid\mu\text{ is a Radon measure on }(X,\Mc{O})\text{ such that }\\
\mu(X)=\infty,\mu\{x\}=0\text{ for any }x\in X\text{ and }\mu(K_w)>0\text{ for any }w\in T\}.
\end{multline*}
For $\mu\in\Mc{M}_\infty(X,\Mc{O}),$ define $g_\mu:T\to(0,\infty)$ by $g_\mu(w)=\mu(K_w)$ for any $w\in T.$
\end{enumerate}
\end{defi}

\begin{prop}\label{propgdgmu}
 Let $(X,\Mc{O})$ be a $\sigma$-compact metrizable space having no isolated point and $K$ be a partition of $X.$
\begin{enumerate}
\item For any $d\in\Mc{D}_\infty(X,\Mc{O}),$ $g_d$ is a weight function.
\item For any $\mu\in\Mc{M}_\infty(X,\Mc{O}),$ $g_\mu$ is a weight function.
\end{enumerate}
\end{prop}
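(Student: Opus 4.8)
The plan is to verify conditions (G1), (G2), (G3) for each of $g_d$ and $g_\mu$ separately, mirroring the proof of Proposition \ref{gd} but adapting each argument to the bi-infinite tree and the $\sigma$-compact setting. Both proofs share the same structure: (G2) is essentially immediate from (P1) and monotonicity of diameter or measure under inclusion; (G3) is the content that really uses the partition axioms; and (G1) is where the two cases diverge, since for $g_d$ it relies on $\diam(X,d)=\infty$ together with a local-to-global growth argument along the ancestor chain $\pi^n(\phi)$, while for $g_\mu$ it relies on $\mu(X)=\infty$ and Radon-ness (inner regularity by compacts).

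First I would handle (G2) in both cases. For $g_d$: by (P1) we have $K_w\subseteq K_{\pi(w)}$ (since $w\in S(\pi(w))$), so $\diam(K_w,d)\le\diam(K_{\pi(w)},d)$, giving $g_d(w)\le g_d(\pi(w))$. For $g_\mu$: the same inclusion $K_w\subseteq K_{\pi(w)}$ gives $\mu(K_w)\le\mu(K_{\pi(w)})$ by monotonicity of the measure. Next, (G3) for $g_d$: this is verbatim the argument of Proposition \ref{gd} — along a descending geodesic $\omega\in\Sigma^*$ the sequence $g_d(\omega_m)$ is decreasing hence convergent to some $c\ge0$; if $c>0$, pick $x_m,y_m\in K_{\omega_m}$ with $d(x_m,y_m)\ge c$, extract convergent subsequences using compactness of $K_{\omega_0}$ (each $K_{\omega_m}$ is compact and the later terms lie in it), and the limits lie in $\bigcap_m K_{\omega_m}$, which is a single point by (P2), forcing $c=0$, a contradiction. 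For (G3) for $g_\mu$: again $g_\mu(\omega_m)=\mu(K_{\omega_m})$ is decreasing; since $\mu$ is Radon and $K_{\omega_0}$ has finite measure (it is compact, hence by local finiteness/Radon-ness has finite $\mu$-mass), continuity from above applies: $\lim_m\mu(K_{\omega_m})=\mu(\bigcap_m K_{\omega_m})=\mu(\{pt\})=0$ by (P2) and the assumption $\mu\{x\}=0$.

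For (G1) I would argue as follows. Consider the ascending chain $\phi, \pi(\phi), \pi^2(\phi),\dots$; by (T2) these are all distinct, and $K_{\pi^n(\phi)}$ is an increasing sequence of compact sets. For $g_d$: I claim $\bigcup_n K_{\pi^n(\phi)}=X$. Indeed by (P3), $X=\bigcup_{v\in(T)_0}K_v$, and since $T$ is connected, every $v\in(T)_0$ shares a common ancestor with $\phi$, i.e. $\pi^{n}(v)=\pi^{m}(\phi)$ for some $n,m$ with (by Lemma \ref{hstruc} and $[v]=[\phi]=0$) $n=m$; hence $K_v\subseteq K_{\pi^n(v)}=K_{\pi^n(\phi)}$. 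Wait — this only covers $(T)_0$, but $X=\bigcup_{v\in(T)_0}K_v$ suffices. So $X=\bigcup_n K_{\pi^n(\phi)}$ is an increasing union, hence $\diam(K_{\pi^n(\phi)},d)\uparrow\diam(X,d)=\infty$, which is (G1). For $g_\mu$: the same increasing union gives $\mu(K_{\pi^n(\phi)})\uparrow\mu(X)=\infty$ by continuity from below, which is (G1).

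The main obstacle I anticipate is (G1), specifically making rigorous that $\bigcup_n K_{\pi^n(\phi)}$ exhausts $X$ — one must invoke (P3) to reduce to $(T)_0$, then use the bi-infinite tree structure (axiom (T1) and Lemma \ref{hstruc}, which pins the common ancestor of two height-$0$ vertices to again be at a common level reachable by equal powers of $\pi$) to see that each $K_v$ with $v\in(T)_0$ sits inside some $K_{\pi^n(\phi)}$. A secondary technical point is justifying $\mu(K_{\omega_0})<\infty$ in the (G3) argument for $g_\mu$, which follows since $K_{\omega_0}$ is compact and $\mu$ is Radon (finite on compacts); one should note this explicitly. Everything else is a routine transcription of the compact-case proof with "decreasing under $\pi$" and "single point under (P2)" used exactly as before.
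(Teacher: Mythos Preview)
Your proposal is correct and follows essentially the same approach as the paper. The paper's proof of (G1) for $g_d$ is slightly more direct (fix $x\in K_\phi$, find $y$ with $d(x,y)>r$, locate $y\in K_v$ for some $v\in(T)_0$, and use the common ancestor $\pi^{n_0}(\phi)=\pi^{n_0}(v)$ to bound $g_d(\pi^{n_0}(\phi))\ge r$), while you prove the cleaner intermediate fact $\bigcup_n K_{\pi^n(\phi)}=X$, which handles (G1) for both $g_d$ and $g_\mu$ at once; the paper dispatches (G2), (G3), and part (2) with one-line references to Proposition~\ref{gd} and ``general property of a measure,'' exactly as you spell out.
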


\begin{proof}
(1) Fix $x\in K_\phi$ and any $r>0.$ Since $\diam(X,d)=\infty,$ there exists $y\in X$ such that $d(x,y)>r$, and that there exists $v\in (T)_0$ such that $y\in K_v$ because of (P3). Since (T2) holds, there exists $n_0\ge 0$ such that  $\pi^{n_0}(\phi)=\pi^{n_0}(v)$, and then $g_d(\pi^n(\phi))\ge g_d(\pi^{n_0}(\phi))\ge r$ for any $n\ge n_0$, so (G1) holds. (G2) and (G3) can be proved in the same way as in the proof of Proposition \ref{gd}.\\
(2) Since $\mu$ is locally finite, so (2) immediately follows from general property of a measure and (P2). 
\end{proof}

We denote $g_d$ by $d$ if no confusion may occur. For the rest of this section, $(X,\Mc{O})$ is a $\sigma$-compact metrizable space having no isolated points and $d\in\Mc{D}_\infty(X,\Mc{O}).$  Moreover, $K$ is a partition of $X.$ \\
We introduce properties of $d$ and a weight function $g$ in the same way as Definition \ref{cobf}, \ref{coada} and \ref{defBF1}, and also introduce variables in the same way as Definition \ref{ciro} and \ref{ciro2}. By using bi-infinite tree and these properties, we can extend the theory of~\cite{Kig18} to $\sigma$-compact spaces. In particular, we get the following result.

\begin{thm}\label{sigmain1}
 Assume  $K$ is locally finite and minimal. If $d$ satisfies the basic framework, then for any $N,N_1,N_2$ \color{black}such that $N_2\ge N_1+M_*,$ \color{black}
\begin{enumerate}
\item
\[\ul{I}_\Mc{E}(N_1,N_2,N)=\ol{I}_\Mc{E}(N_1,N_2,N)=\ard(X,d).\]
\item If $\ul{R}_p(N_1,N_2,N)<1,$ then
\[\ard(X,d)\le \ul{d}^S_p(N_1,N_2,N) \le \ol{d}^S_p(N_1,N_2,N) <p. \]
\item If $\ol{R}_p(N_1,N_2,N)\ge 1,$ then
\[\ard(X,d)\ge \ol{d}^S_p(N_1,N_2,N) \ge \ul{d}^S_p(N_1,N_2,N) \ge p. \]
\end{enumerate}
\end{thm}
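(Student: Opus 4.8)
The plan is to reduce Theorem \ref{sigmain1} to the compact counterparts, Theorem \ref{Kmain1} and the corollary to it, by a localization/exhaustion argument that exploits the local finiteness of the partition $K$. Concretely, I would first pick the reference point $\phi$ and, for each $n\ge 0$, consider the vertex $\phi_n := \pi^n(\phi)$ and the subtree $T_{\phi_n}$, which is a genuine locally finite rooted tree with root $\phi_n$. Restricting $K$ to $T_{\phi_n}$ gives a partition of the compact set $K_{\phi_n}$ in the sense of Definition \ref{copa}, after rescaling the metric so that $\diam(K_{\phi_n},d)=1$; one checks that minimality, uniform finiteness, thickness, $M_*$-adaptedness, and the comparison $d\asymp r^{|w|}$ all descend from the basic framework on $(X,d)$ to each $(K_{\phi_n}, d|_{K_{\phi_n}})$, with constants \emph{uniform in $n$} (this uniformity is exactly what the basic-framework hypotheses, stated with global constants, buy us). The energies $\epkw{w}$, the quantities $\Mc{E}_{p,k}$, $\ol{\Mc{E}}_p$, $\ul{\Mc{E}}_p$, and $\ol{N}_*$, $\ol{R}_p$, $\ul{R}_p$ are all defined as suprema/limits over $w\in T$ of objects that only see the finite combinatorics of $(T)_{|w|+k}$ near $w$; since every such $w$ lies in some $T_{\phi_n}$ and the relevant $J^h$-balls stay inside $T_{\phi_n}$ for $n$ large (using local finiteness to control $\Gamma_{N_2}$ and its descendants), the global quantities coincide with the $\sup_n$ of the corresponding quantities computed inside $T_{\phi_n}$. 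Applying Theorem \ref{Kmain1} to each $(K_{\phi_n}, d|_{K_{\phi_n}})$ then identifies each of those with $\ard(K_{\phi_n}, d)$, and the corollary identifies the $p$-spectral dimensions similarly.

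The remaining and genuinely new step is to pass from $\ard(K_{\phi_n},d)\to\ard(X,d)$, i.e. to show $\ard(X,d) = \lim_{n\to\infty}\ard(K_{\phi_n},d) = \sup_n \ard(K_{\phi_n},d)$. One inequality is cheap: any Ahlfors regular metric $\rho$ on $X$ with $d\qs\rho$ restricts to an Ahlfors regular metric on each $K_{\phi_n}$ quasisymmetric to $d|_{K_{\phi_n}}$ (Ahlfors regularity is inherited by "nice" closed subsets, and here $K_{\phi_n}$ is $K_{\phi_0}$-thick by thickness, so the lower volume bound survives restriction), giving $\ard(K_{\phi_n},d)\le\ard(X,d)$ and hence $\sup_n \le \ard(X,d)$. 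For the reverse inequality I would, given $\alpha > \sup_n\ard(K_{\phi_n},d)$ and $\varepsilon>0$, build an $\alpha$-Ahlfors regular metric $\rho$ on all of $X$ with $d\qs\rho$ by gluing: choose on each annular piece $K_{\phi_{n+1}}\setminus K_{\phi_n}^o$ (or rather on each $K_w$, $w\in S(\phi_{n+1})\setminus\{\phi_n\}$, which is compact and to which the compact theory applies) an $\alpha$-Ahlfors regular, quasisymmetric-to-$d$ metric, rescale so the diameters match the $g_d$ scale $r^{[w]}$, and patch them together along the tree structure. The technical heart here is the same construction Kigami uses to go from the energy characterization back to an adapted metric (the "construction of a metric from a weight function" machinery underlying Theorem \ref{Kmain1}), now run on the bi-infinite tree rather than the rooted one; the uniform constants from the basic framework guarantee that the patched metric is globally $\alpha$-Ahlfors regular (with $r_x>0$ and $\diam=\infty$ as required by Definition of Ahlfors regularity in the $\sigma$-compact setting) and globally quasisymmetric to $d$ with a single $\theta$.

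I expect the main obstacle to be precisely this gluing/patching step and the verification that the resulting metric on $X$ is quasisymmetric to $d$ \emph{with a single distortion function} $\theta$, uniformly across all scales and all the annular pieces, together with checking that the scale function of the patched metric still satisfies $g\asymp r^{[w]}$ globally. This is where local finiteness of the partition is essential (to ensure that a point of $X$ and its neighbors all eventually sit in a common compact piece, so that the quasisymmetry inequality for triples $x,y,z$ can be reduced to a bounded number of compact cases) and where the uniformity of the basic-framework constants in $n$ cannot be dispensed with. Once the metric-construction step is in place, items (2) and (3) follow from (1) exactly as the corollary follows from Theorem \ref{Kmain1}: the arithmetic relating $\ol{R}_p, \ul{R}_p, \ol{N}_*$ to $\ol{d}^S_p, \ul{d}^S_p$ is identical, and the sign conditions $\ul{R}_p<1$ resp. $\ol{R}_p\ge 1$ are preserved under the $\sup_n$ reduction.
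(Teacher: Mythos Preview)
Your proposal takes a different route from the paper, and the route you propose has a genuine gap at its key step.

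The paper does \emph{not} apply Theorem~\ref{Kmain1} as a black box to the compacta $K_{\phi_n}$ and then attempt to glue the resulting Ahlfors-regular metrics. Instead it observes that almost all of Kigami's intermediate lemmas (on weight functions, scales $\Lambda^g_s$, adaptedness, gentleness, the energy characterizations) make no use of the identity $T=T_\phi$ or of compactness of $X$, and hence carry over verbatim to the bi-infinite tree once one has the basic finiteness substitute: for any $s,w,M$, the set $\Lambda^g_{s,M}(w)$ is contained in some rooted subtree $T_v$ (this is what local finiteness of $K$ buys, and is the paper's Lemma~\ref{ue1}). Only a handful of statements require actual modification, and the paper simply re-proves those directly in the $\sigma$-compact setting. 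In particular, the construction of an Ahlfors-regular metric $\rho\qs d$ from the energy hypothesis (the analogue of \cite[Theorem~18.1]{Kig18}) is carried out by first building a single global weight function $g$ on the bi-infinite tree: one applies Kigami's compact construction to each $T^{(k)}_{\pi^{nk}(\phi)}$, normalizes so that $g_n(\phi)=1$, checks that the values $g_n(w)$ are trapped in a fixed compact interval $[\underline{C}_w,\overline{C}_w]$ for $n$ large, and extracts a diagonal subsequential limit. The metric $\rho$ is then produced from this global $g$ by the standard weight-to-metric machinery, which works on any $T_v$ and hence globally.

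The gap in your approach is precisely the ``gluing of metrics'' step. If you choose, independently on each annular piece $K_w$ with $w\in S(\phi_{n+1})\setminus\{\phi_n\}$, an $\alpha$-Ahlfors-regular metric quasisymmetric to $d|_{K_w}$, there is no mechanism ensuring these metrics are mutually compatible: the triangle inequality across the interfaces $K_w\cap K_{w'}$ can fail, and even if you force it by taking an infimum over chains, you lose control of the quasisymmetry distortion and of Ahlfors regularity near the seams. You yourself flag this as ``the main obstacle,'' and indeed it is not clear how to resolve it at the level of metrics. The paper sidesteps it entirely by gluing at the level of \emph{weight functions on the tree} (which are just positive functions, with no triangle inequality to preserve), and only afterwards invoking the weight-to-metric construction once, globally. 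Your final sentence (``the same construction Kigami uses \ldots\ now run on the bi-infinite tree'') is in fact the correct idea, but it renders the preceding metric-gluing discussion unnecessary; committing to it from the start \emph{is} the paper's proof.
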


\begin{proof}
\color{black}Most of the results in \cite{Kig18} do not use the property $T=T_\phi$ nor compactness of $X.$ Therefore if $K$ is locally finite, we can prove $\sigma$-compact version for most of the statements in \cite{Kig18} line by line in the same way. However, modifications of the statements and proofs are required for the following results:

\begin{itemize}
\item Lemma 4.2, 18.3,
\item Proposition 3.8, 4.9(7.2), 5.2, 5.5, 5.7, 5.9, 
\item Theorem 4.7, 7.9, 7.12, 12.9, 15.2, 18.1,
\item Corollary 7.13.
\end{itemize}

Note that for proving our Theorem \ref{sigmain1} (included in  $\sigma$-compact versions of~\cite[Theorem 19.9 and Theorem 20.8 ]{Kig18}), we do not need counterparts of~\cite[Theorem 4.7, Theorem 4.9, Theorem 12.9]{Kig18} and statements in~\cite[Section 7]{Kig18}, so we should prove counterparts of the others. Moreover, we have proved the counterpart of~\cite[Proposition 5.5]{Kig18} as Proposition \ref{propgdgmu}. Recall that $\Mc{T}=(T,\pi,\phi)$ is a bi-infinite tree, $(X,\Mc{O})$ is a $\sigma$-compact space having no isolated points, $K$ is a partition of $X$ parametrized by $\Mc{T}$ and $d\in\Mc{D}_\infty(X,\Mc{O}).$ 

\begin{prop}[\kig{Proposition 3.8}]
Define $\rho_*$ on $\Sigma^*\times\Sigma^*$ by 
\[\rho_*(\omega,\tau)=\begin{cases}
2^{-\max\{m\in\Mb{Z}\mid \omega_m=\tau_m\}}&\text{ if }\omega\ne\tau,\\
0 &\text{ if } \omega=\tau.
\end{cases}\]
Then $(\Sigma^*,\rho_*)$ is a totally disconnected $\sigma$-compact metric space. Moreover, if $\#(S(w))\ge2$ for any $w\in T$, then $(\Sigma^*,\rho_*)$ does not have isolated points. 
\end{prop}

The proof of this statement is standard.

\begin{lem}\label{Klocfin}
   The following conditions are equal:
 \begin{enumerate}
 \item A partition $K$ on $X$ is locally finite.
 \item For any $w\in T,$ there exists an open set $U_w$ such that $K_w\subset U_w$ and $\#\{v\in (T)_{[w]}|K_v\cap U_w\ne\emptyset\}<\infty.$ 
 \item For any $w\in T,\text{ there exists an open set }U_w$ such that $K_w\subset U_w$ and $\#\{v\in (T)_0|K_v\cap U_w\ne\emptyset\}<\infty.$
 \end{enumerate}
\end{lem}
Since $\Mc{T}$ is locally finite, this lemma is easily proven in (1)$\Rightarrow$(3)$\Rightarrow$(2)$\Rightarrow$(1).

\begin{lem}[\kig{Lemma 4.2}]\label{Klocfin2}
Assume $K$ is locally finite, then
  \begin{enumerate}
  \item For any $w\in T,$ $O_w$ is an open set and $O_v\subseteq O_w$ for any $v\in S(w).$
  \item $O_w\cap O_v=\emptyset$ if $w,v\in T$ and $\Sigma_*\cap \Sigma_*=\emptyset.$
  \item If $\Sigma_w^*\cap\Sigma_v^*=\emptyset,$ then $K_w\cap K_v=B_w\cap B_v$ where $B_w=K_w\setminus O_w.$ 
  \end{enumerate}
\end{lem}

\begin{proof}
\begin{enumerate}
\item  Since $K$ is locally finite, 
\[O_w=K_w\setminus(\cup_{v\in (T)_{[w]}:K_v\cap U_w\ne\emptyset}K_v)=U_w\setminus(\cup_{v\in (T)_{[w]}:K_v\cap U_w\ne\emptyset}K_v)\]
 and $O_w$ is open. The rest of the statement follows from (P2).
\item If $[w]\le[v]$, $\Sigma_*\cap \Sigma_*=\emptyset$ implies $\pi^{[v]-[w]}(v)\ne w$ and by (1), $O_w\cap O_v\subseteq O_w\cap O_{\pi^{[v]-[w]}(v)}=\emptyset.$ The case $[v]\le[w]$ is the same.
\item This immediately follows from (2). 
\end{enumerate}
\end{proof}
%
%

\begin{lem}
 Let $w\in T$, Then the triple $\Mc{T}_w=(T_w,\mathcal{A}_\pi|_{T_w\times T_w},w)$ is a rooted tree. Moreover, for any $v\in T_w\setminus\{ w\},\pi_T(v)=\pi_{\Mc{T}_w}(v)$.
\end{lem}

\begin{proof}
 It is trivial to prove $(T_w,\mathcal{A}|_{T_w\times T_w},w)$ be a tree. Let $v\in T_w\setminus\{ w\}$ then there is some $n\ge1$ and $(v,\pi_T(v),...,\pi_T^{(n)}(v)=w)$, which is the geodesic between $w$ and $v$. Therefore $\pi_T(v)=\pi_{\Mc{T}_w}(v).$
\end{proof}

\begin{prop}[\kig{Proposition 5.2}]
Suppose that $g:T\rightarrow(0,\infty)$ satisfies (G1) and (G2). Then $g\in\mathcal{G}(T)$ if and only if $g$ satisfies the following condition:
\begin{itemize}
\item [\rm(G3)'] For any $w\in T,\lim_{n\to\infty}\sup_{v\in T_w\cap(T)_{[w]+n}}g(v)=0.$ 
\end{itemize}
\end{prop}

\begin{proof}
  (G3)' immediately shows (G3). On the other hand, if $g$ is a weight function on $T,$ then $g'$ defined by $g'(v)=g(v)/g(w)$ is a weight function on $\Mc{T}_w=(T_w,\mathcal{A}_\pi|_{T_w\times T_w},w)$. Therefore \cite[Proposition 5.2]{Kig18} shows (G3)'.
\end{proof}

\begin{rem}
  In~\cite{Kig18}, the definition of weight function on a rooted tree is given by the counterpart of (G3)' instead of (G3). 
\end{rem}

For proving  counterparts of statements in~\cite[Section 5]{Kig18}, we use the following Lemma \ref{ue1} derived from locally finiteness. Using this lemma, we can apply original proofs (by using some $T_v$). 
\begin{lem}\label{ue1}
 Let $g$ be a weight function on $T$. Then for any $s\in(0,\infty),w\in\Lambda_s^g,M\ge0,$ there exists $v\in T$ such that $\Lambda_{s,M}^g(w)\subset T_v.$
\end{lem}

\begin{proof}
Let $\tilde{\Lambda}^g_{s,1}(w):=\{\pi^{0\vee([v]-[w])}(v)|v\in\Lambda^g_{s,1}(w)\},$ then $\#\tilde{\Lambda}^g_{s,1}(w)<\infty$ because $K$ is locally finite. And inductively we define 
  \[ \tilde{\Lambda}^g_{s,n+1}(w):=\bigcup\{\tilde{\Lambda}^g_{g(v),1}(v')|\text{for some } v\in\tilde{\Lambda}^g_{s,n}(w),v'\in\Lambda^g_{g(v)}\text{ and }v\in T_{v'}\} \]
  then $\#\tilde{\Lambda}^g_{s,M}(w)<\infty$ for all $M$. \\
  Moreover, for any $v\in\Lambda^g_{s,n}(w)$ there exists $v'\in \tilde{\Lambda}^g_{s,n}(w)$ such that $v\in T_{v'}.$ Therefore $\Lambda^g_{s,M}(w)\subset\cup_{v\in\tilde{\Lambda}^g_{s,M}(w)}T_v\subset T_u$ for some $u$ because of (T1).
\end{proof}

\begin{cor}\label{ue2}
Let $g$ be a weight function on $T$, then for any $x\in X$ and $M\ge0,$ $\#(\Lambda^g_{s,M}(x))<\infty$ for any $s>0$ and $\min_{w\in\Lambda^g_{s,M}(x)}|w|\to\infty$ as $s\to 0.$
\end{cor}

\begin{proof}
$\Lambda^g_{s,M}(x)\subset \Lambda^g_{s,M+1}(w) \subset \Lambda^g_s\cap T_v$ for some $w,v\in T.$ Since $\Sigma^*_v$ is compact, $\Sigma^*_u$ is open for any $u\in T$ and $\Sigma^*_v=\sqcup_{u\in\Lambda^g_s\cap T_v}\Sigma^*_u,$ it follows that $\#(\Lambda^g_{s,M}(x))\le\#(\Lambda^g_s\cap T_v)<\infty.$ The rest of this corollary follows from  $\Lambda^g_{s,M}(x)\subset \Lambda^g_s\cap T_v$ and (G3)'.
\end{proof}

By Lemma \ref{ue1} and Corollary \ref{ue2}, by using $\Lambda^g_{s,M}(x)$ instead of $\Lambda^g_s,$ we can prove following two propositions in the same way as~\cite{Kig18}.

\begin{prop}[\kig{Proposition 5.7}]
Let $g:T\to(0,\infty)$ be a weight function.Then for any $s\in(0,\infty),x\in X,U^g_0(x,s)$ is a neighborhood of $x.$ Furthermore, $\{U^g_M(x,s)\}_{s\in(0,\infty)}$ is a fundamental system of neighborhood of $x$ for any $x\in X.$
\end{prop}

\begin{prop}[\kig{Proposition 5.9}]
For any $M\ge0$ and $x,y\in X,$ define $\delta^g_M(x,y)$ by
\[\delta^g_M(x,y)=\inf\{s|y\in U^g_M(x,s)\},\]
then the infimum attains for any $M\ge0$ and $x,y\in X.$ In particular, for any $M\ge0$ and $s\in(0,\infty),$
\[U^g_M(x,s)=\{y|\delta^g_M(x,y)\le s\}.\]
\end{prop}
\begin{defi}[exponential]
	Let $g$ be a weight function. $g$ is called exponential if there exist $c\in(0,1)$ and $m_0\ge0$ such that
		\begin{align}
		& cg(\pi(w))\le g(w) &&\text{for any }w\in T\text{ and } \label{supexp}\\
		& cg(w)\ge g(v) &&\text{for any }m\ge m_0,\ w\in T\text{ and }v\in S^m(w). \label{subexp}
		\end{align}
		$d\in\Mc{D}_\infty(X,\Mc{O})$ is called exponential if $g_d$ is exponential.
\end{defi}

\begin{defi}[gentle]
  Let $g$ be a weight function on $T.$ function $g:T\to(0,\infty)$ is called gentle with respect to $g$ if there exists $C>0$ such that $f(v)\le Cf(w)$ whenever $K_v\cap K_w\ne\emptyset$ and $K_v, K_w\in \Lambda^g_s$ for some $s>0.$ We write $f\gen g$ if $f$ is gentle with respect to $g.$ 
\end{defi}

\begin{thm}[\kig{Theorem 15.2.}]
Assume $\sup(\#S(w))<\infty$ and  $d$ is exponential, thick, uniformly finite and $d\gen 2^{-[w]}.$ Let $\alpha>0.$ Then there exist a metric $\rho\in \Mc{D}_\infty(X,\Mc{O})$ and a measure $\mu\in \Mc{P}_\infty(X,\Mc{O})$ such that $\rho\qs d$ and $\mu$ is $\alpha$-Ahlfors regular with respect to $\rho$ if and only if there exists an exponential weight function on $T$ such that
\begin{itemize}
\item $g\gen d.$
\item There exists $c>0$ and $M\ge 1$ such that 
\[ cD^g_M(x,y)\le D^g(x,y)\]
for any $x,y\in X,$ where
\begin{multline*}
D^g_m(x,y)=\inf\{\sum_{i=0}^n g(w(i)) \mid n\le m,\ x\in K(w(0)),\ y\in K(w(n))\\ \text{ and }K_{w(i-1)}\cap K_{w(i)}\ne\emptyset\text{ for any }i\in[1,n]_\Mb{Z}\}, 
\end{multline*}
and $D^g(x,y)=\inf_mD^g_m(x,y).$
\item There exists $c>0$ such that for any $w\in T$ and $n\ge0,$
\[c^{-1}g(w)^\alpha\le\sum_{v\in S^n(w)}g(v)^\alpha \le cg(w)^\alpha.\]
\end{itemize}
\end{thm}

\begin{proof}
The ``only if'' part follows from in the same way of~\cite[Theorem 15.2]{Kig18} by replacing $\rho(x,y)$ by $D^g(x,y).$ On the other hand, applying \cite[Theorem 15.2]{Kig18} to $T_w$ and $g(\cdot)/g(w)$ for $w\in (T)_0,$ we obtain a probability measure $\mu_w$ on $K_w$ which is $\alpha$-Ahlfors regular with respect to $(K_w,D^g).$ Define $\mu=\sum_{w\in (T)_0}g(w)\mu_w,$ then $\mu$ satisfies $\mu(w)\asymp g(w)^\alpha$ for any $w\in T,$ and in the same way of the original proof, we can see that $\mu$ is $\alpha$-Ahlfors regular with respect to $D^g.$
\end{proof}

\begin{thm}[\kig{Theorem 18.1}]\label{saigo}
Assume $d$ satisfies the basic framework. Let $M_1\in\Mb{N}$ and $k$ be a sufficiently large number. If there exists $\varphi:T^{(k)}=\cup_{n\in\Mb{Z}}(T)_{nk}\to(0,1]$ such that  
\[\sum_{i=1}^m\varphi(w(i))\ge 1\text{ and } \sum_{v\in S^k(w)}\varphi(v)^p<\frac{1}{2}(\sup_{w\in T}\#(\Gamma_1(w)))^{-2(M_1+M_*)},\]
for any $w\in T^{(k)}$ and any path $(w(1),...,w(m))$ in $J^h_{[w]+k}$ such that 
\begin{itemize}
\item $w(i)\in S^k(\Gamma_{M_1}(w))$ for any $i\in[1,m]_\Mb{Z},$
\item $\Gamma_{M_1}(w(1))\cap S^k(w)\ne\emptyset$ and $\Gamma_{M_1}(w(m))\setminus S^k(\Gamma_{M_1}(w))\ne\emptyset.$
\end{itemize}
Then there exists an exponential metric $\rho\in\Mc{D}_\infty(X,\Mc{O})$ such that $\rho\qs d$ and $\rho$ is $\alpha$-Ahlfors regular.
\end{thm}

This theorem needs the following lemma.

\begin{lem}[\kig{Lemma 18.3}]
Let $k$ be sufficiently large, $\kappa_0\in(0,1)$ and let $f:T^{(k)}\to[\kappa_0,1).$ Then there exists $g:T^{(k)}\to(0,\infty)$ such that
\begin{gather*}
g(u)\ge \kappa_0g(v)\quad\text{for any }(u,v)\in\Cup_{m\in\Mb{Z}}J^h_{mk}, \\ 
f(u)\le \frac{g(u)}{g(\pi^k(u))}\le\max_{v\in\Gamma_{M_*}(u)}f(v)\quad\text{for any }u\in T^{(k)},\text{ and} \\
\sum_{v\in S^k(w)}\left(\frac{g(v)}{g(\pi^k(v))}\right)^p
\le(\sup_{w\in T}\#(\Gamma_1(w)))^{2M_*}\sup_{w'\in \Gamma_{M_*}(w)}\left(\sum_{u\in S^k(w')}f(u)^p \right),
\end{gather*}
for any $p>0$ and $w\in T^{(k)}.$
\end{lem}

\begin{proof}
  Apply \cite[Lemma 18.3]{Kig18} to $T^{(k)}_{\pi^{nk}(\phi)}$ and get $g_n:T^{(k)}_{\pi^{nk}(\phi)}\to (0,1].$ We let $\tilde{g}_n:=\frac{g_n}{g_n(\phi)}$ then $\tilde{g}_n(\phi)=1$ for all $n$. Let $w\in T$, then we take $l,m$ such that $\pi^{lk}(\phi)=\pi^{mk}(w)$ and then for any $n\ge l$,
 \begin{align*}
  \Bigl( \prod_{i=0}^{l-1}\max_{v\in\Gamma_{M_*}(\pi^{ik}(\phi))}f(v)\Bigr)^{-1}\le g_n(\pi^{lk}(\phi))\le\Bigl(\prod_{i=0}^{l-1}f(\pi^{ik}(\phi))\Bigr)^{-1}, \\
  \Bigl( \prod_{j=0}^{m-1}\max_{v\in\Gamma_{M_*}(\pi^{jk}(w))}f(v)\Bigr)^{-1}\le g_n(\pi^{mk}(w))\le\Bigl(\prod_{j=0}^{m-1}f(\pi^{jk}(w))\Bigr)^{-1}.
 \end{align*}
 Therefore there exist $\underline{C}_w,\overline{C}_w$ with $0<\underline{C}_w<\overline{C}_w<\infty$ such that for any $n\ge l,g_n(w)\in[\underline{C}_w \overline{C}_w]$. Using diagonal sequence argument, we get a subsequence of $(g_n)$ and its limit $g$, which also satisfies all required conditions.
\end{proof}

\begin{proof}[Proof of Theorem \ref{saigo}]
 In the same way as proof of \cite[Theorem 18.1]{Kig18}, we obtain $\sigma:T^{(k)}\to [\kappa_0,1).$ We inductively define $\tilde{g}(\phi)=1$ and
  \[ \tilde{g}(w)=\prod_{i=1}^{m(w)}\sigma(\pi^{(i-1)k}(w))\Bigl(\prod_{j=1}^{l(w)}\sigma(\pi^{(j-1)k})(\phi)\Bigr)^{-1}, \]
where $l(w)=\min\{l\mid \pi^{lk}(\phi)=\pi^{mk}(w)\text{ for some }k\}$ and $m(w)=\min\{m\mid \pi^{lk}(\phi)=\pi^{mk}(w)\text{ for some }l\}.$ Note that then $\pi^{l(w)k}(\phi)=\pi^{m(w)k}(w)$ by Lemma \ref{hstruc}. Using this $\tilde{g}$, we can prove this Theorem in the same way as~\cite[Theorem 18.1]{Kig18}.
\end{proof}

We have modify all statements which is necessary for proving Theorem \ref{sigmain1}. Therefore it follows. 
\end{proof}

\section{Ahlfors regular conformal dimension of infinite graphs}
In this section, we give results about the ARC dimensions and the spectral dimensions of metrics on infinite graphs, which are the main results of this paper. To get these results, the cable systems of graphs play an important role. Technically, the main contribution of this paper is to show ARC dimension and a partition of a graph coincides with those of its cable system. Cable systems do NOT appear in statements of main results, but we use them and adapt the results of former sections and lead results for graphs.
Throughout this section, $G$ is a countable (infinite) set, $(G,E)$ is a connected, bounded degree graph and $\Mc{T}=(T,\pi,\phi)$ is a bi-infinite tree with a reference point. 

\subsection{Ahlfors regular conformal dimension of metrics on infinite graphs}
We first denote a class of metrics on $(G,E)$, which we consider in this paper.

\begin{defi}[Fitting metric]
We say a metric $d$ on $G$ fits to $(G,E)$ if it satisfies the following conditions.
\begin{itemize}
\item[(F1)] There exists $C>0$ such that $d(x,y)\le Cd(x,z)$ for any $x,y,z\in G$ with $x\sim y$ and $x\ne z.$
\item[(F2)] For any $\epsilon>0,$ there exists $r>0$, $n\ge1$ and $\{x_i\}_{i=0}^n\subseteq G$ such that
\begin{itemize}
\item $x_i\in B_d(x_0,r)$ for any $i\in[0,n-1]_\Mb{Z}$ and $x_n\not\in B_d(x_0,r),$ 
\item $d(x_i,x_{i-1})\le\epsilon r$ and $x_i\sim x_{i-1}$ for any $i\in[1,n]_\Mb{Z}.$
\end{itemize}
\end{itemize}
If the graph $(G,E)$ is fixed or clear, we simply say $d$ is fitting when $d$ fits to $(G,E).$
\end{defi}

The condition (F1) is a natural condition for metrics on $G.$ For example, the graph distance $l_E$ and ``gently weighted'' graph distances satisfy (F1). Moreover, the effective resistance of a weighted graph with controlled weight, which we will introduce later, also satisfies (F1). The condition (F2) is a little technical, which is needed to evaluate $\ard(G,d).$ 
\begin{ex}
Let $G=\Mb{Z}$ and $E=\{(n,m)\mid |n-m|=1\}.$ For any $k\ge1,$ let $x_i=i$ for $i\in[0,k]_{\Mb{Z}}$ then $l_E(i+1,i)\le\frac{k}{k}$ and $x_k\not\in B_{l_E}(x_0,k),$ so $l_E$ satisfies (F2). On the other hand, let $d(n,m):=|2^n-2^m|$ then for any simple path $(n,n+1,...,n+k),d(n+k-1,n+k)=2^{n+k-1}\ge\frac{d(n,n+k)}{2}.$ Hence in this case $d$ does not satisfy (F2) for $\epsilon\ge1/2.$
\end{ex}
Remark that for any $(G,E),$ $l_E$ satisfies (F2).

\begin{lem}\label{qsfit}
Let $d,\rho$ are metrics on $G$ and $d\qs \rho.$ If $d$ fits to $(G,E),$ then $\rho$ fits to $(G,E).$
\end{lem}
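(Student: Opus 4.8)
The claim is that fitting is a quasisymmetric invariant: if $d$ fits $(G,E)$ and $d\qs\rho$, then $\rho$ fits $(G,E)$. The plan is to verify (F1) and (F2) for $\rho$ separately, using the defining inequality $\rho(x,y)/\rho(x,z)\le\theta(d(x,y)/d(x,z))$ (and the fact that $d$ is also quasisymmetric to $\rho$, with some control function $\eta$, which follows from the standard symmetry of quasisymmetry for Kigami's definition — or can be derived directly since $\theta$ is a homeomorphism).

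First I would handle (F1), which is the easy half. Given $x\sim y$ and $x\ne z$, by (F1) for $d$ we have $d(x,y)\le Cd(x,z)$, hence $d(x,y)/d(x,z)\le C$, and therefore $\rho(x,y)/\rho(x,z)\le\theta(C)$. So (F1) holds for $\rho$ with constant $\theta(C)$. The only subtlety is that (F1) as stated quantifies over all such triples simultaneously, and $\theta$ is a fixed homeomorphism, so $\theta(C)$ is a legitimate uniform constant.

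The harder half is (F2), which I expect to be the main obstacle. Fix $\epsilon>0$; I must produce a $\rho$-ball and a path through it whose consecutive $\rho$-steps are at most $\epsilon$ times the $\rho$-radius. The idea is to apply (F2) for $d$ with a cleverly chosen $\epsilon'=\epsilon'(\epsilon)$ to obtain $r$, $n$, and $\{x_i\}_{i=0}^n$ with $x_i\in B_d(x_0,r)$ for $i<n$, $x_n\notin B_d(x_0,r)$, and $d(x_i,x_{i-1})\le\epsilon' r$, $x_i\sim x_{i-1}$. Using the quasisymmetry inequality with base point $x_0$: from $d(x_0,x_n)\ge r$ and $d(x_0,x_{i-1})< r$ (for $i\le n$, since $x_{i-1}\in B_d(x_0,r)$ when $i-1<n$, and when $i-1=n$... here one must be slightly careful — actually for the step count one wants a reference point inside the ball, so take the base point to be $x_0$ and compare to a point realizing the radius). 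Concretely, set $R:=\rho(x_0,x_n)$; I claim $B_\rho(x_0, R)$ together with the path $\{x_i\}$ works after possibly truncating at the first index leaving $B_\rho(x_0,R)$. For the ball membership, $d(x_0,x_i)<r\le d(x_0,x_n)$ for $i<n$ gives $\rho(x_0,x_i)/\rho(x_0,x_n)\le\theta\bigl(d(x_0,x_i)/d(x_0,x_n)\bigr)\le\theta(1)$ — which is not obviously $\le 1$, so instead one should let $i_0$ be the \emph{last} index with $x_{i_0}\in B_d(x_0,r)$ issue aside, take $R$ slightly larger, or simply re-truncate the path at the first $j$ with $\rho(x_0,x_j)\ge R':=\rho(x_0,x_n)$ and keep indices $0,\dots,j$; then $x_0,\dots,x_{j-1}\in B_\rho(x_0,R')$ and $x_j\notin B_\rho(x_0,R')$ by construction. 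For the step size: $\rho(x_{i-1},x_i)/\rho(x_{i-1},x_0)\le\theta\bigl(d(x_{i-1},x_i)/d(x_{i-1},x_0)\bigr)$; I need a lower bound on $d(x_{i-1},x_0)$. Here the reference-point orientation matters, so it is cleaner to use the \emph{reverse} quasisymmetry (control function $\eta$, i.e. $d$ is $\eta$-quasisymmetric to $\rho$): estimate $d(x_{i-1},x_i)\le\epsilon' r$ and, for a reference point $x_*$ realizing (approximately) the $d$-radius $r$ so that $d(x_{i-1},x_*)\asymp r$ — using the triangle inequality and $d(x_{i-1},x_0)<r$ — obtain $d(x_{i-1},x_i)/d(x_{i-1},x_*)\le C\epsilon'$, hence $\rho(x_{i-1},x_i)\le\theta(C\epsilon')\rho(x_{i-1},x_*)$, and finally bound $\rho(x_{i-1},x_*)$ by $\rho$-diameter considerations against $R'$. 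Choosing $\epsilon'$ small enough that all the accumulated $\theta$-factors multiply below $\epsilon$ then finishes (F2).

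The genuine difficulty, then, is bookkeeping the reference points: quasisymmetry controls $\rho(x,y)/\rho(x,z)$ only when the first argument is shared, so both directions of the equivalence and a careful choice of an auxiliary point near the boundary of the $d$-ball are needed to convert "small $d$-steps relative to the $d$-radius" into "small $\rho$-steps relative to the $\rho$-radius". I would organize this as: (i) fix the $\eta$ for $d$ $\eta$-quasisymmetric to $\rho$; (ii) pick a near-extremal point $x_*$ with $d(x_0,x_*)\ge r/2$ (exists since $x_n\notin B_d(x_0,r)$ — indeed $x_n$ itself works); (iii) apply (F2) for $d$ with $\epsilon'$ TBD; (iv) estimate steps and membership; (v) solve for $\epsilon'$. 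This is all elementary manipulation of the quasisymmetry inequalities, so I would not grind through the constants in the writeup beyond indicating the logic.
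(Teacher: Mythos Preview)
Your approach is essentially the same as the paper's: (F1) is handled identically, and for (F2) you apply (F2) for $d$ with a small parameter, set the $\rho$-radius to be $R'=\rho(x_0,x_n)$, truncate the path at the first $\rho$-exit, and control step sizes via quasisymmetry with base point $x_{i-1}$.

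There is one concrete gap in your bookkeeping. You assert that a single auxiliary point $x_*$ (e.g.\ $x_*=x_n$) satisfies $d(x_{i-1},x_*)\asymp r$ for all $i$, ``using the triangle inequality and $d(x_{i-1},x_0)<r$''. But the triangle inequality only yields an upper bound here; the lower bound $d(x_{i-1},x_n)\ge d(x_0,x_n)-d(x_0,x_{i-1})$ can be arbitrarily close to $0$ when $x_{i-1}$ is near the boundary of $B_d(x_0,r)$. So with a single $x_*$ the ratio $d(x_{i-1},x_i)/d(x_{i-1},x_*)$ is not controlled.

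The paper's fix is clean and avoids invoking the reverse control function $\eta$ altogether: since $d(x_0,x_n)\ge r$, the triangle inequality gives
\[
d(x_i,x_0)\vee d(x_i,x_n)\ \ge\ \tfrac{1}{2}\,d(x_0,x_n)\ \ge\ \tfrac{r}{2},
\]
and both quantities are $<3r$ (using $\delta<1/2$). Hence with base point $x_i$,
\[
\rho(x_i,x_{i+1})\le\theta(2\delta)\bigl(\rho(x_i,x_0)\vee\rho(x_i,x_n)\bigr),
\]
and then with base point $x_0$ or $x_n$ respectively, $\rho(x_i,x_0)\vee\rho(x_i,x_n)\le\theta(3)\rho(x_0,x_n)$. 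Choosing $\delta$ so that $\theta(2\delta)\theta(3)<\epsilon$ finishes. Your plan works once you replace the single $x_*$ by this two-point maximum.
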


%
For this lemma and later statements, now we recall basic properties of quasisymmetry. Let $(X,d)$ and $(X,\rho)$ be metric spaces. 
  \begin{enumerate}
\item Let $\theta:[0,\infty)\to[0,\infty)$ be a homeomorphism, then the following conditions are equal:
\begin{enumerate}
\item $d$ is $\theta$-quasisymmetric to $\rho.$
\item $\rho(x,z)\le\theta(t)\rho(x,z)$ whenever $d(x,y)\le td(x,z).$
\item $\rho(x,z)<\theta(t)\rho(x,z)$ whenever $d(x,y)< td(x,z).$
\end{enumerate}
\item If $d\qs\rho$ and $\diam(X,d)=\infty,$ then $\diam(X,\rho)=\infty.$
\item $\qs$ is an equivalence relation between metrics on $X.$
\item If $d\qs\rho,$ then both $(X,d)$ and $(X,\rho)$ induce the same topology (in other words, $\mathrm{id}_X$ is a homeomorphism between $(X,d)$ and $(X,\rho)$).    
  \end{enumerate}
(1) follows from monotonicity of $\theta.$ For (2)$\sim$(4), see  \cite[Section 10]{Hei} for example.

\begin{proof}[Proof of Lemma \ref{qsfit}]
Since $d$ satisfies (F1) and $d\qs \rho,$ $\rho(x,y)\le \theta(C)\rho(x,z)$ for any $x,y,z\in G$ with $x\sim y$ and $x\ne z,$ so $\rho$ satisfies (F1).
Next we show $\rho$ satisfies (F2). Fix any $\epsilon>0.$ Let $\delta<1/2$ such that $\theta(2\delta)\theta(3)<\epsilon.$ Since $d$ satisfies (F2), there exists  $\{x_i\}_{i=0}^n\subseteq G$ such that
\begin{itemize}
\item $x_i\in B_d(x_0,r)$ for any $i\in[0,n-1]_\Mb{Z}$ and $x_n\not\in B_d(x_0,r),$ 
\item $d(x_i,x_{i-1})\le\delta r$ and $x_i\sim x_{i-1}$ for any $i\in[1,n]_\Mb{Z}.$
\end{itemize}
Let $i\in[0,n-1]_\Mb{Z}.$ Since $d(x_0,x_n)\ge r$ and $x_i,x_n\in B_d(x_0,(1+\delta)r),$ 
\[\frac{r}{2}\le d(x_0,x_i)\vee d(x_i,x_n)<3r \]
and hence 
\[\rho(x_i,x_{i+1})\le\theta(2\delta)(\rho(x_0,x_i)\vee\rho(x_i,x_n))\le \theta(2\delta)\theta(3)\rho(x_0,x_n).\]
Let $m=\min\{i\mid x_i\not\in B_\rho(x_0,\rho(x_0,x_n))\}$, then $r=\rho(x_0,x_n)$ and $\{x_i\}_{i=0}^m$ satisfies the conditions of (F2) for $\epsilon.$
\end{proof}

Next we introduce partitions of infinite graphs.

\begin{defi}[Partition]\label{defPG}
A map $K:T\to\{A\subseteq G\mid \# (A)<\infty\}$ is called a partition of $(G,E)$ parametrized by $\Mc{T}$ if it satisfies following conditions.
\begin{itemize}\setlength{\leftskip}{8pt}
\item[(PG1)] $ \Cup_{v\in S(w)}K_v=K_w $ for any $w\in T.$
\item[(PG2)] For any $\omega\in\Sigma_*,$ there exist $n_0(\omega)\in\Mb{Z}$ and $x,y\in G$ such that $x\sim y$ and 
$K_{\omega_n}=\{x,y\}$ for any $n\ge n_0(\omega).$
\item[(PG3)] For any $(x,y)\in E,$ there exists $w\in T$ such that $K_w=\{x,y\}.$
\end{itemize}
\end{defi}

For the rest of this section, $K$ is a partition of $(G,E)$ parametrized by $\Mc{T}.$ 

\begin{lem}\label{lambdae}
Let $\Lambda_e=\{w\in T \mid\#(K_w)=2 \text{ and } \#(K_{\pi(w)})>2 \},$ then $\sqcup_{w\in\Lambda_e}\Sigma^*_w=\Sigma^*.$
\end{lem}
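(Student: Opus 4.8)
The plan is to show that the sets $\Sigma^*_w$ for $w \in \Lambda_e$ are pairwise disjoint and that every geodesic $\omega \in \Sigma^*$ lies in exactly one of them. For disjointness, suppose $w, v \in \Lambda_e$ with $\Sigma^*_w \cap \Sigma^*_v \ne \emptyset$; then some geodesic passes through both, so (after relabeling so that $[w] \le [v]$) we have $\pi^{[v]-[w]}(v) = w$. If $[w] < [v]$, then $v \in T_w \setminus \{w\}$ with $v$ below $w$, which by repeated application of (PG1) forces $K_v \subseteq K_w$; but $v \in \Lambda_e$ means $\#(K_{\pi(v)}) > 2$, and since $w = \pi^{[v]-[w]}(v)$ is an ancestor of (or equal to) $\pi(v)$ we get $\#(K_w) \ge \#(K_{\pi(v)}) > 2$, contradicting $w \in \Lambda_e$ (which requires $\#(K_w) = 2$) — wait, I need to be careful: $w$ could equal $\pi(v)$ or be strictly above it. In either case $K_w \supseteq K_{\pi(v)}$, so $\#(K_w) \ge \#(K_{\pi(v)}) > 2$, again contradicting $\#(K_w) = 2$. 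Hence $[w] = [v]$, and then $w = \pi^0(v) = v$. So the union is disjoint.

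Next I would show $\Sigma^* = \bigcup_{w \in \Lambda_e} \Sigma^*_w$, i.e. every $\omega \in \Sigma^*$ meets $\Lambda_e$. Fix $\omega = (\omega_n)_{n \in \Mb{Z}} \in \Sigma^*$. By (PG1) applied repeatedly, $K_{\omega_n} \supseteq K_{\omega_{n+1}}$ for all $n$, so $n \mapsto \#(K_{\omega_n})$ is nonincreasing in $n$; moreover each $\#(K_{\omega_n})$ is a positive integer and $\#(K_{\omega_n}) \ge 2$ since $K_{\omega_n}$ is a nonempty finite set that, by (PG2), eventually equals a two-point set $\{x,y\}$ with $x \sim y$ — and more directly, the partition takes values in finite subsets of $G$; I should check $\#(K_w) \ge 2$ for all $w$, which follows because $K_w \supseteq K_{\omega_m}$ for any geodesic $\omega$ through $w$ and large $m$, where $\#(K_{\omega_m}) = 2$ by (PG2). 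So the integer sequence $\#(K_{\omega_n})$ is nonincreasing, bounded below by $2$, hence eventually constant; and by (PG2) its eventual value is exactly $2$. Let $n_*$ be the smallest index with $\#(K_{\omega_{n_*}}) = 2$. Since $\#(K_{\omega_n})$ is nonincreasing and takes the value $2$ for $n \ge n_*$ but (by minimality, if $n_* > -\infty$ in the sense that there is a smaller index at all) $\#(K_{\omega_{n_* - 1}}) > 2$. The only subtlety is whether such a minimal $n_*$ exists: a priori $\#(K_{\omega_n})$ could equal $2$ for all $n \in \Mb{Z}$. I would rule this out using (G3)-type reasoning or, more precisely, the structure of the bi-infinite tree together with (PG3): actually the cleanest route is that $\omega_n = \pi(\omega_{n+1})$, and if $K_{\omega_n} = \{x,y\}$ with $x \sim y$ for all $n$, then all the $\omega_n$ would have to "be" the edge $\{x,y\}$ — but then $\pi^{-n}$ would not be injective on the relevant part of $T$, contradicting that along a geodesic the vertices are distinct; alternatively this is incompatible with (G3) for $g_d$ (diameters shrink to $0$) combined with the fitting/adaptedness hypotheses. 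Granting the minimal $n_*$ exists, set $w = \omega_{n_*}$: then $\#(K_w) = 2$ and $\#(K_{\pi(w)}) = \#(K_{\omega_{n_*-1}}) > 2$, so $w \in \Lambda_e$ and $\omega \in \Sigma^*_w$.

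The main obstacle I anticipate is precisely this last point: ruling out the degenerate geodesic along which $\#(K_{\omega_n}) = 2$ for every $n \in \Mb{Z}$. Intuitively, (PG2) only guarantees the two-point behavior "eventually going down," and one must invoke that going \emph{up} the tree the sets $K_w$ must genuinely grow — which should follow from the hypothesis that $K$ is a partition of the \emph{infinite} graph $(G,E)$ (so $\bigcup_{w \in (T)_0} K_w = G$ is infinite, or the analogue via (P3)/(PG3)) together with (T2) forcing distinct ancestors. I would therefore first establish the helper fact that for every $w \in T$ there exists $n \ge 0$ with $\#(K_{\pi^n(w)}) > 2$ — equivalently, no geodesic stays at the two-point level indefinitely in the upward direction — and then the argument above goes through. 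Once that helper fact and the monotonicity of $\#(K_{\omega_n})$ are in hand, both inclusions are routine, and the disjointness argument via Lemma \ref{hstruc} and (PG1) is straightforward.
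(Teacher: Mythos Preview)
Your approach is essentially the paper's: use (PG1) for monotonicity of $n\mapsto\#(K_{\omega_n})$ to get uniqueness, and (PG2) for coverage. The paper's two-line proof glosses over exactly the point you flag---ruling out a geodesic with $\#(K_{\omega_n})=2$ for \emph{every} $n\in\Mb{Z}$---so you are being more careful than the paper, not less.

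That said, two of your proposed resolutions for this point do not work. The argument ``all the $\omega_n$ would have to be the edge $\{x,y\}$, contradicting distinctness'' fails: the $\omega_n$ are distinct vertices of $T$ by (T2), but $K$ is merely a map into finite subsets of $G$ and nothing prevents distinct tree vertices from having the same image. The alternative via ``(G3) for $g_d$ combined with fitting/adaptedness'' is both unavailable (the lemma is stated before those notions and assumes none of them) and aimed the wrong way ((G3) concerns $n\to+\infty$, whereas you need growth as $n\to-\infty$).

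Your final route is the correct one, but the key tree property is (T1), not (T2). Here is the clean argument for your helper fact: suppose $K_{\omega_n}=\{x,y\}$ for all $n$. Since $G$ is infinite and connected, pick $z\in G\setminus\{x,y\}$ and an edge containing $z$; by (PG3) there is $v\in T$ with $z\in K_v$. By (T1) there exist $n,m\ge 0$ with $\pi^n(\omega_0)=\pi^m(v)$, i.e.\ $\omega_{-n}=\pi^m(v)$; then (PG1) gives $z\in K_v\subseteq K_{\pi^m(v)}=K_{\omega_{-n}}=\{x,y\}$, a contradiction. With this in hand, your proof and the paper's coincide.
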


\begin{proof}
$\cup_{w\in\Lambda_e}\Sigma^*_w=\Sigma^*$ directly follows from (PG2). By (PG1), $\#(K_{\omega_n})$ is non-increasing for any $\omega\in\Sigma^*,$ so there exists a unique $n\in\Mb{Z}$ such that $\omega_n\in\Lambda_e.$ This shows $\Sigma^*_w\cap\Sigma^*_v=\emptyset$ for any $w,v\in\Lambda_e$ with $w\ne v.$  
\end{proof}

\begin{defi}
	\hspace{0pt} \samepage
\begin{enumerate}
\item We denote $\omega_{n_0(\omega)}$ by $\omega_e.$ where $n_0(\omega)$ is in the condition (PG2). We also define $T_e$ by 
\[T_e=\{w\in T\mid T_w\cap\Lambda_e \ne\emptyset \}=\{w\in T\mid \#(K_{\pi(w)})>2\}, \]
and for $w\in (T\setminus T_e)\cup \Lambda_e$ we define $w_e\in \Lambda_e$ such that $w\in T_{w_e}.$   
\item $K$ is called minimal if $K_w\ne K_v$ for any $w,v\in \Lambda_e$ with $w\ne v.$ 
\end{enumerate}
\end{defi}

\begin{defi}[Discrete weight function]
Recall that $K$ is a partition of $(G,E).$ A function $g:T_e\to (0,\infty)$ is called a discrete weight function (with respect to $K$) if it satisfies following conditions.
\begin{itemize}
\setlength{\leftskip}{8pt}
  \item[(GG1)] For some $w\in T_e,\ \lim_{n\to\infty}g(\pi^n(w))=\infty.$
  \item[(GG2)] For any $w\in T_e,\ g(\pi(w))\ge g(w).$
\end{itemize} 
For $s>0,$ we define the scale $\Lambda^g_s$ associated $g$ by 
\[ \Lambda^g_s=\{w\in T_e\mid g(w)\le s< g(\pi(w))\}, \]
and define $E^g_s,\Lambda^g_{s,M}(w),\Lambda^g_{s,M}(x)$ in the same way as compact cases. We also define $U^g_M(x,s)$ for $M\ge0, x\in G$ and $s>0$ by
\[U^g_M(x,s)=\begin{cases}
\{x\}, &\text{ if }\Lambda^g_{s,M}(x)=\emptyset, \\
\Cup_{w\in\Lambda^g_{s,M}(x)}K_w, &\text{otherwise.}
\end{cases} 
\]
\end{defi}

\begin{rem}
Different from compact and $\sigma$-compact cases, $\Sigma^*$ is not necessarily equal to $\sqcup_{w\in \Lambda^g_s}\Sigma^*_w$ since they are restricted to $T_e.$ The difference also appears in the definition of $U^g_M(x,s).$
\end{rem}


\begin{lem}
 Define
 \[\Mc{D}_\infty(G)=\{d\mid d \text{ is a metric on }G\text{ such that }\diam(G,d)=\infty\}\]
 and let $d\in\Mc{D}_\infty(G).$ We also define $g_d:T_e\to(0,\infty)$ by \smash{$\displaystyle
 g_d(w)=\max_{x,w\in K_w}d(x,y),$} then $g_d$ is a discrete weight function. 
\end{lem}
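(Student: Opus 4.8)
The plan is to verify that $g_d$ is well defined with values in $(0,\infty)$ and then check the axioms (GG1) and (GG2). First I would record that $g_d$ is $(0,\infty)$-valued on $T_e$: each $K_w$ is finite and $d$ is a finite metric, so $\max_{x,y\in K_w}d(x,y)<\infty$; and if $w\in T_e$ then $T_w\cap\Lambda_e\ne\emptyset$, so there is $v\in T_w$ with $\#(K_v)=2$, and iterating (PG1) up the finite path from $v$ to $w$ gives $K_v\subseteq K_w$, hence $\#(K_w)\ge 2$ and $g_d(w)>0$ since $d$ is a metric.

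For (GG2), fix $w\in T_e$. I would first note $\pi(w)\in T_e$: since $\#(K_{\pi(w)})>2$ and $K_{\pi(w)}\subseteq K_{\pi^2(w)}$ by (PG1), we get $\#(K_{\pi^2(w)})>2$. Then $K_w\subseteq K_{\pi(w)}$ (again by (PG1)) yields $\diam(K_w,d)\le\diam(K_{\pi(w)},d)$, i.e. $g_d(w)\le g_d(\pi(w))$.

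The substantive point is (GG1). The key claim is that $\bigcup_{n\ge0}K_{\pi^n(w)}=G$ whenever $K_w\ne\emptyset$. To see this, fix $x\in K_w$ and an arbitrary $y\in G$; by connectedness of $(G,E)$ there is a path $x=z_0\sim z_1\sim\cdots\sim z_k=y$, and by (PG3) each edge $(z_{i-1},z_i)$ equals $K_{v_i}$ for some $v_i\in T$. Here I would use the elementary fact that any $u,u'\in T$ have a common ancestor cell containing both: by (T1) there are $a,b\ge0$ with $\pi^a(u)=\pi^b(u')$, whence $K_u\subseteq K_{\pi^a(u)}$ and $K_{u'}\subseteq K_{\pi^b(u')}$ by iterating (PG1). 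Applying this $k$ times, absorbing $w,v_1,\dots,v_k$ one at a time and noting that the running common ancestor stays on the ascending chain $w,\pi(w),\pi^2(w),\dots$, I obtain $N\ge0$ with $K_{v_i}\subseteq K_{\pi^N(w)}$ for all $i\le k$; in particular $y=z_k\in K_{v_k}\subseteq K_{\pi^N(w)}$, proving the claim. This also shows $T_e\ne\emptyset$: since $G$ is infinite and connected, $E\ne\emptyset$, so (PG3) provides $w_1$ with $\#(K_{w_1})=2$, and applying the claim to $w_1$ gives $\#(K_{\pi^n(w_1)})\to\infty$, so $\pi^n(w_1)\in T_e$ for large $n$. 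Finally, fixing any $w_0\in T_e$ (so $K_{w_0}\ne\emptyset$) and using $\diam(G,d)=\infty$: for each $R>0$ pick $x',y'\in G$ with $d(x',y')>R$; by the claim and the monotonicity $K_{\pi^n(w_0)}\subseteq K_{\pi^{n+1}(w_0)}$ there is $N_0$ with $x',y'\in K_{\pi^n(w_0)}$ for all $n\ge N_0$, so $g_d(\pi^n(w_0))\ge d(x',y')>R$ for $n\ge N_0$. Since $n\mapsto g_d(\pi^n(w_0))$ is non-decreasing by (GG2) and unbounded, it tends to $\infty$, which is (GG1) with $w=w_0$.

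The only delicate point is the chaining of (T1) inside the union claim: one must track that each successive common ancestor of $w$ and $K_{v_i}$ lies on the single ascending chain $w,\pi(w),\pi^2(w),\dots$, so that the final cell is genuinely $K_{\pi^N(w)}$ for some $N$. Everything else follows directly from (PG1)--(PG3) and the metric axioms.
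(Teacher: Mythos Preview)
Your proof is correct. The paper states this lemma without proof, presumably regarding it as the direct graph analogue of Proposition~\ref{propgdgmu}(1); your argument follows exactly that template, with the extra (and necessary) step of deducing $\bigcup_{n\ge0}K_{\pi^n(w)}=G$ from connectedness and (PG3) in place of the global covering axiom (P3) available in the $\sigma$-compact setting.

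One minor simplification: the full path-chaining argument in your union claim is more than you need. Since $(G,E)$ is connected and infinite, every $y\in G$ has a neighbor $z$, and (PG3) gives some $v\in T$ with $K_v=\{y,z\}\ni y$. A single application of (T1) to $w$ and $v$ then yields $a\ge0$ with $\pi^a(w)=\pi^b(v)$, hence $y\in K_v\subseteq K_{\pi^b(v)}=K_{\pi^a(w)}$. This is precisely the one-step argument the paper uses in the proof of Proposition~\ref{propgdgmu}(1); your inductive absorption of $v_1,\dots,v_k$ along a path works too, but is unnecessary once you observe that (PG3) already places every vertex, not just every edge, inside some $K_v$.
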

We denote $g_d$ by $d$ if no confusion may occur.

\begin{defi}
Let $g$ be a discrete weight function.
\begin{itemize}
\item $g$ is called uniformly finite if \eqref{uf} holds.
\item $g$ is called thick (with respect to $K$) if there exists $\alpha>0$ such that for any $w\in T_e,$ $\Lambda^g_{\alpha g(\pi(w)),1}(x)\subset T_w$ for some $x\in K_w.$
\end{itemize}
$d\in\Mc{D}_\infty(G)$ is called 
uniformly finite and thick if $g_d$ is 
uniformly finite and thick, respectively.
\end{defi}

We define adapted in the same way as compact cases, and

\begin{defi}
Let $(G,d)$ be a metric space and assume $K$ is minimal.  We say $(G,d)$ satisfies the basic framework (with respect to $K$) if the following conditions hold.
\begin{itemize}
\item $\sup_{w\in T_e\setminus \Lambda_e}\#(S(w))<\infty.$
\item $d$ is uniformly finite, thick, adapted.
\item There exists $r\in(0,1)$ such that $d\asymp r^{[w]}$ for any $\in T_e.$  
\end{itemize}
\end{defi}

The difference between these definition and those of compact cases are $T_e$s in the notation.


\begin{defi}\label{Tr}
 Let $r\in(0,1).$ For $w\in\Lambda_e$ and $n\ge0$, let
\[\Mf{S}_{w,m}=\{\{(x,k),(y,2^{n(m)}-1-k)\}_{w,m}\mid k\in[0,2^{n(m)}-1]_\Mb{Z}\}\]
where $x,y\in G$, $n(m)\in\Mb{N}$ such that $K_w=\{x,y\}$ and $2^{-n(m)}\le r^m<2^{1-n(m)}.$
Then we define $T_r=T_e\sqcup(\Cup_{w\in\Lambda_e}\bigsqcup_{m\ge 1} \Mf{S}_{w,m})$ and $\pi':T_r\to T_r$ by 
\begin{align*}
&\pi'(w)\\
=&\begin{cases}
  \pi(w), & \text{if }w\in T_e,\\
 v, & \text{if }w\in \Mf{S}_{v,1},\\
\{(x,l),(y,2^{n(m-1)}-1-l)\}_{v,n-1},  & \text{if }w=\{(x,k),(y,2^{n(m)}-1-k)\}_{v,m}\\
&\text{and }[\frac{k}{2^{n(m)}},\frac{k+1}{2^{(n(m))}}]\subseteq[\frac{l}{2^{n(m-1)}},\frac{l+1}{2^{n(m-1)}}].
\end{cases}
\end{align*}
Moreover, we define $K'$ by $K'_w=K_w$ (if $w\in T_e$), $K_v$ (if $w\in \sqcup_{m\ge 1}\Mf{S}_{v,m}$).
\end{defi}

\begin{lem}
 $(T_r,\pi')$ is a by-infinite tree and $K'$ is a partition of $(G,E)$. Moreover, if we write $[w]'$ for the height of $(T_r,\pi',\phi')$ for $\phi'\in T_r$ and $\Lambda'_e$ for  $K'$ version of $\Lambda_e$, then $\Lambda_e=\Lambda'_e$ and we can take $\phi'\in T_r$ such that $[w]=[w]'.$
\end{lem}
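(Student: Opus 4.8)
The statement has three parts: (i) $(T_r,\pi')$ is a bi-infinite tree; (ii) $K'$ is a partition of $(G,E)$; (iii) the comparison of heights, $\Lambda_e=\Lambda'_e$ and the existence of a reference point $\phi'$ with $[w]=[w]'$ on $T_e$. The plan is to verify these in order, since (i) and (ii) are largely bookkeeping while (iii) is where the real content lies.

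First I would check that $(T_r,\pi')$ satisfies (T1) and (T2). For (T1): any two vertices of $T_e$ already satisfy it inside $(T,\pi)$, and $\pi'$ agrees with $\pi$ on $T_e$; a vertex in some $\Mf{S}_{v,m}$ maps under finitely many iterates of $\pi'$ into $v\in\Lambda_e\subseteq T_e$ (walk down through $\Mf{S}_{v,m-1},\dots,\Mf{S}_{v,1}$ then to $v$), so any pair of vertices has common $\pi'$-ancestors. For (T2): on $T_e$ this is inherited from $(T,\pi)$; for a vertex $w=\{(x,k),(y,2^{n(m)}-1-k)\}_{v,m}$ one observes that applying $\pi'$ strictly decreases $m$ (until reaching $v$, after which we are in $T_e$ where (T2) holds), so no nontrivial cycle occurs. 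Here I would use that the intervals $[k/2^{n(m)},(k+1)/2^{n(m)}]$ are genuinely nested as $m$ decreases and that $n(m)$ is non-increasing in $m$ (from $2^{-n(m)}\le r^m<2^{1-n(m)}$ and $r<1$), so $\pi'$ is well-defined and single-valued. Local finiteness of $(T_r,\pi')$ follows because each $S'(w)$ for $w\in\Mf{S}_{v,m}$ has at most two elements and $S'(w)$ for $w\in T_e\setminus\Lambda_e$ is unchanged, while $S'(w)$ for $w\in\Lambda_e$ is now $\Mf{S}_{w,1}$, a finite set.

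Second, for (ii) I would verify (PG1), (PG2), (PG3) for $K'$. (PG3) is immediate since $K'$ restricted to $T_e$ equals $K$, which satisfies (PG3) for $(G,E)$. For (PG1) on $T_e\setminus\Lambda_e$ nothing changed; for $w\in\Lambda_e$ we need $\bigcup_{u\in\Mf{S}_{w,1}}K'_u=K'_w$, which holds because $K'_u=K_w$ for every $u\in\Mf{S}_{w,1}$ and $\Mf{S}_{w,1}\ne\emptyset$; and for $w\in\Mf{S}_{v,m}$ similarly $K'_u=K_v=K'_w$ for all $u\in S'(w)$. For (PG2): a geodesic $\omega\in\Sigma^{*\prime}$ through $(T_r,\pi')$ either lies eventually in $T_e$, where it must enter some $\Lambda_e$-vertex and the original (PG2) applies, or it descends through the $\Mf{S}_{v,m}$ for a fixed $v$, and then $K'_{\omega_n}=K_v=\{x,y\}$ for all large $n$ with $x\sim y$, exactly as required.

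Third — the part I expect to be the main obstacle — is the height comparison. The key point is that $\pi'=\pi$ on $T_e$, so the restriction of the height function of $(T_r,\pi',\phi')$ to $T_e$ depends only on $\pi|_{T_e}$ and on where $\phi'$ sits relative to $T_e$. I would argue: fix the original reference point $\phi$ with its height $[\cdot]$ on $(T,\pi)$; since $\phi\in T_e$ (as $K_\phi$ is typically large — or more carefully, replace $\phi$ by $\pi^N(\phi)$ for $N$ large if needed, which shifts heights by a constant), take $\phi'=\phi$ viewed inside $T_r$. Then for $w\in T_e$, a $\pi'$-path from $w$ to a common ancestor with $\phi'$ stays inside $T_e$ (ancestors of $T_e$-vertices under $\pi'$ are $\pi$-ancestors, hence in $T$, and in fact in $T_e$ because $\#(K_{\pi(\cdot)})$ only increases going up), and on $T_e$ the maps $\pi'$ and $\pi$ coincide, so $[w]'=[w]$ by Lemma \ref{hstruc} applied in both trees. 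Finally $\Lambda_e=\Lambda'_e$: the set $\Lambda'_e$ is defined via $\#(K'_u)=2$ and $\#(K'_{\pi'(u)})>2$; for $u\in T_e$ this reads $\#(K_u)=2$ and $\#(K_{\pi(u)})>2$, i.e. $u\in\Lambda_e$; and no $u\in\Mf{S}_{v,m}$ can lie in $\Lambda'_e$ because $\#(K'_{\pi'(u)})=\#(K_v)=2$ is not $>2$. Hence $\Lambda'_e\subseteq\Lambda_e$, and conversely every $w\in\Lambda_e\subseteq T_e$ still has $\#(K'_w)=2$ and $\#(K'_{\pi'(w)})=\#(K_{\pi(w)})>2$, so $\Lambda_e\subseteq\Lambda'_e$. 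The only subtlety to handle carefully is ensuring the chosen $\phi'$ actually lies in $T_e$ (or adjusting by a bounded shift), and checking that $\pi'$-ancestry does not leak out of $T_e$ for $T_e$-vertices — both follow from monotonicity of $w\mapsto\#(K_w)$ along $\pi$.
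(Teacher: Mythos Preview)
Your proposal is correct and follows essentially the same route as the paper's (very terse) proof: verify (T1)--(T2) by reducing to $T_e$ via the fact that every new vertex reaches $\Lambda_e$ after finitely many $\pi'$-steps, check the partition axioms directly from $K'_u=K_v$ on the new part, and obtain the height agreement from $\pi'|_{T_e}=\pi|_{T_e}$ together with Lemma~\ref{hstruc}. One small slip: you write that $n(m)$ is \emph{non-increasing} in $m$, but from $2^{-n(m)}\le r^m$ and $r<1$ one gets that $n(m)$ is \emph{non-decreasing}; this is in fact what you need for the dyadic intervals at level $m$ to be nested inside those at level $m-1$, so your conclusion that $\pi'$ is well-defined stands, only the stated direction of monotonicity should be flipped.
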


\begin{proof}
 For any $w\in T_r\setminus T_e$, by definition of $\pi',$ we have  
\begin{itemize}
  \item $\pi^n(w)\in\Lambda_e$ for some $n>0.$
  \item For any $n>0$, $\pi^n(w)\ne w.$
\end{itemize}
These and  conditions (T1), (T2) of $\pi$ show conditions (T1), (T2) of $\pi'$, so $(T_r,\pi')$ is a bi-infinite tree. Fix $w\in T_e$ and let $\phi'\in S^{[w]}$ then $[w]=[w]'$ and by Lemma 3.1, $[v]=[v]'$ for any $v\in T_e$ because $\pi=\pi'$ on $T_e.$ The rest of this lemma is clear by definition.
\end{proof}

Remark that discrete weight functions, and their properties are given only on $T_e$, so they do not change if we replace $(T,\pi,\phi)$ by $(T_r,\pi',\phi').$ For the rest of this section, assume $(T,\pi,\phi)=(T_r,\pi',\phi').$

\begin{defi}
Now we formally define $\ul{K}$ on $T_r$ by
\[\ul{K}_w=\begin{cases}
K_w,  &\text{ if }w\in T_e,\\
\{x\}, &\text{ if }w=\{(x,0),(y,2^{n(m)}-1)\}_{m,v}\text{ for some }m,v,\\
\emptyset, &\text{ otherwise.}
\end{cases}  \]
We also define $J^h_m\subset (T)_m\times (T)_m$ by
\begin{align*}
J^h_m=J^h_m(K)=\{(w,u)\mid w,u\in(T)_m,\ \ul{K}_w\cap \ul{K}_v\ne\emptyset \text{ or there exists }v\in T_e,\\i\ge 0\text{ such that }w=\{(x,i),(y,2^{n(m-[v])}-1-i)\}_{w,m-[v]},\\u=\{(x,i-1),(y,2^{n(m-[v])}-i)\}_{w,m-[v]}\}.
\end{align*}
\end{defi}
$J^h_m$ can be constructed as follows: define 
\[\hat{J}^h_m=\{(w,v)\mid w,v\in((T)_m\cap T_e)\cup\Lambda_{e,m}, K_w\cap K_v\ne\emptyset \},\] where $\Lambda_{e,m}=\{w_e\mid w\in(T)_m\setminus T_e\}$, and replace each $w\in \Lambda_{e,m}$ by a $2^{n(m-[v])}$-path. We will justify this idea later in the cable system. We define $\ul{I}_\Mc{E}, \ol{I}_\Mc{E}, \ul{R}_p, \ol{R}_p,$ etc. in the same way as Definition \ref{ciro} and \ref{ciro2}.\\
The following is the one of two main theorems of this paper.
\begin{thm}\label{Smain1}
Let $K$ be a minimal partition of $(G,E).$ If $d\in\Mc{D}_\infty(G)$ satisfies the the basic framework in Definition \ref{defBF1} and fits to $(G,E)$, then for any $N,N_1,N_2$ \color{black}such that $N_2-N_1$ is sufficiently large. \color{black}
\begin{enumerate}
\item\[\ul{I}_\Mc{E}(N_1,N_2,N)=\ol{I}_\Mc{E}(N_1,N_2,N)=\ard(G,d).\]
\item If $\ul{R}_p(N_1,N_2,N)<1,$ then
\[\ard(G,d)\le \ul{d}^S_p(N_1,N_2,N) \le \ol{d}^S_p(N_1,N_2,N) <p.\]
\item If $\ol{R}_p(N_1,N_2,N)\ge 1,$ then
\[\ard(G,d)\ge \ol{d}^S_p(N_1,N_2,N) \ge \ul{d}^S_p(N_1,N_2,N) \ge p.\]
\end{enumerate}
\end{thm}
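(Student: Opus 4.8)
\textbf{Proof strategy for Theorem \ref{Smain1}.} The plan is to reduce the statement about the infinite graph $(G,E)$ with the partition $K$ to the $\sigma$-compact statement already proved in Theorem \ref{sigmain1}, by passing through the \emph{cable system} of $(G,E)$. Concretely, I would attach to each edge $(x,y)\in E$ a copy of the unit interval, obtain a $\sigma$-compact metric space $(X_G,\Mc{O})$ without isolated points (the cable system), and show that $\ard(G,d)=\ard(X_G,\hat d)$ for a natural extension $\hat d$ of $d$ to $X_G$. The refined tree $T_r$ and the partition $\ul K$ (together with the dyadic subdivision $\Mf{S}_{w,m}$ of each ``edge vertex'' $w\in\Lambda_e$ into a $2^{n(m)}$-path) are tailor-made so that $\ul K$, read on $X_G$, is an honest partition of $(X_G,\Mc{O})$ in the sense of Section 3: the subdivided edges become the dyadic partitions of the attached intervals, and $J^h_m(K)$ as defined above is exactly the horizontal adjacency graph $J^h_m$ of this partition of $X_G$. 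Then the quantities $\ol{\Mc{E}}_p,\ul{\Mc{E}}_p,\ol I_\Mc{E},\ul I_\Mc{E},\ol R_p,\ul R_p,\ol d^S_p,\ul d^S_p$ computed combinatorially on $T_r$ coincide with the corresponding quantities for the partition of $X_G$, so the three assertions follow from the three parts of Theorem \ref{sigmain1}.

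The key steps, in order, would be: (i) construct the cable system $X_G$ and the metric $\hat d$ extending $d$ (with each cable of edge $(x,y)$ given length $\asymp d(x,y)$, parametrized so that the dyadic subcables realize the diameters $r^m$ built into Definition \ref{Tr}); (ii) verify that $\hat d\in\Mc{D}_\infty(X_G,\Mc{O})$ and that $(X_G,\Mc{O})$ is perfect and $\sigma$-compact, using local finiteness of $(G,E)$ and (F1) to get local finiteness of the partition $\ul K$; (iii) check that $\hat d$ satisfies the basic framework of Section 3 \emph{with respect to $\ul K$} --- here is where (F1), (F2) and the hypothesis that $d$ satisfies the basic framework w.r.t.\ $K$ enter, since thickness, uniform finiteness, and adaptedness must be transported across the cabling, and (F2) is precisely what guarantees adaptedness survives the extension (a ball in $(G,d)$ and a ball in $(X_G,\hat d)$ of comparable radius see comparable pieces of the partition); (iv) prove $\ard(G,d)=\ard(X_G,\hat d)$ --- the inclusion $G\hookrightarrow X_G$ is a quasisymmetric embedding and, conversely, any Ahlfors-regular metric quasisymmetric to $\hat d$ restricts to one on $G$, while an Ahlfors-regular metric quasisymmetric to $d$ on $G$ extends over the cables (interpolating along each cable) without changing the exponent, using again (F1)/(F2) and gentleness of the weight $g_d$; (v) identify the combinatorial quantities on $T_r$ with those of the $X_G$-partition and invoke Theorem \ref{sigmain1}.

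I expect step (iv), the equality $\ard(G,d)=\ard(X_G,\hat d)$, to be the main obstacle. The ``$\ge$'' direction is the easy one (restrict an optimal metric from $X_G$ to $G$). The delicate direction is extending an Ahlfors $\alpha$-regular metric $\rho$ on $G$, quasisymmetric to $d$, to a metric $\hat\rho$ on $X_G$ that is still $\alpha$-Ahlfors regular and still quasisymmetric to $\hat d$: one must glue a Euclidean-type metric on each cable to the endpoint data coming from $\rho$, control the measure of balls that straddle several cables near a vertex of high degree (bounded degree is used here), and verify the quasisymmetry inequality for triples with points in the interior of cables. This is exactly the point where the dyadic bookkeeping in Definitions \ref{Tr} and the definition of $\ul K$/$J^h_m(K)$ pays off: subdividing each cable into $2^{n(m)}$ dyadic pieces at scale $r^m$ makes the cable's contribution to every relevant $p$-energy computable and comparable to that of a line segment, so that adding cables changes neither the critical exponent $\inf\{p\mid\ol{\Mc{E}}_p=0\}$ nor the spectral-dimension ratios $\ol R_p,\ul R_p$. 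Once step (iv) is in hand, parts (1), (2), (3) are immediate consequences of parts (1), (2), (3) of Theorem \ref{sigmain1} applied to $(X_G,\hat d,\ul K)$, with ``$N_2-N_1$ sufficiently large'' absorbing the constant $M_*$ of the cable-system partition.
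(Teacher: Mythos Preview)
Your strategy is essentially the same as the paper's: pass to the cable system $\Mf{C}_G$, equip it with the induced cable metric $d_{\Mf{C}}$, show that the partition $\Mc{K}$ built from $K$ and the dyadic subdivisions $\Mf{S}_{w,m}$ is a genuine partition of $\Mf{C}_G$ with $J^h_M(K)=J^h_M(\Mc{K})$, verify the basic framework for $d_{\Mf{C}}$, prove $\ard(G,d)=\ard(\Mf{C}_G,d_{\Mf{C}})$ (Proposition~\ref{sore}), and then invoke Theorem~\ref{sigmain1}. One correction: you attribute (F2) to making adaptedness survive the cabling, but in the paper adaptedness of $d_{\Mf{C}}$ (Lemma~\ref{junbi}) uses only (F1) and the basic framework of $d$; the role of (F2) is instead to prove $\ard(G,d)\ge 1$ at the start of Proposition~\ref{sore}, which is then used in the hard direction of step (iv) to guarantee $\alpha\ge 1$ so that the extended metric $\rho_{\Mf{C},1/\alpha}$ is well-defined and Ahlfors regular.
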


In order to prove the theorem, we introduce the notion of cable system.
\begin{defi}[Cable system]
 Let $\simeq$ be the  minimal equivalence relation on $G\times G\times[0,1]$ which satisfies
\begin{itemize}
\item $((x,y),0)\simeq ((x,z),0)$ for any $x,y,z\in G,$ 
\item $((x,y),t)\simeq ((y,x),1-t)$ for any $(x,y)$ and $t\in[0,1].$
\end{itemize}    
Then we define the cable system $\Mf{C}_G$ of $(G,E)$ by $\Mf{C}_G:=(E\times[0,1])/\simeq.$\\
For $(x,y)\in E,$ we also define $\iota(x,y)=(x,y)\times [0,1]/\simeq.$ Moreover, for any $x\in G,\ \tau(x)=((x,y),0)/\simeq,$ where $(x,y)\in E,$ is well-defined because of the definition of $\simeq.$ We equate $\tau(x)$ with $x$ and regard $G$ as a subset of $\Mf{C}_G.$ 
\end{defi}

\begin{defi}[Induced cable metric]
Let $\alpha\in(0,1]$ and $d\in\Mc{D}_\infty(G).$ We define an induced cable metric $d_{c,\alpha}:\Mf{C}_G\times \Mf{C}_G\to[0,\infty)$ by
\begin{align*}
&d_{\Mf{C},\alpha}(x,y)\\
=&\begin{cases}
|t-s|^\alpha d(x_0,x_1),& \begin{aligned}& \text{ if }(x\simeq ((x_0,x_1),t)\text{ and }y\simeq \\ &((x_0,x_1),s)\text{ for some }(x_0,x_1)\in E,
\end{aligned}
\\[10pt]
\hspace{-1pt}\begin{aligned}
\min \{t^\alpha d(x_1,x_0)\hspace{-1pt}+\hspace{-1pt}d(x_0,y_0)\hspace{-1pt}+\hspace{-1pt}s^\alpha d(y_0,y_1)& \\
\mid x\simeq ((x_0,x_1),t)\text{ and }y\simeq((y_0,y_1),s)\}&
\end{aligned}, \hspace{-5.8pt}
&\text{ otherwise.}\end{cases}
\end{align*}
\end{defi}
 We write $d_{\Mf{C}}$ instead of $d_{\Mf{C},1}.$ Note that 
\begin{lem}
 $(\Mf{C}_G,d_{\Mf{C},\alpha})$ is a metric space.  
\end{lem}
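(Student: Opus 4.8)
The plan is to verify the three metric-space axioms for $d_{\Mf{C},\alpha}$ directly from its definition. Non-negativity is immediate since $|t-s|^\alpha$, $t^\alpha$, $s^\alpha$ and $d$ are all non-negative. For symmetry, observe that each defining expression is symmetric in the roles of $x$ and $y$ once we account for the relation $\simeq$: in the first case $|t-s|^\alpha=|s-t|^\alpha$, and in the second case the quantity $t^\alpha d(x_1,x_0)+d(x_0,y_0)+s^\alpha d(y_0,y_1)$ is manifestly symmetric under swapping $(x_0,x_1,t)\leftrightarrow(y_0,y_1,s)$, and the infimum is taken over representatives of both points, so it does not depend on which point we call $x$. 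One subtlety worth a sentence: the definition distinguishes "$x$ and $y$ lie on a common cable" from "otherwise", so I would first check that when both cases are applicable (i.e.\ the points lie on a common cable \emph{and} on possibly different representatives) the two formulas agree — indeed, taking $x_0=y_0$ in the second formula and using $t^\alpha d(x_1,x_0)+s^\alpha d(x_0,x_1)\ge |t-s|^\alpha d(x_0,x_1)$ together with the path through an endpoint shows the "otherwise" formula never undercuts, while choosing $x_1=y_1$ and routing within the single cable recovers $|t-s|^\alpha d(x_0,x_1)$; hence the minimum in the general formula coincides with the first-case value on a common cable.

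The substantive point is positivity: $d_{\Mf{C},\alpha}(x,y)=0$ should force $x=y$ in $\Mf{C}_G$. If $x,y$ lie on a common cable $\iota(x_0,x_1)$ with $d(x_0,x_1)>0$ (which holds since $d$ is a metric on $G$ and $x_0\neq x_1$ for an edge), then $|t-s|^\alpha d(x_0,x_1)=0$ forces $t=s$, so $x\simeq y$. If they do not lie on a common cable, then any representative sum $t^\alpha d(x_1,x_0)+d(x_0,y_0)+s^\alpha d(y_0,y_1)$ with $x_0,y_0\in G$: the infimum being $0$ forces, for a minimizing or near-minimizing choice, $d(x_0,y_0)=0$ hence $x_0=y_0$, and then $t^\alpha d(x_1,x_0)=0=s^\alpha d(y_0,y_1)$; since $d(x_1,x_0),d(y_1,y_0)>0$ this gives $t=s=0$, so $x\simeq\tau(x_0)=\tau(y_0)\simeq y$. (One should note the infimum over the finitely-or-countably many representatives is attained or can be taken along a sequence; since each cable has at most the two representatives coming from $((x_0,x_1),t)\simeq((x_1,x_0),1-t)$, the "infimum" is really a minimum over a finite set once $x,y$ are fixed.)

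The main obstacle is the triangle inequality $d_{\Mf{C},\alpha}(x,z)\le d_{\Mf{C},\alpha}(x,y)+d_{\Mf{C},\alpha}(y,z)$, which requires a short case analysis on where $x,y,z$ sit. The key estimate is the one-dimensional fact that $s\mapsto s^\alpha$ is subadditive for $\alpha\in(0,1]$, i.e.\ $(a+b)^\alpha\le a^\alpha+b^\alpha$, and more precisely that for $0\le s\le t\le 1$ one has $(t-s)^\alpha\le t^\alpha$ and, when one must pass through an endpoint, $t^\alpha\le s^\alpha+(t-s)^\alpha$. The strategy is: interpret $d_{\Mf{C},\alpha}$ as a shortest-path (infimal) metric, where a path from $x$ to $z$ is a concatenation of "cable-segments" each contributing a term of the form (fraction of that cable)$^\alpha\cdot d(\text{its endpoints})$ plus "vertex-hops" contributing $d$ between the $G$-vertices visited; then the value $d_{\Mf{C},\alpha}(x,z)$ is the infimum of such path-costs, and the triangle inequality is automatic because concatenating a near-optimal $x$-to-$y$ path with a near-optimal $y$-to-$z$ path yields an admissible $x$-to-$z$ path. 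I would therefore first prove the lemma that the "otherwise" formula equals this path-infimum (handling the common-cable case separately to confirm the two formulas match, as above), and then deduce symmetry and the triangle inequality as formal consequences of the path-infimum description; the subadditivity of $t\mapsto t^\alpha$ is exactly what makes the common-cable formula consistent with the path description when a competing path leaves and re-enters a cable. I expect the bookkeeping of representatives under $\simeq$ (especially at vertices, where many cables meet) to be the only place demanding care.
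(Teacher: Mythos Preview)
Your approach is essentially the same as the paper's: both reduce the triangle inequality to a path-infimum reformulation of $d_{\Mf{C},\alpha}$, using the subadditivity $(a+b)^\alpha\le a^\alpha+b^\alpha$ together with the triangle inequality for $d$ to show that the two-segment formula in the definition already equals the infimum over arbitrary finite chains. The paper writes this chain-infimum explicitly (allowing formal segments $((x_i,y_i),[s_i,t_i])$ with $(x_i,y_i)$ not necessarily in $E$, which absorbs your ``vertex-hops'' as full traversals of cost $d(x_i,y_i)$), and then observes that the triangle inequality is automatic; your consistency check between the common-cable and general formulas is a detail the paper leaves implicit.
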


\begin{proof}
If $d_{\Mf{C},\alpha}(x,y)=0$, then $x\simeq ((x_0,x_1),t)\simeq y$ for some $x_0,x_1,t.$ $d_{\Mf{C}}(x,y)=d_{\Mf{C}}(y,x)$ is obvious. Moreover, since $t^\alpha+s^\alpha\ge(t+s)^\alpha$ and the triangle inequality for $d$ holds, we can write 
\begin{multline*}
d_{\Mf{C},\alpha}(x,y)=\min \{\sum_{i=1}^n|s_i-t_i|^\alpha d(x_i,y_i)\mid n\ge 1,\ x_i,y_i\in G\\
\text{ such that } x\simeq ((x_1,y_1),s_1),\ y\simeq ((x_n,y_n),t_n)\\
\text{ and } ((x_i,y_i),t_i)\simeq((x_{i+1},y_{i+1})s_{i+1})\text{ for any }i\in[1,n-1]_\Mb{Z}\}.
\end{multline*} 
(Remark that $(x_i,y_i)$ is not necessarily in $E$.) Hence the triangle inequality for $d_{\Mf{C},\alpha}$ also holds.
\end{proof}
We also note that $d(x,y)=d_{\Mf{C},\alpha}(x,y)$ for any $x,y\in G$ since $d$ satisfies the triangle inequality.\\
For the notation of a partition,  we write $K_w=\{w^+,w^-\}$ for each $w\in\Lambda_e.$

\begin{defi}
Define $\Mc{K}:T=T_r\to \Mf{C}_G$ by
\[\Mc{K}_w=\begin{cases}
  \Cup_{\omega\in\Sigma^*_w}\iota(\omega_e^+,\omega_e^-),& \text{ if }w\in T_e,\\
 (x,y)\times[\frac{k}{2^{n(m)}},\frac{k+1}{2^{n(m)}}],& \text{ if }w=\{(x,k),(y,2^{n(m)}-1-k)\}_{w,m}.
\end{cases}\]
\end{defi}

\begin{lem}\label{lemKMcK}
\begin{enumerate}
\item $\Mc{K}$ is a partition of $(\Mf{C}_G.d_{\Mf{C},\alpha}).$
\item for any $w,v\in T_e,$ $\Mc{K}_w\cap\Mc{K}_v\ne\emptyset$ if and only if $K_v\cap K_w\ne\emptyset.$ 
\end{enumerate}
\end{lem}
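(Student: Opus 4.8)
The plan is to read off the partition axioms (and local finiteness) directly from the construction of $T_r$ and $\Mc{K}$, after noting that $\Mf{C}_G$ is a countable union of compact cables with no isolated points, so that the $\sigma$-compact framework of Section 3 applies, and that the topology of $d_{\Mf{C},\alpha}$ is independent of $\alpha\in(0,1]$ (so neither assertion depends on $\alpha$). The key preliminary I would establish first is the identity $\Mc{K}_w=\bigcup_{a\in\Lambda_e\cap T_w}\iota(a^+,a^-)$ for every $w\in T_e$: indeed $\{\omega_e\mid\omega\in\Sigma^*_w\}=\Lambda_e\cap T_w$, because every geodesic through $w$ passes through exactly one vertex of $\Lambda_e$ (Lemma~\ref{lambdae}) and that vertex is precisely $\omega_e$. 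Since $K$ is a \emph{minimal} partition of $(G,E)$, distinct vertices of $\Lambda_e$ carry distinct edges of $G$, so $\Lambda_e\cap T_w$ injects into the finite edge set of the subgraph induced on the finite set $K_w$; hence $\Mc{K}_w$ is a finite union of cables, i.e.\ a nonempty compact subset of $\Mf{C}_G$ that is not a single point. For $w\in\Mf{S}_{v,m}$, $\Mc{K}_w$ is by definition a nondegenerate closed subinterval of the cable $\iota(v^+,v^-)$, hence also lies in $\Mc{C}(\Mf{C}_G,\Mc{O})$.

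Next I would verify (P1) along the structure of $\pi'$. If $w\in T_e$ with $\#(K_w)>2$, then the $\pi'$-children of $w$ coincide with its $\pi$-children, all in $T_e$, and $\Sigma^*_w=\sqcup_{v\in S(w)}\Sigma^*_v$, so $\bigcup_{v\in S(w)}\Mc{K}_v=\Mc{K}_w$ follows at once from the displayed identity. If $w\in\Lambda_e$, then $\Mc{K}_w=\iota(w^+,w^-)$ and the children are $\Mf{S}_{w,1}$, whose $\Mc{K}$-images are the $2^{n(1)}$ consecutive closed sub-cables tiling $\iota(w^+,w^-)$. If $w=\{(x,l),(y,2^{n(m)}-1-l)\}_{v,m}\in\Mf{S}_{v,m}$, the $\pi'$-children are exactly the level-$(m+1)$ sub-cables whose parameter interval is contained in $[l2^{-n(m)},(l+1)2^{-n(m)}]$, and these tile $\Mc{K}_w$ (using $n(m+1)\ge n(m)$). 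For (P2): a geodesic $\omega\in\Sigma^*$ of $T_r$ runs through a unique $\Lambda_e$-vertex and is, past that vertex, a descending chain in the $\Mf{S}$-subdivisions; since $n(m)\to\infty$, the tail $(\Mc{K}_{\omega_n})_n$ is a nested sequence of closed cable-intervals with lengths tending to $0$, so $\bigcap_n\Mc{K}_{\omega_n}$ is a single point, and the finitely many earlier $\Mc{K}_{\omega_m}$ are supersets. For (P3): given $p\in\iota(x,y)$, pick by (PG3) and minimality the vertex $a\in\Lambda_e$ with $K_a=\{x,y\}$; then $p\in\Mc{K}_{\pi^j(a)}$ for every ancestor $\pi^j(a)\in T_e$ of $a$, and $p\in\Mc{K}_u$ for a suitable cable-subdivision $u\in\Mf{S}_{a,m}$ of each depth $m$, and exactly one such vertex has height $0$ in $T_r$. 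Local finiteness of $\Mc{K}$ is then inherited from bounded degree of $(G,E)$ together with finiteness of each $K_w$.

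Part (2) is immediate from the displayed identity together with the elementary fact that, in $\Mf{C}_G$, two cables meet iff they share an endpoint vertex of $G$, i.e.\ $\iota(a^+,a^-)\cap\iota(b^+,b^-)\ne\emptyset$ iff $K_a\cap K_b\ne\emptyset$. Hence for $w,v\in T_e$, $\Mc{K}_w\cap\Mc{K}_v\ne\emptyset$ holds iff there are $a\in\Lambda_e\cap T_w$ and $b\in\Lambda_e\cap T_v$ with $K_a\cap K_b\ne\emptyset$. On the other hand $K_w=\bigcup_{a\in\Lambda_e\cap T_w}K_a$ — a point of $K_w$ propagates down a geodesic to the edge at which that geodesic stabilizes — and likewise for $K_v$, so $K_w\cap K_v\ne\emptyset$ is equivalent to the very same condition; the two statements therefore coincide.

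I expect the only real fuss to be (P3) together with the height bookkeeping for $T_r$: with the reference point $\phi'$ normalised so that $[\,\cdot\,]'=[\,\cdot\,]$ on $T_e$, one must check carefully that every point of $\Mf{C}_G$ is covered by a height-$0$ vertex, splitting into the case where the relevant $\Lambda_e$-vertex lies at or above height $0$ (use an ancestor inside $T_e$, whose $\Mc{K}$ still contains the point) and the case where it lies strictly below (use the appropriate cable-subdivision in some $\Mf{S}_{\,\cdot\,,m}$). Everything else reduces to the two routine case analyses above.
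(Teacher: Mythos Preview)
Your approach is essentially the same as the paper's: both arguments hinge on the identity $K_w=\bigcup_{\omega\in\Sigma^*_w}K_{\omega_e}$ (equivalently $K_w=\bigcup_{a\in\Lambda_e\cap T_w}K_a$) for $w\in T_e$, and then read off (P1)--(P3) and part (2) from the construction; your case analysis for (P1) and (P3) is in fact more explicit than the paper's. One small point: you invoke minimality of $K$ to conclude $\Lambda_e\cap T_w$ is finite, but minimality is not among the lemma's hypotheses and is not needed here---what matters for compactness of $\Mc{K}_w$ is only that the \emph{image} set $\{K_a: a\in\Lambda_e\cap T_w\}$ is finite, which follows since each $K_a\subseteq K_w$ and $K_w$ is finite (this is the paper's argument); also, local finiteness of $\Mc{K}$ is not part of this lemma but is established later in Lemma~\ref{junbi}.
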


\begin{proof}
\begin{enumerate}
\item By the definition of $d_{\Mf{C},\alpha},$ $\Mf{C}_G$ has no isolated point, and $((x,y)\times[0,1],d_{\Mf{C},\alpha})$ is isomorphic to $([0,1],|\bullet|_\Mb{R})$ and compact. Since $\{\omega_e^+,\omega_e^-\}\subseteq K_w$ for any $\omega\in\Sigma^*_w$ and $K_w$ is a finite set, $\Mc{K}_w$ is a compact set for any $w.$
\begin{itemize}
\item[(P1)] By the definition of $\Mc{K},$ (P1) follows from (PG1).
\item[(P2)] For any $\omega\in\Sigma^*$, $\cap_{n\ge 0}K_{\omega_n}=(\omega_e^+,\omega_e^-)\times \cap [\frac{k_n}{2^{m(n)}},\frac{k_n+1}{2^{(m(n))}}]$ for some $\{k_n\}_{n\ge0}$ and is a single point. 
\item[(P3)] By definition of $\Sigma^*,$ $\Sigma^*=\sqcup_{w\in(T)_0}\Sigma^*_w.$ Let $w\not\in T_e,$ then for any $\omega\in\Sigma^*_w,$ $\omega_e=w_e$ and 
\begin{align*}
&\cup_{v\in(T)_m\text{ and }v_e=w_e}\Mc{K}_v\\
=&\cup_{k\in [0,2^{m([w]-[w_e])}-1]_\Mb{Z}}(w_e^+,w_e^-)\times [\frac{k}{2^{m([w]-[w_e])}},\frac{k+1}{2^{m([w]-[w_e])}}]\\
=&(w_e^+,w_e^-)\times[0,1].
\end{align*}
Therefore 
\[\Cup_{w\in(T)_0}\Mc{K}_w=\hspace{-3pt}\Cup_{w\in(T)_0}\Cup_{\omega\in\Sigma^*_w}(\omega_e^+,\omega_e^-)\times[0,1]=\hspace{-3pt}\Cup_{\omega\in\Sigma^*}(\omega_e^+,\omega_e^-)\times[0,1]=\Mf{C}_G.\]
(The last equation follows from (PG2) and (PG3)).
\end{itemize}
\item Let $w\in T_e,$ then (PG1) ensures $K_w\supseteq \Cup_{\omega\in\Sigma^*_w}K_{\omega_e}.$ On the other hand, since (PG1) holds, for any $v\in T$ and $x\in K_v$ we can take $v'\in S(v)$ such that $x\in K_{v'}.$ Inductively use this fact, we can get $\omega\in\Sigma^*_w$ such that $x\in \omega_n$ for any $n,$ therefore $K_w=\Cup_{\omega\in\Sigma^*_w}K_{\omega_e}.$ Hence by definition of $\Mc{K},$ we get the desired result.
\end{enumerate}
\end{proof}

The following proposition plays the key role in the proof of the main theorem.
\begin{prop}\label{sore}
Let $d\in\Mc{D}_\infty(G)$ and fitting to $(G,E).$ Then 
\[1\le\ard(G,d)=\ard(\Mf{C}_G,d_{\Mf{C}}).\]
\end{prop}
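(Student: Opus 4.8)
The plan is to prove the two assertions of Proposition~\ref{sore} separately: first the lower bound $\ard(G,d)\ge 1$, and then the equality $\ard(G,d)=\ard(\Mf{C}_G,d_{\Mf{C}})$. For the lower bound, I would argue that the fitting condition (F2) forces $(G,d)$ to contain, at arbitrarily large scales, chains of adjacent points of small relative step; combined with connectedness, this prevents $(G,d)$ from being quasisymmetric to any Ahlfors regular metric of dimension $<1$. Concretely, if $\rho\qs d$ with $\mu$ an $\alpha$-Ahlfors regular measure for $\rho$, one uses (F2) (which survives quasisymmetry by Lemma~\ref{qsfit}) to produce, for a ball $B_\rho(x_0,R)$, a $\rho$-chain from $x_0$ to a point outside the ball with steps of size $\le\epsilon R$; summing the measures of disjoint sub-balls of radius comparable to the step sizes along this chain and comparing with $\mu(B_\rho(x_0,2R))$ yields $n\cdot(\epsilon R)^\alpha\lesssim R^\alpha$ while the chain must have $n\gtrsim \epsilon^{-1}$ members, which is only consistent as $\epsilon\to 0$ if $\alpha\ge 1$. (This is the standard "a quasisymmetric image of a space containing rectifiable-like curves has dimension $\ge 1$" argument, adapted to the discrete setting via (F2).)

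For the equality, the key point is that $(\Mf{C}_G,d_{\Mf{C}})$ is obtained from $(G,d)$ by gluing to each edge $(x,y)\in E$ an isometric copy of an interval of length $d(x,y)$, and that $G$ sits inside $\Mf{C}_G$ as a "net": every point of $\Mf{C}_G$ lies within $d_{\Mf{C}}$-distance $\tfrac12 d(x,y)$ (hence, using (F1), within a bounded multiple of its own "local scale") of a vertex. I would first show $\ard(\Mf{C}_G,d_{\Mf{C}})\le\ard(G,d)$: given a metric $\rho\qs d$ on $G$ that is $\alpha$-Ahlfors regular with measure $\mu$, extend $\rho$ to $\Mf{C}_G$ by the same cable construction $\rho\mapsto\rho_{\Mf{C}}$ and extend $\mu$ by adding, on each glued interval $\iota(x,y)$, a multiple of Lebesgue-type length measure scaled so the interval has total mass $\rho(x,y)^\alpha$ (using $\alpha\ge 1$ from the first part so that the added mass on a cable is controlled by $\rho(x,y)^\alpha$). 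One checks $\rho_{\Mf{C}}\qs d_{\Mf{C}}$ — this reduces to the quasisymmetry inequality for triples, handled edge-by-edge using that $d_{\Mf{C}}$ and $\rho_{\Mf{C}}$ agree with $d,\rho$ on vertices and interpolate affinely (with exponent $1$) inside cables — and that the extended measure is $\alpha$-Ahlfors regular for $\rho_{\Mf{C}}$, by splitting a ball into its vertex part and its cable-interior part. For the reverse inequality $\ard(G,d)\le\ard(\Mf{C}_G,d_{\Mf{C}})$: given $\sigma\qs d_{\Mf{C}}$ on $\Mf{C}_G$ with an $\alpha$-Ahlfors regular measure $\nu$, restrict $\sigma$ and $\nu$ to $G\subseteq\Mf{C}_G$. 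Quasisymmetry is inherited by subspaces, and $\sigma|_G\qs d_{\Mf{C}}|_G=d$. For Ahlfors regularity of $\nu|_G$ one must control $\nu(B_{d_{\Mf{C}}}(x,r)\cap G)$ from below and $\nu(B_{d_{\Mf{C}}}(x,r))$ from above in terms of $r^\alpha$; the lower bound uses that a $d_{\Mf{C}}$-ball around $x\in G$ always contains a fixed fraction of a cable attached at $x$, which has $\nu$-mass comparable to $r^\alpha$ by Ahlfors regularity of $\nu$ itself, and that $G$ is "relatively dense" so that $\nu|_G$ of that region is bounded below — this last step is where (F1) and the bounded-degree hypothesis enter, guaranteeing that cable lengths near $x$ are comparable to the relevant scale.

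The main obstacle I expect is the verification that the cable-extended metric $\rho_{\Mf{C}}$ is quasisymmetric to $d_{\Mf{C}}$, and symmetrically that restriction to $G$ preserves Ahlfors regularity with the \emph{same} exponent and uniform constants. The subtlety is that $\rho$ on $G$ need not be bi-Lipschitz to $d$, so ratios $\rho(x,y)/\rho(x,z)$ for $x,y,z$ living in different cables are controlled only through $\theta$; one has to chase the quasisymmetry function through the $\min$ in the definition of $d_{\Mf{C}}$ (geodesic-type concatenations across several edges), using the uniform comparability $d\asymp r^{[w]}$ from the basic framework together with (F1) to keep the relevant edge-lengths comparable along any near-geodesic. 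I would organise this by reducing, via the explicit formula for $d_{\Mf{C},\alpha}$ proved in the lemma before Definition~\ref{Tr}, to the case where all three points lie in at most two adjacent cables, where an elementary (if somewhat tedious) estimate suffices; the general case then follows by a concatenation argument. A secondary but routine point is checking that the constants produced are genuinely independent of the edge — this is exactly where $d$ being fitting and of bounded degree, rather than arbitrary, is used.
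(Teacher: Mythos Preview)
Your outline for $\ard(G,d)\ge 1$ is essentially the paper's argument and is fine. The direction $\ard(G,d)\le\ard(\Mf{C}_G,d_{\Mf{C}})$ is also close to the paper's, though you should not literally restrict the measure $\nu$ to $G$: since $(\Mf{C}_G,\sigma)$ has no isolated points, any Ahlfors regular $\nu$ has $\nu(\{x\})=0$ for every $x$, so $\nu|_G\equiv 0$. The paper instead \emph{pushes} the cable mass onto the vertices via $\Mf{m}(\{x\})=\sum_{y\sim x}\nu(\iota(x,y))$, and then checks Ahlfors regularity of this new atomic measure with respect to $\sigma|_G$. That repair is routine once noticed.

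The real gap is in the other direction. You propose to extend $\rho$ to the cable system via $\rho_{\Mf{C}}=\rho_{\Mf{C},1}$, i.e.\ affine interpolation along each cable, and then to put a scaled Lebesgue measure on each cable with total mass $\rho(x,y)^\alpha$. This cannot yield an $\alpha$-Ahlfors regular measure for any $\alpha>1$: with $\rho_{\Mf{C},1}$, each cable $\iota(x,y)$ is an isometric copy of the interval $[0,\rho(x,y)]$, so a ball of radius $s\ll\rho(x,y)$ centred in its interior has $\rho_{\Mf{C},1}$-diameter $\asymp s$ but your measure $\asymp s\cdot\rho(x,y)^{\alpha-1}$, which is not $\asymp s^\alpha$. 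No choice of absolutely continuous measure on a rectifiable arc can be $\alpha$-Ahlfors regular for $\alpha>1$. The paper's key device is to snowflake the cable parametrisation and use $\rho_{\Mf{C},1/\alpha}$ instead: then $\rho_{\Mf{C},1/\alpha}((x,y,0),(x,y,t))=t^{1/\alpha}\rho(x,y)$, a small ball in a cable has parameter length $\asymp (s/\rho(x,y))^\alpha$, and the natural measure (Lebesgue in $t$ with density $\asymp\rho(x,y)^\alpha$) is $\alpha$-Ahlfors regular. The quasisymmetry $d_{\Mf{C}}\qs\rho_{\Mf{C},1/\alpha}$ then has to be proved; the paper does this by an explicit case analysis (seven cases, depending on whether $x,y,z$ lie in a common cable and on the structure of the minimising concatenation in the definition of $d_{\Mf{C}}$ and of $\rho_{\Mf{C},1/\alpha}$), using only (F1). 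Note also that Proposition~\ref{sore} does not assume the basic framework, so your appeal to $d\asymp r^{[w]}$ is not available here; the argument must (and in the paper does) go through with only the fitting condition and bounded degree.
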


\begin{proof}
We first show $\ard(G,d)\ge1.$ Let $\rho$ be a metric on $(G,E)$ and $\Mf{m}$ be a measure on $G$ such that $d\qs\rho$ and $\rho$ is $\alpha$-Ahlfors regular with respect to $\Mf{m}$ for some $\alpha>0.$ Then by Lemma \ref{qsfit}, $\rho$ fits to $(G,E)$ and so for any $n$, there exist $m\ge1$ and $\{x_i\}_{i=0}^m\subseteq G$ such that
\begin{itemize}
\item $x_i\in B_d(x_0,r)$ for any $i\in[0,m-1]_\Mb{Z}$ and $x_m\not\in B_d(x_0,r),$ 
\item $d(x_i,x_{i-1})\le r/2n$ and $x_i\sim x_{i-1}$ for any $i\in[1,m]_\Mb{Z}.$
\end{itemize}
Let $y_0=x_0$ and inductively choose $y_k$ by $y_k=x_{i_k}$ where 
\[i_k=\min\{i\mid x_i\not\in \Cup_{j=0}^{k-1}B_\rho(y_j,\frac{r}{2n})\}.\]
Note that 
\[\min_{0\le j\le k-1}\rho(y_j,y_k)<\frac{2r}{2n} \text{ because }\rho(y_k,x_{i_k-1})<\frac{r}{2n}\]
and so
\[\rho(y_0,y_k)<\frac{k}{n}r\text{ and }\Cup_{j=0}^k B_\rho(y_j,\frac{r}{2n})\subseteq B_\rho(y_0,r)\text { for any }k<n.\]
Therefore we can take $\{y_k\}_{k=0}^n.$ By definition, $\sqcup_{k=0}^{n-1}B_\rho(y_k,r/4n)\subseteq B_\rho(y_0,r),$ so 
\begin{align*}
Cr^\alpha &\ge \Mf{m}(B_\rho(x,r))\ge \sum_{k=0}^{n-1}\Mf{m}(B_\rho(y_k,r/4n)) \\
&\ge \sum_{k=0}^{n-1} C^{-1}((r/4n)\vee r_{y_k})^\alpha \\
&\ge 4^{-\alpha}C^{-1}n^{1-\alpha}r^\alpha,
\end{align*}
for some $C>0.$ Taking sufficiently large $n,$ this inequality contradicts if $\alpha<1.$ Therefore $\ard(G,d)\ge 1.$

Next we show $\ard(G,E)\le \ard(\Mf{C}_G,d_{\Mf{C}}).$ Let $\rho_X$ be a metric on $\Mf{C}_G$ and $\Mf{m_C}$ be a Borel measure on $(\Mf{C}_G,d_{\Mf{C}})$ such that $d_c\qs\rho_X$ and $\rho_X$ is $\alpha$-Ahlfors regular with respect to $\Mf{m_C}.$\\ Define $\rho$ on $G$ by $\rho(x,y)=\rho_X(x,y)$ for any $x,y\in G$ and it is clear that $d\qs \rho$ because $d(x,y)=d_{\Mf{C}}(x,y).$ \\
Define $\Mf{m}$ by 
\[\Mf{m}(\{x\})=\sum_{y\sim x}\Mf{m_C}(\iota(x,y)).\]
Let $r>0$, $x,y,z\in G$ such that $y\in B_\rho(x,r)\setminus\{x\}$ and $y\sim z.$ Then by (F1), there exists $C>1,$
$Cd_{\Mf{C}}(x,y)\ge d_{\Mf{C}}(y,z)\ge d_{\Mf{C}}(y,z')$ and hence $\theta(C)\rho_X(x,y)\ge \rho_X(y,z')$ for any $z'\in \iota(y,z).$ Moreover, if $r>r_{x,\rho}$ then there exists $o_0\in B_\rho(x,r)$ and $d(x,o')\le d(x,o)\le Cd(x,o_0)$ for any $o'$ such that $o'\in\iota (x,o)$ for some $o\in G$ with $o\sim x.$ Hence $o'\in B_{\rho_X}(x,\theta(C)r)$ and so for some $C'>0,$
\begin{align*}
\Mf{m}(B_\rho(x,r)) &=\sum_{y\in B_\rho(x,r)}\sum_{z\sim y} \Mf{m_C} (\iota(y,z))\\
 &\le 2\Mf{m_C} (B_{\rho_X}(x,(1+\theta(C))r)\le 2C'(1+\theta(C))^\alpha r^\alpha. 
 \end{align*}
(Remark that $\Mf{m_C}\{x\}=0$ for any $x\in G$ because of Ahlfors regularity.) On the other hand, for any $y'\in B_{\rho_X}(x,r)$ there exists $y,z\in G$ such that $y'\in\iota(y,z)$ and $d_{\Mf{C}}(x,y)\le d_{\Mf{C}}(x,y')$, and hence $\rho_X(x,y)\le\theta(1)\rho_X(x,y').$ Therefore
\[\Mf{m}(B_\rho(x,r))= \sum_{y\in B_\rho(x,r)}\sum_{z\sim y} \Mf{m_C}(\iota(y,z))\ge B_{\rho_X}(x,\theta(1)r)\ge C'^{-1}\theta(1)^\alpha r^\alpha.\]
We have shown $\rho$ is $\alpha$-Ahlfors regular with respect to $\Mf{m}$, so $\ard(G,E)\le \ard(\Mf{C}_G,d_{\Mf{C}}).$\\
Next we show $\ard(G,E)\ge \ard(\Mf{C}_G,d_{\Mf{C}}).$ Let $\rho$ be a metric on $G$ and $\Mf{m}$ be a measure on $G$ such that $d\qs\rho$ and $\rho$ is $\alpha$-Ahlfors regular with respect to $\Mf{m}.$ Note that $\alpha\ge 1$ because $\ard(G,d)\ge 1.$\\

\textbf{Claim.} $d_{\Mf{C}}\qs\rho_{\Mf{C},1/\alpha}$
\begin{proof}[Proof of claim]
Let $t>0$ and $x,y,z\in \Mf{C}_G$ with $d_{\Mf{C}}(x,y)\le td_{\Mf{C}}(x,z).$ We first consider the conditions
\begin{align}
\text{There exist }e\in E \text{ and }s,t\in(0,1)\text{ such that }
&x=(e,s)\text{ and }y=(e,t), \tag{$y*$}\\
\text{There exist }e\in E \text{ and }s,t\in(0,1)\text{ such that } &x=(e,s)\text{ and }z=(e,t). \tag{$z*$}
\end{align}
Let (Ch) be the condition such that neither ($y*$) nor ($z*$) hold.
Under the condition (Ch), we consider the following condition ($d_*$):
\begin{multline}
\text{ There exist }x_0,x_1,y_0\in G\text{ such that }x_0\sim x_1,\ x_1\sim y_0, \\ 
x\in\iota(x_0,x_1),\ y\in\iota(y_0,x_1)\text{ and } d_{\Mf{C}}(x,y)=d_{\Mf{C}}(x,x_1)+d_{\Mf{C}}(x_1,y). \tag{$d_*$}
\end{multline}
If ($d_*$) does not hold, then
\begin{multline*}
\text{ there exist }x_0,x_1,y_0,y_1\in G\text{ such that }x_1\ne y_1,\ x_0\sim x_1,\ y_1\sim y_0, \\ 
x\in\iota(x_0,x_1),\ y\in\iota(y_0,x_1)\text{ and } d_{\Mf{C}}(x,y)=d_{\Mf{C}}(x,x_1)+d_{\Mf{C}}(x_1,y_1)+d_{\Mf{C}}(y_1,y).
\end{multline*}
We also consider the following similar condition ($\rho_*$):
\begin{multline}
\text{ There exist }x_2,x_3,z_0\in G\text{ such that }x_2\sim x_3,\ x_3\sim z_0, \\ 
x\in\iota(x_2,x_3),\ z\in\iota(x_3,z_0)\text{ and } \rho_{\Mf{C},1/\alpha}(x,z)=\rho_{\Mf{C},1/\alpha}(x,x_3)+\rho_{\Mf{C},1/\alpha}(x_3,z). \tag{$\rho_*$}
\end{multline}
Otherwise,
\begin{multline*}
\hspace{-2.2pt}\text{there exist }x_2,x_3,z_0,z_1\in G\text{ such that }x_3\ne z_1, x_2\sim x_3, z_1\sim z_0, x\in\iota(x_2,x_3),\\
 z\in\iota(x_3,z_0)\text{ and } \rho_{\Mf{C},1/\alpha}(x,z)=\rho_{\Mf{C},1/\alpha}(x,x_3)+\rho_{\Mf{C},1/\alpha}(x_3,z_1)+\rho_{\Mf{C},1/\alpha}(z_1,z). 
\end{multline*}
We first prove the claim with (Ch) in four cases with these conditions. Recall that $d$ fits to $(G,E),$ so there exists $C>0$ such that for any $o,p,q\in G$ with $o\sim p\sim q,$ $d(o,p)\le Cd(p,q)$ and hence $\rho(o,p)\le \theta(C)\rho(p,q).$\\
\textbf{Case 1. Both ($d_*$) and ($\rho_*$) hold.} Let 
\[u=\ccdd{x_2}{x}{x_3}{z}{z_0},\]
then
\begin{align*}
(d_{\Mf{C}}(x,x_1)\vee d_{\Mf{C}}(x_1,y_1)) &\le d_{\Mf{C}}(x,y) \le td_{\Mf{C}}(x,z) \\
 & \le 2t(d_{\Mf{C}}(x,x_3)\vee d_{\Mf{C}}(x_3,z)) \le 2tu(d(x_2,x_3)\vee d(x_3,z_0)).
\end{align*}
Since $x\in\iota(x_0,x_1)\cup\iota(x_2,x_3),$ $l_E(x_1,x_3)\le 2$ and so 
\[(d(x_2,x_3)\vee d(x_3,z_0))\le C^2 (d(x_0,x_1)\wedge d(x_1,y_0)).\]
Hence
\[\ccdd{x_0}{x}{x_1}{y}{y_0}\le 2C^2tu,\]
therefore
\begin{align*}
\rho_{\Mf{C},1/\alpha} (x,y) &\le \rho_{\Mf{C},1/\alpha}(x,x_1)+\rho_{\Mf{C},1/\alpha}(x_1,y)\\
& \le 2(2C^2tu)^{1/\alpha}(\theta(C))^2(\rho(x_2,x_3)\wedge\rho(x_3,z_0))\\
& \le 2(2C^2tu)^{1/\alpha}(\theta(C))^2u^{-1/\alpha}(\rho_{\Mf{C},1/\alpha}(x,x_3)\vee\rho_{\Mf{C},1/\alpha}(x_3,z))\\
& \le \eta_1(t) \rho_{\Mf{C},1/\alpha}(x,z),
\end{align*}
where $\eta_1(t)=2^{(\alpha+1)/\alpha}C^{2/\alpha}(\theta(C))^2 t^{1/\alpha}.$\\
\textbf{Case 2. Neither ($d_*$) nor ($\rho_*$) hold.} Then,
\begin{align*}
d(x_1,y_1)&\le d_{\Mf{C}}(x,y)\le td_{\Mf{C}}(x,z) \\
&\le (1+2C)td(x_3,z_1)\le f_1(C)td(x_1,z_1),
\end{align*}
where $f_1(C)=(1+C+C^2)(1+2C).$ Therefore
\begin{align*}
\rho_{\Mf{C},1/\alpha}(x,y)&\le (1+2\theta(C))\rho(x_1,y_1)\\
&\le (1+2\theta(C))\theta(f_1(C) t)\rho(x_1,z_1) \le \eta_2(t)\rho_{\Mf{C},1/\alpha}(x,z),
\end{align*}
where $\eta_2(t)=f_1(\theta(C))\theta(f_1(C) t).$\\
\textbf{Case 3. Only ($d_*$) holds.} Then
\[(d_{\Mf{C}}(x,x_1)\vee d_{\Mf{C}}(x_1,y_0))\le d_{\Mf{C}}(x,y)\le td_{\Mf{C}}(x,z) \le (1+2C)td(x_3,z_1).\]
Let $u=\ccdd{x_0}{x}{x_1}{y}{y_0},$ then
\begin{align*}
\frac{1}{C}(d(x_0,x_1)\vee d(x_1,y_0)) &\le (d(x_0,x_1)\wedge d(x_1,y_0))\le\frac{t}{u}d_{\Mf{C}}(x,z)\\
&\le (1+2C)\frac{t}{u}d(x_3,z_0) \le f_1(C)\frac{t}{u}d(x_1,z_0),
\end{align*}
and hence
\begin{align*}
\rho_{\Mf{C},1/\alpha}(x,y) &\le 2 u^{1/\alpha}(\rho(x_0,x_1)\vee\rho(x_1,y_0))\\
& \le 2u^{1/\alpha}(\theta(C)\wedge\theta(Cf_1(C)\frac{t}{u}))\rho(x_1,z_0)\\
& \le 2u^{1/\alpha}f_1(\theta(C))(\theta(C)\wedge\theta(Cf_1(C)\frac{t}{u}))\rho_{\Mf{C},1/\alpha}(x,z).
\end{align*}
Since $u\wedge\frac{t}{u}\le\sqrt{t}$ and $u\le1,$ $\rho_{\Mf{C},1/\alpha}(x,y)\le \eta_3(t)\rho_{\Mf{C},1/\alpha}(x,z)$ where\\
$\eta_3(t)=2f_1(\theta(C))(t^{1/2\alpha}\theta(C)\vee\theta(Cf_1(C)\sqrt{t})).$
\\
\textbf{Case 4. Only ($\rho_*$) holds.} Then,
\[d_{\Mf{C}}(x_1,y_1)\le d_{\Mf{C}}(x,y)\le td_{\Mf{C}}(x,z) \le 2t(d_{\Mf{C}}(x,x_3)\vee d_{\Mf{C}}(x_3,z)).\]
Let $u=\ccdd{x_2}{x}{x_3}{z}{z_0}$, then
\begin{equation}
d(x_3,y_1)\le (1+C+C^2)d(x_1,y_1)\le (1+C+C^2)2tu(d(x_2,x_3)\vee d(x_3,z_0),\label{case4}
\end{equation}
and hence
\begin{align*}
&\rho_{\Mf{C},1/\alpha}(x,y)\\
\le& f_1(\theta(C))\rho(x_3,y_1)\\
\le& f_1(\theta(C))\theta(C)\theta((1+C+C^2)2tu)(\rho(x_2,x_3)\wedge\rho(x_3,z_0))\\
\le& f_1(\theta(C))\theta(C)u^{-1/\alpha}\theta((1+C+C^2)2tu)(\rho_{\Mf{C},1/\alpha}(x,x_3)\vee\rho_{\Mf{C},1/\alpha}(x_3,z)).
\end{align*}
Since $Cd(x_3,y_1)\ge d(x_2,x_3)\vee d(x_3,z_0)$ and \eqref{case4} hold, $1\le u^{-1}\le C(1+C+C^2)2t.$ Therefore $\rho_{\Mf{C},1/\alpha}(x,y)\le \eta_4(t)\rho_{\Mf{C},1/\alpha}(x,z)$ where \[\eta_4(t)=f_1(\theta(C))\theta(C)(C+C^2+C^3)^{1/\alpha}\theta((1+C+C^2)2t)t^{1/\alpha}.\]
Finally take $\eta=\eta_1\vee\eta_2\vee\eta_3\vee\eta_4,$ we have shown $d_{\Mf{C}}\qs\rho_{1,\alpha}$ under (Ch).
Next we prove general case by using the result for (Ch). \\
\textbf{Case 5. Both $(y*)$ and $(z*)$ hold.} 
Then $\rho_{\Mf{C},1/\alpha}(x,y)\le\eta_5(t)\rho_{\Mf{C},1/\alpha}(x,z)$ whenever $d_{\Mf{C}}(x,y)\le td_{\Mf{C}}(x,z)$ where $\eta_5(t)=t^\alpha.$\\
\textbf{Case 6. Only $(y*)$ holds.}
We can take $w\in G$ such that $\rho_{\Mf{C},1/\alpha}(x,w)+\rho_{\Mf{C},1/\alpha}(w,z)=\rho_{\Mf{C},1/\alpha}(x,z).$ If $\sqrt{t}d_{\Mf{C}}(w,z)\ge d_{\Mf{C}}(x,w)$ then
\[d_{\Mf{C}}(x,w)\vee d_{\Mf{C}}(y,w)  \le d_{\Mf{C}}(x,w)+d_{\Mf{C}}(x,y) \le (\sqrt{t}+t(1+\sqrt{t}))d_{\Mf{C}}(w,z),\]
and hence
\[\rho_{\Mf{C},1/\alpha}(x,y)\le\rho_{\Mf{C},1/\alpha}(x,w)\vee\rho_{\Mf{C},1/\alpha}(y,w)\le \eta(\sqrt{t}+t(1+\sqrt{t}))\rho_{\Mf{C},1/\alpha}(w,z).\]
Otherwise $d_{\Mf{C}}(x,y)\le (\frac{1+\sqrt{t}}{\sqrt{t}}t)d_{\Mf{C}}(x,w).$ Therefore
\[\rho_{\Mf{C},1/\alpha}(x,y)\le \eta_6(t)(\rho_{\Mf{C},1/\alpha}(w,z)\vee\rho_{\Mf{C},1/\alpha}(x,w))\le \eta_6(t)\rho_{\Mf{C},1/\alpha}(x,z),\]
where $\eta_6(t)= \eta(\sqrt{t}+t(1+\sqrt{t}))\vee ((1+\sqrt{t})\sqrt{t})^{1/\alpha}.$
\\
\textbf{Case 7. Only $(z*)$ holds.}
We can take $w\in G$ such that $d_{\Mf{C}}(x,w)+d_{\Mf{C}}(w,y)=d_{\Mf{C}}(x,y).$ Since
\[ d_{\Mf{C}}(w,y)\le d_{\Mf{C}}(x,y) \le t d_{\Mf{C}}(x,z)\le t(d_{\Mf{C}}(x,w)\vee d_{\Mf{C}}(z,w)),\]
we get $\rho_{\Mf{C},1/\alpha}(w,y)\le\eta(t)(\rho_{\Mf{C},1/\alpha}(x,w)\vee\rho_{\Mf{C},1/\alpha}(z,w)).$ Moreover,
\[ d_{\Mf{C}}(x,w)\vee d_{\Mf{C}}(z,w)\le d_{\Mf{C}}(x,z)+d_{\Mf{C}}(y,z)\le (1+t)d_{\Mf{C}}(x,z),\]
hence $(\rho_{\Mf{C},1/\alpha}(x,w)\vee\rho_{\Mf{C},1/\alpha}(z,w))\le(1+t)^{1/\alpha}\rho_{\Mf{C},1/\alpha}(x,z).$ Therefore
\[\rho_{\Mf{C},1/\alpha}(x,y)\le \rho_{\Mf{C},1/\alpha}(x,w)+\rho_{\Mf{C},1/\alpha}(w,y)\le \eta_7(t)\rho_{\Mf{C},1/\alpha}(x,z)\]
where $\eta_7(t)=t^{1/\alpha}+(1+t)^{1/\alpha}\eta(t)$\\
Let $\eta'=\eta\vee\eta_5\vee\eta_6\vee\eta_7,$ then we have shown $d_{\Mf{C}}$ is $\eta'$-quasisymmetric to $\rho_{\Mf{C},1/\alpha}.$
\end{proof}

Finally, we prove $\rho_{\Mf{C},1/\alpha}$ is $\alpha$-Ahlfors regular. 
Define $\Mf{m_C}$ by $\Mf{m_C}(\{(x,y,t)\in \Mf{C}_G\mid a\le t\le b\})=(b-a)(\Mf{m}(\{x,y\}))$ for any $(x,y)\in E$ and $0\le a< b\le 1.$ First we consider the case $x\in G.$
\begin{itemize}
\item Let $x\in G.$ Since $r_x=r_{x,\rho}\asymp r_y\asymp \rho(x,y)$ for any $(x,y)\in E$ and $\rho$ is $\alpha$-Ahlfors regular with respect to $\Mf{m},$ 
\[V_{\rho_{\Mf{C},1/\alpha},\Mf{m_C}}(x,r)=\hspace{-1pt} \sum_{y:y\sim x}
\left(\frac{r}{\rho_{\Mf{C},1/\alpha}(x,y)}\right)^\alpha \Mf{m}(\{x,y\})
\asymp\hspace{-1pt} \sum_{y:y\sim x}\left(\frac{r}{r_x}\right)^\alpha\hspace{-1pt} r_x^\alpha\asymp r^\alpha\]
for any $x\in G$ and $r\le r_x$ (the last inequality is because $G$ is bounded degree). Next we consider global cases.
\begin{itemize}
\item Let $(y,z)\in E.$ If $y\in B_\rho(x,r)$ for some $r>0,$ then for any $y'\in\iota(y,z),$
\[\rho_{\Mf{C},1/\alpha}(x,y')\le\rho(x,y)+\rho(y,z)<(1+\theta(C))r.\]
\item Let $y'\in B_{\rho_{\Mf{C},1/\alpha}}(x,r),$ then there exists $(y,z)\in E$ such that $y'\in\iota(y,z)$ and $y\in B_\rho(x,r).$
\end{itemize}
Therefore there exists $C_1,C_2>0$ such that
\begin{multline*}
V_{\rho_{\Mf{C},1/\alpha},\Mf{m_C}}(x,r)\ge \frac{1}{2}\sum_{y\in B_\rho(x,r/(1+\theta(C)))}\sum_{z\sim y} \Mf{m_C}(\iota(y,z)) \\
=\frac{1}{2}\sum_{y\in B_\rho(x,r/(1+\theta(C)))}\sum_{z\sim y} \Mf{m}(\{y,z\})\ge V_{\rho,\Mf{m}}(x,r/(1+\theta(C)))\ge C_1r^\alpha,
\end{multline*}
for any $x\in G$ and $r>(1+\theta(C))r_x$, and
\begin{multline*}
V_{\rho_{\Mf{C},1/\alpha},\Mf{m_C}}(x,r)\le \sum_{y\in B_\rho(x,r)}\sum_{z\sim y}\Mf{m}(\{y,z\}) \\
\le (\sup_{y\in G}\#\{z\mid z\sim y\})V_{\rho,\Mf{m}}(x,(1+\theta(C))r) \le C_2r^\alpha,
\end{multline*}
for any $x\in G$ and $r>r_x$. Adjusting constants, we get $V_{\rho_{\Mf{C},1/\alpha},\Mf{m_C}}(x,r)\asymp r^\alpha$ for any $r\in G$ and $r>0.$
\item Let $x\in \Mf{C}_G$ and $y,z\in G$ such that $x\in\iota(y,z).$ We also let $r_0=\rho_{\Mf{C},1/\alpha}(x,y).$ If $r\le r_0/(1+\theta(C)),$ then $B_{\rho_{\Mf{C},1/\alpha}}(x,r)\subseteq\cup_{w\sim y}\iota(w,y)\cup\cup_{w\sim z}\iota(w,z),$ so there exist $C',C_3$ such that
\begin{align*}
&V_{\rho_{\Mf{C},1/\alpha},\Mf{m_C}}(x,r)\\
\le&\sum_{w\sim y} \left(\frac{r}{\rho_{\Mf{C},1/\alpha}(y,w)}\right)^\alpha\Mf{m}(\{w,y\})+\sum_{w\sim z} \left(\frac{r}{\rho_{\Mf{C},1/\alpha}(w,z)}\right)^\alpha\Mf{m}(\{w,z\})\\
\le& \sum_{w\sim y}\left(\frac{\theta(C)r}{r_y}\right)^\alpha C'(r_y^\alpha+(\theta(C)r_y)^\alpha)+\sum_{w\sim z}\left(\frac{\theta(C)^2r}{r_y}\right)^\alpha C'(2(\theta(C)^2r_y)^\alpha)\\
\le& C_3\left(\frac{r}{r_y}\right)^\alpha r_y^\alpha=C_3r^\alpha,
\end{align*}
because $(G,E)$ is bounded degree. Otherwise, since \[
B_{\rho_{\Mf{C},1/\alpha}}(x,r)\subseteq B_{\rho_{\Mf{C},1/\alpha}}(y,r)\cup B_{\rho_{\Mf{C},1/\alpha}}(x,r)\text{ if }r>r_0,\]
there exists $C_4>0$ such that $V_{\rho_{\Mf{C},1/\alpha},\Mf{m_C}}(x,r)\le C_4 r^\alpha$ for any $r>r_0$ and $x\in \Mf{C}_G.$ 
 On the other hand, there exist $C_5,C_6$ such that
\[V_{\rho_{\Mf{C},1/\alpha},\Mf{m_C}}(x,r)\ge \left(\frac{r}{\rho_{\Mf{C},1/\alpha}(y,z)}\right)^\alpha\Mf{m}(\{y,z\})\ge C_5\left(\frac{r}{r_y}\right)^\alpha r_y^\alpha=C_5r^\alpha,\]
for any $x\in \Mf{C}_G$ and $r<r_0(x)$, and
\[V_{\rho_{\Mf{C},1/\alpha},\Mf{m_C}}(x,r)\ge V_{\rho_{\Mf{C},1/\alpha},\Mf{m_C}}(y,r/2)\ge C_6\left(\frac{r}{2}\right)^\alpha,\]
for any $x\in \Mf{C}_G$ and $r\ge 2r_0(x).$ Therefore
\[\left(\frac{C_5\vee C_6}{2^\alpha}\right)r^\alpha\le V_{\rho_{\Mf{C},1/\alpha},\Mf{m_C}}(x,r)\le (C_3\vee(1+\theta(C))^\alpha C_4)r^\alpha,\]
for any $r>0$ and $x\in \Mf{C}_G.$
\end{itemize}
\end{proof}

We can also obtain the following lemma.
\begin{lem}\label{junbi}
Under the same assumption of Theorem \ref{Smain1}, $d_{\Mf{C}}$ satisfies the assumptions of Theorem \ref{sigmain1} with respect to the partition $\Mc{K}_r$ hold.
\end{lem}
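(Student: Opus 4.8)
The plan is to verify, one by one, the hypotheses of Theorem \ref{sigmain1} for the triple $(\Mf{C}_G, d_{\Mf{C}}, \Mc{K}_r)$. Recall that $\Mf{C}_G$ is $\sigma$-compact and perfect, that $\diam(\Mf{C}_G,d_{\Mf{C}})=\infty$ because $d_{\Mf{C}}|_{G\times G}=d$ and $\diam(G,d)=\infty$, that $\Mc{K}_r$ is a partition of $\Mf{C}_G$ by Lemma \ref{lemKMcK}(1), and that by Lemma \ref{lemKMcK}(2) the incidence relation among cells indexed by $T_e$ coincides with that of $K$. These facts give a dictionary turning essentially every statement about $(G,E,d,K)$ into the corresponding statement about $(\Mf{C}_G,d_{\Mf{C}},\Mc{K}_r)$; the only genuinely new feature is the behaviour on the vertical parts $\Mf{S}_{w,m}$, which carry the trivial path structure of the subdivided cables, and on cable interiors. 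I will also use repeatedly that condition (F1) forces all edges incident to a fixed vertex to have mutually comparable $d$-lengths, so that $\Mf{C}_G$ looks locally like boundedly many intervals of comparable length near a vertex, and like a single interval near a cable-interior point.

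\emph{Local finiteness and minimality.} Since $(G,E)$ has bounded degree, every $d_{\Mf{C}}$-ball meets only finitely many cables; as $\Mc{K}_w$ is a finite union of cables for $w\in T_e$ and a subinterval of one cable for $w$ in an $\Mf{S}$-part, one directly builds the open neighbourhoods required by local finiteness (Lemma \ref{Klocfin}). For minimality I must check $O_w\ne\emptyset$ for every $w$. If $w\in T_e$, then $\Mc{K}_w$ contains a full cable $\iota(z^+,z^-)$ with $z\in\Lambda_e\cap T_w$, hence $[z]\ge[w]$; an interior point of that cable can lie in a height-$[w]$ cell $\Mc{K}_v$ only if $v\in T_e$ and $z\in T_v$ (an $\Mf{S}$-cell at height $[w]$ would force $[w]=[z]+m$ with $m\ge1$), and $z\in T_v\cap T_w$ with $v\ne w$ at a common height is impossible. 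If $w$ lies in an $\Mf{S}$-part, the same argument rules out $T_e$-cells and $\Mf{S}$-cells on other cables at height $[w]$ (here minimality of $K$, i.e.\ distinct cables for distinct elements of $\Lambda_e$, is used), while the remaining same-level neighbours lie on the same cable and meet $\Mc{K}_w$ only at endpoints, so $O_w$ is a nondegenerate interval minus finitely many points.

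\emph{Basic framework.} (i) $\sup_w\#(S(w))<\infty$: over $T_e\setminus\Lambda_e$ this is the hypothesis on $K$, while over $\Lambda_e$ and the $\Mf{S}$-parts the branching numbers are $2^{n(1)}$ and $2^{n(m+1)-n(m)}$, bounded because $r\in(0,1)$ is fixed. (ii) $d_{\Mf{C}}\asymp r^{[w]}$: for $w$ in an $\Mf{S}$-part, $\diam(\Mc{K}_w,d_{\Mf{C}})=2^{-n(m)}g_d(w_e)\asymp r^m r^{[w_e]}=r^{[w]}$; for $w\in T_e$ one has $K_w\subseteq\Mc{K}_w$ and $\Mc{K}_w$ is a connected finite union of cables $\iota(z^+,z^-)$ of $d_{\Mf{C}}$-length $g_d(z)\le g_d(w)$, so $\diam(\Mc{K}_w,d_{\Mf{C}})\asymp\diam(K_w,d)=g_d(w)\asymp r^{[w]}$. (iii) Uniform finiteness: by the dictionary the scale graph on $T_e$-cells is isomorphic to the uniformly finite scale graph of $d$, each $\Mf{S}$-cell has at most two same-scale neighbours along its own cable, and the remaining same-scale neighbours cluster at the two cable endpoints, where their number is controlled by the degree bound, by (F1), and by uniform finiteness of $d$. (iv) Thickness: for $w\in T_e$ the cell (or point) witnessing thickness of $d$ inside $K_w$ transfers to $\Mc{K}_w$; for an $\Mf{S}$-cell one takes its central dyadic subinterval. (v) Adaptedness: I must produce $M$ and $c_1,c_2>0$ with $B_{d_{\Mf{C}}}(x,c_1 s)\subseteq U^{d_{\Mf{C}}}_M(x,s)\subseteq B_{d_{\Mf{C}}}(x,c_2 s)$. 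For $x\in G$ the scale-$s$ cells through $x$ are, up to the cable subdivision, the scale-$s$ discrete cells through $x$ together with the fibres over incident edges, so $M_*$-adaptedness of $d$, the degree bound, (F1), and the fact that a $d_{\Mf{C}}$-ball reaches $d_{\Mf{C}}$-length $\asymp s$ into each incident cable give the two inclusions; for $x$ in the interior of a cable $\iota(y,z)$ the scale-$s$ cells near $x$ form an interval of length $\asymp s$ around $x$ when $s$ is small compared with $d_{\Mf{C}}(x,\{y,z\})$, and otherwise reach $\{y,z\}$, reducing to the vertex case.

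\emph{Main obstacle.} The delicate steps are the adaptedness in (v) and the uniform diameter comparison in (ii): these are precisely where the subdivided tree $T_r$ and the cable metric $d_{\Mf{C}}$ genuinely interact, and both rest on condition (F1) — comparability of incident edge-lengths — which is what keeps the overshoot of a cell or a ball beyond the vertex set $G$ under control. The verification of adaptedness at cable-interior points and at high-degree vertices will run through a short case analysis in the same spirit as, but considerably simpler than, the proof of Proposition \ref{sore}. Once this lemma is in place, Theorem \ref{sigmain1} applied to $(\Mf{C}_G,d_{\Mf{C}},\Mc{K}_r)$, combined with Proposition \ref{sore} and the matching of the energies of $\Mc{K}_r$ with those of $K$, will yield Theorem \ref{Smain1}.
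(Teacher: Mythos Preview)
Your proposal follows the same item-by-item strategy as the paper's proof, and most of the verifications are sound sketches. Two points, however, need more care.

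In (ii) you assert that $\Mc{K}_w$ is a \emph{connected} union of cables for $w\in T_e$. Connectedness of $K$ is not among the hypotheses of Theorem \ref{Smain1} (it only enters in Section~4.2), so you cannot use it here. The correct upper bound goes through the triangle inequality for $d$ on $G$: for $x,y\in\Mc{K}_w$ with $x\in\iota(x_0,x_1)$, $y\in\iota(y_0,y_1)$ and $x_i,y_i\in K_w$, one has $d_{\Mf{C}}(x,y)\le d(x_0,x_1)+d(x_1,y_1)+d(y_0,y_1)\le 3g_d(w)$, with no connectedness needed.

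More substantively, in (iv) you claim that the point $x\in K_w$ witnessing discrete thickness ``transfers'' to a thickness witness for $\Mc{K}_w$. This step can fail outright: if $x$ is incident to an edge $(x,y)$ with $y\notin K_w$ and $d(x,y)$ large compared with $\alpha g_{d_{\Mf{C}}}(\pi(w))$, then the $\Mf{S}$-cells of the cable $\iota(x,y)$ near $x$ belong to $\Lambda^{d_{\Mf{C}}}_{\alpha g_{d_{\Mf{C}}}(\pi(w)),1}(x)$, so $U^{d_{\Mf{C}}}_1(x,\alpha g_{d_{\Mf{C}}}(\pi(w)))$ protrudes into $\iota(x,y)\not\subseteq\Mc{K}_w$. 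The paper repairs this with a case split: when such a long incident edge exists, condition (F1) forces a comparably long edge $w'\in\Lambda_e\cap T_w$, and one takes the midpoint of $\iota(w'^+,w'^-)$ as the thickness witness instead (reducing to the $\Mf{S}$-cell case you already handled). This is precisely where (F1) does essential work in the lemma, so you should make the case distinction explicit rather than bundling it into a one-line transfer claim.
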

\begin{proof}
\begin{itemize}
\item $\sup_{w\in T_r}\#(S(w))\le\sup_{w\in T_e\setminus \Lambda_e}\#(S(w))\vee(1-\frac{\log r}{\log2})<\infty.$
\item (locally finite) Since $(G,E)$ is locally finite and $d$ fits to $(G,E)$, for any $x\in G$, there exists $r>0$ such that $B_{d_{\Mf{C}}}(x,r)\subset\sum_{y:y\sim x}\iota(x,y)$ and then $\#\{w\in(T)_0\mid B_{d_{\Mf{C}}}(x,r)\cap\Mc{K}_w\ne\emptyset\}<\infty.$ Let $U_w:=\Mc{K}_w\cup (\cup_{x\in K_w}B_{d_{\Mf{C}}}(x,r)),$ we get $ \#\{v\in(T)_0\mid U_w\cap\Mc{K}_v\ne\emptyset\}<\infty.$
\item (minimal) Since $K$ is minimal, it directly follows from Lemma \ref{lemKMcK} (2) and the definition of $\Mc{K}.$
\item ($r^{[w]}\asymp d_{\Mf{C}}$) Let $w\in T_e.$ For any $x,y\in \Mc{K}_w$, there exists $x_0,x_1,y_0,y_1\in K_w$ such that $x\in\iota(x_0,x_1),y\in\iota(y_0,y_1)$ and then $d_{\Mf{C}}(x,y)\le d_{\Mf{C}}(x_0,x_1)+d_{\Mf{C}}(x_1,y_1)+d_{\Mf{C}}(y_1,y_0),$ so $d(w)\le d_{\Mf{C}}(w)\le 3d(w)$ and so $d_{\Mf{C}}(w)\asymp r^{[w]}$ for any $w\in T_e.$ Moreover, by the definition of $\Mc{K}_w$ and $d_{\Mf{C}},$ 
\[2^{-1}r^{[w]-[w_e]}d_{\Mf{C}}(w_e)\le d_{\Mf{C}}(w)=\diam(\Mc{K}_w,d_{\Mf{C}})\le r^{[w]-[w_e]}d_{\Mf{C}}(w_e)\]
for any $w\not\in T_e.$ Combining them, we get $r^{[w]}\asymp d_{\Mf{C}}(w)$ for any $w\in T_r.$
\item (uniformly finite) Let $w\in T_e\cup \Lambda^{d_{\Mf{C}}}_s$ for some $s>0.$ Since $d_{\Mf{C}}\asymp d\asymp r^{[w]}$ and $d(w)\le d_{\Mf{C}}(w)$ for $w\in T_e$, there exist $c<1$ and $m>0$ such that if $v\in\Lambda^{d_{\Mf{C}}}_{s,1}(w)\cup T_e$ then $cs\le d(v)\le s$ and hence there exists $v'\in \Lambda^d_s$ such that $v\in T_{v'}$ and $[v]-[v']<m.$ On the other hand, let $v\in \Lambda_e$ such that $K_v\cap K_w\ne \emptyset$ and $d_{\Mf{C}}(v)=d(v)>s.$ Then $s<d(v)\le C d(w)\le Cs$ by (F1), and similarly, there exists $m_1>0$, which is independent of $v$, and $v'\in \Lambda^d_{\Mf{C}s}$ such that $v\in T_{v'}$ and $[v]-[v']<m_1.$ Therefore
  \begin{align*}
  & \#(\Lambda^{d_{\Mf{C}}}_{s,1}(w))\\
=& \#(\Lambda^{d_{\Mf{C}}}_{s,1}(w)\cup T_e)+ \#(\Lambda^{d_{\Mf{C}}}_{s,1}(w)\setminus T_e)\\
\le&  \#(\Lambda^{d_{\Mf{C}}}_{s,1}(w)\cup T_e)+ 2\#(\{v\in\Lambda_e\mid d(v)>s\text{ and }K_v\cap K_w\ne\emptyset \})\\
\le& (\sup_{v\in T_e}\#(S(v)))^m \#(\Lambda^{d}_{s,1}(w'))+ 2(\sup_{v\in T_e}\#(S(v)))^{m_1} \#(\Lambda^{d}_{s,1}(w'')),
  \end{align*}
where $w'\in \Lambda^{d}_s$ and $w''\in\Lambda^{d}_{Cs}$ such that $w\in T_{w'}\subset T_{w''}.$ Since $d$ is uniformly finite, $ \#(\Lambda^{d_{\Mf{C}}}_{s,1}(w))$ is bounded. Moreover, since $ \#(\Lambda^{d_{\Mf{C}}}_{s,1}(w))\le \sup_{x\in G}\#(\{y\mid y\sim x\})+1$ for $s>0$ and $w\in \Lambda^{d_{\Mf{C}}}_s\setminus T_e,$ $d_{\Mf{C}}$ is uniformly finite.
\item(thick) Let $w\in\Lambda^{d_{\Mf{C}}}_s\setminus T_e,$ then $\Mc{K}_w=\{((x,y),t)\mid a\le t\le b\}$ for some $(x,y)\in E,0\le a<b\le 1,$ and then $U^{d_{\Mf{C}}}_1(((x,y),(t+s)/2),(r/8)d_{\Mf{C}}(\pi(w)))\linebreak\subseteq \Mc{K}_w.$ Next we let $w\in T_e.$ Since $d$ is thick and $d\asymp d_{\Mf{C}}$, there exists $\alpha$, independent of $w$, and $x\in K_w$ such that $U^d_1(x,\alpha d_{\Mf{C}}(\pi(w)))\subseteq \Mc{K}_w.$
\begin{itemize}
\item[$\circ$]  If $\Lambda^{d_{\Mf{C}}}_{\alpha d_{\Mf{C}}(\pi(w)),1}(x)\setminus T_e\ne\emptyset,$ then there exists $v\in\Lambda_e$ such that $K_v\cap K_w\ne \emptyset$ and $d(v)>\alpha d_{\Mf{C}}(\pi(w))$, and hence by (F1), there also exists $w'\in\Lambda_e$ such that $w'\in T_w$ and $d(w')>(\alpha/C)d_{\Mf{C}}(\pi(w)).$ Then similar to the former case, $U^{d_{\Mf{C}}}_1(x',(\alpha/4C)d_{\Mf{C}}(\pi(w)))\subseteq \Mc{K}_{w'}\subseteq \Mc{K}_w $ for some $x'\in\Mc{K}_w.$
\item[$\circ$] If $\Lambda^{d_{\Mf{C}}}_{\alpha d_{\Mf{C}}(\pi(w)),1}(x)\subseteq T_e,$ then for any $v\in \Lambda^{d_{\Mf{C}}}_{\alpha d_{\Mf{C}}(\pi(w)),1}(x),$ there exists $v'\in\Lambda^d_{\alpha d_{\Mf{C}}(\pi(w)),1}(x)$ such that $v\in T_{v'}$ because $d(v)\le d_{\Mf{C}}(v)$ and $v\in T_e,$ so $U^{d_{\Mf{C}}}_1(x,\alpha\pi(d_{\Mf{C}}(w)))\in \Mc{K}_w.$ 
\end{itemize}
Therefore $d_{\Mf{C}}$ is thick.
\item (($M_*+1$)-adapted) Let $x\in \Mf{C}_G.$ If $y\in U^{d_{\Mf{C}}}_{M_*+1}(x,r)$, then there exist $w_0,w_1,...,w_{M_*+1}\in \Lambda^{d_{\Mf{C}}}_r$ such that $x\in  K_{w_0},\ y\in K_{w_{M_*+1}}$ and $K_{w_i}\cup K_{w_{i+1}}\ne\emptyset$ for any $i\in[0,M_*]_\Mb{Z}$, so $d_{\Mf{C}}(x,y)\le \sum_{i=0}^{M_*+1}d_{\Mf{C}}(w_i)\le (M_*+2)r$ and hence $U^d_M(x,r)\subseteq B_d(x,(M_*+3)r).$ To show inverse direction, we take $y,z\in G$ such that $x\in\iota(y,z).$ If $r<d(y,z)/C,$ then
\[B_{d_{\Mf{C}}}(x,r)\subseteq\left(\Cup_{w:w\sim z}\iota(w,z)\right) \cup \left(\Cup_{w:w\sim y}\iota(w,y)\right),\]
so $B_{d_{\Mf{C}}}(x,r)\subseteq U^1_d(x,2r)\subseteq U^{M*+1}_d(x,2r)$ by the definition of $\Mc{K}$ on $T\setminus T_e.$ If $r>d(y,z)$, then $B_{d_{\Mf{C}}}(x,r)\subseteq B_{d_{\Mf{C}}}(y,r)\cup B_{d_{\Mf{C}}}(z,r).$ Recall that if $p\in B_{d_{\Mf{C}}}(x,r)$ and $p\in\iota(p_1,p_2)$ for $p\in \Mf{C}_G$ and $p_1,p_2\in G,$ then $\{p_1,p_2\}\subseteq B_{d_{\Mf{C}}}(x,(1+C)r)$, so 
\begin{multline*}
B_{d_{\Mf{C}}}(y,r)\subset \{\iota(p_1,p_2)\mid p_1,p_2\in B_{d_{\Mf{C}}}(y,(1+C)r)\}\\
 \subseteq \Cup\{\Mc{K}_v\mid v\in \Lambda^d_{\alpha(1+C)r,M_*}(y)\subseteq U^{d_{\Mf{C}}}_{M_*}(y,\alpha(1+C)r),
\end{multline*}
because $d$ is $M_*$-adapted. (Note that the last inclusion follows from $d_{\Mf{C}}(w)\ge d(w)$, similar to the proof of thick.)
Since $r>d_{\Mf{C}}(y,z)$, $y,z\in U^{d_{\Mf{C}}}_0(x,r)$ and hence $B_{d_{\Mf{C}}}(x,r)\subseteq U^{d_{\Mf{C}}}_{M_*+1}(y,\alpha(1+C)r).$ Adjusting constants, we get $d_{\Mf{C}}$ is ($M_*+1$)-adapted.
\end{itemize}
\end{proof}
Using these, we can prove Theorem \ref{Smain1}.
\begin{proof}[Proof of Theorem \ref{Smain1}]
Let $(w,v)\in T_m$ and suppose $\Mc{K}_w\cap \Mc{K}_v$ does not intersect with $G.$ Then $\Mc{K}_w\cap\Mc{K}_v\ne\emptyset$ if and only if 
\begin{align*}
w&=\{(x,i),(y,2^{n(m-[w_e])}-1-i)\}_{w,m-[w_e]}\text{ and }\\
v&=\{(x,j),(y,2^{n(m-[v_e])}-1-j)\}_{w,m-[v_e]}\end{align*}
with $w_e=v_e$ and $|i-j|=1.$ Therefore 
\[ J^h_M=J^h_M(K)=J^h_M(\Mc{K}).\]
Hence $\ul{I}_\Mc{E}, \ol{I}_\Mc{E}, \ul{d}^S_p$ and $\ol{d}^S_p$ for $K$ and $\Mc{K}$ coincide respectively. Therefore by Lemma \ref{junbi} and Theorem \ref{sigmain1},
\[\ul{I}_\Mc{E}(N_1,N_2,N)=\ol{I}_\Mc{E}(N_1,N_2,N)=\ard(\Mf{C}_G,d_{\Mf{C}}) \]
for $N_2\ge N_1+M^*+1.$ Combining this with Proposition \ref{sore}, we get
\[\ul{I}_\Mc{E}(N_1,N_2,N)=\ol{I}_\Mc{E}(N_1,N_2,N)=\ard(G,d). \]
We also obtain (2) and (3).
\end{proof}

\subsection{Spectral dimension and Ahlfors regular conformal dimension of weighted graphs} 
In Theorem \ref{Smain1}, we saw a relation between the ARC dimension and the $p$-spectral dimension of associated metrics on graphs. On the other hand, the spectral dimension of the associated random walks on graphs can be determined. In this subsection, we consider the relation between these dimensions. Recall that $(G,E)$ is a connected, bounded degree simple graph and $\Mc{T} = (T,\pi,\phi) = (T_r, \pi', \phi')$ is a bi-infinite tree with a reference point. Throughout this section, let $K$ be a partition of $(G,E)$ parametrized by $\Mc{T}.$    

\begin{defi}[Weighted graph]
Let $\mu$ be a positive symmetric function on $E$, then we call $( G,\mu)$ a weighted graph and $\mu$ a conductance (or a weight) on $(G,E)$. Moreover, we treat $\mu$ as a measure on $ G$ defined by
\[ \mu_x:=\sum_{y:y\sim x}\mu_{xy} \text{ and }\mu(A):=\sum_{x\in A} \mu_x \]
for any $x\in G$ and $A\subset  G.$    
\begin{itemize}
\item(Controlled weight).
We say $( G,\mu)$ has controlled weight, or satisfies condition ($p_0$) if there exists $p_0>0$ such that
\begin{equation}
 p(x,y):=\frac{\mu_{xy}}{\mu_{x}}\ge p_0 \text{ for any }x,y \in G\text{ with }x\sim y. \tag{$p_0$}
\end{equation}
Note that if $(G,\mu)$ has controlled weight, then $\#\{y\mid y\sim x\}\le\lfloor p_0^{-1}\rfloor$ for any $x\in T.$ (It shows that $(G,E)$ must be a bounded degree graph). 
\item(Heat kernel).
We inductively define 
\[p_0(x,y)=\delta_{x,y},\hspace{20pt}p_n(x,y)=\sum_{z\in G}p_{n-1}(x,z)p(z,y).\]
$p_n(x,y)$ is also thought as transition function of associated random walk; that is,
\[ \mathbb{P}^x(X_n=y)=p_n(x,y). \]
Additionally, we define the heat kernel of this random walk (with respect to $\mu$) by $h_n(x,y)=p_n(x,y)/\mu_y.$ It is easy that $h_n(x,y)=h_n(y,x).$
\item(Effective resistance).
For $f\in \mathbb{R}^ G,$ we define
\[\mathcal{E}_\mu(f)=\mathcal{E}(f):=\frac{1}{2}\sum_{(x,y)\in E}(f(x)-f(y))^2\mu_{xy}. \]
For any $A,B\subset  G,$ we also define the effective resistance of $( G,\mu)$ by
\[ R(A,B)=(\inf\{\mathcal{E}(f)|f|_A=1,f|_B=0 \})^{-1} \]
where $\inf\emptyset=\infty.$
We write $R(x,A)$ (resp. $R(x,y)$) instead of $R(\{x\},A)$ (resp. $R(\{x\},\{y\})$). 
\end{itemize}
\end{defi}
It is known that the infimum of $R(A,B)^{-1}$ is attained and that $R(x,y)$ is a distance on $ G$ (for example, see~\cite{Kig01}).\\
For the rest of this paper, $(G,\mu)$ is always a weighted graph and $R$ is the associated effective resistance.
\begin{defi}[Spectral dimension]
Fix $x\in G$ and define 
\[\ol{d}_S(G,\mu)= 2\limsup_{n\to\infty}\frac{-\log p_{2n}(x,x)}{\log n}\quad\text{and}\quad \ul{d}_S(G,\mu)= 2\liminf_{n\to\infty}\frac{-\log p_{2n}(x,x)}{\log n}.\]
\end{defi}
We can see that $\ol{d}_S(G,\mu)$ and $\ul{d}_S(G,\mu)$ are independent of $x.$
We call $\ol{d}_S(G,\mu)$ the upper spectral dimension of $(G,\mu)$, and $\ul{d}_S(G,\mu)$ the lower spectral dimension of $(G,\mu).$ If $\ol{d}_S(G,\mu) = \ul{d}_S(G,\mu),$ then we call $d_S(G,\mu)=\ol{d}_S(G,\mu)$ the spectral dimension of $(G,\mu).$\\

We introduce other notions related to the partition.
\begin{defi}
We say $K$ is connected if for any $w\in T_e$ and $x,y\in K,$ there exists a path between $x$ and $y$ in $K_w,$ in other words, $(K_w,E|_{K_w\times K_w})$ is connected for any $w\in T_e.$  
\end{defi}
\begin{defi}
We define $\ul{\Mc{N}},\ol{\Mc{N}},\ul{\Mc{R}}_p$ by 
\begin{align*}
&\ol{\Mc{N}}=\sup_{w\in T}\limsup_{k\to\infty}\#(\{S^k(\pi^k(w))\})^{1/k},\hspace{10pt}
\ul{\Mc{N}}=\sup_{w\in T}\liminf_{k\to\infty}\#(\{S^k(\pi^k(w))\})^{1/k},\\
&\ul{\Mc{R}}_p(N_1,N_2,N)=\sup_{w\in T}\liminf_{k\to\infty}\epkw{\pi^k(w)}^{1/k}. 
\end{align*}
\end{defi}
Remark that the difference between $\ol{N}_*$ (resp. $\ul{R}_p$) and $\ol{\Mc{N}}$ (resp. $\ol{\Mc{R}}_p$) is the order of the supremum over $w\in T$ and the limit as $k,$ the index of scales, approaches to infinity. By definition, $\ol{N}_*\ge\ol{\mathcal{N}}\ge{\ul{\mathcal{N}}}$ and $\ul{R}_p\ge\ul{\mathcal{R}}_p.$ \\
\begin{lem}\label{tilt}
Assume $\sup_{w\in T_e\setminus \Lambda_e}\#(S(w))<\infty,$ then
  \begin{enumerate}
  \item $\displaystyle\ol{\Mc{N}}=\limsup_{k\to\infty}\#(\{S^k(\pi^k(w))\})^{1/k}$ and $\displaystyle\ul{\Mc{N}}=\liminf_{k\to\infty}\#(\{S^k(\pi^k(w))\})^{1/k}$ for any $w\in T.$
  \item $\displaystyle\ul{\Mc{R}}_p(N_1,N_2,N)=\sup_{l\ge0 }\liminf_{k\to\infty}\epkw{\pi^{k+l}(w)}^{1/k}$ for any $w\in T.$
  \end{enumerate}
\end{lem}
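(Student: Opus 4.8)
The plan is to exploit the fact that, under the hypothesis $\sup_{w\in T_e\setminus\Lambda_e}\#(S(w))<\infty$, the cardinalities $\#(S^k(\pi^k(w)))$ for different $w$ differ only by a bounded multiplicative factor that does not grow with $k$; once this is established, taking $k$-th roots kills the factor and both the $\limsup$ and the $\liminf$ become independent of $w$, which immediately gives (1), and a similar but slightly more delicate argument gives (2).

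For part (1), fix two vertices $w,v\in T$. By (T1) there is a common ancestor: choose $a,b\ge 0$ with $\pi^a(w)=\pi^b(v)=:u$. For $k\ge a\vee b$ we have $\pi^k(w)=\pi^{k-a}(u)=\pi^k(v)\cdot(\text{shift by }b-a)$, more precisely $\pi^{k}(w)$ and $\pi^{k}(v)$ lie at the same level for $k$ large (after adjusting indices by the height difference $[w]-[v]$, which is constant by Lemma \ref{hstruc}). Thus it suffices to compare $\#(S^k(z))$ and $\#(S^k(z'))$ when $z,z'$ are two vertices at the same height with a common ancestor $u$ at height $[z]-j$ for some fixed $j$; but then both $S^k(z)$ and $S^k(z')$ are subsets of $S^{k+j}(u)$, and conversely $S^{k+j}(u)=\bigsqcup_{y\in S^j(u)}S^k(y)$, so $\#(S^k(z))\le \#(S^{k+j}(u))\le (\sup_{x}\#(S(x)))^{j}\,\#(S^k(z''))$ for any child $z''$ of $u$ at height $[z]$ — using that every vertex in $S^j(u)$ descends to at least one vertex after $k$ more steps (by (PG1), $K$ being nonempty) and that $\#(S^j(u))$ is bounded by $(\sup\#(S(x)))^j$. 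The point is that $C_j:=(\sup_{x\in T_e\setminus\Lambda_e}\#(S(x)))^{j}$ is a constant independent of $k$ — here one must also handle the $\Lambda_e$-part, where $\#(S(w))\le 1-\tfrac{\log r}{\log 2}$ is bounded as noted after Definition \ref{Tr}, so the global supremum of $\#(S(x))$ over all $x\in T_r$ is finite. Taking $k$-th roots, $C_j^{1/k}\to 1$, so $\limsup_k \#(S^k(\pi^k(w)))^{1/k}$ and $\limsup_k \#(S^k(\pi^k(v)))^{1/k}$ agree, and likewise for $\liminf$; since this common value is what $\ol{\Mc{N}}$ (resp. $\ul{\Mc{N}}$) takes the supremum of, the supremum is redundant.

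For part (2), the quantity $\mathcal{E}_{p,k,w'}(N_1,N_2,N)$ for $w'=\pi^{k+l}(w)$ involves the energy minimization over functions on $(T)_{[w']+k}$, and the key structural fact is that $\epkw{w'}$ depends on $w'$ only through the finite combinatorial pattern of the subtree descending from $\Gamma_{N_2}(w')$ down $k$ levels together with the horizontal adjacency $J^h$; shifting $w'$ to a deeper ancestor $\pi^{l'}(w')$ for $l'>0$ enlarges the relevant neighborhood but, by monotonicity of the infimum under restriction and the boundedness of $\#(S(\cdot))$ and of $\#(\Gamma_1(\cdot))$, changes the energy only by a factor that is bounded independently of $k$ (compare the role of the factor $(\sup_w\#(\Gamma_1(w)))^{2M_*}$ in Lemma \kig{Lemma 18.3}). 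Concretely I would show: for $l\ge 0$, $\liminf_k \epkw{\pi^{k+l}(w)}^{1/k}$ is nondecreasing in $l$ (larger ancestor, more "room", hence the sup over $l$), and is bounded above by $\liminf_k \epkw{\pi^k(v)}^{1/k}$ for any $v$ by the common-ancestor comparison as in (1); then $\sup_{w\in T}\liminf_k\epkw{\pi^k(w)}^{1/k}=\sup_{l\ge0}\liminf_k\epkw{\pi^{k+l}(w)}^{1/k}$ for each fixed $w$. I expect the main obstacle to be precisely the bookkeeping in part (2): one must verify that passing from $\pi^k(w)$ to $\pi^{k+l}(w)$ (with $l$ fixed but the base level receding with $k$) only perturbs the energy by a multiplicative constant depending on $l,N_1,N_2,N$ but not on $k$, so that the $1/k$-th root washes it out; this requires carefully tracking how the constraint sets $S^k(\Gamma_{N_1})$, $S^k(\Gamma_{N_2})$ and the edge set $J^h_{N,\cdot}$ transform under the shift, and invoking the bounded-degree/uniformly-finite hypotheses to control the number of extra vertices and edges introduced.
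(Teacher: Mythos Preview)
Your plan for part (1) is correct and matches the paper's argument: both exploit that $\#(S^{k-l}(\pi^k(w)))\le \#(S^k(\pi^k(w)))\le N_*^l\,\#(S^{k-l}(\pi^k(w)))$ together with $\pi^k(w)=\pi^{k-l}(\pi^l(w))$, so that the $k$-th root kills the constant $N_*^l$ and the limsup/liminf along any ray of ancestors is constant; (T1) then makes the value independent of $w$.

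For part (2) your outline is right but two points need correction. First, the direction of your comparison is inverted: you wrote that $\liminf_k \epkw{\pi^{k+l}(w)}^{1/k}$ ``is bounded above by $\liminf_k \epkw{\pi^k(v)}^{1/k}$ for any $v$,'' but what one actually proves (and what is needed) is the reverse: for \emph{every} $v\in T$ there is an $l$ (coming from the common ancestor $\pi^n(v)=\pi^l(w)$) with $\liminf_k\epkw{\pi^k(v)}^{1/k}\le \liminf_k\epkw{\pi^{k+l}(w)}^{1/k}$; this gives $\sup_{v}\le \sup_{l}$, and $\sup_{l}\le\sup_{v}$ is trivial.

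Second, the paper does not bound the energy two-sidedly ``up to a multiplicative constant'' as you suggest; it proves only the one inequality that yields monotonicity along ancestors, and it does so by a concrete construction you have not identified. Given a test function $f$ on $(T)_{[w]-l}$ admissible for $\Mc{E}_{p,k-l,\pi^k(w)}$, one \emph{pulls it back} to $(T)_{[w]}$ by $\ol f(u)=f(\pi^l(u))$; one checks $\ol f$ is admissible for $\Mc{E}_{p,k,\pi^k(w)}$, and since $(u,v)\in J^h_{N,[w]}$ implies $(\pi^l(u),\pi^l(v))\in J^h_{N,[w]-l}$ while each coarse edge has at most $N_*^{2l}$ fine preimages, one gets
\[
\Mc{E}_{p,k,\pi^k(w)}\le N_*^{2l}\,\Mc{E}_{p,k-l,\pi^{k-l}(\pi^l(w))}.
\]
This is the missing mechanism; your reference to the factor in \kig{Lemma 18.3} is a different estimate in a different context and does not substitute for it. Once you have this one-sided inequality, the $1/k$-th root and the reindexing $k\mapsto k-l$ give the monotonicity $\liminf_k\epkw{\pi^k(w)}^{1/k}\le\liminf_k\epkw{\pi^k(\pi^l(w))}^{1/k}$, exactly as in part (1).
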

\begin{proof}
Let $N_*=\sup_{w\in T_e\setminus \Lambda_e}\#(S(w))$
  \begin{enumerate}
  \item Let $w\in T$ and $l\ge 0.$ Then for any $k\ge l,$ $\#(S^{k-l}(\pi^k(w)))\le \#(S^k(\pi^k(w)))\linebreak \le N_*^l\#(S^{K-l}(\pi^k(w))).$ So
    \begin{align*}
     N_*^{-1/(k-l)}\#(S^{k-l}(\pi^{k-l}(\pi^l(w))))^{1/(k-l)}&\le N_*^{-1/(k-l)}\#(S^k(\pi^k(w))))^{1/(k-l)}\\%
&\le  \#(S^k(\pi^k(w))))^{1/k} \\
 &\le N_*^{l/k}\#(S^{k-l}(\pi^{k-l}(\pi^l(w)))^{1/(k-l)},
    \end{align*}
because $\#(S^k(\pi^k(w)))^{1/(k-l)-1/k}\le (N_*^k)^{1/k(k-l)}=N_*^{1/(k-l)}.$ Therefore $\limsup_{k\to\infty}(\#\{S^k(\pi^k(w))\})^{1/k}=\limsup_{k\to\infty}(\#\{S^k(\pi^k(\pi^l(w)))\})^{1/k}.$ By (P1), for any $w,v\in T,$ there exists $n,m\ge0$ such that $\pi^l(w)=\pi^n(v)$ and hence
\begin{align*}
\ol{\Mc{N}}&=\sup_{v\in T}\limsup_{k\to\infty}\#(\{S^k(\pi^k(v))\})^{1/k}\\
&=\sup_{l\ge0}\limsup_{k\to\infty}\#(\{S^k(\pi^k(\pi^l(w)))\})^{1/k}\\
&=\limsup_{k\to\infty}\#(\{S^k(\pi^k(v))\})^{1/k}.
\end{align*} 
The case of $\ul{\Mc{N}}$ is the same.
\item Let $w\in T$ and $k\ge l\ge0.$ We also let $f$ be a function on $(T)_{[w]-l}$ such that $f\equiv 1$ on $S^{k-l}(\Gamma_{N_1}(\pi^k(w))$ and $f\equiv 0$ on $S^{k-l}(\Gamma_{N_2}(\pi^k(w)))^c.$ Then for $\ol{f}(w)=f(\pi^l(w))$, $\ol{f}\equiv 1$ on $S^k(\Gamma_{N_1}(\pi^k(w))$, $\ol{f}\equiv 0$ on $S^k(\Gamma_{N_1}(\pi^k(w)).$ Moreover,
  \begin{align*}
    \sum_{(u,v)\in J^h_{N,[w]}}|\ol{f}(u)-\ol{f}(v)|^p &\le \sum_{(u,v)\in J^h_{N,[w]-l}}\sum_{u'\in S^l(u)}\sum_{v'\in S^l(v)}|\ol{f}(u')-\ol{f}(v')|^p \\
& \le (N_*)^{2l}\sum_{(u,v)\in J^h_{N,[w]-l}}|f(u)-f(v)|^p,
  \end{align*}
because $l_{J^h_{[w]}}(u,v)\le N$ implies $l_{J^h_{[w]-l}}(\pi(u),\pi(v)) \le N.$ Therefore 
\[(N_*)^{-2l}\epkw{\pi^k(w)}\le\Mc{E}_{p,k-l,\pi^{k-l}(\pi^l(w))}(N_1,N_2,N),\] and same as the former case, we get 
\[\liminf_{k\to\infty}(\epkw{\pi^k(w)})^{1/k}\le\liminf_{k\to\infty}(\epkw{\pi^k(\pi^l(w))})^{1/k},\]
and 
\begin{align*}
\ul{\Mc{R}}_p(N_1,N_2,N)&=\sup_{v\in T}\liminf_{k\to\infty}(\epkw{\pi^k(v)})^{1/k}\\
&\le \sup_{l\ge0}\liminf_{k\to\infty}(\epkw{\pi^k(\pi^l(w))})^{1/k}\\
&\le \ul{\Mc{R}}_p(N_1,N_2,N).
\end{align*}
  \end{enumerate}
\end{proof}

We will consider the case that the weight is uniformly bounded. In the following theorem, we evaluate $\ol{d}_S$ and $\ul{d}_S$ by a partition.
\begin{thm}\label{Smain2}
Assume $\mu_{xy}\asymp 1$ for any $(x,y)\in E$ and $K$ is minimal and connected. Let $d\in \Mc{D}_\infty(G),$ fitting to $(G,E)$ and satisfying the basic framework. Suppose that $d$ satisfies the following for some $\beta>\alpha\ge1;$ 
\begin{align}
\bullet\hspace{0.3cm}& d(x,y)\asymp 1\text{ for any }(x,y)\in E, \notag \\
\bullet\hspace{0.3cm}& h_{2n}(x,x)\asymp\frac{c}{V_d(x,n^{1/\beta})}\text{ for any }n, \tag{DHK($\beta$)}\\
\bullet\hspace{0.3cm}& \text{There exists }\lambda,C>0\text{ such that}\notag \\ &R(B_d(x,\lambda r),B_d(x,r)^c)V(x,r)\ge Cr^\beta \text{ for any }r>r_x, \tag{ARL($\beta$)}\\
\bullet\hspace{0.3cm}& \text{There exists }C'>0\text{ such that }\notag \\ &R(x,B_d(x,r)^c)V(x,r)\le C'r^\beta\text{ for any }r>r_x, \tag{BRU($\beta$)} \\
\bullet\hspace{0.3cm}& \text{There exists }C''>0\text{ such that }\notag \\ &V_d(x,r)\le C''(r^\alpha/s^\alpha) V_d(x,s)\text{ for any }x\in G\text{ and }r>s>0. \tag{VG($\alpha$)}
\end{align}
Then for any $N,N_1\ge0$ and sufficiently large $N_2=N_2(N_1),$ 
\begin{align*}
\ul{d}_S(G,\mu)&=2\frac{\log \ul{\Mc{N}}}{\log \ul{\Mc{N}}-\log\ul{\Mc{R}_2}(N_1,N_2,N)},\\ 
\ol{d}_S(G,\mu)&=2\frac{\log \ol{\Mc{N}}}{\log \ul{\Mc{N}}-\log\ul{\Mc{R}_2}(N_1,N_2,N)}.
\end{align*}
\end{thm}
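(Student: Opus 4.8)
The plan is to split the proof into three reductions, each drawing on a different part of the hypotheses, and then combine them arithmetically. Write $r\in(0,1)$ for the scale factor of the basic framework, so that $d\asymp r^{[w]}$ on $T_e$. Since $d(x,y)\asymp 1$ on $E$, we get $d(w)\asymp 1$ for every $w\in\Lambda_e$, hence the heights of the edge-scale vertices lie in a fixed bounded window; consequently every piece $K_v$ with $v$ at a \emph{fixed} height $m$ has cardinality bounded by a constant depending only on $m$ (using $\sup_{w\in T_e\setminus\Lambda_e}\#(S(w))<\infty$, connectedness of $K$, and bounded degree). This is what makes the comparisons below uniform in the "refinement depth" $k$.

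\textbf{Step 1 (spectral dimension $=$ volume dimension).} From $\mu_{xy}\asymp1$ and bounded degree, $\mu_x\asymp1$, so DHK($\beta$) gives $p_{2n}(x,x)=h_{2n}(x,x)\mu_x\asymp 1/V_d(x,n^{1/\beta})$, i.e. $-\log p_{2n}(x,x)=\log V_d(x,n^{1/\beta})+O(1)$. Substituting $s=n^{1/\beta}$ and using monotonicity of $s\mapsto V_d(x,s)$ along the geometric scale, one obtains $\ol{d}_S(G,\mu)=\tfrac{2}{\beta}\ol{d}_f$ and $\ul{d}_S(G,\mu)=\tfrac{2}{\beta}\ul{d}_f$, where $\ol{d}_f=\limsup_{s\to\infty}\log V_d(x,s)/\log s$ and $\ul{d}_f$ is the corresponding $\liminf$; VG($\alpha$) together with $r_x\asymp1$ shows $\ol{d}_f\le\alpha$, so these are finite. \textbf{Step 2 (volume from the partition).} Fix a vertex $x$ and $w_0\in\Lambda_e$ with $x\in K_{w_0}$. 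Since $d$ is adapted and uniformly finite, $V_d(x,r^{[w_0]-k})$ is comparable to the number of vertices in a bounded union of scale-$r^{[w_0]-k}$ pieces around $x$; using connectedness and minimality of $K$ and bounded degree, the cardinality of such a piece $K_{\pi^{k}(w)}$ is comparable to $\#(S^{k}(\pi^{k}(w)))$ (each "bottom" piece contributes boundedly many vertices, and the map from height-$[w_0]$ descendants into the piece is boundedly-to-one). Hence $V_d(x,r^{[w_0]-k})\asymp\#(S^k(\pi^k(w_0)))$, and Lemma \ref{tilt}(1) applied along the geometric scale $s=r^{[w_0]-k}$ yields
\[\ol{d}_f=\frac{\log\ol{\Mc{N}}}{-\log r},\qquad\ul{d}_f=\frac{\log\ul{\Mc{N}}}{-\log r}.\]

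\textbf{Step 3 (the growth rate of the $2$-energies, and assembly).} For fixed $w\in T$ and $k\ge0$, $\etkw{\pi^k(w)}^{-1}$ is the unit-conductance effective resistance in the graph $\bigl((T)_{[w]},J^h_{N,[w]}\bigr)$ between $S^k(\Gamma_{N_1}(\pi^k(w)))$ and the complement of $S^k(\Gamma_{N_2}(\pi^k(w)))$. Because $K$ is connected, the scale-$r^{[w]}$ pieces are connected of bounded size, $\mu_{xy}\asymp1$, and $N$ is fixed, a network-reduction (trace) comparison gives, with a constant depending on $[w]$ and $N$ but \emph{not} on $k$, that $\etkw{\pi^k(w)}^{-1}\asymp R(A_k,B_k^{\,c})$, where $A_k,B_k\subseteq G$ are the unions of the corresponding pieces and are trapped between $d$-balls about some $x_0\in K_{\pi^k(w)}$ of radii $\asymp N_1 r^{[w]-k}$ and $\asymp N_2 r^{[w]-k}$. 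Choosing $N_2=N_2(N_1)$ large enough that the outer radius of $A_k$ is at most $\lambda$ times the inner radius of $B_k$, BRU($\beta$) applied to the point $x_0$ bounds $R(A_k,B_k^{\,c})$ above by a constant times $(r^{[w]-k})^\beta/V_d(x_0,r^{[w]-k})$, while ARL($\beta$) together with monotonicity of effective resistance in both arguments bounds it below by a constant times the same quantity (here volume doubling from VG($\alpha$) is used to replace $V_d(x_0,\cdot)$ at comparable radii). With Step 2 this gives $\etkw{\pi^k(w)}\asymp\#(S^k(\pi^k(w)))/(r^{[w]-k})^\beta$; taking $k$-th roots, letting $k\to\infty$, and invoking Lemma \ref{tilt}(1) gives $\liminf_{k\to\infty}\etkw{\pi^k(w)}^{1/k}=r^\beta\ul{\Mc{N}}$ for every $w$, hence $\ul{\Mc{R}}_2(N_1,N_2,N)=r^\beta\ul{\Mc{N}}$ for all $N$, all $N_1$, and $N_2=N_2(N_1)$ large. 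Then $\log\ul{\Mc{N}}-\log\ul{\Mc{R}}_2(N_1,N_2,N)=-\beta\log r>0$ (and $<\log\ul{\Mc{N}}$, i.e. $\ul{\Mc{R}}_2<1$, precisely because $\ul{d}_f\le\ol{d}_f\le\alpha<\beta$), so Steps 1--2 give
\[\ul{d}_S(G,\mu)=\frac{2}{\beta}\cdot\frac{\log\ul{\Mc{N}}}{-\log r}=\frac{2\log\ul{\Mc{N}}}{\log\ul{\Mc{N}}-\log\ul{\Mc{R}}_2(N_1,N_2,N)},\]
\[\ol{d}_S(G,\mu)=\frac{2}{\beta}\cdot\frac{\log\ol{\Mc{N}}}{-\log r}=\frac{2\log\ol{\Mc{N}}}{\log\ul{\Mc{N}}-\log\ul{\Mc{R}}_2(N_1,N_2,N)}.\]

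The hard part will be the network-reduction comparison in Step 3: one must show that the unit-conductance effective resistance on the combinatorial adjacency graph $J^h$ of the scale-$r^{[w]}$ pieces is comparable, \emph{uniformly in $k$}, to the effective resistance of $(G,\mu)$ between the corresponding unions of pieces, and that those unions are correctly squeezed between $d$-balls of the expected radii so that ARL($\beta$)/BRU($\beta$) can be applied to a $\lambda$-matched pair of radii. This is the standard "resistance estimate" phenomenon, but it is delicate here because it has to interlock cleanly with the balls appearing in ARL($\beta$) and BRU($\beta$); the combinatorial identification of $V_d$ with $\#(S^k(\pi^k(\cdot)))$ in Step 2 is a second, milder but bookkeeping-heavy point.
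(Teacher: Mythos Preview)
Your proposal is correct and follows essentially the same route as the paper: your Step 2 is exactly Lemma \ref{Smain2lem2}, your Step 3 packages Lemma \ref{Smain2lem1} (the squeeze between $U_{N_1},U_{N_2}$ and $d$-balls, which forces $N_2$ large) and Lemma \ref{Smain2lem3} (the network-reduction comparison $\etkw{\pi^k(w)}\asymp R(U_{N_1}(\pi^k(w)),U_{N_2}(\pi^k(w))^c)^{-1}$) together with ARL($\beta$)/BRU($\beta$), and your Step 1 is the paper's final computation from DHK($\beta$). You have also correctly located the two genuine work points --- the network-reduction comparison (hard) and the volume--cardinality identification (bookkeeping) --- and identified the reason the constants are uniform in $k$ (the heights in $\Lambda_e$ lie in a bounded window since $d(x,y)\asymp 1$ on $E$).
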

The assumption of $d$ seems to be too strong, but we can justify the above assumption in the following way.

\begin{defi}[Volume doubling condition]
Let $(X,d)$ be a metric space and $\mu$ be a measure on $X.$ We say $\mu$ satisfies volume doubling condition with respect to $d$, we will write $\mu$ satisfies ($VD$)$_d$ in short, if there exists $C>0$ such that 
\[V_{d,\mu}(x,2r)\le CV_{d,\mu}(x,r) \hspace{10pt} \text{ for any $x\in X$ and $r>0.$ }\]
\end{defi}

\begin{thm}\label{th-d}
  Assume $(G,\mu)$ satisfies condition {\rm($p_0$)} and {\rm(VD)$_R$} holds. If $R\in\Mc{D}_\infty(G)$ and $V_R(x,r)<\infty$ for any $r>0$ and $x\in G,$ then there exists a fitting metric $d$ such that $d\qs R,$ $d(x,y)\asymp 1$ for any $(x,y)\in E$ and satisfy {\rm(DHK($\beta$)),(ARL($\beta$)),(BRU($\beta$))} and {\rm(VG($\alpha$))} for some $1\le\alpha<\beta.$  
\end{thm}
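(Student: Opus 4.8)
The plan is to produce, from the effective resistance metric $R$, a gentle modification $d$ that has unit scale on edges, and then to transfer the heat-kernel and resistance estimates along the quasisymmetry. First I would record what follows immediately from the hypotheses: since $(G,\mu)$ satisfies $(p_0)$, the graph $(G,E)$ has bounded degree and $\mu_x\asymp 1$, so $\mu$ is comparable to the counting measure; and since $R\in\Mc{D}_\infty(G)$ with all balls of finite volume, $(G,R)$ is a genuine unbounded metric space. The construction of $d$ is the heart: set $d$ to be (a rescaling of) the chain metric built from $R$ by declaring every edge to have length $1$, i.e.\ $d(x,y)=\inf\{\sum_i r(e_i)\}$ over paths, where $r(e)\asymp 1$ for every $e\in E$ because $(p_0)$ forces $R(x,y)\asymp 1$ across an edge (here I would invoke the standard two-sided bound $p_0\mu_x^{-1}\le R(x,y)^{-1}\le \mu_{xy}$, hence $R(x,y)\asymp 1$ on edges). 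The point is that $R$ and $d$ differ only by the gentle rescaling coming from $(VD)_R$: $(VD)_R$ together with the resistance estimates gives that $R$ is itself gentle in the sense of Definition~\ref{defPG}'s surrounding notions, so $R\qs d$ with $d$ enjoying $d(x,y)\asymp 1$ on edges; I would check $d$ is fitting ((F1) is immediate from the edge comparison, and (F2) holds because, $d$ being a length-type metric built from uniformly bounded edge lengths, a geodesic chain exits any ball in unit steps).

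Next I would verify that $d$ satisfies the basic framework with respect to the partition $K$. Uniform finiteness, thickness, adaptedness, and $d\asymp r^{[w]}$ all follow from $(VD)_R$ plus the fact that $R\qs d$ preserves the volume doubling property (the mechanism cited in the introduction, \cite{Kig12,Kig14}); concretely, the volume doubling property of $\mu$ with respect to $d$, combined with the sub-Gaussian resistance bounds (ARL($\beta$))/(BRU($\beta$)), yields the chaining and adaptedness estimates by the standard arguments. Then I would establish the four displayed properties. (DHK($\beta$)): the two-sided heat kernel estimate $h_{2n}(x,x)\asymp V_d(x,n^{1/\beta})^{-1}$ follows from the conjunction of volume doubling and the resistance estimates by the now-classical equivalence (e.g.\ the resistance-form heat kernel theory of Kigami, or Barlow–Coulhon–Kumagai type results) — I would cite the relevant statement and note that the defining hypotheses are exactly (VD) plus (ARL)/(BRU). (ARL($\beta$)) and (BRU($\beta$)): these are hypotheses on $R$ rephrased via $R\asymp d$ up to the gentle rescaling; since $d\qs R$ and both induce the same topology with comparable small-scale behaviour on edges, the resistance–volume products transfer with adjusted constants. (VG($\alpha$)): the upper volume-growth bound $V_d(x,r)\le C(r/s)^\alpha V_d(x,s)$ is precisely a quantitative form of volume doubling with exponent $\alpha$, which I would extract from $(VD)_R$ together with the polynomial volume growth forced by (ARL($\beta$))/(BRU($\beta$)) — indeed these bound $R$-balls' resistances polynomially, and since $(G,\mu)$ is recurrent-or-transient in a controlled way, the volume cannot grow faster than some power; one then takes $\alpha$ to be that power and notes $\alpha<\beta$ because $\beta$ is the walk dimension and $\alpha$ the (fractal/volume) dimension in the resistance metric, with $\beta=\alpha+1$ in the resistance-form setting so strictly $\alpha<\beta$.

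The main obstacle I anticipate is the passage from the volume doubling hypothesis $(VD)_R$ alone (plus $(p_0)$) to the \emph{uniform} exponent $\alpha$ in (VG($\alpha$)) together with the strict inequality $\alpha<\beta$: volume doubling by itself only gives $V(x,r)\le C(r/s)^{\log_2 C}V(x,s)$ with a non-optimal exponent, and one must argue that the resistance estimates (ARL($\beta$)),(BRU($\beta$)) pin $\beta$ down as a walk dimension strictly exceeding that volume exponent. I would handle this by the resistance-form identity relating resistance, volume, and time scaling — in a resistance form, $R(x,B(x,r)^c)\asymp r$ in the intrinsic resistance metric, so after rescaling to $d$ one gets $\beta$ equal to $1$ plus the $d$-volume exponent, giving $\alpha<\beta$ automatically, and the finiteness $V_R(x,r)<\infty$ rules out the degenerate case. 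A secondary but purely technical point is checking that the chain metric $d$ actually induces the original topology and is bi-Lipschitz-to-$R$-at-unit-scale while only quasisymmetric globally; this is routine given the edge comparison $R(x,y)\asymp1$ and the bounded degree of $(G,E)$, so I would treat it briefly and refer to Lemma~\ref{qsfit} for the stability of the fitting property under $\qs$.
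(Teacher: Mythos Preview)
Your proposal has a fundamental gap in the construction of $d$. You build $d$ as the graph metric (chain metric with unit edge lengths), but nothing in the hypotheses $(p_0)$ and (VD)$_R$ forces $R\qs l_E$ or, more to the point, forces the crucial scaling $R(x,y)V_d(x,d(x,y))\asymp d(x,y)^\beta$ that drives all the heat-kernel estimates. On a Sierpi\'nski-gasket-type graph, for instance, resistance, volume, and graph distance scale with three different exponents, and the graph metric does \emph{not} automatically produce a single $\beta$ with the required resistance--volume relation. The paper's construction is essentially different: it sets $\varphi(x,y)=R(x,y)\bigl(V_R(x,R(x,y))+V_R(y,R(x,y))\bigr)$, shows $\varphi$ is a quasi-metric (using (VD)$_R$ and the uniformly shrinking property coming from $(p_0)$), and then invokes \cite[Proposition~14.5]{Hei} to obtain a genuine metric $d$ and an exponent $\beta>1$ with $d^\beta\asymp\varphi$. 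This is the ``time-scale'' metric: by design it satisfies (R($\beta$)), from which (VG($\alpha$)) with $\alpha<\beta$, (ARL($\beta$)), (BRU($\beta$)) and (DHK($\beta$)) are then derived via BCK-type arguments. Your chain metric has no mechanism for producing such a $\beta$.

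Two further issues. First, you treat (ARL($\beta$)) and (BRU($\beta$)) as ``hypotheses on $R$ rephrased via $R\asymp d$'': they are not hypotheses of the theorem but conclusions to be established for the constructed $d$, and their proof in the paper uses (R($\beta$)) together with the uniformly shrinking property and harmonic-function oscillation bounds. Second, your paragraph on verifying ``the basic framework with respect to the partition $K$'' is irrelevant here: Theorem~\ref{th-d} is a purely metric/heat-kernel statement with no partition in sight; the partition enters only later, in Theorem~\ref{Smain2} and Corollary~\ref{Smain3}. Your argument for $\alpha<\beta$ via ``$\beta=\alpha+1$ in the resistance-form setting'' is also not what happens: the paper extracts a strictly positive gain $\iota>0$ from the quasisymmetry between $R$ and $d$ and sets $\alpha=(\beta-\iota)\vee1$.
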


This theorem is a discrete version of the corresponding result in \cite{Kig12}, and also based on \cite{BCK}. See Section 6 for the proof. Combining these theorems, we get the following corollary.
\begin{cor}\label{Smain3}
Assume $\mu$ satisfies {\rm(VD)}$_R,$ $(G,\mu)$ satisfies $\mu_{xy}\asymp 1$ for any $(x,y)\in E,$ $V_R(x,r)<\infty$ for any $r>0,x\in G$ and $\diam(X,R)=\infty.$ If the metric $d,$ taken in Theorem \ref{th-d} satisfies the basic framework (with respect to some minimal connected partition $K$) and 
\begin{equation}\label{thethe}
 \frac{\log \ul{R}_2(N_1,N_2,N)}{\log \ol{N}_*} \le \frac{\log \ul{\Mc{R}}_2(N_1,N_2,N)}{\log \ul{\Mc{N}}} 
\end{equation}
for some $N,N_1\ge0$ and sufficiently large $N_2>N_1.$ Then
\[ \ard(G,R)\le \ul{d}_S(G,\mu) \le \ol{d}_S(G,\mu)<2.\] 
\end{cor}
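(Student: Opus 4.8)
The plan is to chain together Theorem~\ref{th-d}, Theorem~\ref{Smain1}, Theorem~\ref{Smain2} and the quasisymmetric invariance of $\ard$, with \eqref{thethe} carrying out the final numerical comparison. First I would invoke Theorem~\ref{th-d} to obtain a fitting metric $d$ with $d\qs R$, $d(x,y)\asymp1$ for $(x,y)\in E$, and satisfying (DHK($\beta$)), (ARL($\beta$)), (BRU($\beta$)) and (VG($\alpha$)) for some $1\le\alpha<\beta$; by the hypothesis of the corollary this same $d$ satisfies the basic framework with respect to the minimal connected partition $K$, so both Theorem~\ref{Smain1} and Theorem~\ref{Smain2} apply to it. Since $\ard$ is a quasisymmetric invariant and $\qs$ is an equivalence relation, $\ard(G,R)=\ard(G,d)$ (and $\ard(G,R)\ge1$ by Lemma~\ref{qsfit} and Proposition~\ref{sore}), so it suffices to estimate $\ard(G,d)$.

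Next I would observe that the bound $\ol{d}_S(G,\mu)<2$ already falls out of (DHK($\beta$)) and (VG($\alpha$)) alone. Indeed $\mu_x\asymp1$ because $\mu_{xy}\asymp1$ and $(G,E)$ has bounded degree, so $p_{2n}(x,x)=h_{2n}(x,x)\mu_x\asymp V_d(x,n^{1/\beta})^{-1}$ by (DHK($\beta$)); applying (VG($\alpha$)) with $r=n^{1/\beta}$ and $s=r_x$ (and $V_d(x,r_x)=\mu(\{x\})=\mu_x\asymp1$) gives $V_d(x,n^{1/\beta})\le C_x n^{\alpha/\beta}$ for a constant $C_x$ depending only on the fixed base point. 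Hence $-\log p_{2n}(x,x)\le(\alpha/\beta)\log n+O(1)$ and $\ol{d}_S(G,\mu)\le 2\alpha/\beta<2$.

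Now fix $N,N_1\ge0$ and $N_2$ as in the hypothesis \eqref{thethe}, enlarging $N_2$ if necessary so that Theorems~\ref{Smain1} and \ref{Smain2} also apply. By Theorem~\ref{Smain2}, $\ol{d}_S(G,\mu)=2\log\ol{\Mc N}\big/(\log\ul{\Mc N}-\log\ul{\Mc R}_2(N_1,N_2,N))$; since $\ol{\Mc N}\ge\ul{\Mc N}>1$ in the present setting and the left side is a finite number strictly below $2$ by the previous step, the denominator $\log\ul{\Mc N}-\log\ul{\Mc R}_2$ must be positive and strictly larger than $\log\ol{\Mc N}\ge\log\ul{\Mc N}>0$, which forces $\ul{\Mc R}_2(N_1,N_2,N)<1$. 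Then the right-hand side of \eqref{thethe} is negative, and since $\log\ol N_*>0$ (the dyadic refinement of the cables built into $T_r$ makes $\ol N_*>1$) the left-hand side is negative too, i.e. $\ul R_2(N_1,N_2,N)<1$. This puts us in case~(2) of Theorem~\ref{Smain1}, giving $\ard(G,d)\le\ul d^S_2(N_1,N_2,N)\le\ol d^S_2(N_1,N_2,N)<2$.

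It remains to compare the two lower dimensions. Writing $A=\log\ol N_*>0$ and $C=\log\ul{\Mc N}>0$, we have $\ul d^S_2=2A/(A-\log\ul R_2)$ and, by Theorem~\ref{Smain2}, $\ul d_S(G,\mu)=2C/(C-\log\ul{\Mc R}_2)$, both denominators being positive by the preceding step; clearing denominators, the inequality $\ul d^S_2\le\ul d_S(G,\mu)$ reduces to $(-\log\ul{\Mc R}_2)/C\le(-\log\ul R_2)/A$, which is exactly \eqref{thethe}. Combining everything, $\ard(G,R)=\ard(G,d)\le\ul d^S_2\le\ul d_S(G,\mu)\le\ol d_S(G,\mu)<2$, as claimed. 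The only genuinely delicate point is extracting $\ul R_2<1$ (so that case~(2) rather than case~(3) of Theorem~\ref{Smain1} is the relevant one): this is precisely where the a priori bound $\ol d_S(G,\mu)<2$ and the hypothesis \eqref{thethe} are combined. Everything else is a verification that the hypotheses of Theorems~\ref{th-d}, \ref{Smain1} and \ref{Smain2} are in force, together with elementary monotonicity of $x\mapsto 2x/(x+t)$.
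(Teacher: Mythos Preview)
Your proof is correct and follows essentially the same route as the paper: bound $\ol d_S(G,\mu)\le 2\alpha/\beta<2$ from (DHK($\beta$)) and (VG($\alpha$)), rewrite $\ul d^S_2$ and $\ul d_S$ as $2(1-\log\ul R_2/\log\ol N_*)^{-1}$ and $2(1-\log\ul{\Mc R}_2/\log\ul{\Mc N})^{-1}$ via Definition~\ref{ciro2} and Theorem~\ref{Smain2}, use \eqref{thethe} and monotonicity to get $\ul d^S_2\le\ul d_S$, and conclude with Theorem~\ref{Smain1}(2) plus $\ard(G,R)=\ard(G,d)$. Your explicit extraction of $\ul R_2<1$ (via $\ul{\Mc R}_2<1$ and \eqref{thethe}) before invoking Theorem~\ref{Smain1}(2) makes transparent a step the paper leaves implicit in the line ``Since $\ul d^S_2(N_1,N_2,N)<2$, by Theorem~\ref{Smain1}\ldots''.
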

The condition \eqref{thethe} holds in natural settings, including Sierpi\'{n}ski carpets or $n$-gaskets. We give an example in Example \ref{exthe2} such that assumptions of Corollary \ref{Smain3} except \eqref{thethe} hold, and neither $\ul{d}^S_2$ nor $\ol{d}^S_2$ coincides with the spectral dimension $d_S(G,\mu).$  It helps to understand the difference between $\ul{\Mc{R}}$ and $\ul{R}.$ 
\begin{proof}[Proof of Corollary \ref{Smain3}]
 Since $d$ satisfies (VG($\alpha$)) and (DHK($\beta$)), it follows that 
\begin{align*}
\ul{d}_S(G,\mu)&\le 2\limsup_{n\to\infty}\frac{\log V_d(x,n^{1/\beta})}{\log n}\\
& \le 2\limsup_{n\to\infty} \frac{\log V_d(x,1)+\log n^{\alpha/\beta}}{\log n}=2\frac{\alpha}{\beta}<2.
\end{align*}
On the other hand, by definition and Theorem \ref{Smain2}, we have
\begin{align*}
 \ul{d}^S_2(N_1,N_2,N)&=\left(1-\frac{\log \ul{R}_2(N_1,N_2,N)}{\log \ol{N}_*}\right)^{-1}, \\
 \ul{d}_s(G,\mu)&=\left(1-\frac{\log \ul{\Mc{R}}_2(N_1,N_2,n)}{\log \ul{\Mc{N}}} \right)^{-1},
\end{align*}
(note that $\diam(G,d)=\infty$ because $R\qs d$) and hence 
 \[\ul{d}^S_2(N_1,N_2,N) \le\ul{d}_S(G,\mu) \le \ol{d}_S(G,\mu)<2. \]
Since $\ul{d}^S_2(N_1,N_2,N)<2,$ by Theorem \ref{Smain1} we have $\ard(G,d)\le\ul{d}^S_2(N_1,N_2,N).$ Moreover, $\ard(G,R)=\ard(G,d)$ because $R\qs d,$ so this shows
\[\ard(G,R) \le\ul{d}_S(G,\mu) \le \ol{d}_S(G,\mu)<2. \]
\end{proof}
For the rest of this section, we prove Theorem \ref{Smain2}.
First we give general lemmas and definition.
\begin{lem}
Let $g$ be a thick discrete weight function, then for any $M\ge1$, there exists $\eta=\eta_M>0$ such that for any $w\in T_e$, there exists $x\in K_w$ such that  $U^d_M(x, \eta g(\pi(w)))\subseteq K_w.$ 
\end{lem}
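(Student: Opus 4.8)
The plan is to prove, by induction on $M$, the sharper assertion that for every $w\in T_e$ there is $x\in K_w$ such that every vertex of $\Lambda^g_{s,M}(x)$ lies in $T_w$ for \emph{every} scale $s\le\alpha^M g(\pi(w))$, where $\alpha\in(0,1)$ is a thickness constant for $g$ (one may always shrink $\alpha$). This yields the lemma with $\eta_M=\alpha^M$, since $v\in T_w$ forces $K_v\subseteq K_w$ (iterate (PG1)), so $U^g_M(x,\alpha^M g(\pi(w)))=\bigcup_{v\in\Lambda^g_{\alpha^M g(\pi(w)),M}(x)}K_v\subseteq K_w$, while in the degenerate case $\Lambda^g_{\alpha^M g(\pi(w)),M}(x)=\emptyset$ one has $U^g_M(x,\cdot)=\{x\}\subseteq K_w$. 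Two elementary facts will be used: (i) since $g$ is a discrete weight function, (GG1), (GG2) and (T1) force $g(\pi^n(u))\to\infty$ for every $u\in T_e$, so each $u$ has an ancestor in $\Lambda^g_t$ for every $t\ge g(u)$; (ii) a coarsening principle: if $s\le s_0$ and $\Lambda^g_{s_0,M}(x)\subset T_w$, then $\Lambda^g_{s,M}(x)\subset T_w$, seen by replacing each piece in a chain realizing membership at scale $s$ by its ancestor at scale $s_0$ (available by (i)). For $M=1$ the sharper assertion is the definition of thickness combined with (ii).

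For the inductive step, fix $w\in T_e$ and apply thickness to $w$ to obtain $x_0\in K_w$ with $\Lambda^g_{\alpha g(\pi(w)),1}(x_0)\subset T_w$; pick $w'\in\Lambda^g_{\alpha g(\pi(w))}$ with $x_0\in K_{w'}$ (if there is no such vertex, then $\Lambda^g_{\alpha g(\pi(w)),j}(x_0)=\emptyset$ for all $j$ and $x=x_0$ works). Then $w'\in T_e\cap T_w$, and $g(\pi(w'))\le g(\pi(w))$: either $w'=w$, or $\pi(w')$ is a descendant of $w$ and $g(\pi(w'))\le g(w)\le g(\pi(w))$ by (GG2). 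The key claim is the \emph{escape-control} property: for every $s'\le\alpha g(\pi(w))$ and every $v\in T_{w'}\cap\Lambda^g_{s'}$, one has $\Lambda^g_{s',1}(v)\subset T_w$. Indeed, let $u\in\Lambda^g_{s',1}(v)$; if $u=v$ then $u\in T_{w'}\subseteq T_w$, and otherwise $K_u\cap K_v\ne\emptyset$, hence $K_u\cap K_{w'}\ne\emptyset$ as $K_v\subseteq K_{w'}$. Taking the ancestor $u''$ of $u$ in $\Lambda^g_{\alpha g(\pi(w))}$ (available by (i), since $g(u)\le s'\le\alpha g(\pi(w))$) we get $K_{u''}\supseteq K_u$, so $K_{u''}\cap K_{w'}\ne\emptyset$, whence $u''\in\Lambda^g_{\alpha g(\pi(w)),1}(w')\subseteq\Lambda^g_{\alpha g(\pi(w)),1}(x_0)\subset T_w$, and therefore $u\in T_{u''}\subseteq T_w$.

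Now apply the induction hypothesis to $w'$: there is $x_1\in K_{w'}\subseteq K_w$ such that $\Lambda^g_{s,M}(x_1)\subset T_{w'}$ for every $s\le\alpha^M g(\pi(w'))$. Fix $s\le\alpha^{M+1}g(\pi(w))$. Since $g(\pi(w'))>\alpha g(\pi(w))$ (because $w'\in\Lambda^g_{\alpha g(\pi(w))}$), we have $\alpha^M g(\pi(w'))>\alpha^{M+1}g(\pi(w))\ge s$, so the induction hypothesis applies at scale $s$ and gives $\Lambda^g_{s,M}(x_1)\subset T_{w'}$; as moreover $s\le\alpha g(\pi(w))$, the escape-control property yields $\Lambda^g_{s,M+1}(x_1)=\bigcup_{v\in\Lambda^g_{s,M}(x_1)}\Lambda^g_{s,1}(v)\subset T_w$. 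This holds for every $s\le\alpha^{M+1}g(\pi(w))$, closing the induction with $\eta_{M+1}=\alpha^{M+1}$.

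The step I expect to be the main obstacle is the escape-control property, i.e.\ ruling out that a single $E^g_{s'}$-neighbour of a deep vertex $v\in T_{w'}$ leaves $T_w$. What makes it go through with no extra hypothesis on $g$ (in particular without assuming $g$ exponential) is the observation that, because $\pi(w')$ is a descendant of $w$, the coarse scale $\alpha g(\pi(w))$ at which thickness controls the $1$-neighbourhood of $w'$ still dominates all the finer scales $s\le\alpha^{M}g(\pi(w'))$ that arise in the induction, so the coarse-scale control absorbs the boundary effects at those finer scales.
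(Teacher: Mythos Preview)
Your argument is correct and follows essentially the same inductive scheme as the paper: descend to a piece $w'$ at the thickness scale, apply the inductive hypothesis there, and lift the final horizontal step back to the coarser scale to see that it stays inside $K_w$. The main difference is that you use raw thickness (level~$1$) for the outer control on $w$ while the paper invokes the full level-$M$ hypothesis; this yields the sharper constant $\eta_M=\alpha^{M}$ instead of the paper's $\eta_{M+1}=\eta_M^{2}$, and your explicit ``escape-control'' claim together with the coarsening principle makes the uniformity in $s\le\eta_M g(\pi(w))$ transparent, whereas the paper leaves these points implicit.
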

\begin{proof}
 If $M=1$, the statement follows from the definition of $U_1^d(x,s)$. We prove the rest using induction.\\
 Assume the statement for $M\ge 1$ holds. Let $w\in T_e$ and $x\in K_w$ such that $U^g_M(x,\eta g(\pi(w)))\subseteq K_w.$
 \begin{itemize}
 \item If $\Lambda^g_{\eta g(\pi(w)),0}(x)=\emptyset$, then $U^g_{M+1}(x,\eta g(\pi(w)))=\{x\}\in K_w.$
 \item Otherwise, we can take $v\in\Lambda^g_{\eta g(\pi(w)),0}(x)$ and $y\in K_v$ such that 
\[U^g_M(y,\eta^2g(\pi(w)))\subseteq U^g_M(y,\eta g(\pi(v))) \subseteq K_v,\]
and hence
\[U^g_{M+1}(y,\eta^2g(\pi(w)))\subseteq \{K_u\mid u\in\Lambda^g_{\eta g(\pi(w)),1}(v) \}\subseteq K_w. \]
 \end{itemize}
\end{proof}
\begin{lem}\label{lemada}
Let $d\in \Mc{D}_\infty(G).$ If $d$ is adapted for $M=M_0$, then $d$ is  adapted for any $M\ge M_0.$ 
\end{lem}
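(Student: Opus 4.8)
The plan is to exploit two facts about the sets $U^d_M(x,r)$: that they are monotone nondecreasing in $M$, and that, independently of any adaptedness hypothesis, $U^d_M(x,r)$ is always contained in the ball $B_d(x,(M+1)r)$. Fix $M_0$ and let $\alpha_1,\alpha_2>0$ be constants witnessing that $d$ is $M_0$-adapted, and let $M\ge M_0$; I will show that $\alpha_1$ and $M+1$ witness that $d$ is $M$-adapted.

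For the lower inclusion $B_d(x,\alpha_1 r)\subseteq U^d_M(x,r)$, I would first observe that an $E^d_r$-path of length at most $M_0$ is in particular one of length at most $M$, so $\Lambda^d_{r,M_0}(w)\subseteq\Lambda^d_{r,M}(w)$ for every $w\in\Lambda^d_r$, hence $\Lambda^d_{r,M_0}(x)\subseteq\Lambda^d_{r,M}(x)$ and therefore $U^d_{M_0}(x,r)\subseteq U^d_M(x,r)$ (treating separately the degenerate case $\Lambda^d_{r,M_0}(x)=\emptyset$, in which $U^d_{M_0}(x,r)=\{x\}$ by definition and $x\in U^d_M(x,r)$ in all cases). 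Then $B_d(x,\alpha_1 r)\subseteq U^d_{M_0}(x,r)\subseteq U^d_M(x,r)$, so this half of adaptedness persists verbatim with the same constant.

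For the upper inclusion $U^d_M(x,r)\subseteq B_d(x,(M+1)r)$, I would take $y\in U^d_M(x,r)$ with $y\ne x$, so $y\in K_v$ with $v\in\Lambda^d_{r,M}(w)$ for some $w\in\Lambda^d_r$ with $x\in K_w$; choose an $E^d_r$-path $w=v_0,v_1,\dots,v_k=v$ with $k\le M$, pick $z_i\in K_{v_{i-1}}\cap K_{v_i}$ for $1\le i\le k$, and recall that each $v_i\in\Lambda^d_r$ has $\diam(K_{v_i},d)=g_d(v_i)\le r$ by the definition of the scale. The triangle inequality along $x,z_1,z_2,\dots,z_k,y$ then gives $d(x,y)\le(k+1)r\le(M+1)r$. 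Combining the two inclusions yields that $d$ is $M$-adapted.

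I expect no real obstacle here; the only points that need a little care are the bookkeeping of the degenerate case $\Lambda^d_{r,M_0}(x)=\emptyset$ in the graph setting (where $U^d_{M_0}(x,r)$ is defined to be $\{x\}$ rather than a union of cells) and making sure the telescoping chain is counted as $k+1$ cells each of diameter at most $r$, so that the resulting constant is $M+1$ rather than $M$.
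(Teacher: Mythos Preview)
Your proof is correct and follows essentially the same approach as the paper: monotonicity of $U^d_M(x,r)$ in $M$ for the lower inclusion, and a triangle-inequality chain across the cells $K_{v_0},\dots,K_{v_k}$ (each of diameter at most $r$) for the upper inclusion. One cosmetic point: since $B_d(x,r)$ denotes the \emph{open} ball, the chain argument only gives $d(x,y)\le (M+1)r$, not strict inequality, so you should take $\alpha_2=M+2$ (as the paper does) rather than $M+1$.
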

\begin{proof}
  Since $d$ is $M_0$ adapted, there exists $\alpha_1,$
\[B_d(x,\alpha_1r)\subseteq U^d_{M_0}(x,r)\subseteq U^d_{M}(x,r).\]
On the other hand, if $y\in U^d_{M}(x,r)\setminus\{x\}$, then there exists $\{w_i\}_{i=0}^n\subseteq \Lambda^d_r$ with $n\le M$ such that $x\in K_{w_0}, y\in K_{w_n}$ and $K_{w_{i-1}}\cap K_{w_i}\ne\emptyset$ for any $i\in [1,n]_{\Mb{Z}}.$ Hence $d(x,y)\le \sum_{i=0}^nd(w_i)\le (M+1)r$, therefore $U^d_{M}(x,r)\subseteq B_d(x,(M+2)r).$  
\end{proof}
\begin{defi}
 For $w\in T_e$ and $M\ge 0,$ define $U_M(w)$ by 
\[U_M(w)=\cup\{K_v\mid v\in \Gamma_M(w)\cap T_e \}.\]
\end{defi}
For the rest of this section, we assume $d$ satisfies the basic framework, $K$ is connected, $\mu_{xy}\asymp 1$ and $d(x,y)\asymp 1$ for any $(x,y)\in E.$  Let $\eta_0>0$ be such that $\eta_0^{-1}r^{[w]}\le d(w) \le \eta_0r^{[w]}$ and write $N_*=\sup_{w\in T}\#(S(w))$ (note that $N_*\le \sup_{w\in T_e\setminus \Lambda_e}\#(S(w))\vee 2r^{-1}<\infty).$\\
Moreover, since $d(x,y)\asymp 1$ for any $(x,y)\in E$ and $d(w)\asymp r^{[w]}$ for $w\in \Lambda_e,$  there exist $m_0,m_1 \in \Mb{Z}$ such that $m_0\le [w] \le m_1$ for any $w\in \Lambda_e.$  
\begin{lem}
 $\sup_{w\in T}\#(\Gamma_1(w))<\infty.$ 
\end{lem}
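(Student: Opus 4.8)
The plan is to reduce the statement, by passing to the cable system, to the uniform finiteness of $d_{\Mf{C}}$ with respect to $\Mc{K}$ supplied by Lemma \ref{junbi}. Recall from the proof of Theorem \ref{Smain1} that $J^h_M(K)=J^h_M(\Mc{K})$ for every $M$; in particular, for $w\in T=T_r$ one has $l_{J^h_{[w]}}(w,v)\le 1$ if and only if $v=w$ or ($v\in(T)_{[w]}$ and $\Mc{K}_v\cap\Mc{K}_w\ne\emptyset$), so that $\Gamma_1(w)\subseteq\{w\}\cup\{v\in(T)_{[w]}\mid \Mc{K}_v\cap\Mc{K}_w\ne\emptyset\}$. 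Hence it suffices to bound $\#\{v\in(T)_{[w]}\mid\Mc{K}_v\cap\Mc{K}_w\ne\emptyset\}$ uniformly in $w\in T_r$. Since $d_{\Mf{C}}\in\Mc{D}_\infty(\Mf{C}_G)$, Proposition \ref{propgdgmu} gives that $d_{\Mf{C}}$ ($=g_{d_{\Mf{C}}}$) is a weight function on $T_r$ with respect to $\Mc{K}$, and by Lemma \ref{junbi} it is uniformly finite and satisfies $d_{\Mf{C}}(w)\asymp r^{[w]}$ for all $w\in T_r$. Write $C_0:=\sup\{\#(\Lambda^{d_{\Mf{C}}}_{s,1}(v))\mid s>0,\ v\in\Lambda^{d_{\Mf{C}}}_s\}<\infty$ and fix $\eta_1\ge 1$ with $\eta_1^{-1}r^{[w]}\le d_{\Mf{C}}(w)\le\eta_1 r^{[w]}$ for all $w\in T_r$.

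The key step is to pass from the height index to the scale index. Fix $w\in T_r$ with $[w]=m$ and put $s=\eta_1 r^{m}$. Since $d_{\Mf{C}}(w)\le\eta_1 r^m=s$, since $n\mapsto d_{\Mf{C}}(\pi^n(w))$ is non-decreasing (by (G2)), and since $d_{\Mf{C}}(\pi^n(w))\ge\eta_1^{-1}r^{\,m-n}\to\infty$ as $n\to\infty$, there is a unique $j_w\ge 0$ with $\widetilde{w}:=\pi^{j_w}(w)\in\Lambda^{d_{\Mf{C}}}_s$; moreover $\eta_1^{-1}r^{\,m-j_w}\le d_{\Mf{C}}(\widetilde{w})\le s=\eta_1 r^m$ forces $r^{-j_w}\le\eta_1^{2}$, i.e. $j_w\le j_0:=\lceil\log_{1/r}(\eta_1^{2})\rceil$, a constant independent of $w$. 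By (P1) for the partition $\Mc{K}$ (Lemma \ref{lemKMcK} (1)), iterating $\Mc{K}_{\pi(u)}\supseteq\Mc{K}_u$ gives $\Mc{K}_{\widetilde{w}}\supseteq\Mc{K}_w$, and similarly for any other vertex.

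Now let $v\in(T)_m$ satisfy $\Mc{K}_v\cap\Mc{K}_w\ne\emptyset$. As above there is $\widetilde{v}=\pi^{j_v}(v)\in\Lambda^{d_{\Mf{C}}}_s$ with $j_v\le j_0$ and $\Mc{K}_{\widetilde{v}}\supseteq\Mc{K}_v$, whence $\Mc{K}_{\widetilde{v}}\cap\Mc{K}_{\widetilde{w}}\supseteq\Mc{K}_v\cap\Mc{K}_w\ne\emptyset$; therefore $\widetilde{v}\in\Lambda^{d_{\Mf{C}}}_{s,1}(\widetilde{w})$, a set with at most $C_0$ elements. Since $\widetilde{v}$ is an ancestor of $v$ with $[v]-[\widetilde{v}]=j_v\le j_0$, we have $v\in\bigcup_{i=0}^{j_0}S^i(\widetilde{v})$, and $\#(S^i(u))\le N_*^{\,i}$ for every $u$ and $i$. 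Consequently $\#\{v\in(T)_m\mid\Mc{K}_v\cap\Mc{K}_w\ne\emptyset\}\le C_0\sum_{i=0}^{j_0}N_*^{\,i}$, and so $\#(\Gamma_1(w))\le 1+C_0\sum_{i=0}^{j_0}N_*^{\,i}$, a bound independent of $w$, which proves the lemma.

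The only genuine obstacle is the passage from scale-indexed neighbourhoods, where uniform finiteness is formulated, to the height-indexed neighbourhood $\Gamma_1(w)$; it is resolved by the two-sided comparison $d_{\Mf{C}}(w)\asymp r^{[w]}$, which bounds the number of levels between a vertex and the scale $\Lambda^{d_{\Mf{C}}}_s$ that it belongs to after a few applications of $\pi$. It is essential here to work on the cable system $\Mf{C}_G$ with the partition $\Mc{K}$ and weight $d_{\Mf{C}}$: the discrete weight function $d$ is defined only on $T_e$, so its uniform finiteness does not directly control the vertices of $T_r\setminus T_e$, whereas $d_{\Mf{C}}$ is defined on all of $T_r$.
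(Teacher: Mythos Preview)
Your argument is correct and rests on the same core mechanism as the paper's: the comparison $d_{\Mf{C}}(w)\asymp r^{[w]}$ (resp.\ $d(w)\asymp r^{[w]}$) forces every vertex of $(T)_m$ to lie within a bounded number $j_0$ of applications of $\pi$ from the scale $\Lambda_s$ (for $s\asymp r^m$), so the height-indexed neighbourhood $\Gamma_1(w)$ is controlled by a scale-indexed one via uniform finiteness, times a factor $N_*^{j_0}$ for the descendants. The implementation differs. The paper stays on the graph side, where the discrete weight $d$ is only defined on $T_e$, so it splits into two cases: for $w\in T_e$ it pulls back to $\Lambda^d_{\eta_0 r^{[w]}}$ and uses uniform finiteness of $d$ directly; for $w\notin T_e$ it observes from the explicit structure of $T_r\setminus T_e$ that $\#(\Gamma_1(w))\le\sup_x\#\{y:y\sim x\}+1$, invoking only bounded degree. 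You instead pass to the cable system, where $d_{\Mf{C}}$ is a genuine weight function on all of $T_r$ and $J^h_m(K)=J^h_m(\Mc{K})$, so a single argument covers every $w$; this is slightly slicker but pulls in more machinery (Lemma \ref{lemKMcK}, Lemma \ref{junbi}, and the equality of horizontal edges from the proof of Theorem \ref{Smain1}).

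One caveat: you invoke Lemma \ref{junbi}, whose hypotheses are those of Theorem \ref{Smain1} and in particular include that $d$ fits to $(G,E)$. The standing assumptions stated just before the present lemma list only the basic framework, $K$ connected, $\mu_{xy}\asymp 1$, and $d(x,y)\asymp 1$ on $E$; fitting is not among them. Since the lemma is a preparation for Theorem \ref{Smain2}, which \emph{does} assume fitting, this is harmless in context, but the paper's direct argument avoids the dependence altogether.
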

\begin{proof}
  If $w\in T_e,$ then for $k>2\log \eta_0/\log2,$
\begin{align*}
(\Gamma_1(w))& 
\le\#(\{v\in (T)_{[w]}\mid\text{there exists }v',w'\in\Lambda^d_{\eta_0 r^{[w]}}
\text{ such that }\\
&\hspace{120pt} v\in T_{v'}, w\in T_{w'}\text{ and }K_{v'}\cap K_{w'}\ne\emptyset \})
\\
& \le N_*^k\sup_{w'}\#(\Lambda^d_{\eta_0 r^{[w]},1}(w')).
\end{align*}
Otherwise, $\#(\Gamma_1(w))\le \sup_{x\in G}\#(\{y\mid y\sim x\})+1.$ Since $d$ is uniformly finite and $(G,E)$ is bounded degree, these values are bounded.
\end{proof}
We write $L_*=\sup_{w\in T}\#(\Gamma_1(w)).$
\begin{lem}\label{Smain2lem1}
Let $N_1\ge 0$ and $\lambda\in(0,1),$ then there exists $N_2$ and $\xi>0$ such that for any $x\in G$ and $w\in T_e$ such that $x\in K_w,$
\begin{equation}
U_{N_1}(w)\subset B_d(x,\lambda\xi r^{[w]}),\hspace{10pt} B_d(x,\xi r^{[w]})\subset U_{N_2}(w).
\end{equation}
\end{lem}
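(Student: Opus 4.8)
The plan is to prove the two inclusions separately. Write $m=[w]$, let $C_0>0$ satisfy $C_0^{-1}r^{[v]}\le d_{\Mf{C}}(v)\le C_0r^{[v]}$ for all $v\in T_r$ (Lemma~\ref{junbi}), and recall from the proof of Theorem~\ref{Smain1} that $J^h_\ell(K)=J^h_\ell(\Mc{K})$ for every $\ell$, so that $J^h_\ell$ is the intersection graph of the (compact) pieces $\Mc{K}_v$, $v\in(T)_\ell$, and that $d_{\Mf{C}}$ restricts to $d$ on $G\times G$ while $K_v\subseteq\Mc{K}_v$ for $v\in T_e$. For the first inclusion, fix $v\in\Gamma_{N_1}(w)\cap T_e$, a point $z\in K_v$, and a path $w=u_0,u_1,\dots,u_n=v$ in $J^h_m$ with $n\le N_1$. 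Choosing $z_i\in\Mc{K}_{u_{i-1}}\cap\Mc{K}_{u_i}$ and using that every $u_i$ has height $m$, so $\diam(\Mc{K}_{u_i},d_{\Mf{C}})\le C_0r^m$, we get
\[ d(x,z)=d_{\Mf{C}}(x,z)\le\sum_{i=0}^{n}\diam(\Mc{K}_{u_i},d_{\Mf{C}})\le C_0(N_1+1)r^m. \]
Thus, if $\xi$ is fixed large enough that $\lambda\xi>C_0(N_1+1)$ (and, say, $\xi\ge\alpha_1\eta_0$, which we will need below), then $U_{N_1}(w)\subset B_d(x,\lambda\xi r^m)$; note $\xi$ depends only on $N_1,\lambda$ and the structural constants.

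For the second inclusion, keep this $\xi$ and take $z\in B_d(x,\xi r^m)\subseteq G$. Since $w\in T_e$ and $T_e$ is closed under $\pi$ (if $\#K_{\pi(w)}>2$ then $\#K_{\pi^2(w)}\ge\#K_{\pi(w)}>2$ by (PG1)), there is $w'=\pi^k(w)\in T_e$ with $x\in K_{w'}$ and $w'\in\Lambda^d_{s}$ for $s:=\xi r^m/\alpha_1$, with $\alpha_1$ the adaptedness constant of Definition~\ref{coada} and $k$ bounded by a constant $k_0$ because $d\asymp r^{[\cdot]}$ (the choice $\xi\ge\alpha_1\eta_0$ guarantees $s\ge d(w)$, so $k\ge0$). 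By $M_*$-adaptedness, $B_d(x,\xi r^m)=B_d(x,\alpha_1s)\subseteq U^d_{M_*}(x,s)=\Cup\{K_u\mid u\in\Lambda^d_{s,M_*}(x)\}$. Every such $u$ has $d(u)\asymp s\asymp r^m$, so $|[u]-m|\le c_0$ for a constant $c_0$, and $u$ lies within $E^d_s$-distance $M_*$ of $w'$. Since $s$ and $r^m$ are comparable and $J^h_m=J^h_m(\Mc{K})$, intersection of pieces at scale $s$ becomes, after ascending or descending each piece to height $m$, bounded $J^h_m$-distance, with a bound $N_2$ depending only on $M_*,c_0,k_0$ and $L_*=\sup_w\#(\Gamma_1(w))$. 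Hence each point of $B_d(x,\xi r^m)$ lies in $K_v$ for some $v\in(T)_m\cap T_e$ with $l_{J^h_m}(w,v)\le N_2$, i.e. $B_d(x,\xi r^m)\subset U_{N_2}(w)$.

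The main obstacle is this last translation, because $T_r$ is uneven: the edge-nodes $\Lambda_e$ have heights confined to $[m_0,m_1]$, while their $T_e$-neighbours are subdivided to arbitrary depth, so one must check that a $G$-vertex $z$ reached inside the metric ball actually belongs to some height-$m$ piece \emph{in $T_e$} and not merely to a sub-edge piece (which $U_{N_2}(w)$ does not see). When $m\le m_0$ this is automatic, since no sub-edge piece occurs at height $m$; when $m_0<m\le m_1$ one has $r^m\asymp1$, and here one uses that the sub-edges meeting the finite set $K_w$ are finite in number and are already exhausted by the $J^h_m$-ball about $w$ of bounded radius, together with the connectedness of $K$, to deduce that such a $z$ is in fact contained in a $T_e$-piece at bounded $J^h_m$-distance from $w$. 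Once this is settled, the remainder is the standard scale-comparison bookkeeping of Kigami's framework already used in the proofs of Lemma~\ref{junbi} and Theorem~\ref{Smain1}.
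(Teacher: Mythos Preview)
Your first inclusion is fine and essentially equivalent to the paper's (the paper routes it through $U^d_{N_1\vee M_*}$ and adaptedness, you sum diameters directly; both give $U_{N_1}(w)\subset B_d(x,C(N_1+1)r^{[w]})$).

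The second inclusion has a real gap. You correctly use adaptedness to land in $U^d_{M_*}(x,s)$ with $s\asymp r^m$ and note that every $u\in\Lambda^d_{s,M_*}(x)$ has $|[u]-m|\le c_0$. But the sentence ``intersection of pieces at scale $s$ becomes, after ascending or descending each piece to height $m$, bounded $J^h_m$-distance'' is where the work lies, and you do not supply it. When $[u]<m$ you must \emph{descend} to $S^{m-[u]}(u)$, and to bound the $J^h_m$-distance between two arbitrary descendants of $u$ you need that the nerve of $\{\Mc{K}_{v}:v\in S^{m-[u]}(u)\}$ is connected. This is precisely where connectedness of $K$ enters: $K$ connected makes each $\Mc{K}_u$ path-connected in $\Mf{C}_G$, so the cover by the $\Mc{K}_v$'s has a connected nerve on at most $N_*^{c_0}$ vertices, hence $J^h_m$-diameter $\le N_*^{c_0}$. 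You invoke connectedness only for the secondary $T_e$-versus-sub-edge issue, and you list $L_*$ as the governing constant, which plays no role here; the bound is governed by $N_*^{c_0}$. Your deferral to ``standard scale-comparison bookkeeping of Lemma~\ref{junbi} and Theorem~\ref{Smain1}'' is also misplaced, since those results neither assume $K$ connected nor perform a descent of this kind.

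The paper's proof avoids this by going \emph{up} first: it ascends to the ancestor $\pi^m(w)$ at the coarse scale (where every $\Lambda^d_s$-piece, being in $T_e$, lifts safely to height $[w]-m$), obtains $B_d(x,\xi r^{[w]})\subset U_{(N_1\vee M_*)+1}(\pi^m(w))$, and then descends in one uniform step $U_{M'+1}(\pi^m(w))\subset U_{N_*^m(M'+1)}(w)$, invoking connectedness of $K$ for exactly this inclusion. As for the $T_e$ obstacle you flag: it is real, but the lemma is only applied in the proof of Theorem~\ref{Smain2} with $[w]\le m_0$, where $(T)_{[w]}\subset T_e$ automatically, so neither proof needs to resolve it in full generality.
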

\begin{proof}
Since $d$ is $M_*$ adapted, there exists $\eta_1$ such that
\[U_{N_1}(w)\subset U^d_{N_1\vee M_*}(x,\eta_0r^{[w]})\subset B_d(x,\eta_1 r^{[w]}),\]
and 
\[B_d(x,\lambda^{-1}\eta_1r)\subset U^d_{N_1\vee M_*}(x,\lambda^{-1}\eta_1^2r),\]
for any $x\in G$ and $w\in T$ such that $x\in K_w.$ Let $m=\lceil -\log(\lambda^{-1}\eta_0\eta_1^2)/\log r \rceil,$ then
\[U^d_{N_1\vee M_*}(x,\lambda^{-1}\eta'^2r^{[w]})\subset U_{(N_1\vee M_*)+1}(\pi^m(w))\subset U_{N^m_*((N_1\vee M_*)+1)}(w),\]
because $K$ is connected. Adjusting constants, we get desired result.
\end{proof}
\begin{lem}\label{Smain2lem2}
Assume {\rm(VG($\alpha$))} holds. Then $\#(S^{m_1-[w]}(w))\asymp V_d(x,r^{[w]})$ for any $w\in T_e$ and $x\in K_w.$
\end{lem}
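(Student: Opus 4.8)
The plan is to establish the chain of comparisons
\[
\#(S^{m_1-[w]}(w))\ \asymp\ \#(\Lambda_e\cap T_w)\ \asymp\ \#(K_w)\ \asymp\ \mu(K_w)\ \asymp\ V_d(x,r^{[w]}),
\]
with all implied constants uniform in $w\in T_e$ and $x\in K_w$. For the first comparison, note that $w\in T_e$ forces $[w]\le m_1$, and that at height $m_1$ the subtree $T_w$ is the disjoint union of the sets $S^{m_1-[v']}(v')$ over $v'\in\Lambda_e\cap T_w$ (every height-$m_1$ descendant of $w$ lies in $T_{v'}$ for a unique such $v'$, by Lemma \ref{lambdae}). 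By Definition \ref{Tr} one has $\#(S^{j}(v'))=\#\Mf{S}_{v',j}=2^{n(j)}$ for $j\ge1$ and $\#(S^0(v'))=1$, and $2^{-n(j)}\le r^j<2^{1-n(j)}$ gives $r^{-j}\le 2^{n(j)}<2r^{-j}$; since $m_1-[v']$ ranges only over $[0,m_1-m_0]_\Mb{Z}$, each summand lies between $1$ and $2r^{-(m_1-m_0)}$, so summing gives $\#(S^{m_1-[w]}(w))\asymp\#(\Lambda_e\cap T_w)$. For the second comparison, minimality of $K$ makes $v'\mapsto K_{v'}$ injective on $\Lambda_e$; iterating (PG1) and using (PG2) yields $K_w=\Cup_{v'\in\Lambda_e\cap T_w}K_{v'}$ with each $K_{v'}$ a two-point set, so $\#(K_w)\le 2\#(\Lambda_e\cap T_w)$, while conversely the $K_{v'}$ are distinct edges of $G$ with both endpoints in $K_w$, whose number is at most $\tfrac12(\sup_{x}\#\{y\mid y\sim x\})\,\#(K_w)$ by bounded degree. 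Finally $\mu_x\asymp 1$ (bounded degree together with $\mu_{xy}\asymp 1$), hence $\mu(K_w)=\sum_{x\in K_w}\mu_x\asymp\#(K_w)$.

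The crux is the last comparison $V_d(x,r^{[w]})\asymp\mu(K_w)$ for $x\in K_w$. The upper bound is immediate: every $y\in K_w$ satisfies $d(x,y)\le d(w)\le\eta_0 r^{[w]}$, so $K_w\subseteq B_d(x,2\eta_0 r^{[w]})$ and {\rm(VG($\alpha$))} (with $\eta_0\ge1$) gives $\mu(K_w)\le V_d(x,2\eta_0 r^{[w]})\le C''(2\eta_0)^\alpha V_d(x,r^{[w]})$. For the lower bound I would first pass to a point deep inside $K_w$: applying thickness in its iterated form (the version producing $U^d_{M_*}$ rather than $U^d_1$, the lemma preceding Lemma \ref{lemada}) there is $x_w\in K_w$ with $U^d_{M_*}(x_w,\eta_{M_*}d(\pi(w)))\subseteq K_w$, and since $d$ is $M_*$-adapted and $d(\pi(w))\asymp r^{[w]}$ this yields a ball $B_d(x_w,c_1 r^{[w]})\subseteq K_w$ with $c_1>0$ independent of $w$; hence $\mu(K_w)\ge V_d(x_w,c_1 r^{[w]})$, which is $\asymp V_d(x_w,r^{[w]})$ by {\rm(VG($\alpha$))}. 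Since $x,x_w\in K_w$ gives $B_d(x,r^{[w]})\subseteq B_d(x_w,(1+\eta_0)r^{[w]})$, one further use of {\rm(VG($\alpha$))} transports the estimate back, $V_d(x,r^{[w]})\le C''(1+\eta_0)^\alpha V_d(x_w,r^{[w]})\le C\,\mu(K_w)$. Chaining the comparisons proves the lemma.

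The main obstacle is exactly this lower bound $V_d(x,r^{[w]})\le C\mu(K_w)$: a $d$-ball of radius $r^{[w]}$ around an arbitrary $x\in K_w$ can spill into several neighbouring level-$[w]$ cells, so there is no direct containment inside a controlled neighbourhood of $K_w$, and one is forced to route through the interior point $x_w$ furnished by thickness, using {\rm(VG($\alpha$))} both to manufacture a genuine ball inside $K_w$ around $x_w$ and to carry the volume bound from $x_w$ back to $x$ while absorbing the scale factors $\eta_0$ and $c_1$. The first two comparisons, by contrast, amount to bookkeeping with the $\Mf{S}$-layers of $T_r$ and to elementary counting in a bounded-degree graph.
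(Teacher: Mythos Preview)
Your proof is correct and follows essentially the same strategy as the paper's: both establish $\#(S^{m_1-[w]}(w))\asymp\#(\Lambda_e\cap T_w)\asymp\mu(K_w)$ via minimality and bounded degree, and then compare $\mu(K_w)$ with $V_d(x,r^{[w]})$ by using thickness to produce an interior point $x_w\in K_w$ with a $d$-ball $B_d(x_w,c_1r^{[w]})\subseteq K_w$, together with {\rm(VG($\alpha$))} to transport between $x_w$ and $x$. The only cosmetic differences are that you insert $\#(K_w)$ as an explicit intermediate and decompose $S^{m_1-[w]}(w)$ directly as $\bigsqcup_{v'\in\Lambda_e\cap T_w}S^{m_1-[v']}(v')$, whereas the paper sandwiches $\#(\Lambda_e\cap T_w)$ between $\#(S^{(m_0-[w])\vee0}(w))$ and $\#(S^{m_1-[w]}(w))$.
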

\begin{proof}
 Since $\mu\asymp 1$ , $K$ is minimal and $(G,E)$ has a bounded degree,
\[\mu(K_w)\asymp \sum_{v\in T_w\cap\Lambda_e}\mu(K_v)\asymp\#(\{v\mid v\in T_w\cap\Lambda_e \}).\]
Moreover, since $m_0\le [v] \le m_1 $ for any $v\in \lambda_e,$
\begin{align*}
\#(S^{m_1-[w]}(w))&\ge\#(\{v\mid v\in T_w\cap\Lambda_e \})\\
&\ge \#(S^{(m_0-[w])\vee 0})\ge N_*^{-2}\#(S^{m_1-[w]}(w)).
\end{align*}
On the other hand, let $w\in T_e$ and $x\in K_w.$ Then there exist $\eta_1>0$ and $M_*\in\Mb{N}$ such that
\[ K_w \subseteq U^d_{M_*}(x,\eta r^{[w]}) \subseteq B_d(x,\eta_1r^{[w]}),\]
because $d$ is adapted. This together with (VG($\alpha$)) shows there exists $C_1>0$ such that $\mu(K_w)\le C_1V_d(x,r^{[w]}).$ \\Moreover, since $d$ is thick, there exists $\eta_2,\eta_3>0$ and $x'\in K_w$ such that 
\[K_w \supseteq U^d_{M_*}(x',\eta_2\eta^{-1}r^{[w]-1})\supseteq B_d(x',\eta_3r^{[w]}).\]
Note that $d(x,x')\le d(w)\le \eta r^{[w]},$ this together with (VG($\alpha$)) shows there exists $C_2>0$ such that
\[V_d(x,r^{[w]})\le V_d(x',(1+\eta)r^{[w]}) \le C_2V_d(x',\eta_3r^{[w]})\le C_2\mu(K_w).\]
Therefore 
\[\#(S^{m_1-[w]}(w))\asymp \mu(K_w)\asymp V_d(x,r^{[w]}) \]
for any $w\in T_e$ and $x\in K_w.$ 
\end{proof}
\begin{lem}\label{Smain2lem3}
Fix any $w\in T$ such that $[w]\le m_0.$ Then 
\[R(U_{N_1}(\pi^k(w)),U_{N_2}(\pi^k(w))^c)^{-1}\asymp \etkw{\pi^k(w)} \] 
for any $k\ge0.$
\end{lem}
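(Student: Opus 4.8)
The plan is to read both sides of the claimed relation as reciprocal effective resistances and to match them by a coarse‑graining (network reduction) argument. On one side, $R(U_{N_1}(\pi^k(w)),U_{N_2}(\pi^k(w))^c)^{-1}=\inf\{\mathcal{E}_\mu(f)\mid f|_{U_{N_1}(\pi^k(w))}\equiv1,\ f|_{U_{N_2}(\pi^k(w))^c}\equiv0\}$; on the other side, $\etkw{\pi^k(w)}$ is, directly from Definition \ref{ciro} with $p=2$, the reciprocal effective resistance of the simple graph $J^h_{N,[w]}$ with all conductances equal to $1$, between the vertex sets $S^k(\Gamma_{N_1}(\pi^k(w)))$ and $(S^k(\Gamma_{N_2}(\pi^k(w))))^c$. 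Thus the lemma asks that passing from the weighted graph $(G,\mu)$ to this ``cell network at level $[w]$'' distort the conductance by at most a bounded factor, the bound depending on $w,N_1,N_2,N$ but required to be uniform in $k$. Since $\mu_{xy}\asymp1$, the weight can be absorbed into constants throughout, so it suffices to produce two‑sided comparisons of the two Dirichlet infima.

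First I would record the structural facts that make coarse‑graining work, all of which use $[w]\le m_0$. Because every vertex of $\Lambda_e$ has height $\ge m_0$, every $v\in(T)_n$ with $n\le m_0$ lies in $T_e$; in particular every $w'\in(T)_{[w]}$ and every $v\in\Gamma_{N_\ell}(\pi^k(w))$ (these have height $[w]-k\le m_0$) lie in $T_e$, so that $U_{N_\ell}(\pi^k(w))=\bigcup\{K_{w'}\mid w'\in S^k(\Gamma_{N_\ell}(\pi^k(w)))\}$ for $\ell=1,2$. By (PG1)--(PG3) every $x\in G$ lies in some $K_{w'}$ with $w'\in(T)_{[w]}$; such cells are connected (as $K$ is connected) and of size at most $2\,(\sup_{v\in T_e\setminus\Lambda_e}\#S(v))^{m_1-[w]}$, a constant independent of $k$. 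Together with $\mu_{xy}\asymp1$ this gives a uniform bound $C_0=C_0(w)$ on the effective resistance between any two vertices of any such cell, and a uniform Poincaré estimate $|f(a)-f(b)|^2\le C_0(w)\,\mathcal{E}_\mu(f;K_{w'})$ for $a,b\in K_{w'}$. I would also use $L_*=\sup_{v\in T}\#\Gamma_1(v)<\infty$, the bounded degree of $(G,E)$, and uniform finiteness of $d$ to bound, by constants depending only on $w$ and $N$, the number of level‑$[w]$ cells containing a given vertex or edge, and the $J^h_{N,[w]}$‑degree.

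For $R^{-1}\le C\,\etkw{\pi^k(w)}$ I would push down a near‑minimizer $\tilde f$ on $(T)_{[w]}$: choose for each $x\in G$ a cell $w'(x)\in(T)_{[w]}$ with $x\in K_{w'(x)}$, taking $w'(x)\in S^k(\Gamma_{N_1}(\pi^k(w)))$ whenever $x\in U_{N_1}(\pi^k(w))$, and set $f(x)=\tilde f(w'(x))$. Then $f\equiv1$ on $U_{N_1}(\pi^k(w))$ and $f\equiv0$ on $U_{N_2}(\pi^k(w))^c$ (if $x\notin K_v$ for all $v\in\Gamma_{N_2}(\pi^k(w))$ then $w'(x)\notin S^k(\Gamma_{N_2}(\pi^k(w)))$, so $\tilde f(w'(x))=0$). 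For each $(x,y)\in E$, (PG3) gives the level‑$[w]$ ancestor $u'$ of the edge‑cell $\{x,y\}$, and $w'(x),u',w'(y)$ are at pairwise $J^h_{[w]}$‑distance $\le1$, so $(f(x)-f(y))^2\le2(\tilde f(w'(x))-\tilde f(u'))^2+2(\tilde f(u')-\tilde f(w'(y)))^2$, each term a summand of $\mathcal{E}_{J^h_{N,[w]}}(\tilde f)$; since each $J^h_{[w]}$‑pair is hit by boundedly many edges and $\mu_{xy}\asymp1$, summing yields $\mathcal{E}_\mu(f)\le C\,\mathcal{E}_{J^h_{N,[w]}}(\tilde f)$. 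For the reverse, I would lift the minimizer $f$ of the continuous problem: fix $c_{w'}\in K_{w'}$ for each $w'\in(T)_{[w]}$ and set $\tilde f(w')=f(c_{w'})$. Cells in $S^k(\Gamma_{N_1}(\pi^k(w)))$ lie in $U_{N_1}(\pi^k(w))$, so $\tilde f\equiv1$ there; and if $w'\notin S^k(\Gamma_{N_2+1}(\pi^k(w)))$ then $K_{w'}$ does not meet $U_{N_2}(\pi^k(w))$ (project a length‑$\le1$ $J^h_{[w]}$‑path to level $[w]-k$), so $\tilde f\equiv0$ off $S^k(\Gamma_{N_2+1}(\pi^k(w)))$, i.e.\ $\tilde f$ is admissible for $\mathcal{E}_{2,k,\pi^k(w)}(N_1,N_2+1,N)$. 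Chaining through $\le N$ pairwise‑intersecting cells and their shared vertices, and using the uniform Poincaré bound $C_0(w)$ inside each cell together with bounded multiplicities, gives $\mathcal{E}_{J^h_{N,[w]}}(\tilde f)\le C\,\mathcal{E}_\mu(f)$, hence $\mathcal{E}_{2,k,\pi^k(w)}(N_1,N_2+1,N)\le C\,R^{-1}$.

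Combining, $c\,\mathcal{E}_{2,k,\pi^k(w)}(N_1,N_2+1,N)\le R(U_{N_1}(\pi^k(w)),U_{N_2}(\pi^k(w))^c)^{-1}\le C\,\etkw{\pi^k(w)}$, and it remains to remove the one‑step discrepancy in the second parameter. This is the one genuinely delicate point: I would invoke (or reprove, in the spirit of the estimates underlying Theorem \ref{Kmain1} and Theorem \ref{sigmain1}) the comparability $\mathcal{E}_{2,k,v}(N_1,N_2+1,N)\asymp\mathcal{E}_{2,k,v}(N_1,N_2,N)$ with constants uniform in $k$ (and $v$), valid exactly because $N_2-N_1$ is taken large --- this is the role of ``sufficiently large $N_2=N_2(N_1)$'' in Theorem \ref{Smain2}, and it follows from a chaining argument between the two annular regions using $\sup_v\#\Gamma_1(v)<\infty$. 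Everything else is bookkeeping of constants; the hypothesis $[w]\le m_0$ is used precisely to bound the size of the cells at level $[w]$, which is what keeps all of these constants independent of $k$.
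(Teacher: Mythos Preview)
Your approach is essentially the paper's: both directions are handled by coarse-graining between the weighted graph $(G,\mu)$ and the level-$[w]$ cell network, pushing a tree test function down to $G$ and lifting a graph test function up to $(T)_{[w]}$, with all constants controlled by the bounded size of cells at level $[w]$ (this is where $[w]\le m_0$ enters), the bound $L_*=\sup_v\#\Gamma_1(v)<\infty$, connectedness of $K$, and $\mu_{xy}\asymp1$. The paper uses $\bar f(x)=\max_{v:x\in K_v}f(v)$ for the push-down and $\underline h(v)=\min_{x\in K_v}h(x)$ for the lift, whereas you use a choice $w'(x)$ and a centre $c_{w'}$; these are interchangeable.

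The only real difference is in the boundary condition for the lift: the paper asserts $\underline h\equiv0$ on $S^k(\Gamma_{N_2}(\pi^k(w)))^c$ directly, while you (correctly) observe that a priori one only obtains vanishing on $S^k(\Gamma_{N_2+1}(\pi^k(w)))^c$ and then close the gap via the comparability $\mathcal E_{2,k,v}(N_1,N_2,N)\asymp\mathcal E_{2,k,v}(N_1,N_2+1,N)$. In fact the paper's direct claim has the same issue --- nothing rules out $K_v\subseteq U_{N_2}(\pi^k(w))$ when $\pi^k(v)\in\Gamma_{N_2+1}(\pi^k(w))\setminus\Gamma_{N_2}(\pi^k(w))$ --- so your detour is the more careful version; the energy comparability you invoke is part of Kigami's framework and is exactly what the hypothesis ``sufficiently large $N_2=N_2(N_1)$'' in Theorem~\ref{Smain2} is there to provide.
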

\begin{proof}
 Let $u,v\in (T)_{[w]}.$ If $x\in K_u,y\in K_v$ and $x\sim y,$ then there exists $\omega \in \Sigma^*$ such that $K_{\omega_e}=\{x,y\}. $ Since $[w]\le m_0,$ we have $\omega_{[w]}\in T_e.$ Then $K_{\omega_{[w]}}\cap K_u\ne\emptyset $ and $K_{\omega_{[w]}}\cap K_v\ne\emptyset,$ so we have $l_{J^h_{[w]}}(u,v)\le 1.$
Now, let $f$ be a function on $(T)_{[w]}$ such that $f\equiv 1$ on $S^k(\Gamma_{N_1}(\pi^k(w))$ and $f\equiv 0$ on $S^k(\Gamma_{N_2}(\pi^k(w)))^c.$ Define $\ol{f}$ on $G$ by $\ol{f}(x)=\max_{w:x\in K_w}f(w)$, then $\ol{f}\equiv 1$ on $U_{N_1}(\pi^k(w))$ and $\ol{f}\equiv0$ on $U_{N_2}(\pi^k(w))^c.$ This is  because
\begin{itemize}
\item if $x\in U_{N_1}(\pi^k(w)),$ then there exists $v\in S^k(\Gamma_{N_1}(\pi^k(w)))$ such that $x\in K_v$ by (PG1),
\item if $x\not\in U_{N_2}(\pi^k(w)),$ then for any $v\in S^k(\Gamma_{N_2}(\pi^k(w))),$ it follows that $x\not\in K_v$ by (PG1).
\end{itemize}
Hence, noting that $\mu_{xy}\asymp 1,$ there exists $C>0$ such that the following holds: 
\begin{align*}
  \frac{1}{2}\sum_{x\sim y}|\ol{f}(x)-\ol{f}(y)|^2\mu_{xy}&\le \frac{1}{2}C\sum_{x\sim y}\sum_{u:x\in K_u}\sum_{v:y\in K_v}|f(u)-f(v)|^2\\
&\le \frac{1}{2}C\sum_{(u,v)\in J^h_{[w]}}\sum_{x\in K_u}\sum_{y\in K_v}|f(u)-f(v)|^2\\
&\le  \frac{1}{2}CN_*^{2(m_1-[w])}\sum_{(u,v)\in J^h_{[w]}}|f(u)-f(v)|^2.
\end{align*}
This implies $R(U_{N_1}(\pi^k(w)),U_{N_2}(\pi^k(w))^c)^{-1}\le C N_*^{2(m_1-[w])} \etkw{\pi^k(w)}.$\\
On the other hand, let $h$ be a function on $G$ such that $h\equiv 1$ on $U_{N_1}(\pi^k(w))$ and $h\equiv 0 $ on $U_{N_2}(\pi^k(w))^c.$ Define $\ul{h}(w)=\min_{x\in K_w}h(w),$ then similarly we get $\ul{h}\equiv 1$ on $S^k(\Gamma_{N_1}(w))$ and $\ul{h}\equiv 0$ on $S^k(\Gamma_{N_2}(w))^c.$ \\
Let $x,y\in G.$ If $x\in K_u$ and $y\in K_v$ for some $(u,v)\in J^h_{[w],N}$, then 
\[l_B(x,y)\le (N+1)\sup_{\nu\in (T)_{[w]}}\#(K_\nu)\le 2(N+1)N_*^{m_1-[w]},\] because $K$ is connected. Therefore 
  \begin{align*}
  \frac{1}{2}\sum_{(u,v)\in J^h_{[w],n}}|\ul{h}(u)-\ul{h}(v)|^2&\le \frac{1}{2}\sum_{(u,v)\in J^h_{[w],n}}\sum_{x\in K_u}\sum_{y\in K_v}|h(x)-h(y)|^2\\
&\le \frac{1}{2}\sum_{x,y:l_E(x,y)\le l_0}\sum_{u:x\in K_u}\sum_{v:y\in K_v}|h(x)-h(y)|^2\\
&\le  \frac{1}{2}N_*^{2(m_1-[w])}\sum_{x,y:l_E(x,y)\le l_0}|h(x)-h(y)|^2,
\end{align*}
where $l_0=2(N+1)N_*^{m_1-[w]}.$ Note that if $l_E(x,y)\le l_0$, then $|h(x)-h(y)|^2\le l_0\sum_{i=1}^{n}|h(x_{i-1})-h(x_i)|^2$ for some $n$-path $\{x_i\}_{i=0}^{n}$ between $x$ and $y$ with $n\le l_0,$ so for $E_{x,y,l_0}=\{(p,q)\mid p\sim q, p\in B_{l_E}(x,l_0),\text{ and }q\in B_{l_E}(y,l_0)\}\subseteq E,$
\begin{align*}
 \sum_{x,y:l_E(x,y)\le l_0}|h(x)-h(y)|^2&\le l_0\sum_{x,y:l_E(x,y)\le l_0}\sum_{(p,q)\in E_{x,y,l_0}}|h(p)-h(q)|^2\\
& \le l_0\sum_{p\sim q}\sum_{x\in B_{l_E}(x,l_0)}\sum_{y\in B_{l_E}(y,l_0)}|h(p)-h(q)|^2 \\
& \le l_0(\sup_{x\in G}\#(\{y\mid y\sim x\}))^{2l_0}\sum_{p\sim q}|h(p)-h(q)|^2.
\end{align*}
These inequalities with $\mu_{xy}\asymp 1$ shows
\[C'R(U_{N_1}(\pi^k(w)),U_{N_2}(\pi^k(w))^c)^{-1}\ge\etkw{\pi^k(w)}\text{ for some }C'>0.\]
\end{proof}
\begin{proof}[Proof of Theorem \ref{Smain2}]
 Fix $N_1,N\ge 0$ and  let $w\in T$ such that $[w]\le m_0.$ Then using Lemma \ref{Smain2lem1} and the fact that $d$ is adapted, there exists $\xi$ and $\zeta$ such that for sufficiently large $N_2,$ for any $x\in K_w,$ we have
\[ x\in U_{N_1}(w) \subseteq B_d(x,\lambda\xi r^{[w]})\]
and 
\[B_d(x,\xi r^{[w]})\subseteq U_{N_2}(w)\subseteq U^d_{N_2}(\eta r^{[w]})\subseteq B_d(x,\zeta r^{[w]}).\]   
Hence
\[R(x,B_d(x,\zeta r^{[w]})^c)\ge R(U_{N_1}(w),U_{N_2}(w)^c) \ge R( B_d(x,\lambda\xi r^{[w]}),B_d(x,\xi r^{[w]})^c).\]
These with Lemmas \ref{Smain2lem2} and \ref{Smain2lem3} show that for any $k\ge 0,$ there exist $C_1,C_2$ such that
\begin{align*}
&C_1 R(x,B_d(x,\zeta r^{[w]-k})^c)V_d(x,\zeta r^{[w]-k})\\
\ge& (\etkw{\pi^k(w)})^{-1}\#(S^{m_1-[w]+k}(\pi^k(w)))\\
\ge& C_2 R( B_d(x,\lambda\xi r^{[w]}),B_d(x,\xi r^{[w]})^c)V_d(x,\xi r^{[w]-k}).
\end{align*}
This together with (ARL($\beta$)),(BRU($\beta$)) implies that there exist $\delta>0$ such that
\begin{align*}
-k\beta\log r-\delta &\le \log\#(S^{m_1-[w]+k}(\pi^k(w)))-\log \etkw{\pi^k(w)}\\
& \le -k\beta\log r+\delta 
\end{align*}
and hence by Lemma \ref{tilt},
\[ \log\ul{\Mc{N}}+\beta\log r=\liminf_{k\to\infty} \log\etkw{\pi^k(w)}^{1/k}\]
because $N_*^{m_1-[w]}\#(S^{k}(\pi^k(w)))\ge\#(S^{m_1-[w]+k}(\pi^k(w)))\ge\#(S^{k}(\pi^k(\pi^l(w)))).$ This equation also holds for $\pi^l(w)$ with $l\ge 0$, so again using Lemma \ref{tilt}, we obtain
\[ \log\ul{\Mc{N}}+\beta\log r=\sup_{l\ge0}\liminf_{k\to\infty} \log\etkw{\pi^k(\pi^l(w))}^{1/k}\hspace{-1.2pt}=\log\ul{\Mc{R}_2}(N_1,N_2,N).\]
Now, by (DHK) and (VG)$_\alpha$,

\[\frac{\ul{d}_S}{2}=\liminf_{n\to\infty} \frac{\log V_d(x,n^{1/\beta})}{\log n}=\liminf_{r\to\infty} \frac{\log V_d(x,r^{1/\beta})}{\log r}=\liminf_{k\to\infty} \frac{\log V_d(x,r^{-k})}{\log r^{-\beta k}}\]
and by Lemma \ref{Smain2lem2},
\[\ul{d}_S=2\frac{\liminf_{k\to\infty}\frac{1}{k}\log V_d(x,r^{-k})}{-\beta \log r}=2\frac{\log \ul{\Mc{N}}}{\log \ul{\Mc{N}}-\log\ul{\Mc{R}_2}(N_1,N_2,N)}.\]
In the same way, we also get $\ol{d}_S=2\frac{\log \ol{\Mc{N}}}{\log \ul{\Mc{N}}-\log\ul{\Mc{R}_2}(N_1,N_2,N)}.$
\end{proof}

\section{Examples}
We first give an example that the ARC dimension can be calculated using Theorem \ref{Smain1}.
\begin{ex}\label{exthe1}
  Let $f(n):\Mb{Z}_+\to\Mb{Z}_+$ such that $f(n)\le n$ for any $n.$ For $n\ge0,$ define $B_n,L_n,X_n\in \Mb{R}^2$ by
  \begin{align*}
    B_n&= [2^n,2^{n+1}]\times[0,2^n], \\
 L_n&= \Cup_{j\in\Mb{Z}}\left(\{(x,y)\mid x=2^{n-f(n)}j\} \cup \{(x,y)\mid y=2^{n-f(n)}j\}\right),\\
X_n&= B_n\cap L_n.
  \end{align*}
We also define $X,G,E$ by
\begin{align*}
  X&=\{(t,0)\mid 0\le t\le 1\}\Cup\left(\Cup_{n\ge0}X_n\right), \\
 G&=X\cap\Mb{Z}^2,\\
 E&=\{(p,q)\in G\times G \mid d_2(p,q)=1\},
\end{align*}
where $d_2$ is the Euclidean metric in $\Mb{R}^2.$ See Figure \ref{figex1} or Figure \ref{figex2}.
\begin{figure}[tb]
\centering
\begin{minipage}{0.48\columnwidth}
 \centering
 \includegraphics[width=\columnwidth]{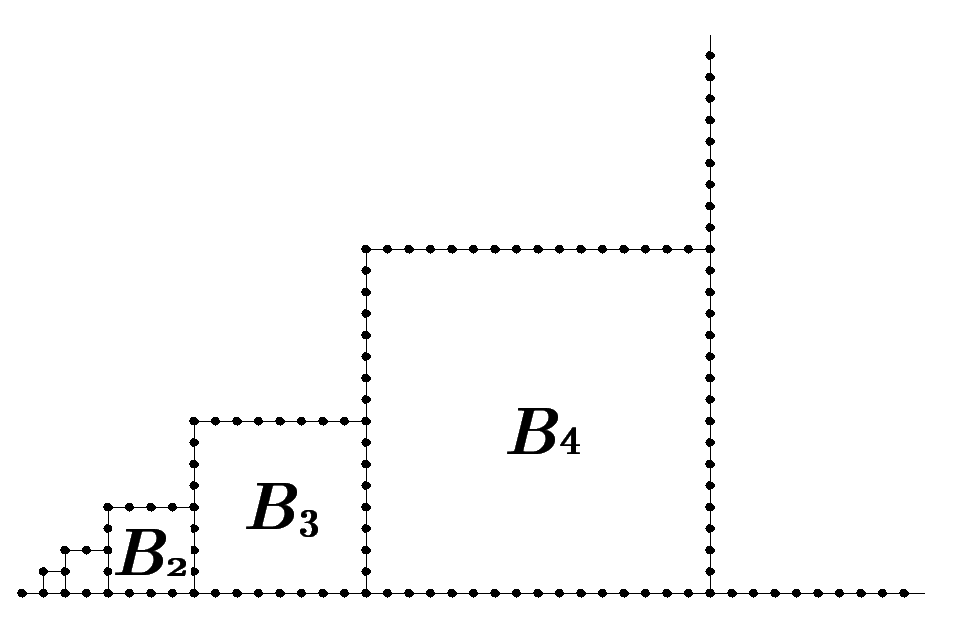}
 \caption{$f(n)\equiv 0$}
 \label{figex1}
\end{minipage}
\begin{minipage}{0.48\columnwidth}
 \centering
 \includegraphics[width=\columnwidth]{ex01.png}
 \caption{$f(n)=\lfloor \frac{n}{2} \rfloor$}
 \label{figex2}
\end{minipage}
\end{figure}
Next we introduce a partition of $(G,E).$ For $m,a,b\in\Mb{Z},$ define 
\begin{align*} \Mc{S}_{m,a,b}&=\{(x,y)\mid 2^ma\le x+y\le 2^m(a+1),\ 2^mb\le x-y\le 2^m(b+1) \}, \\
T_{-m}&=\{ \Mc{S}_{m,a,b}\mid \mathrm{int}(\Mc{S}_{m,a,b})\cap X\ne\emptyset\},\\
T&=\cup_{m\in\Mb{Z}}T_m,
\end{align*}
 and for $w\in T_m,$ define $\pi(w)$ as the unique element in $T_{m-1}$ such that $w\subseteq \pi(w)$ as subsets of $\Mb{R}^2.$ Then $(T,\pi)$ is a bi-infinite tree, and $T_m=(T)_m$ by taking $\phi=\Mc{S}_{0,0,0}.$ We also define $K:T\to\{\text{ finite subsets of $G$ }\}$ by
\[K_w=
\begin{cases}
  w\cap G\ (\text{as subsets of $\Mb{R}^2$}), &\text{ if }[w]\le 0,\\
 \pi^{[w]}(w)\cap G (\text{as subsets of $\Mb{R}^2$}), &\text{ if }[w]>0.
\end{cases}\]
Then $K_w$ is a partition of $(G,E)$. Moreover, $\Lambda_e=(T)_0$ and $T=T_{1/2}.$\\
Now we let $d=l_E,$ and calculate $\ard(G,d).$ 
\end{ex}
\begin{prop}\label{dimgauge}\hspace{0pt}\samepage
  \begin{enumerate}
  \item If  $\limsup_{n\to\infty} f(n)=\infty,$ then $\ard(G,d)=2.$
  \item If  $\limsup_{n\to\infty} f(n)<\infty,$ then $\ard(G,d)=1.$
  \end{enumerate}
\end{prop}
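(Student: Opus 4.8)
The plan is to apply Theorem \ref{Smain1}, which reduces the computation of $\ard(G,d)$ to computing the critical exponent $\inf\{p\mid\ol{\Mc{E}}_p(N_1,N_2,N)=0\}$ of the $p$-energies associated to the partition $K$ described in Example \ref{exthe1}. Before doing so I must check the hypotheses of Theorem \ref{Smain1}: that $K$ is a minimal partition of $(G,E)$, that $d=l_E$ satisfies the basic framework (with $r=1/2$, using the dyadic structure of the $\Mc{S}_{m,a,b}$), and that $l_E$ fits to $(G,E)$ — the last point holds for any graph as noted in the excerpt, and the basic-framework verifications are routine from the self-similar dyadic decomposition into squares $\Mc{S}_{m,a,b}$ and the fact that each nonempty such square at scale $m$ splits into boundedly many squares at scale $m+1$. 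So the substance is estimating the discrete $p$-energies $\Mc{E}_{p,k}(N_1,N_2,N)$, equivalently the $p$-modulus of crossing a square by paths in the graph $(T)_{[w]+k}$ with its horizontal adjacency structure.

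For part (2), when $\limsup f(n)<\infty$, the boxes $X_n$ are (uniformly in $n$) grids of bounded combinatorial complexity: $G$ is, up to bounded perturbations, quasi-isometric to a "thickened half-line" whose cross-sections have bounded size. The partition then behaves like a partition of $\Mb{Z}_+$ (or a bounded-width strip), for which the discrete $p$-energy of crossing a block at scale $k$ stays bounded below for all $p$ — indeed a single path of length $\asymp 2^k$ costs energy $\asymp 2^k\cdot 2^{-k\cdot?}$... more precisely one should show $\ol{\Mc{E}}_p>0$ for every $p$ (so the infimum is $1$, since $\ard\ge1$ always by Proposition \ref{sore}), OR directly exhibit $l_E\qs\rho$ with $\rho$ $1$-Ahlfors regular. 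The cleaner route for (2) is the second: build an explicit $1$-regular metric by collapsing each $X_n$ to a segment of length $\asymp 2^n$ (legitimate because the cross-sectional size $2^{n-f(n)}$ is $\asymp 2^n$), giving a space bi-Lipschitz (hence quasisymmetric) to a half-line, which is $1$-Ahlfors regular; combined with $\ard(G,l_E)\ge1$ this gives $\ard=1$.

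For part (1), when $\limsup f(n)=\infty$, the key point is that for infinitely many $n$ the box $X_n$ contains, at an appropriate intermediate scale, a genuine two-dimensional grid of side $2^{f(n)}\to\infty$, i.e. an arbitrarily large combinatorial square. The known $p$-energy estimates for crossing an $\ell\times\ell$ dyadic grid give that the $2$-energy of crossing it is bounded below uniformly (this is the discrete analogue of the fact that the plane has conformal dimension $2$), while for $p<2$ the crossing $p$-energy of an $\ell\times\ell$ grid tends to $0$ — but crucially, the *rate* at which it tends to zero, per halving of scale, is governed by the side length $\ell$, and because these squares recur at scale $2^{f(n)}$ with $f(n)\to\infty$ we cannot make $\ol{\Mc{E}}_p=0$ for any $p<2$: the $\limsup$ over $k$ in $\ol{\Mc{E}}_p=\limsup_k\Mc{E}_{p,k}$ picks out those scales where the embedded square is as large as we like. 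Combined with $\ard\le2$ (the ambient space is a subset of $\Mb{Z}^2$, which is $2$-Ahlfors regular, and ARC dimension is monotone / bounded above by Ahlfors regular dimension $2$), this forces $\ard=2$. The main obstacle is making precise the claim that $\ol{\Mc{E}}_p>0$ for all $p<2$: one must carefully locate, inside $(T)_m$ for the recurring good scales, a sub-configuration isomorphic to a large dyadic square, apply the standard lower bound on its $2$-modulus, and check that $\limsup_k\sup_w\Mc{E}_{p,k}$ therefore stays positive; this requires quantitative control of how the blocks $X_n$ sit inside the tree $T$ and uniformity of the grid-crossing energy lower bound in the side length, i.e. that it does not decay as $\ell\to\infty$ faster than the number of available scales grows — which is exactly where the hypothesis "only $\limsup f(n)=\infty$ is needed, not $f(n)\to\infty$" and the remark after the Proposition come from.
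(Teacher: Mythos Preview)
Your overall strategy matches the paper's --- apply Theorem~\ref{Smain1} for part~(1) and argue directly for part~(2) --- but there are genuine gaps in the execution.

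\textbf{Part~(1), upper bound.} Your argument that $\ard(G,d)\le 2$ because $G\subset\Mb{Z}^2$ and ``ARC dimension is monotone'' is not valid: Ahlfors regular conformal dimension is not monotone under subspaces or bi-Lipschitz embeddings. Even if $l_E$ is comparable to the restricted Euclidean metric, the counting measure on $G$ is \emph{not} $2$-Ahlfors regular for $l_E$ (at scales where $f(n)$ is small the volume growth is linear), so you cannot simply restrict the ambient regular structure. The paper instead proves the upper bound through Theorem~\ref{Smain1} by exhibiting an explicit test function --- the truncated affine function
\[
g(\Mc{S}_{m-k,i,j})=\Bigl(2^{-k}\min\{2^k(a{+}2){-}i,\,i{-}2^k(a{-}2),\,2^k(b{+}2){-}j,\,j{-}2^k(b{-}2)\}\vee 0\Bigr)\wedge 1
\]
--- which gives $\Mc{E}_{p,k,w}(0,2,1)\le C'2^{(2-p)k}$ for \emph{every} $w\in T$, whence $\ol{\Mc{E}}_p=0$ for $p>2$.

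\textbf{Part~(1), lower bound.} Your description of the $p$-energy is backwards: for an $\ell\times\ell$ grid the crossing $p$-energy is $\asymp\ell^{2-p}$, so for $p<2$ it tends to $\infty$, not to $0$. The correct argument (which the paper gives via Jensen's inequality) is simply that since $\limsup f(n)=\infty$, for every $k$ one can find $w$ whose $k$-level refinement is a full $2^k\times 2^k$ grid, and then $\Mc{E}_{p,k,w}(0,2,1)\ge C\,2^{(2-p)k}\ge C$ for all $p\le 2$. No ``rate'' subtlety is needed; the point is just that $\sup_w\Mc{E}_{p,k,w}$ does not tend to zero.

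\textbf{Part~(2).} The claim that $(G,l_E)$ is bi-Lipschitz to a half-line is false: balls in $G$ contain more points than balls in $\Mb{Z}_+$ (the cross-sections have $\asymp 2^{f(n)}$ vertices), so no bijection can be bi-Lipschitz. What the bounded-cross-section observation actually gives --- and what the paper does --- is that the counting measure on $(G,l_E)$ is itself $1$-Ahlfors regular: the upper bound $V_d(x,r)\le \bar C r$ comes from summing $\Mf m(B_d(x,r)\cap G_n)\le 2(2^{f(n)}+1)(\diam(B_d(x,r)\cap G_n,d)+1)$ over $n$, using $\sup_n f(n)<\infty$ and $\sum_n\diam(B_d(x,r)\cap G_n,d)\le 2r$. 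Combined with $\ard(G,d)\ge 1$ from Proposition~\ref{sore}, this gives $\ard=1$ without constructing any new metric.
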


\begin{proof}
(1) First we check $d,K$ satisfy assumptions of Theorem \ref{Smain1}.
\begin{itemize}
\item By definition, $\#(S(w))\le 4$ for any $w\in T$ and $K$ is minimal. It is easily seen that $d$ fits to $(G,E)$.
\item $d(w)=2^{-m}=(1/2)^m$ for any $m\le 0$ and $w\in (T)_m.$
\item (uniformly finite) Similar to Example \ref{exSC}, 
  \[\Lambda^d_s=\begin{cases}
    (T)_{-m} & \text{ if } 2^m\le s<2^{m+1}, \\
\emptyset & \text{ if } s<1.
  \end{cases}\]
Hence $\#(\Lambda^d_{s,1}(w))\le\#(\{v\in (T)_{[w]}\mid v\cap w\ne\emptyset \text{ as subsets of }\Mb{R}^2\})\le 9$ for any $s>0$ and $w\in \Lambda^d_s.$ This shows $d$ is uniformly finite. 
\item (thick) Let $w=\Mc{S}_{m,a,b}\in (T)_{-m}$ for some $m\ge0.$
  \begin{itemize}
  \item[$\circ$] If $m\ge1,$ then $\Lambda^d_{d(\pi(w))/8,1}(x_w)=\Lambda^d_{2^{m-2},1}(x_w)=S^2(w)$ for $x_w=(2^{m-1}(a+b+1),2^{m-1}(a-b)).$ 
  \item[$\circ$] If $m=0,$ then either $(\frac{a+b}{2},\frac{a-b}{2})$ or $(\frac{a+b+1}{2},\frac{a-b+1}{2})$ belongs to $K_w.$ Let $x_w$ be such a point, then $\Lambda^d_{d(\pi(w))/4,1}(x_w)=\emptyset.$
 \end{itemize}
Hence $d$ is thick.
\item ($1$-adapted) Similar to Lemma \ref{lemada}, $U^d_1(x,r)\subseteq B_d(x,3r).$ On the other hand, if $r\ge1,$ then $U^d_1(x,r)=U^d_1(x,2^n)\supseteq B_d(x,2^n)\supseteq B_d(x,r/2)$ for some $n,$ hence $d$ is $1$-adapted. (See Figure \ref{figpf2}).
\end{itemize}
Therefore $d,K$ satisfy the assumptions of Theorem \ref{Smain1}. Now we adapt Theorem $\ref{Smain1}$ and show $\ard(G,d)=2.$\\
 The first step is to show $\ard(G,d)\ge 2.$ Since $\sup_n f(n)=\infty,$ for any $k\ge0,$ there exists $m\in \Mb{N}$ and $w=\Mc{S}_{m,a,b}\in (T)_{-m}$ such that 
\begin{align}
&\{S_{m-k,i,j}\mid i\in [2^k(a\hspace{-0.5pt}-\hspace{-0.5pt}2)\hspace{-0.5pt}-\hspace{-0.5pt}1,2^k(a\hspace{-0.5pt}+\hspace{-0.5pt}2)\hspace{-0.5pt}+\hspace{-0.5pt}1]_\Mb{Z},\ j\in[2^k(b\hspace{-0.5pt}-\hspace{-0.5pt}2)\hspace{-0.5pt}-\hspace{-0.5pt}1,2^k(b\hspace{-0.5pt}+\hspace{-0.5pt}2)\hspace{-0.5pt}+\hspace{-0.5pt}1]_\Mb{Z} \}\notag\\
&\subseteq (T)_{-(m-k)}.\label{aaaa}
\end{align}  

\begin{figure}[tb]
\centering
\begin{minipage}{0.48\linewidth}
 \centering
 \includegraphics[width=\columnwidth]{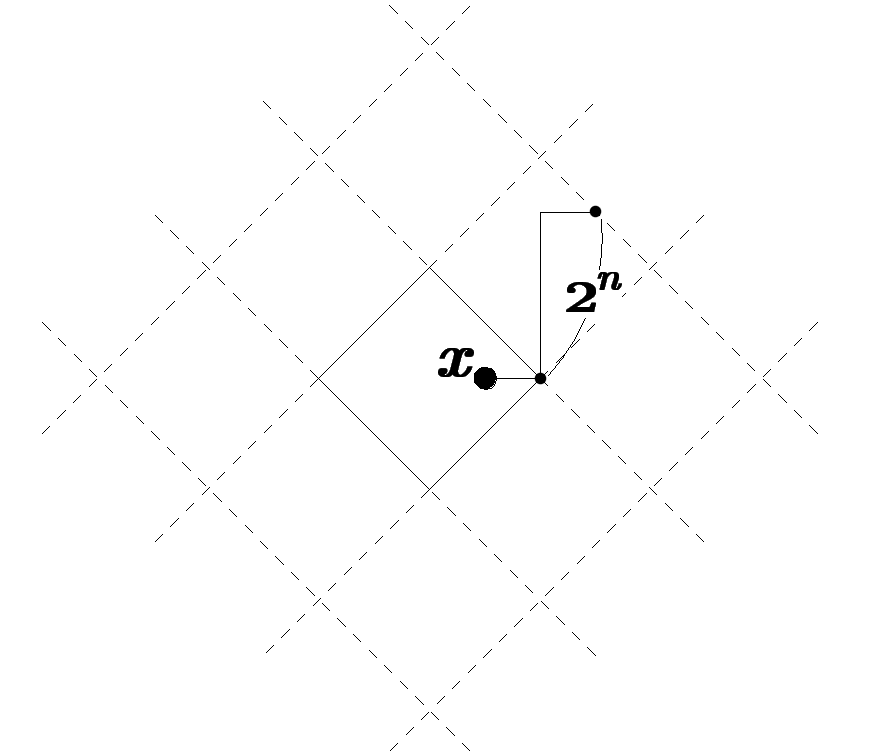}
 \caption{$B_d(x,2^n)\subseteq U^d_1(x,2^n)$}
 \label{figpf2} 
\end{minipage}
\begin{minipage}{0.48\linewidth}
 \centering
 \includegraphics[width=\columnwidth]{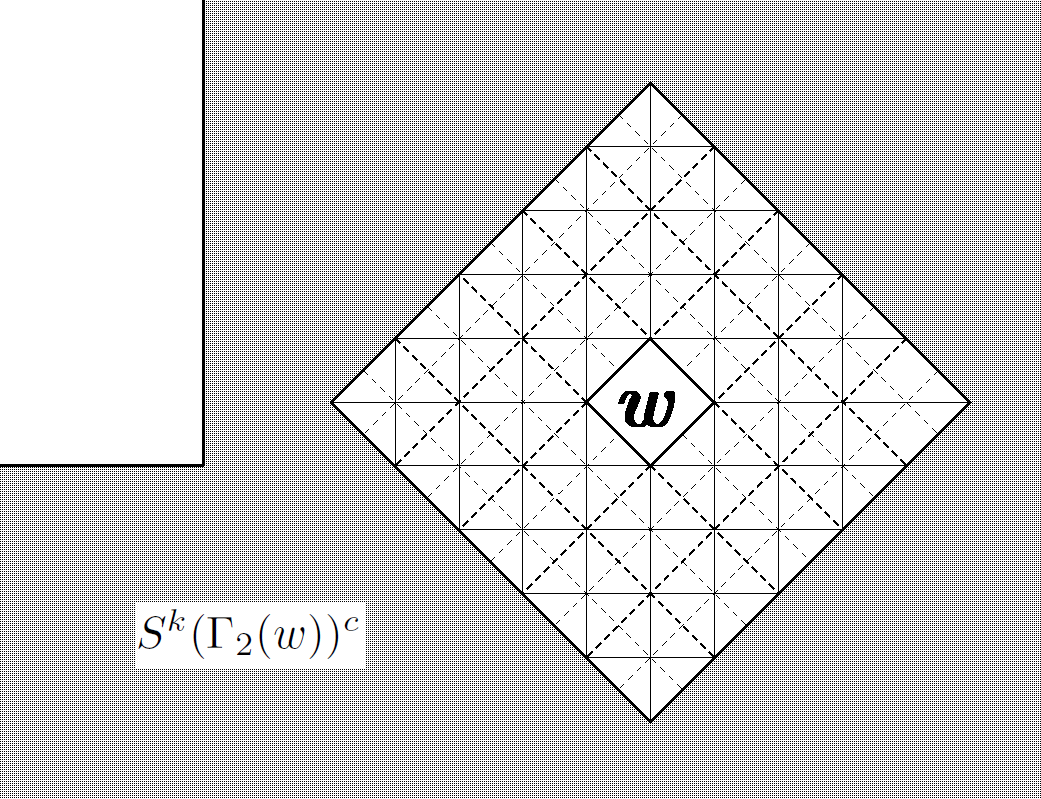}
 \caption{$w$ satisfying \eqref{aaaa}}
 \label{figpf1}
\end{minipage}
\end{figure}

Let $g$ be a function on $(T)_{-(m-k)}$ such that $g\equiv 1$ on $S^k(\Gamma_0(w))$ and $g\equiv 0$ on $S^k(\Gamma_2(w))^c.$ We also let $\tilde{g}=(g\vee0)\wedge1,$ then for any $p\ge 1,$ there exists $C_p>0$ such that 
\begin{align*}
 & \sum_{(u,v)\in E^h_{-(m-k)}}|g(u)-g(v)|^p
 \ge \sum_{(u,v)\in E^h_{-(m-k)}}|\tilde{g}(u)-\tilde{g}(v)|^p\\
 \ge&\sum_{i\in[2^ka,2^k(a+1)]_\Mb{Z}}\sum_{j\in[2^k(b-2),2^kb]_\Mb{Z}}|\tilde{g}(\Mc{S}_{-(m-k),i,j})-\tilde{g}(\Mc{S}_{-(m-k),i,j-1})|^p\\
\ge &\sum_{i}(2^{k+1}+1)^{1-p}\ge C2^{(2-p)k}.
\end{align*}
(This inequality follows from Jensen's inequality, together with $\tilde{g}(\Mc{S}_{-(m-k),i,2^kb})=1$ and $\tilde{g}(\Mc{S}_{-(m-k),i,2^k(b-2)-1})=0$ for any $i\in[2^ka,2^k(a+1)]_\Mb{Z}.$) Moreover for $\ p<1,$
\begin{align*}
 & \sum_{(u,v)\in E^h_{-(m-k)}}|g(u)-g(v)|^p
\ge \sum_{(u,v)\in E^h_{-(m-k)}}|\tilde{g}(u)-\tilde{g}(v)|^p\\
 \ge&\sum_{j\in[2^k(b-2),2^kb]_\Mb{Z}}|\tilde{g}(\Mc{S}_{-(m-k),2^ka,j})-\tilde{g}(\Mc{S}_{-(m-k),2^ka,j-1})|\ge 1. \\
\end{align*}
 Therefore $\lim_{k\to\infty}\Mc{E}_{p,k}(0,2,1)>0$ for any $p\le 2,$ hence $\ard(G,d)\ge2.$\\
On the other hand, define $g=g_w$ on $E^h_{-(m-k)}$ by 
\begin{align*}
 & g(\Mc{S}_{m-k,i,j})\\
=& \left( \frac{(2^k(a\hspace{-0.4pt}+\hspace{-0.4pt}2)\hspace{-0.4pt}-\hspace{-0.4pt}i)\wedge(i\hspace{-0.4pt}-\hspace{-0.4pt}2^k(a\hspace{-0.4pt}-\hspace{-0.4pt}2))\wedge(2^k(b\hspace{-0.4pt}+\hspace{-0.4pt}2)\hspace{-0.4pt}-\hspace{-0.4pt}j)\wedge(j\hspace{-0.4pt}-\hspace{-0.4pt}2^k(b\hspace{-0.4pt}-\hspace{-0.4pt}2))}{2^k}\vee 0\right) \wedge 1.
\end{align*}
Then $f\equiv 1$ on $S^k(\Gamma_0(w)),$ $f\equiv 0$ on $S^k(\Gamma_2(w))^c$ and 
\[ \sum_{(u,v)\in E^h_{-(m-k)}}|f(u)-f(v)|^p \le \sum_{v\in S^k(\Gamma_2(w))}8\cdot2^{-kp}\le C'2^{(2-p)k} \]
for some $C'>0,$ hence $\Mc{E}_{p,k,w}(0,2,1)\le C'2^{(2-p)k}. $ Moreover, for any $v\in T,$ this upper bound holds by the definition of $T$ and $K.$ Therefore $\lim_{k\to\infty}\Mc{E}_{p,k}(0,1,2)=0$ for any $p>2$ and hence $\ard(G,d)=2.$ \\
\vspace{-6pt}\\
(2) Let $\Mf{m}(A)=\#(A)$ for any $A\subset G,$ and $G_n=X_n\cap G$ for any $n\ge 0.$ Then
\[\Mf{m}(B_d(x,r)\cap G_n)\le 2(2^{f(n)}+1)(\diam(B_d(x,r)\cap G_n, d)+1),\] 
because $G_n$ consists of $2(2^{f(n)}+1)$ segments whose length are $2^n.$ Hence there exists $\bar{C}$ such that for any $x\in G$ and $r\ge 1,$
\[ r\le V_d(x,r) \le 1+\sum_{n\ge0}\Mf{m}(B_d(x,r)\cap G_n)\le \bar{C}r, \]
because $\sum_{n\ge0}\diam(B_d(x,r)\cap G_n, d)\le 2r$ and $\sup_n f(n)<\infty.$
Therefore $d$ is $1$-Ahlfors regular. On the other hand, $\ard(G,d)\ge1$ by Proposition \ref{sore} and hence $\ard(G,d)=1.$
\end{proof}
\begin{rem}
If we use a partition parallel to axes, that is, a partition $K'$ defined by $\Mc{S}'_{m,a,b}=[2^ma,2^m(a+1)]\times [2^mb,2^m(b+1)]$ in a similar way to $K,$ then $K'$ is not minimal. For example, both $S'_{0,0,0}$ and $S'_{0,1,0}$ include a edge $((1,0),(1,1))\in E.$ So we need some modification to apply Theorem \ref{Smain1} to $d,K'.$ 
\end{rem}
The next example is that $d_s(G,\mu)\ne \ul{d}_2^S=\ol{d}_2^S$ although $d$ satisfies (DHK($\beta$)), (ARL($\beta$)) and (BRU($\beta$)). 

\begin{ex}\label{exthe2}
Let $f:\Mb{N}\to\{0,1\},$  $G_0=\{0,1,\crt\}\in\Mb{C}$ and $E_0=\{(x,y)\in G_0\times G_0\mid x\ne y\}.$ For $n\in\Mb{N},$ we inductively define $|G_{n-1}|_\infty=\max_{z\in G_{n-1}}|z|,$ and
\begin{align*}
  F_{n,1}(z)&=z, & F_{n,2}(z)&=z+|G_{n-1}|_\infty,\\
 F_{n,3}(z)&=z+\left(\crt\right)|G_{n-1}|_\infty,& F_{n,4}(z)&=z+2|G_{n-1}|_\infty, \\
 F_{n,5}(z)&=z+\left(1+\crt\right)|G_{n-1}|_\infty, & F_{n,6}(z)&=z+2\left(\crt\right)|G_{n-1}|_\infty,
\end{align*}
\[F_n(z)=\begin{cases}
\cup_{i=1}^3F_{n,i}, &\text{ if }f(n)=0, \\
\cup_{i=1}^6F_{n,i}, &\text{ if }f(n)=1. \end{cases}\]
Now define $G_n=F_n(G_{n-1})$ and
\begin{multline*}
E_{n}=\{(x,y)\in G_n\times G_n\mid\text{there exist }x',y'\in G_{n-1} \text{ and }i\ge0\\ \text{ such that }(x',y')\in E_{n-1}\text{ and }x=F_{n,i}(x'),\ y=F_{n,i}(y')\}.
\end{multline*}

\begin{figure}[tb]
\centering
\begin{minipage}{0.4\linewidth}
 \centering
 \includegraphics[width=0.8\columnwidth]{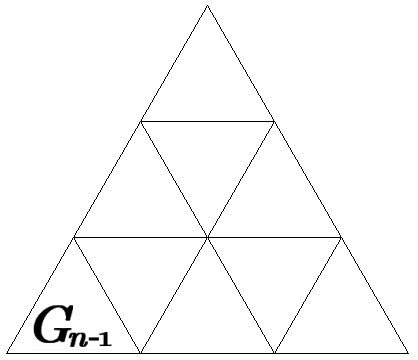}
 \caption{$G_n$ (if $f(n)=1$).}
 \label{figpf41} 
\end{minipage}
\begin{minipage}{0.4\linewidth}
 \centering
 \includegraphics[width=0.8\columnwidth]{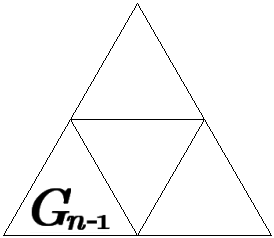}
 \caption{$G_n$ (if $f(n)=0$). }
 \label{figpf42}
\end{minipage}
\end{figure}

Note that $|G_n|_\infty=2^{n-m(n)}\cdot3^{m(n)}$ where $m(n)=\#(\{ k\mid k\le n,f(k)=1\}).$
Let $G=\cup_{n\ge 0}G_n$ and $E=\cup_{n\ge0}E_n.$ We also let $\mu\equiv 1$ on $E$ and consider the effective resistance $R$ of $(G,\mu).$

\begin{figure}[tb]
 \centering
 \includegraphics[width=0.5\columnwidth]{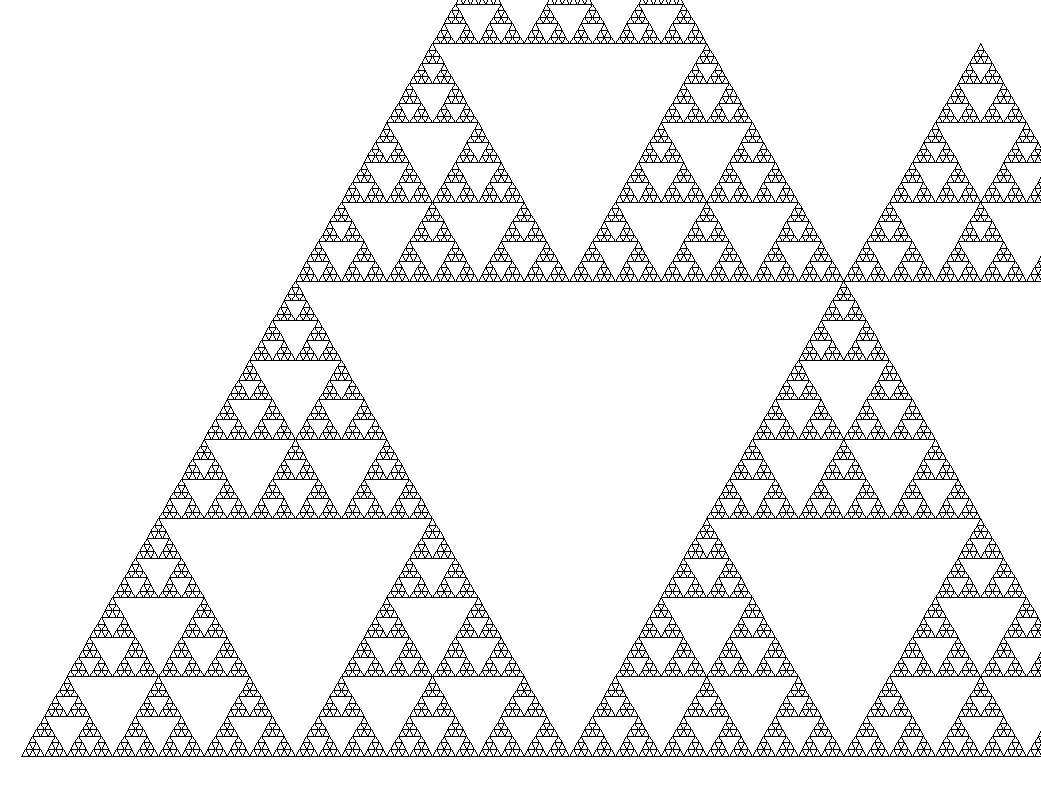}
 \caption{$(G,E)$ (for some $f$).}
\label{figpf5}
\end{figure}
 
Note that 
\[\begin{cases}
R(x,y)^{-1}\ge 1,& \text{ for any }(x,y)\in E,\\
R(x,y)^{-1}\le \Mc{E}(\bold{1}_{\{x\}})\le 6,&\text{ for any }x,y\in G\text{ with }x\ne y , 
\end{cases}\]
so $R$ fits to $(G,E).$ We will check properties of $R$ in order to apply Theorem \ref{th-d}. For the purpose, we first introduce a partition. For $n\ge0$ and $a,b\in\Mb{Z},$ define 
\begin{align*}
 \tr_{0,0,0}&=\{s+\left(\crt\right)t\mid s\ge0,\ t\ge 0,\ s+t\le 1\},\\
 \tr_{n,a,b}&=|G_n|_\infty\bigl(\tr_{0,0,0}+a+\bigl(\crt\bigr)b\bigr),\\
 T_{-n}&=\{\tr_{n,a,b}\mid \tr_{n,a,b}\subseteq \Cup_{m\ge n}F_m\circ F_{m-1}\circ\cdots\circ F_n(\tr_{n,0,0})\},\\
K_{n,a,b}&=\tr_{n,a,b}\cap G\text{ (as subsets of $\Mb{C}$). } 
\end{align*}
For any $n\ge 1,$ we let $T_n=\cup_{w\in T_0}\cup_{x,y\in K_w}\{x,y\}$ and $K_w=w$ for any $w\in T_n.$ Define $T=\sqcup_{n\in\Mb{Z}}T_n$ and $\pi(w)$ for $w\in T_n$ as the unique elements in $T_{n-1}$ such that $K_w\subseteq K_{\pi(w)}.$ Then $(T,\pi)$ is a bi-infinite tree, $(T)_n=T_n$ with $\phi=\tr_{0,0,0}$, $K$ is minimal connected partition and $\Lambda_e=(T)_1.$ If necessary, we replace $T,K$ by $T_r,K'$ for $r\in(0,1)$ in the way of Definition \ref{Tr}.\\
\end{ex}
\begin{lem}\label{lemSGtri}
Let $\Mf{R}(n)=\left(\frac{5}{3}\right)^{n-m(n)}\left(\frac{15}{7}\right)^{m(n)}$ for any $n\ge 0$ and let 
\begin{multline*}
n(x,y)=\min\{n\ge0 \mid\text{there exist }w,v\in(T)_{-n}\text{ such that }\\x\in K_w,y\in K_v\text{ and } K_w\cap K_v\ne\emptyset \}
\end{multline*}
for $(x,y)\in G.$ Then $R(x,y)\asymp \Mf{R}(n(x,y))$ for any $x,y\in G$ with $x\ne y.$
\end{lem}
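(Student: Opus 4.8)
The plan is to compare $R(x,y)$ with the \emph{level-$n$ resistance}
\[
r_n:=R_{(G_n,E_n)}(a,b),
\]
where $a,b$ are two of the three extreme vertices of $G_n$ (images of $0$ and $1$); by the threefold symmetry of the construction this does not depend on the chosen pair. The first ingredient is the renormalisation identity
\[
r_0=\tfrac23,\qquad r_n=\rho_n\,r_{n-1},\qquad \rho_n=\tfrac53\ \text{if}\ f(n)=0,\quad \rho_n=\tfrac{15}{7}\ \text{if}\ f(n)=1,
\]
which gives $r_n=\tfrac23(5/3)^{n-m(n)}(15/7)^{m(n)}=\tfrac23\mathfrak R(n)$. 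Granting it, the lemma is exactly $R(x,y)\asymp r_{n(x,y)}$ for all $x\neq y$ with absolute constants, and this is what the remaining steps establish.

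For the renormalisation: $(G_0,E_0)$ is a triangle of three unit-conductance edges, so $r_0=2/3$. For the step, $(G_n,E_n)$ is $k_n\in\{3,6\}$ translated copies of $(G_{n-1},E_{n-1})$ glued only along shared extreme vertices, arranged as the upward unit triangles inside a side-$2$ (resp.\ side-$3$) triangle. Replacing each copy by its equivalent three-terminal network --- a star with three legs of resistance $r_{n-1}/2$, by symmetry --- and performing the evident series/parallel and star--triangle reductions (solving a small linear system, using the reflection fixing one terminal) yields $r_n=\tfrac53 r_{n-1}$ for $k_n=3$ and $r_n=\tfrac{15}{7}r_{n-1}$ for $k_n=6$. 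I will also use two routine structural facts about the lattice gasket $(G,E)$: \emph{(i)} each level-$(-m)$ cell $K_w=\tr_{m,a,b}\cap G$ is, as a subgraph of $(G,E)$, a translate of $(G_m,E_m)$, two distinct level-$(-m)$ cells meet in at most one vertex, and the only edges from $K_w$ to $G\setminus K_w$ issue from the three extreme vertices of $K_w$ (finite ramification); \emph{(ii)} $\diam_R(G_m,E_m)\le C_0 r_m$ with $C_0$ absolute --- proved by inducting on $\max_v R_{(G_m,E_m)}(v,\{\text{three corners}\})$, whose recursion contracts by $1/\rho_m\le 3/5<1$ per step and hence stays bounded.

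The upper bound $R(x,y)\le C\mathfrak R(n)$, $n:=n(x,y)$, follows quickly. Pick level-$(-n)$ cells $K_w\ni x$, $K_v\ni y$ with $c\in K_w\cap K_v$ (possibly $w=v$, $c=y$). Any $f$ with $f(x)=1,f(y)=0$ has $\max(|f(x)-f(c)|,|f(c)-f(y)|)\ge\tfrac12$, and since $(K_w,E|_{K_w})\cong(G_n,E_n)$ and likewise for $K_v$,
\[
\mathcal E_G(f)\ \ge\ \mathcal E_{K_w}(f)\vee\mathcal E_{K_v}(f)\ \ge\ \frac{(f(x)-f(c))^2}{R_{K_w}(x,c)}\vee\frac{(f(c)-f(y))^2}{R_{K_v}(c,y)}\ \ge\ \frac{1/4}{C_0 r_n}
\]
by \emph{(ii)}. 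Taking the infimum over $f$ gives $R(x,y)\le 4C_0 r_n\asymp\mathfrak R(n)$.

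The lower bound $R(x,y)\ge c\,\mathfrak R(n)$, $n:=n(x,y)$, is the heart of the matter. For $n=0$ one has $R(x,y)\ge 1/6=\tfrac16\mathfrak R(0)$ (the estimate already noted for $(G,\mu)$). For $n\ge 1$, fix a level-$(-(n-1))$ cell $\tr^x\ni x$ and let $U$ be the union of $\tr^x$ with all level-$(-(n-1))$ cells meeting it; minimality of $n(x,y)$ forces $y\notin U$. Shorting $\tr^x$ to a vertex $s$ and $G\setminus U$ to a vertex $t$ (each such contraction only decreases resistances between retained vertices),
\[
R(x,y)\ \ge\ R\bigl(\tr^x,\,G\setminus U\bigr)\ =\ R_{\mathcal N}(s,t),
\]
where $\mathcal N$ is built from the at most $J$ (an absolute constant) ``neighbour cells'' of $\tr^x$, each a translate of $(G_{n-1},E_{n-1})$, glued at extreme vertices. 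By finite ramification every $s$--$t$ current first enters a neighbour cell; if $I_{\tr'}$ is the net current a neighbour cell $\tr'$ receives at its $s$-vertices, then $I_{\tr'}$ must leave $\tr'$ through its non-$s$ vertices, so (one more star computation) the energy dissipated in $\tr'$ is at least $\tfrac34 r_{n-1}I_{\tr'}^2$; the neighbour cells are edge-disjoint and $\sum_{\tr'}I_{\tr'}=1$. Cauchy--Schwarz then gives $\mathcal E(\text{flow})\ge\tfrac34 r_{n-1}\sum_{\tr'}I_{\tr'}^2\ge\tfrac{3}{4J}r_{n-1}$, whence $R(x,y)\ge\tfrac{3}{4J}r_{n-1}=\tfrac{3}{4J\rho_n}r_n\ge c\,\mathfrak R(n)$. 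I expect the fiddliest part to be the finite-ramification bookkeeping --- checking that distinct cells meet only at single vertices and that $U$ ``surrounds'' $\tr^x$ so that the current must genuinely traverse a neighbour cell --- though this is dictated by the triangular-lattice geometry of $G$.
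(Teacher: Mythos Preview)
Your argument is correct and follows the same skeleton as the paper: compute the renormalisation ratios $\rho_n\in\{5/3,15/7\}$ for the corner-to-corner resistance (the paper records this as $R(0,|G_n|_\infty)=\tfrac23\mathfrak R(n)$ together with the three-terminal energy formula \eqref{Rtri}), deduce the cell-diameter bound $\diam_R(K_w)\asymp\mathfrak R(n)$ via a geometric series in $1/\rho_n\le 3/5$, and then sandwich $R(x,y)$ between two cell-scale quantities at level $n(x,y)$.

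The one place you diverge is the lower bound. The paper works on the potential side: it takes $w'\in(T)_{-(n-1)}$ with $x\in K_{w'}$, builds a test function equal to $1$ on $K_{w'}$, equal to $0$ outside $U_1(w')$, and in each neighbouring cell equal to the harmonic function from \eqref{Rtri} (value $1$ at the shared corner, $0$ at the other two corners); the energy is then bounded by $2\#(\Gamma_1(w'))\mathfrak R(n-1)^{-1}$, giving $R(x,y)\ge c\,\mathfrak R(n-1)$ directly. You instead short $K_{w'}$ and $G\setminus U$ and run a flow/Thomson argument with Cauchy--Schwarz across the edge-disjoint neighbour cells. The two are exact duals of the same annulus estimate; your version makes the role of finite ramification and the bound $J$ on the number of neighbours more explicit, while the paper's test-function version is shorter because the constant $2\mathfrak R(n-1)^{-1}$ is already tabulated in \eqref{Rtri}. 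Either way the constants are absolute and the conclusion $R(x,y)\asymp\mathfrak R(n(x,y))$ follows.
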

\begin{proof}
We first evaluate $R(w).$ By the method of Laplacian on finite set (see \cite{Kig01}), $R(0,|G_n|_\infty)=\frac{2}{3}\Mf{R}(n)$ and 
\begin{multline}\label{Rtri}
\min\{\frac{1}{2}\sum_{(x,y)\in E_n}|f(x)-f(y)|^2\mid f:G_n\to \Mb{R},\ f(0)=1,\\
f(|G_n|_\infty)=f\bigl((\crt)|G_n|_\infty\bigr)=0\}=2\Mf{R}(n)^{-1}.
\end{multline}
Hence for any $\tr_{n,a,b}\in T,$
\[ \frac{2}{3}\Mf{R}(n)\ge R\bigl(\bigl(a+(\crt)b\bigr)|G_n|_\infty,\bigl(a+1+(\crt)b\bigr)|G_n|_\infty\bigr)\ge \frac{1}{6}\Mf{R}(n),\] 
and since $\Mf{R}(n-1)\le\frac{3}{5}\Mf{R}(n),$ we obtain $R(w)\asymp R(n)$ for any $n\ge-1$ and $w\in (T)_{-n}$ by using the sum of a geometric series. Fix any $x,y\in G,$ and let $w,v\in (T)_{-n(x,y)}$ such that $x\in K_w,\ y\in K_v$ and $K_w\cap K_v\ne\emptyset.$ Then by \eqref{Rtri}, there exists $C>0$ such that
\[R(w)+R(v)\ge R(x,y)\ge\frac{\Mf{R(n-1)}}{2\#(\Gamma_2(w'))}\ge C\Mf{R(n)},\]
where $w'\in S(w)$ such that  $x\in K_{w'}.$ Therefore $R(x,y)\asymp \Mf{R}(n(x,y)).$
\end{proof}
This lemma also implies $R$ is adapted (for $M=1$) and $V(x,R(x,y))\asymp\Mf{V}(n(x,y))$ where $\Mf{V}(n)=3^{n-m(n)}\cdot6^{m(n)}.$ This inequality also shows (VD)$_R,$ and $(G,\mu)$ satisfies the conditions of Theorem \ref{th-d}.\\
Next we modify $T$ in order to satisfy $d(w)\asymp r^{[w]}$ for some $r\in(0,1)$, where $d$ is the metric obtained by Theorem \ref{th-d}. For $j\ge0,$ let $n(j)\ge0$ such that $\Mf{R}(n(j))\Mf{V}(n(j))\le\bigl(\frac{90}{7}\bigr)^j<\Mf{R}(n(j)+1)\Mf{V}(n(j)+1)$ and for $j<0,$ set $n(j)=j.$ We consider $\bar{T}=\cup_{j\in\Mb{Z}}(T)_{-n(j)}$, and $\bar{\pi}(w)=\pi^{n(j+1)-n(j)}$ for $w\in (T)_{-n(j)}.$ Then $(\bar{T},\bar{\pi},\tr_{0,0,0})$ is a bi-infinite tree with a reference point, $(\bar{T})_j=(T)_{n(j)}$ and $K|_{\bar{T}}$ is minimal, connected partition, $\Lambda_e=(T)_1.$ Moreover, for any $w\in\cup_{j\ge0} (T)_{-n(j)},$
\[\sup_{x,y\in K_w}R(x,y)V(x,R(x,y))\asymp \Mf{R}(n(j))\Mf{V}(n(j))\asymp \left(\frac{90}{7}\right)^j\]
and hence $d(w)\asymp \bigl( \frac{7}{90} \bigr)^{[w]/\beta}$ for any $w\in T_e$ where $\beta$ is the constant in Theorem \ref{th-d}. Comparing with $R(x,y)V(x,R(x,y)),$ we can also see that $d$ is uniformly finite, thick and adapted because of Equation \eqref{Rtri}.
Now we let 
\[f(n)=\begin{cases}
  1,& \text{ if }l(l^2-1)<n\le l^3 \text{ for some }l\in\Mb{N}, \\
  0,& \text{ otherwise. }
\end{cases}\]
Then, we have the following.
\begin{prop}
$ d_S(G,\mu)=2\log 3/\log5$ and $\ul{d}^S_2(0,2,1)=\ol{d}^S_2(0,2,1)=2\log6/(\log90-\log7).$
\end{prop}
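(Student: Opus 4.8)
The plan is to handle the two assertions separately: $d_S(G,\mu)$ from the on--diagonal bound DHK($\beta$) together with VG($\alpha$), and $\ul{d}^S_2(0,2,1),\ol{d}^S_2(0,2,1)$ by computing the quantities $\ol{N}_*$, $\ol{R}_2(0,2,1)$, $\ul{R}_2(0,2,1)$ of Definition \ref{ciro2} directly from the combinatorics of the modified tree $\bar T$. I will use what has already been assembled for this example: by Theorem \ref{th-d} the metric $d$ (with $d(x,y)\asymp1$ on $E$) satisfies the basic framework for the minimal connected partition $K|_{\bar T}$ together with DHK($\beta$), ARL($\beta$), BRU($\beta$), VG($\alpha$) for some $1\le\alpha<\beta$; by Lemma \ref{lemSGtri} and the remarks after it, $R(x,y)\asymp\Mf{R}(n(x,y))$, $V(x,R(x,y))\asymp\Mf{V}(n(x,y))$, $\#K_w\asymp\Mf{V}(N)$ for a cell $w$ at original level $N$ (Lemma \ref{Smain2lem2}), and $d(w)^{\beta}\asymp P(N):=\Mf{R}(N)\Mf{V}(N)=5^{N-m(N)}(90/7)^{m(N)}$. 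The only arithmetic input about $f$ needed is that $\{n:f(n)=1\}=\bigsqcup_{l\ge1}(l^3-l,l^3]_{\Mb Z}$, so that the $f=1$ runs have lengths $l\to\infty$ while $m(N)=\#\{n\le N:f(n)=1\}\asymp N^{2/3}$, hence $m(N)/N\to0$.

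For $d_S$: since $\mu_x\asymp1$, $p_{2n}(x,x)=h_{2n}(x,x)\mu_x\asymp1/V_d(x,n^{1/\beta})$ by DHK($\beta$). Choosing $N$ with $n\asymp P(N)$ gives $n^{1/\beta}\asymp P(N)^{1/\beta}\asymp d(w)$ for a cell $w$ at level $N$ containing $x$, and adaptedness together with $\#K_w\asymp\Mf{V}(N)$ yields $V_d(x,n^{1/\beta})\asymp\Mf{V}(N)$; for $n$ between $P(N)$ and $P(N+1)$ the ratio $\log V_d(x,n^{1/\beta})/\log n$ is squeezed between $\log\Mf{V}(N)/\log P(N+1)$ and $\log\Mf{V}(N+1)/\log P(N)$ by VG($\alpha$), and as the increments of $\log\Mf{V}$ and $\log P$ are bounded, both converge to $\lim_N\log\Mf{V}(N)/\log P(N)$. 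Finally
\[\frac{\log\Mf{V}(N)}{\log P(N)}=\frac{\bigl(1-m(N)/N\bigr)\log3+\bigl(m(N)/N\bigr)\log6}{\bigl(1-m(N)/N\bigr)\log5+\bigl(m(N)/N\bigr)\log(90/7)}\longrightarrow\frac{\log3}{\log5}\]
since $m(N)/N\to0$, so $\ul{d}_S(G,\mu)=\ol{d}_S(G,\mu)=2\log3/\log5$. (Equivalently one invokes Theorem \ref{Smain2}: the ``$\sup_w$ of $\liminf_k$'' quantities satisfy $\ul{\Mc N}=\ol{\Mc N}=3^{\log_5(90/7)}$ and $\ul{\Mc R}_2(0,2,1)=(3/5)^{\log_5(90/7)}$ because these only see the coarse-scale regime where $m/N\to0$ forces the pure-gasket scalings; the factor $\log_5(90/7)$ cancels in the formula of Theorem \ref{Smain2} and again gives $2\log3/\log5$.)

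For the partition dimensions, recall from Definition \ref{ciro2} that $\ol{d}^S_2(0,2,1)=\tfrac{2\log\ol N_*}{\log\ol N_*-\log\ol R_2(0,2,1)}$ and $\ul{d}^S_2(0,2,1)=\tfrac{2\log\ol N_*}{\log\ol N_*-\log\ul R_2(0,2,1)}$. The $\bar T$-levels are spaced (by the choice of $n(\cdot)$) so that passing one level changes $P$ by a factor comparable to $90/7$; hence over the $k$ $\bar T$-levels immediately below a cell $w$, if $\Delta n_0$ of the spanned original levels have $f=0$ and $\Delta n_1$ have $f=1$ then $\Delta n_0\log5+\Delta n_1\log(90/7)=k\log(90/7)+O(1)$ (uniformly). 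Since the branching is exactly $3$ at an $f=0$ level and $6$ at an $f=1$ level, $\#(S^k(w))=3^{\Delta n_0}6^{\Delta n_1}$; and using Lemma \ref{Smain2lem3} together with the self-similar resistance scaling \eqref{Rtri} (each $f=0$ level multiplies the separating effective resistance by $5/3$, each $f=1$ level by $15/7$, and $N_2=2$ already realizes that resistance up to constants in both cell types) we get $\Mc{E}_{2,k,w}(0,2,1)\asymp(3/5)^{\Delta n_0}(7/15)^{\Delta n_1}$, uniformly in $w,k$. Maximizing $\Delta n_0\log3+\Delta n_1\log6$ over $\Delta n_0,\Delta n_1\ge0$ subject to the constraint is a one-dimensional linear program whose optimum is at $\Delta n_0=0$, $\Delta n_1=k+O(1)$, because $\log6\approx1.792>1.743\approx\tfrac{\log(90/7)}{\log5}\log3$; this optimum is attained up to $O(1)$ in the exponent by taking $w$ inside the $l$-th $f=1$ block with $l\ge k$, where each $\bar T$-level is exactly one $f=1$ original level, so $\sup_w\#(S^k(w))\asymp6^k$ and $\ol N_*=6$. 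The same linear program applied to $\Mc{E}_{2,k,w}(0,2,1)$ again has optimum at $\Delta n_0=0$, $\Delta n_1=k+O(1)$, because $\log(7/15)\approx-0.762>-0.811\approx\tfrac{\log(90/7)}{\log5}\log(3/5)$, realized inside long $f=1$ blocks; hence $\sup_w\Mc{E}_{2,k,w}(0,2,1)\asymp(7/15)^k$ for all large $k$, so $\ol R_2(0,2,1)=\ul R_2(0,2,1)=7/15$. Plugging in,
\[\ul{d}^S_2(0,2,1)=\ol{d}^S_2(0,2,1)=\frac{2\log6}{\log6-\log(7/15)}=\frac{2\log6}{\log(90/7)}=\frac{2\log6}{\log90-\log7}.\]

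The step I expect to be the main obstacle is the uniform self-similar estimate $\Mc{E}_{2,k,w}(0,2,1)\asymp(3/5)^{\Delta n_0}(7/15)^{\Delta n_1}$: one must factor the combinatorial $p=2$ energy across the two cell types with constants independent of $w$ and $k$, and in particular confirm that $N_2=2$ is large enough to capture the correct separating effective resistance in both the $3$-fold ($f=0$) and the $6$-fold ($f=1$) subdivision patterns — this is where \eqref{Rtri} and Lemma \ref{Smain2lem3} (whose hypotheses, the basic framework plus connectedness, hold here) carry the weight. Once this estimate is available, the two linear programs, the numerical comparisons, and the bookkeeping for the block structure of $f$ are routine.
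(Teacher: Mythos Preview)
Your proposal is correct and matches the paper's approach: for $d_S$ the paper applies Theorem \ref{Smain2} after computing $\ul{\Mc N}=\ol{\Mc N}$ and $\ul{\Mc R}_2$ from $m(N)/N\to0$ (your direct DHK route and your Theorem \ref{Smain2} alternative both cover this), and for $\ul d^S_2=\ol d^S_2$ both you and the paper use the arbitrarily long $f=1$ blocks to realize $\ol N_*=6$ and $\ol R_2=\ul R_2=7/15$, with your linear program being exactly the paper's comparison of the two extreme cell types made explicit. The only refinement on the step you flag: the paper obtains the uniform multiplicative energy estimate via the $\Delta$-$Y$ transform applied directly to the $J^h$-graph (citing \cite[Lemma 2.1.15]{Kig01}), not via Lemma \ref{Smain2lem3}, whose constants depend on $[w]$ --- so invoke \eqref{Rtri} at the combinatorial level rather than detouring through the graph resistance.
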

\begin{proof}
 Let $w=\tr_{n(j),0,0}$ for some $j\ge0.$ With the $\Delta$-$Y$ transform (see \cite[Lemma 2.1.15]{Kig01}), we can see that $\Mc{E}_{p,k,\pi^k(w)}(0,2,1)\asymp \Mf{R}(n(k+j))/\Mf{R}(n(j))$ (see Figure  \ref{figpf6}), so
 \begin{align*}
   \lim_{k\to\infty}\frac{1}{k}\log \Mc{E}_{p,k,\pi^k(w)}(0,2,1)
&=\lim_{k\to\infty}\frac{1}{k}\log\frac{\Mf{R}(n(k+j))}{\Mf{R}(n(j))}
=\lim_{k\to\infty}\frac{1}{k}\log\Mf{R}(n(k))\\
&=\lim_{k\to\infty}\frac{1}{k}\left(n(k)\log\frac{3}{5}+m(n(k))\log\frac{7}{15}\right).
 \end{align*}
 
\begin{figure}[tb]
 \centering
 \includegraphics[width=0.8\columnwidth]{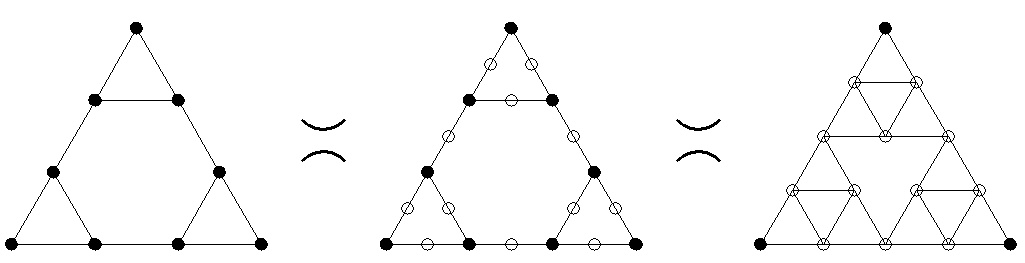}
 \caption{$\Mc{E}_{p,k,\pi^k(w)}(0,2,1)\asymp \Mf{R}(n(k+j))/\Mf{R}(n(j)).$}
\label{figpf6}
\end{figure}
 
Now we consider $\lim_{k\to\infty}n(k)/k.$ By definition, we obtain
\[ k\frac{\log90-\log7}{\log5}\ge n(k) \ge k\frac{\log90-\log7}{\log5}-m(n(k))-C\]
for some $C>0.$ Note that $\lim_{k\to\infty} m(k)/k=\lim_{k\to\infty}k^{-1/3}=0$ because $m(k^3)=\sum_{j=1}^kj=k(k-1)/2$, hence $\lim_{k\to\infty}k/n(k)=\log5/(\log90-\log7).$ Therefore by Lemma \ref{tilt},
\[\ul{\Mc{R}}_2(0,2,1)=\sup_{j\ge0}\lim_{k\to\infty}\frac{1}{k}\log \Mc{E}_{p,k,\pi^k(\tr_{n(j),0,0})}(0,2,1)=\frac{\log90-\log7}{\log5}\log\frac{3}{5}.\]
Similarly we get
\begin{align*}
\ul{\Mc{N}}=\ol{\Mc{N}}=\lim_{k\to\infty}\frac{1}{k}\log\#(S^k(\pi^k(\tr_{0,0,0})))&=\lim_{k\to\infty}\frac{1}{k}\left(n(k)\log3+m(n(k))\log6\right)\\
& =\frac{\log90-\log7}{\log5}\log3.
\end{align*}
Therefore by Theorem \ref{Smain2}, $d_S(G,\mu)=2\log3/(\log3-\log\frac{3}{5})=2\log3/\log5.$\\
On the other hand, since
 \[\sup\{k\mid \text{there exist }a\in\Mb{N}\text{ such that }f(b)=1\text{ for any }b\in[a,a+k]_\Mb{Z}\}=\infty,\] 
it follows that
\[\log\ol{N_*}=\lim_{k\to\infty}\frac{1}{k}\left(\log6^k\vee \log3^{(\log\frac{90}{7}/\log5)k}\right)=\log6\vee \frac{\log90-\log7}{\log5}\log3=\log6,\]
because $\log_{10}6>0.77>0.76>\frac{\log90-\log7}{\log5}\log_{10}3.$ Similarly,
\[\log\ol{R}_2(0,1,2)=\log\ul{R}_2(0,1,2)=\log\frac{7}{15}\vee \frac{\log90-\log7}{\log5}\log\frac{3}{5}=\log\frac{7}{15}.\]
Therefore
\[\ol{d}^S_2(0,2,1)=\ul{d}^S_2(0,2,1)=2\frac{\log6}{\log6-\log\frac{7}{15}}=2\frac{\log6}{\log90-\log7}.\]
\end{proof}
\begin{rem}
In the same way, we can prove that $ d_S(G,\mu)=\ul{d}^S_2(0,2,1)=\ol{d}^S_2(0,2,1)=2\log 3/\log5$ if $f\equiv 0$ (Sierpi\'nski gasket graph) and $ d_S(G,\mu)=\ul{d}^S_2(0,2,1)=\ol{d}^S_2(0,2,1)=2\log6/(\log90-\log7)$ if $f\equiv 1.$ Clearly the assumptions of Corollary \ref{Smain3} holds in these cases.
\end{rem}

\section{Proof of Theorem \ref{th-d}}
To show Theorem \ref{th-d}, we first prepare some condition.
\begin{defi}[uniformly shrinking]
Let $(X,d)$ be a metric space. we call $(X,d)$ is uniformly shrinking if there exists $\alpha\in(0,1)$ such that for any $x\in G$ and $r>0$, $B(x,r)\setminus B(x,\alpha r)\ne\emptyset$ whenever $B(x,r)\ne X$ and $B(x,r)\ne \{ x\}.$
\end{defi}
Uniformly shrinking condition is an extension of uniformly perfect condition to discrete metric spaces, but clearly it does not imply perfectness of a space.

\begin{lem}\label{lem-us}
Let $(X,d),(X,\rho)$ be metric spaces such that $d\qs\rho.$ Moreover, assume $(X,d)$ is uniformly shrinking, then
\begin{enumerate}
\item $(X,\rho)$ is uniformly shrinking.
\item If a measure $\mu$ satisfies \rm{(VD)}$_d$, then also satisfies {\rm(VD)}$_\rho.$
\end{enumerate}
\end{lem}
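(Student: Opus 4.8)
The plan is to prove each part by transporting the relevant inequality through the quasisymmetric relation $d\qs\rho$, using the basic properties of quasisymmetry recalled earlier in the section (in particular the equivalence (a)$\Leftrightarrow$(c): $\rho(x,y)<\theta(t)\rho(x,z)$ whenever $d(x,y)<td(x,z)$) together with the fact that $\qs$ is symmetric, so that $\rho\qs d$ with some homeomorphism $\theta'$.

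For part (1), suppose $(X,\rho)$ is not uniformly shrinking. The idea is to produce, for each $n$, a point $x_n$ and radius $r_n$ with $B_\rho(x_n,r_n)\ne X$, $B_\rho(x_n,r_n)\ne\{x_n\}$, but $B_\rho(x_n,r_n)=B_\rho(x_n,\alpha_n r_n)$ with $\alpha_n\to 1$. First I would pick $z_n\notin B_\rho(x_n,r_n)$ (possible since the ball is not all of $X$) and $y_n\in B_\rho(x_n,r_n)$ with $y_n\ne x_n$ (possible since the ball is not a singleton); then $\rho(x_n,y_n)\ge \alpha_n \rho(x_n,z_n)$ forces, via $\rho\qs d$ applied with the roles arranged so that a large $\rho$-ratio gives a large $d$-ratio, a lower bound $d(x_n,y_n)\ge \theta'^{-1}\!\bigl(\text{something}\to 1^-\bigr)\,d(x_n,z_n)$ — more carefully, from $\rho(x_n,z_n)\le \alpha_n^{-1}\rho(x_n,y_n)$ one gets $d(x_n,z_n)\le \theta'(\alpha_n^{-1})d(x_n,y_n)$, hence $d(x_n,y_n)\ge \theta'(\alpha_n^{-1})^{-1}d(x_n,z_n)$. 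Setting $\beta_n=\theta'(\alpha_n^{-1})^{-1}\to 1$ as $\alpha_n\to 1$ (by continuity of $\theta'$ at $1$ with $\theta'(1)=1$), and noting $z_n\notin B_\rho(x_n,r_n)$ translates via $d\qs\rho$ into $z_n\notin B_d(x_n,r'_n)$ for a suitable $r'_n$ with $y_n\in B_d(x_n,r'_n)$, one contradicts the uniform shrinking constant $\alpha$ of $(X,d)$ once $\beta_n>\alpha$. The bookkeeping of which ball radius to use on the $d$-side is the one genuinely fiddly point, but it is the same kind of argument already carried out in the proof of Lemma \ref{qsfit}.

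For part (2), the cleanest route is to use the classical fact (Heinonen, \cite[Section 10]{Hei}, or the same circle of ideas used implicitly in \cite{Kig12}) that quasisymmetry transports the volume doubling property \emph{provided} the space is uniformly perfect — and here uniform shrinking is exactly the discrete surrogate that makes this work, since it guarantees that $\rho$-balls cannot be too ``thin'' relative to their radius. Concretely, given a $\rho$-ball $B_\rho(x,2r)$, I would cover it by controlling, via $\rho\qs d$, a $d$-ball $B_d(x,R)$ with $R\asymp \overline{d}_\rho(x,2r)$, and conversely bound $B_\rho(x,r)$ below by a $d$-ball $B_d(x,r')$; the uniform shrinking of $(X,d)$ (inherited, or assumed) lets one compare $R$ and $r'$ by a bounded factor independent of $x,r$ (this is where a space that is merely ``shrinking at scale $1$'' would fail and one needs it at all scales). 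Then $\mu(B_\rho(x,2r))\le \mu(B_d(x,R))\le \mu(B_d(x,C r'))\le C^{\log_2 C}\mu(B_d(x,r'))\le C'\mu(B_\rho(x,r))$ by iterating (VD)$_d$ a bounded number of times.

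The main obstacle I expect is part (2): unlike (1), which is a direct contradiction argument, (2) requires a uniform (in $x$ and $r$) comparison between the $\rho$-ball radius and the $d$-ball radius that sandwiches it, and quasisymmetry only gives such control when the balls involved are uniformly non-degenerate — which is precisely the content of the uniform shrinking hypothesis and must be invoked on \emph{both} the inner and outer comparisons. I would expect the author's proof of (2) to either cite the standard quasisymmetry-transports-doubling lemma almost verbatim (with uniform shrinking replacing uniform perfectness) or to reduce to it; accordingly my proposal is to prove (1) in detail as above and then reduce (2) to the known statement, being careful that the ``shrinking'' hypothesis is what licenses the reduction at every scale.
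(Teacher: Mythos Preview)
Your sketch of part (2) is on the right track and matches the paper's argument: use the uniform shrinking of $\rho$ (obtained from part (1)) to find a witness $z\in B_\rho(x,r)$ with $\rho(x,z)\asymp r$, compare $\overline{d}_\rho(x,2r)$ with $\overline{d}_\rho(x,r)$ via quasisymmetry, sandwich $B_\rho(x,2r)$ and $B_\rho(x,r)$ between $d$-balls of comparable radii, and apply (VD)$_d$; a separate case handles $B_\rho(x,r)=\{x\}$. Your argument for part (1), however, has concrete errors. With $y_n\in B_\rho(x_n,r_n)\setminus\{x_n\}$ and $z_n\notin B_\rho(x_n,r_n)$, the empty-annulus hypothesis gives $\rho(x_n,y_n)<\alpha_n r_n\le\alpha_n\rho(x_n,z_n)$, the \emph{reverse} of the inequality you wrote. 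More fatally, you invoke $\theta'(1)=1$ to deduce $\beta_n\to 1$; nothing in the definition of quasisymmetry forces $\theta'(1)=1$ (one only has $\theta'(1)\ge 1$ in general), so this limit is unjustified. Taking $\alpha_n\to 1$ is also the uninformative direction: emptiness of $B_\rho(x,r)\setminus B_\rho(x,\alpha r)$ for $\alpha$ near $1$ is a weak statement, and what a contradiction argument would actually need is emptiness for $\alpha_n\to 0$, producing a $\rho$-gap of diverging ratio which one then transfers to a $d$-gap.

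The paper's proof of (1) does not argue by contradiction. It fixes $x,r$ with $B_\rho(x,r)\ne X$ and $B_\rho(x,r)\ne\{x\}$, takes $y_1\notin B_\rho(x,r)$, picks $\delta$ with $\theta(\delta)<1$, and iterates the uniform shrinking of $d$ to produce $y_{k+1}\in B_d(x,\delta d(x,y_k))\setminus B_d(x,\alpha\delta d(x,y_k))$. Quasisymmetry then gives $\lambda_1\rho(x,y_k)\le\rho(x,y_{k+1})<\lambda_2\rho(x,y_k)$ with $\lambda_2=\theta(\delta)<1$ and $\lambda_1=\theta(\alpha^{-1}\delta^{-1})^{-1}$, so the $\rho$-distances decrease geometrically with bounded jump ratio, and some $y_k$ must land in $B_\rho(x,r)\setminus B_\rho(x,\lambda_1 r)$. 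A short separate argument covers the case where the iteration halts because $B_d(x,\delta d(x,y_k))=\{x\}$.
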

\begin{proof}
Since $(X,d)$ is uniformly shrinking, there exists $\alpha\in(0,1)$ such that $B(x,r)\setminus B(x,\alpha r)\ne\emptyset$ whenever $B(x,r)\ne X$ and $B(x,r)\ne \{ x\}.$ \\ 
(1) Fix $x\in X$ and $r>0$ such that $B_\rho(x,r)\ne X$ and $B_\rho(x,r)\ne\{x\}.$ Choose $\delta\in(0,1)$ such that $\theta(\delta)<1.$ Let $y_1\in X\setminus B_\rho(x,r).$ If $B_d(x,\alpha\delta d(x,y_1))\ne\{x\},$ then there exists $y_2$ such that $\alpha\delta d(x,y_1)\le  d(x,y_2)< \delta d(x,y_1)$ and then $\lambda_1\rho(x,y_1)\le \rho(x,y_2)<\lambda_2 \rho(x,y_1)$ where $\lambda_1=(1/\theta(\delta^{-1}\alpha^{-1})>0,\lambda_2=\theta(\delta)<1.$ We can inductively choose $y_n$ such that $\lambda_1\rho(x,y_n)\le \rho(x,y_{n+1}) <\lambda_2 \rho(x,y_n)$ whenever $B_d(x,\delta d(x,y_n))\ne\{x\}.$
\begin{itemize}
\item If there exists $n$ such that $\rho(x,y_{n+1})<r\le \rho(x,y_n)$, then $y_{n+1}\in B_\rho(x,r)\setminus B(x,\lambda_1 r).$
\item Assume $r\le\rho(x,y_n)$ and $B_d(x,\delta d(x,y_n))=\{x\}.$ Let $y\in B_\rho(x,r)\setminus\{x\},$ then $\rho(x,y)\ge (1/\theta(\delta^{-1})) \rho(x,y_n)$ because $d(x,y)\ge \delta d(x,y_n).$ Therefore $y\in B_\rho(x,r)\setminus B_\rho(x,r/\theta(\delta^{-1})).$ 
\end{itemize}
Therefore $\rho$ is uniformly shrinking.\\
(2) We first consider the case $B_\rho(x,r)\ne\{x\}.$ Then by (1), there exist $\alpha'\in(0,1)$ and $z\in B_\rho(x,r)$ such that $\rho(x,y)<(2/\alpha')\rho(x,z)$ for any $y\in B_\rho(x,2r)$ where $\alpha'$ is independent of $x$ and $r.$ Therefore 
\[B_\rho(x,2r)\subset B_d(x,2\ol{d}_\rho(x,2r))\subset B_d(x,\lambda_3\ol{d}_\rho(x,r))\]
for some $\lambda_3.$ Moreover, if $d(x,y)<\theta^{-1}(1)\ol{d}_\rho(x,r)$, then $\rho(x,y)<r.$ Since (VD)$_d$ holds, there exists $C>0$ and
\[V_\rho(x,2r)\le V_d(x,\lambda_3\ol{d}_\rho(x,r)) \le C V_d(x,\theta^{-1}(1)\ol{d}_\rho(x,r))\le C V_\rho(x,r).\]
Next we assume $B_\rho(x,r)=\{x\}.$ Choose $y\in B_\rho(x,2r)$ such that $d(x,y)\ge (1/2)\ol{d}_\rho(x,2r).$ Then for any $z\in X\setminus\{x\},d(x,z)\ge \theta^{-1}(1)d(x,y)$ because $B_\rho(x,r)=\{x\}.$ Since (VD)$_d$ holds, there exists $C>0$ such that
\[V_\rho(x,2r)\le V_d(x,2\ol{d}_\rho(x,2r)) \le C V_d(x,\theta^{-1}(1)d(x,y)) \]
and 
\[B_\rho(x,r)=\{x\}=B_d(x,\theta^{-1}(1)d(x,y)), \]
so we get desired condition.
\end{proof}

\begin{lem}\label{le-Rus}
Let $( G,\mu)$ be a weighted graph and $R$ be the associated effective resistance.
 \begin{enumerate}
\item $R(x,y)\ge\max\{\mu_x^{-1},\mu_y^{-1}\}$ for any $x,y\in  G$ such that $x\ne y.$
\item $R(x,y)\le\mu^{-1}_{xy}$ for any $x,y\in G$ such that $x\sim y.$
\item If $( G,\mu)$ satisfies the condition $(p_0)$, then $(X,R)$ is fitting and uniformly shrinking.
\end{enumerate}
\end{lem}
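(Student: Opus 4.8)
The plan for the first two items is to read off everything from the variational characterisation of $R$. For (1) I would test the Dirichlet form against $f=\mathbf 1_{\{x\}}$: since $y\neq x$ this function is admissible for $R(x,y)^{-1}=\inf\{\Mc{E}(g)\mid g(x)=1,\ g(y)=0\}$, and a one-line computation gives $\Mc{E}(\mathbf 1_{\{x\}})=\sum_{z\sim x}\mu_{xz}=\mu_x$, whence $R(x,y)\ge\mu_x^{-1}$; using symmetry $R(x,y)=R(y,x)$ and the same estimate with $x,y$ interchanged gives $R(x,y)\ge\mu_y^{-1}$, proving (1). For (2), given any admissible $g$ (so $g(x)=1$, $g(y)=0$) I would keep only the term of the edge $(x,y)\in E$ in the sum defining $\Mc{E}(g)$: as $x\sim y$,
\[\Mc{E}(g)\ \ge\ (g(x)-g(y))^2\mu_{xy}\ =\ \mu_{xy},\]
and taking the infimum yields $R(x,y)^{-1}\ge\mu_{xy}$, i.e.\ $R(x,y)\le\mu_{xy}^{-1}$.

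For (3), condition (F1) is immediate: if $x\sim y$ and $z\neq x$ then, combining (2), the hypothesis $(p_0)$, and (1),
\[R(x,y)\ \le\ \mu_{xy}^{-1}\ =\ p(x,y)^{-1}\mu_x^{-1}\ \le\ p_0^{-1}\mu_x^{-1}\ \le\ p_0^{-1}R(x,z),\]
so (F1) holds with $C=p_0^{-1}$. For uniform shrinking, fix $x$ and $r>0$ with $B_R(x,r)\neq\{x\}$ and $B_R(x,r)\neq G$. Since $B_R(x,r)\neq G$ pick $w\notin B_R(x,r)$ and, using connectedness of $(G,E)$, a path $x=v_0,v_1,\dots,v_m=w$; let $j\ge 1$ be the first index with $R(x,v_j)\ge r$. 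If $j\ge 2$ then $v_{j-1}\neq x$, so by (1) $R(x,v_{j-1})\ge\mu_{v_{j-1}}^{-1}$, while by (2) and $(p_0)$ $R(v_{j-1},v_j)\le\mu_{v_{j-1}v_j}^{-1}\le p_0^{-1}\mu_{v_{j-1}}^{-1}\le p_0^{-1}R(x,v_{j-1})$; feeding this into $r\le R(x,v_j)\le R(x,v_{j-1})+R(v_{j-1},v_j)$ gives $R(x,v_{j-1})\ge\frac{p_0}{1+p_0}\,r$, so $v_{j-1}\in B_R(x,r)\setminus B_R(x,\alpha r)$ with $\alpha=p_0/(1+p_0)$. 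The residual case $j=1$ (every such path exits at the first step) I would reduce to the previous one by restarting the path at a point of $B_R(x,r)\setminus\{x\}$, which is nonempty and, by (1)--(2), sits within a bounded multiple of $\mu_x^{-1}$ of $x$.

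The genuinely delicate item is (F2). Given $\varepsilon>0$, the plan is to produce a ball $B_R(x_0,r)$ and a nearest-neighbour path $x_0,x_1,\dots,x_n$ lying in the ball except at its last vertex, all of whose consecutive $R$-increments are $\le\varepsilon r$. By (1)--(2) and $(p_0)$, the increment $R(x_{i-1},x_i)$ along a path is comparable, up to the universal factor $p_0^{-1}$, to $\mu_{x_{i-1}}^{-1}$, so the task reduces to finding, inside $(G,E)$, a nearest-neighbour path of arbitrarily large combinatorial length along which the conductances $\mu_{x_i}$ vary by at most a fixed factor and whose endpoints realise, up to a fixed constant, their $R$-distance. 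Along such a path the $R$-increments are mutually comparable and their running sums from $x_0$ grow linearly, so each increment is $\le\varepsilon\,R(x_0,x_i)$ once $i$ is large; taking $r$ just below $R(x_0,x_n)$ for such an $n$ and cutting the path at its first exit from $B_R(x_0,r)$ then yields the required configuration. I expect the construction of these "flat and efficient" paths --- equivalently, a quantitative bound on how fast $\mu$ may vary along near-geodesics of $(G,E)$, using only that the graph is connected, infinite and of bounded degree --- to be the main obstacle, and the point where the constants (including $\alpha$ above and $C$ in (F1)) must be tracked so that a single choice works uniformly.
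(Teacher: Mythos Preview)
Your treatment of (1), (2), and the (F1) part of fitting is correct and coincides with the paper's. For uniform shrinking the paper splits on the size of $r$ rather than on the exit index $j$: when $r>2\max_{y\sim x}\mu_{xy}^{-1}$ it runs exactly your $j\ge 2$ chain argument, and when $r\le 2\max_{y\sim x}\mu_{xy}^{-1}$ it notes directly that any $y\in B_R(x,r)\setminus\{x\}$ satisfies $R(x,y)\ge\mu_x^{-1}\ge (p_0/2)r$. Your $j=1$ case collapses to this same inequality (since $j=1$ forces $r\le R(x,v_1)\le\mu_{xv_1}^{-1}\le p_0^{-1}\mu_x^{-1}$), so the ``restart the path'' manoeuvre is unnecessary --- any point of $B_R(x,r)\setminus\{x\}$ already lies outside $B_R(x,p_0 r)$.

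Where you part company with the paper is (F2). The paper does not argue (F2) at all: its entire justification for fitting is the sentence ``(1) and (2) also show that $R$ fits to $(G,E)$ under the condition $(p_0)$''. Your instinct that (F2) is the real obstruction is correct, and in fact (F2) is \emph{false} under the stated hypotheses. Take $G=\Mb{Z}_{\ge 0}$ with $\mu_{n,n+1}=2^{-n}$: then $(p_0)$ holds with $p_0=1/3$, $R(m,n)=2^{\max(m,n)}-2^{\min(m,n)}$, and $\diam(G,R)=\infty$. For any base point $x_0=a$ and any nearest-neighbour path exiting $B_R(a,r)$ at $x_n$, either $x_n>a$, in which case the final step has $R$-length $2^{x_n-1}>r/2$, or $x_n<a$, which forces $2^a>r$ and then already the first step from $a$ has $R$-length at least $2^{a-1}>r/2$. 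Hence (F2) fails for every $\epsilon<1/2$. The lemma is only invoked where the stronger hypotheses of Theorem~\ref{th-d} are in force, so the gap may be harmless for the applications, but your search for ``flat and efficient'' paths cannot succeed from $(p_0)$ alone.
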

\begin{proof}
For any $x,y\in G$, $\mathcal{E}(\chi_{\{x\}},\chi_{\{x\}})=\mu(x)$ and so $R(x,y)\ge\max\{\mu^{-1}_x,\mu^{-1}_y\}.$ On the other hand, if $x\sim y,$ then $\mathcal{E}(f,f)\ge (1-0)^2\mu_{xy}$ for any $f$ such that $f(x)=1,f(y)=0$, so $R(x,y)\le\mu^{-1}_{xy}.$ (1) and (2) also show that $R$ fits to $(G,E)$ under the condition $(p_0).$ We prove uniformly shrinking condition in two cases. 
\begin{itemize}
\item Assume $B(x,r)\ne X$ and  $\displaystyle r>2\max_{y:y\sim x}\mu^{-1}_{xy}.$ Choose $\{x_n\}_{n=1}^k$ such that $x_1=x,x_k\not\in B(x,r)$ and $x_n\sim x_{n+1}$ for any $n.$ Then for some $n<k,$ $x_n\in B(x,r)$ and $x_{n+1}\not\in B(x,r).$ Note that $x\ne x_n$ because $x\not\sim x_{n+1}.$ Since $( G,\mu)$ satisfies the condition $(p_0),$ there exists $C>0$ such that $R(x_n,x_{n+1})\le \mu_{x_nx_{n+1}}^{-1}\le C\mu_{x_n}^{-1}\le CR(x,x_n),$ so $(1+C)R(x,x_n)\ge r$ and $B(x,r)\setminus B(x,(1+C)^{-1}r)\ne\emptyset.$
\item Assume $B(x,r)\ne\{x\}$ and $\displaystyle r\le 2\max_{y:y\sim x}\mu^{-1}_{xy}.$ By (1), $B(x,\mu_{x}^{-1})=\{x\}$ so $\displaystyle B(x,r)\setminus B(x,2^{-1}\mu^{-1}_x(\max_{y:y\sim x}\mu_{xy})r)\ne\emptyset.$ Since $(p_0)$ holds, $B(x,r)\setminus B(x,C'r)\ne\emptyset$ for some $C'\in (0,1/2].$
\end{itemize}
Take $(1+C)^{-1}\wedge C',$ we get the uniformly shrinking condition.
\end{proof}

We prove Theorem \ref{th-d} in two steps. The first step is to show existence of suitable distance $d$.\\
For the rest of this section, we assume that
\begin{itemize}
\item $( G,\mu)$ be a weighted graph.
\item $\mathcal{E}(f)$ is the associated energy and $R$ is the associated effective resistance to $\mu.$ 
\end{itemize}

\begin{prop}\label{pr-qsmet}
Let $( G,\mu)$ be a weighted graph and assume $R$ satisfies the condition of Theorem \ref{th-d}(1),
\begin{enumerate}
\item then there exists a distance $d$ on $ G$, which satisfies following conditions for some $\beta>\alpha\ge1.$
\begin{itemize}
\item $d\qs R$ 
\item $R(x,y)V_d(x,d(x,y))\asymp d^\beta(x,y)$ for any $x,y\in G.$ \hspace{30pt}{\rm(R($\beta$))}
\item $V_d(x,r)\le (Cr^\alpha/s^\alpha) V_d(x,s)$ for any $x\in G$ and $r>s>0$. \hspace{5pt}{\rm(VG($\alpha$))}
\end{itemize}
\item Let $d$ be a metric in (1). Then we can choose $d$ also satisfies following conditions.
\begin{itemize}
\item $\inf_{x,y\in G}d(x,y)=r_0>0.$\hspace{30pt}{\rm(dL)}
\item There exists $C_+>0$ such that $d(x,y)\le C_+$ for any $x,y\in G$ with $x\sim y.$ \hspace{30pt}{\rm(NdU)}
\item For any $x\in G$ and $r>0,$ $B_d(x,r)$ is a finite set. \hspace{30pt}{\rm(BF)}
\end{itemize}
\end{enumerate}
\end{prop}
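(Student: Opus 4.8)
The plan is to follow Kigami's quasisymmetric-modification technique from \cite{Kig12}, carried over to the discrete resistance setting. By Lemma \ref{le-Rus} the standing hypotheses make $(G,R)$ fitting and uniformly shrinking, and $(\mathrm{VD})_R$ holds with some doubling constant $C_{\mathrm{VD}}$; set $\alpha_0=\log_2 C_{\mathrm{VD}}>0$. The metric $d$ will be built from the gauge $h(x,y):=R(x,y)\,V_R(x,R(x,y))$ (with $h(x,x)=0$). First I would record two elementary facts about $h$: using $x\in B_R(y,R(x,y))\subseteq B_R(x,2R(x,y))$ one gets $V_R(x,R(x,y))\asymp V_R(y,R(x,y))$ by $(\mathrm{VD})_R$, so $h$ is symmetric up to a constant; and combining $R(x,z)\le R(x,y)+R(y,z)$ with $(\mathrm{VD})_R$ to enlarge the radius shows $h$ is a quasi-metric with quasi-triangle constant bounded in terms of $C_{\mathrm{VD}}$ only. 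Then I would fix $\beta$ large: large enough that $h^{1/\beta}$ has quasi-triangle constant below the threshold of Frink's metrization lemma, so that $h^{1/\beta}$ is bi-Lipschitz to a genuine metric $d$ on $G$, and also large enough that $\beta\ge(1+\alpha_0)/\alpha_0$.

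With $d\asymp h^{1/\beta}$ fixed, the three conclusions of part (1) follow by routine bookkeeping. For $d\qs R$: if $R(x,y)\le tR(x,z)$ then $(\mathrm{VD})_R$ gives $V_R(x,R(x,y))\le C_{\mathrm{VD}}\,t^{\alpha_0}V_R(x,R(x,z))$, hence $h(x,y)\le C_{\mathrm{VD}}\,t^{1+\alpha_0}h(x,z)$ and $d(x,y)\le\theta(t)\,d(x,z)$ with $\theta(t)\asymp t^{(1+\alpha_0)/\beta}$, a homeomorphism of $[0,\infty)$. For (R($\beta$)): one has $d(x,y)^\beta\asymp h(x,y)=R(x,y)V_R(x,R(x,y))$, and since $\id$ is a quasisymmetry in both directions the ball $B_d(x,d(x,y))$ is trapped between two $R$-balls of radius $\asymp R(x,y)$, so $(\mathrm{VD})_R$ yields $V_d(x,d(x,y))\asymp V_R(x,R(x,y))$; substituting gives $R(x,y)V_d(x,d(x,y))\asymp d(x,y)^\beta$. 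For (VG($\alpha$)): given $r>s$, let $\rho,\sigma$ be the $R$-radii with $\rho V_R(x,\rho)\asymp r^\beta$ and $\sigma V_R(x,\sigma)\asymp s^\beta$, so that $V_d(x,r)\asymp V_R(x,\rho)$ and $V_d(x,s)\asymp V_R(x,\sigma)$; the bound $V_R(x,\rho)\le C_{\mathrm{VD}}(\rho/\sigma)^{\alpha_0}V_R(x,\sigma)$ forces $\rho/\sigma\gtrsim(r/s)^{\beta/(1+\alpha_0)}$, and then $V_d(x,r)/V_d(x,s)\asymp(r/s)^\beta(\sigma/\rho)\lesssim(r/s)^{\alpha}$ with $\alpha=\beta\alpha_0/(1+\alpha_0)$, which lies in $[1,\beta)$ by the choice of $\beta$.

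Part (2) then needs almost nothing new, since the metric $d$ produced above already satisfies (dL), (NdU), (BF). Indeed, for every $x\ne y$, Lemma \ref{le-Rus}(1) gives $R(x,y)\ge\mu_x^{-1}$, and $x\in B_R(x,R(x,y))$ gives $V_R(x,R(x,y))\ge\mu_x$, so $h(x,y)\ge 1$ and $d(x,y)\gtrsim 1$, which is (dL). When $x\sim y$, condition ($p_0$) and Lemma \ref{le-Rus}(2) give $R(x,y)\le\mu_{xy}^{-1}\le p_0^{-1}\mu_x^{-1}\le p_0^{-1}r_{x,R}$, while $r_{x,R}\ge\mu_x^{-1}$ and $V_R(x,r_{x,R})=\mu_x$; hence $(\mathrm{VD})_R$ forces $V_R(x,R(x,y))\asymp\mu_x$ and $h(x,y)\asymp 1$, i.e.\ $d(x,y)\le C_+$, which is (NdU). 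For (BF), $R(x,y)\ge\mu_y^{-1}$ shows that every point of the closed ball $\{y:R(x,y)\le\rho\}$ has mass at least $\rho^{-1}$, so this set is finite because $V_R(x,\rho+1)<\infty$; since $R\qs d$, any $d$-ball $B_d(x,r)$ is contained in some such closed $R$-ball and is therefore finite.

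The main obstacle is confined to part (1): choosing one exponent $\beta$ that is simultaneously large enough to metrize the quasi-metric $h^{1/\beta}$ and to push the resulting volume-growth exponent $\alpha$ into $[1,\beta)$, and executing the transfer of volume doubling and polynomial volume growth from $R$-balls to $d$-balls through the quasisymmetry. This is exactly the point where the continuous argument of \cite{Kig12} must be re-run in the discrete resistance-metric framework; once it is in place, the quasi-triangle estimate for $h$ and the whole of part (2) are immediate consequences of $(\mathrm{VD})_R$ and Lemma \ref{le-Rus}.
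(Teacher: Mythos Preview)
Your proposal is correct and follows essentially the same route as the paper: build the gauge $h(x,y)\asymp R(x,y)V_R(x,R(x,y))$, apply the Frink metrization lemma (this is the \cite[Proposition~14.5]{Hei} invoked in the paper) to $h^{1/\beta}$, read off $R\underset{\mathrm{QS}}{\sim}d$ from volume doubling, and then transfer ball volumes through the quasisymmetry to obtain (R($\beta$)) and (VG($\alpha$)); part~(2) is identical to the paper's, resting on Lemma~\ref{le-Rus} and $(p_0)$.

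The one place where your write-up differs in execution is the derivation of (VG($\alpha$)). The paper realizes the radii $r>s$ by actual points $y,z$ (via uniform shrinking of $d$, which you have by Lemma~\ref{lem-us}), and then bounds $R(x,y)/R(x,z)$ from above via a chain argument along intermediate points to extract an exponent $\iota>0$, arriving at $\alpha=(\beta-\iota)\vee 1$. Your version is cleaner and purely algebraic: invert the gauge abstractly to $\rho,\sigma$ and use $(\mathrm{VD})_R$ to get $\alpha=\beta\alpha_0/(1+\alpha_0)$. Both work, but be aware that your inversion step ``let $\rho,\sigma$ satisfy $\rho V_R(x,\rho)\asymp r^\beta$'' and the identification $V_d(x,r)\asymp V_R(x,\rho)$ still tacitly rely on uniform shrinking (to produce a point at the right scale) and break down when $s<r_{x,d}$; the paper handles that boundary case separately by noting $B_d(x,s)=\{x\}$ and invoking $(\mathrm{VD})_d$. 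You should add a sentence covering this, and also note that your $\theta(t)\asymp t^{(1+\alpha_0)/\beta}$ is only the $t\ge1$ branch --- for $t\le1$ monotonicity of $V_R$ gives $\theta(t)\lesssim t^{1/\beta}$, and the two together form the required homeomorphism.
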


To prove this proposition, we prepare a lemma.

 \begin{lem}\label{lemvdr}
Assume $(X,R)$ is uniformly shrinking and {\rm(VD)$_R$} holds. Then there exists a homeomorphism $\eta:[0,\infty)\to [0,\infty)$ such that for any $t>0$,
\[V_R(x,R(x,y))<\eta(t)V_R(x,R(x,z)) \text{ whenever } R(x,y)<tR(x,z) \]
\end{lem}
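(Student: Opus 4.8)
The statement to prove, Lemma~\ref{lemvdr}, asks for a single homeomorphism $\eta$ controlling the ratio $V_R(x,R(x,y))/V_R(x,R(x,z))$ in terms of the ratio $R(x,y)/R(x,z)$, under uniform shrinking and (VD)$_R$. The plan is to first reduce (VD)$_R$, which only compares radii $r$ and $2r$, to a doubling estimate at an arbitrary dyadic scale by iteration: there is $C_0$ and $\gamma>0$ with $V_R(x,2^kr)\le C_0^k V_R(x,r)=C_0\,(2^k)^\gamma\,V_R(x,r)$ for every integer $k\ge0$ and every $r>0$, hence $V_R(x,\lambda r)\le C_0\lambda^\gamma V_R(x,r)$ for all $\lambda\ge1$. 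This already handles the ``upper'' half of the comparison: if $R(x,y)<t\,R(x,z)$ with $t\ge1$, then $V_R(x,R(x,y))\le V_R(x,t\,R(x,z))\le C_0 t^\gamma V_R(x,R(x,z))$, and we may take the candidate $\eta$ to dominate $t\mapsto C_0 t^\gamma$ on $[1,\infty)$.

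The genuinely delicate part is the regime $t<1$, i.e.\ when $R(x,y)$ is much \emph{smaller} than $R(x,z)$; here we need $V_R(x,R(x,y))$ to be correspondingly small, which is a \emph{reverse} volume bound and is where uniform shrinking enters (volume doubling alone gives no lower control on volumes of small balls relative to large ones). The plan is: apply uniform shrinking with constant $\alpha\in(0,1)$ iteratively, starting from a point $z$ with $R(x,z)$ comparable to the big radius, to produce points $x=z_0,z_1,z_2,\dots$ with $R(x,z_{j+1})\in[\alpha R(x,z_j),R(x,z_j))$, as long as the ball has not collapsed to $\{x\}$. These balls $B_R(x,R(x,z_j))$ are strictly nested, so by (VD)$_R$ applied $\lceil\log_2\alpha^{-1}\rceil$ times we get a fixed factor $q>1$ with $V_R(x,R(x,z_{j-1}))\ge q\,V_R(x,R(x,z_j))$ at each step. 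Given $y$ with $R(x,y)<t R(x,z)$, one locates the step $j$ at which $R(x,z_j)$ first drops below (a fixed multiple of) $R(x,y)$; then $j\gtrsim \log(1/t)/\log(1/\alpha)$, so $V_R(x,R(x,z))/V_R(x,R(x,y))\ge V_R(x,R(x,z_0))/V_R(x,R(x,z_j))\ge q^{\,j}$, which is a positive power of $1/t$, yielding $V_R(x,R(x,y))\le \eta(t)V_R(x,R(x,z))$ with $\eta(t)$ a multiple of $t^{\,\delta}$ for some $\delta>0$. One must handle the edge case where the shrinking process terminates because $B_R(x,\cdot)=\{x\}$ before reaching the scale of $R(x,y)$: in that case $R(x,y)$ itself is so small that $B_R(x,R(x,y))=\{x\}$, whose volume is $\mu\{x\}>0$ but which is dominated trivially, and one checks the bound directly.

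Finally, the plan is to assemble the two regimes: define $\eta$ on $[0,\infty)$ by taking, say, $\eta(t)=C_0 t^\gamma$ for $t\ge1$ and $\eta(t)=C_1 t^\delta$ for $t<1$ (with $C_1$ chosen so that the two pieces match continuously at $t=1$ and $\eta$ is strictly increasing and vanishes at $0$, possibly after replacing $\delta,\gamma$ by smaller/larger values), so that $\eta$ is a homeomorphism of $[0,\infty)$ and the required inequality $V_R(x,R(x,y))<\eta(t)V_R(x,R(x,z))$ holds in both cases (replacing $\le$ by $<$ costs at most an arbitrarily small enlargement of $\eta$). The main obstacle is the $t<1$ case: extracting a quantitative lower bound on the number of shrinking steps between the two scales and converting the geometric decay of volumes along nested balls into a clean power-law bound, while carefully treating the degenerate situation where balls collapse to a single point. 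The other steps — iterating (VD)$_R$ and stitching together the homeomorphism — are routine.
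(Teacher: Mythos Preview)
Your overall strategy --- splitting into $t\ge 1$ and $t<1$, handling $t\ge1$ by iterating (VD)$_R$, and for $t<1$ deriving a reverse-doubling inequality from uniform shrinking plus (VD)$_R$ --- is exactly the paper's approach. However, the key step in the $t<1$ regime is not correctly argued. You claim that because the balls $B_R(x,R(x,z_j))$ are strictly nested, ``by (VD)$_R$ applied $\lceil\log_2\alpha^{-1}\rceil$ times we get a fixed factor $q>1$ with $V_R(x,R(x,z_{j-1}))\ge q\,V_R(x,R(x,z_j))$.'' This does not follow: (VD)$_R$ is an \emph{upper} bound $V_R(x,2r)\le C\,V_R(x,r)$, and iterating it gives only $V_R(x,R(x,z_{j-1}))\le C^{\lceil\log_2\alpha^{-1}\rceil}V_R(x,R(x,z_j))$, which is the wrong direction. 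Strict nesting alone contributes nothing quantitative, since the single annulus point $z_j$ may carry arbitrarily small $\mu$-mass relative to the ball.

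The missing idea --- and the heart of the paper's argument --- is the disjoint-balls trick. Given $\xi$ in the annulus $B_R(x,r/2)\setminus B_R(x,cr/2)$, one places a small ball $B_R(\xi,cr/4)$ around $\xi$; this ball is disjoint from $B_R(x,cr/4)$, and both are contained in $B_R(x,r)$. Now (VD)$_R$, applied at the center $\xi$ (using $B_R(x,cr/4)\subset B_R(\xi,r)$), yields $V_R(\xi,cr/4)\ge\gamma\,V_R(x,cr/4)$ for a uniform $\gamma>0$, whence
\[
V_R(x,r)\ge V_R(x,cr/4)+V_R(\xi,cr/4)\ge(1+\gamma)\,V_R(x,cr/4).
\]
Iterating this genuine reverse-doubling step gives the power law $V_R(x,R(x,y))\le C_2\,t^{\tau_2}\,V_R(x,R(x,z))$ you are after. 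Your chain of points $z_j$ is not wrong as scaffolding, but you must put a \emph{ball} around the annulus point and compare its volume to the central ball via (VD)$_R$; merely invoking (VD)$_R$ on the concentric balls cannot produce a lower bound.
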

\begin{proof}
If $t\ge 1$, then there exists $C_1$ and $\tau_1$ such that $V_R(x,R(x,y))<C_1t^{\tau_1}$ whenever $ R(x,y)<tR(x,z)$ because of (VD)$_R$.  Since $R$ is uniformly shrinking, there exists $c\in(0,1)$ such that $B_R(x,r/2)\setminus B_R(x,cr/2)\ne\emptyset$ whenever $B(x, r/2)\ne\{x\}.$ Choose $\xi\in B_R(x,r/2)\setminus B_R(x,cr/2)$. Then by (VD)$_R$, there exists $\gamma$, which is independent of $x,\xi$ and $r$ such that $\gamma V_R(x,cr/4)\le V_R(\xi,cr/4)$. This implies 
\[(1+\gamma)V_R(x,cr/4)\le V_R(x,cr/4)+V_R(\xi,cr/4)\le V_R(x,r).\]
 Therefore there exists $C_2$ and $\tau_2$ such that 
\[V_R(x,R(x,y))< C_2 t^{\tau}V_R(x,R(x,z))\text{ whenever }R(x,y)<tR(x,z)\] 
for $t\le 1$ and we obtain desired $\eta(t):=(C_1\vee C_2)(t^{\tau_1}\vee t^{\tau_2})$.
\end{proof}
\begin{proof}[Proof of Proposition \ref{pr-qsmet}] 
(1)Let $\varphi(x,y)=R(x,y)(V(x,R(x,y))+V(y,R(x,y))$ and $\eta$ be a function given in Lemma \ref{lemvdr}. Note that for any $x,y,z\in G$, $R(x,z)/2\le\max\{R(x,y),R(x,z)\}.$ Assume $R(x,z)/2\le R(x,y),$ then 
\[R(x,z)V_R(x,R(x,z))\le2\eta(2)R(x,y)V_R(x,R(x,y))\le2\eta(2)(\varphi(x,y)+\varphi(x,z))\]
and by (VD)$_R$, there exists $C_1$, which is independent of $x,y,z$, such that 
\[\varphi(x,z)\le 2C_1\eta(2)(\varphi(x,y)+\varphi(x,z)).\]
So, by \cite[Proposition 14.5]{Hei}, there exist a metric $d$ on $ G$ and $\beta>1$ such that $\varphi \asymp d^\beta$ for any $x,y\in  G$, and (VD)$_R$ shows $d^\beta(x,y)\asymp V_R(x,R(x,y))R(x,y)$. Moreover, this inequality and the above lemma shows that for some $C_2>0$, 
\[d(x,y)\le C_2 (R(x,y)V_R(x,y))^{1/\beta}\le C_2\eta^{1/\beta}(t) (R(x,z)V_R(x,z))^{1/\beta}\le C_2^2d(x,z)\]
whenever $R(x,y)<tR(x,z)$. This shows $R\qs d.$\\
Next we show $R(\beta).$ Because $d\qs R$, there exists $c>0$ such that $d(x,z)<cd(x,y)$ whenever $R(x,z)<R(x,y),$ and then $B_R(x,R(x,y))\subset B_d(x,cd(x,y)).$ Using the same way, we get $B_d(x,d(x,y))\subset B_R(x,c'R(x,y))$ for some $c'.$ By Lemma \ref{lem-us}, $\mu$ satisfies both (VD)$_d$ and (VD)$_R.$ By using them and the above conditions, we get $V_R(x,R(x,y))\asymp V_d(x,d(x,y))$ for any $x,y \in  G $ and therefore $R(x,y)V_d(x,d(x,y))\asymp d^\beta(x,y).$ \\
Finally, we show $(VG)_\alpha.$ Recall that by Lemma \ref{lem-us}, there exists $\gamma\in (0,1)$ such that $B_d(x,r)\setminus B_d(x,\gamma r)\ne\emptyset$ whenever $B_d(x,r)\ne\{x\}.$ Fix $x\in G$ and assume $r>s>2r_{x,d},$ then there exist $y\in B_d(x,s)\setminus B_d(x,\gamma s)$ and $z\in B_d(x,r)\setminus B_d(x,\gamma r).$ Therefore
\[\frac{V_d(x,r)}{V_d(x,s)}\le C_3\frac{V_d(x,d(x,z))}{V_d(x,d(x,y))}\le C_4\left(\frac{r}{s}\right)^\beta \frac{R(x,y)}{R(x,z)}\] 
for some $C_3,C_4>0,$ because of (VD)$_d$ and $R(x,y)V_d(x,d(x,y))\asymp d^\beta(x,y)$. To evaluate $R(x,z)$, we take $\tau,\nu>1$ such that $\nu R(o,p)\le R(o,q)$ whenever $\tau d(o,p)\le d(o,q).$ Now we let $y=y_0$ and choose $y_{n}\in B_d(x,\gamma^{-1}\tau d(x,y_{n-1}))\setminus B_d(x,\tau d(x,y_{n-1})),$ inductively. Then there exists $C_5>0$ such that $\nu^n R(x,y)\le R(x,y_n) \le C_5R(x,z)$ for any $n$ with $(\tau^n/\gamma^n)\le (d(x,z)/d(x,y)).$ Therefore there exists $C_6,\iota>0$ such that 
\[\frac{R(x,y)}{R(x,z)}\le C_6\left( \frac{s}{r}\right)^\iota  ,\]
and so
\[\frac{V_R(x,r)}{V_R(x,s)}\le C \left(\frac{r}{s}\right)^{(\beta-\iota)\vee 1} .\]
If $s<2r_{x,d}$, then $B_d(x,s)\supset\{x\}=B_d(x,\inf_{y\ne x}d(x,y))$ and (VD)$_d$ imply that this inequality holds by modifying $C.$ Because $\iota,C$ are determined only by (VD)$_d$ and independent of $x,r$ and $s$, we get the desired inequality.\\  
(2) Lemma\ref{lem-us}, (VD)$_d,$ and $(p_0)$ imply
\[ V_R(x,R(x,y))R(x,y)\ge\frac{\mu_x}{\mu_x}=1 \]
for any $x,y\in G$ and there exists $C_1,C_2>0$ such that
\[V_R(x,R(x,y))R(x,y)\le C_1V_R(x,\mu_x^{-1})\mu^{-1}_{xy}=C_1\frac{\mu_x}{\mu_{xy}}\le C_2\] for any $x,y\in  G$ such that $x\sim y.$ 
Since $V(x,R(x,y))R(x,y)\asymp d^\beta(x,y)$, we get desired inequalities for $d(x,y)$. 
 Since (VD)$_d$ holds, for any $x\in  G,c>0$ and $r>c,$ $B_d(x,r)\subset \sum_{i=1}^n B_d(x_i,c)$ for some $n=n(c)$ and $\{x_i\}_{i=1}^n$ (see~\cite{Hei} for example). Let $c=\inf_{x,y\in G}d(x,y)$ then this imply $\#\{B_d(x,r)\}<\infty.$
\end{proof}
The second step of Proof of Theorem \ref{th-d} is applying the method of~\cite{BCK}, but we need some modification because $d$ is not the graph metric. For the sake of completeness, we do not omit the proofs unless no modification is needed.
Properties of $d$ in Proposition \ref{pr-qsmet} (2) and uniformly shrinking condition help our modification.\\
For the rest of this section, let $d$ be a metric on $ G$ and define
\[\tau(x,r)=\tau_d(x,r)=\min\{n|X_n\not\in B_d(x,r)\}\]
where $X_n$ is associated random walk.

\begin{lem}
Assume $d$ is uniformly shrinking and satisfies {\rm(dL)}, {\rm(NdU)}, {\rm(BF)}, {\rm(R($\beta$))} and {\rm(VG($\alpha$))} for some $\beta>\alpha\ge1.$ Then there exists $\lambda,C>0$ such that
\begin{equation}
 R(B_d(x,\lambda r),B_d(x,r)^c)V(x,r)\ge Cr^\beta \tag{ARL($\beta$)}
\end{equation}

for any $x\in G$ and $r>r_0.$
\end{lem}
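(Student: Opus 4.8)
The plan is to prove \textrm{(ARL($\beta$))} by exhibiting, for each $x$ and each large $r$, an explicit test function $f$ for the effective resistance $R(B_d(x,\lambda r),B_d(x,r)^c)$ whose Dirichlet energy is at most a constant times $V_d(x,r)/r^\beta$; since $R(A,B)=(\inf\{\mathcal{E}(h):h|_A=1,\ h|_B=0\})^{-1}$, this gives $R(B_d(x,\lambda r),B_d(x,r)^c)\ge\mathcal{E}(f)^{-1}$, which is exactly what is needed. First I would dispose of small scales: for $r_0<r\le r_1$ (any fixed $r_1$), the set $B_d(x,r+C_+)$ contains a uniformly bounded number of vertices by \textrm{(dL)}, \textrm{(NdU)}, \textrm{(BF)} and bounded degree, so both $R(B_d(x,\lambda r),B_d(x,r)^c)$ and $V_d(x,r)$ lie between positive constants and the inequality is trivial; the degenerate case $B_d(x,r)=\{x\}$ is handled the same way, using $R(x,G\setminus\{x\})\mu_x\ge1$ from Lemma~\ref{le-Rus}. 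So assume $r$ is large.

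The test function is built from the resistance kernel $u(z):=R(x,z)$. Using \textrm{(R($\beta$))} together with \textrm{(VG($\alpha$))}, for $z\in B_d(x,\lambda r)$ one gets $u(z)\le C_1\lambda^{\beta-\alpha}r^\beta/V_d(x,r)$ --- here $\beta-\alpha>0$ is crucial --- while, since $t\mapsto t^\beta/V_d(x,t)$ is almost increasing by \textrm{(VG($\alpha$))} and by \textrm{(R($\beta$))} again, $u(z)\ge c_1 r^\beta/V_d(x,r)$ for every $z\notin B_d(x,r)$. Choose $\lambda$ so small that $C_1\lambda^{\beta-\alpha}<\tfrac12 c_1$, put $\mathfrak{r}_1=C_1\lambda^{\beta-\alpha}r^\beta/V_d(x,r)$, $\mathfrak{r}_2=c_1 r^\beta/V_d(x,r)$ (so $\mathfrak{r}_2-\mathfrak{r}_1\asymp r^\beta/V_d(x,r)$), and set $f:=1\wedge\big(0\vee(\mathfrak{r}_2-u)/(\mathfrak{r}_2-\mathfrak{r}_1)\big)$. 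Then $f\equiv1$ on $B_d(x,\lambda r)$ and $f\equiv0$ on $B_d(x,r)^c$, so $f$ is admissible, and $\mathcal{E}(f)=(\mathfrak{r}_2-\mathfrak{r}_1)^{-2}\,\mathcal{E}(\psi(u))$, where $\psi(t)=(t\vee\mathfrak{r}_1)\wedge\mathfrak{r}_2$.

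It therefore remains to show $\mathcal{E}(\psi(u))\le C\,(\mathfrak{r}_2-\mathfrak{r}_1)$, which I regard as the main obstacle and the place where the method of~\cite{BCK} enters. The point is that $u=R(x,\cdot)$ is superharmonic on $G\setminus\{x\}$ and $(-\Delta u)(x)=-\sum_{z\sim x}R(x,z)\mu_{xz}$ has absolute value comparable to $1$ by condition \textrm{($p_0$)} and Lemma~\ref{le-Rus}. Writing $v=\psi(u)$, which ranges in $[\mathfrak{r}_1,\mathfrak{r}_2]$ and is eventually constant (so $-\Delta v$ has finite support and $\sum_z(-\Delta v)(z)=0$), one has $\mathcal{E}(v)=\sum_z v(z)(-\Delta v)(z)\le(\mathfrak{r}_2-\mathfrak{r}_1)\,S$, where $S=\sum_z(-\Delta v)_+(z)=\sum_z(-\Delta v)_-(z)$ is the flux; superharmonicity of $u$ off $x$ confines $(-\Delta v)_-$ to a neighbourhood of the inner level set $\{u\approx\mathfrak{r}_1\}$, and flux balance against the source at $x$ forces $S\le C$ with $C$ depending only on the $(p_0)$-constant and the degree bound.

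Putting these together, $\mathcal{E}(f)\le S(\mathfrak{r}_2-\mathfrak{r}_1)^{-1}\le C'V_d(x,r)/r^\beta$, so $R(B_d(x,\lambda r),B_d(x,r)^c)\ge\mathcal{E}(f)^{-1}\ge C'^{-1}r^\beta/V_d(x,r)$, which is \textrm{(ARL($\beta$))} with $C=C'^{-1}$. The modifications relative to~\cite{BCK} are exactly those forced by $d$ not being the graph metric: \textrm{(NdU)} controls the oscillation of $u$ along edges crossing the level sets $\{u=\mathfrak{r}_1\}$, $\{u=\mathfrak{r}_2\}$ (so that the truncation $\psi$ behaves well), while the uniformly shrinking condition together with \textrm{(BF)} ensures the annular region $\{u\in[\mathfrak{r}_1,\mathfrak{r}_2]\}$ on which $f$ varies is finite and non-degenerate; the small-scale reduction above uses \textrm{(dL)}, \textrm{(NdU)}, \textrm{(BF)}.
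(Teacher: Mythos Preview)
Your argument has a genuine gap at the key step: the claim that $u(z)=R(x,z)$ is superharmonic on $G\setminus\{x\}$ is false in general, and without it the flux bound $S\le C$ does not follow. A minimal counterexample: take $x$ joined by a single edge to a vertex $a$, with $a$ the apex of a unit-conductance triangle $a,b,c$ (and, if you want $\diam=\infty$, attach a ray at $x$; this does not change the values below). Then $R(x,a)=1$ and $R(x,b)=R(x,c)=5/3$, so
\[
(-\Delta u)(a)=\sum_{z\sim a}\mu_{az}\bigl(u(a)-u(z)\bigr)=(1-0)+(1-\tfrac{5}{3})+(1-\tfrac{5}{3})=-\tfrac{1}{3}<0,
\]
i.e.\ $u$ is strictly \emph{sub}harmonic at $a\ne x$. (More conceptually, in the transient case $R(x,z)=g(x,x)+g(z,z)-2g(x,z)$; the term $g(z,z)$ is not harmonic in $z$ unless the graph is transitive, so $R(x,\cdot)$ has no reason to be one-sided.) Once superharmonicity fails, $(-\Delta v)_-$ is no longer confined to the inner level set, and there is no source at $x$ to balance against: your inequality $\mathcal{E}(\psi(u))\le C(\mathfrak r_2-\mathfrak r_1)$ is unproven.

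The paper's proof avoids this by working with functions that \emph{are} harmonic by construction. For each $y$ in a thin annulus $A\subset B_d(x,c_*r)\setminus B_d(x,r)$ it takes the equilibrium potential $f_y$ with $f_y(x)=1$, $f_y(y)=0$, $\mathcal E(f_y)=R(x,y)^{-1}$. The resistance H\"older inequality $|f_y(z)-f_y(z')|^2\le R(z,z')/R(x,y)$ combined with {\rm(R($\beta$))} and {\rm(VG($\alpha$))} gives, for $\lambda$ small, $f_y\ge 2/3$ on $B_d(x,\lambda r)$ and $f_y\le 1/3$ on $B_d(y,\lambda r)$. A finite cover $A\subset\bigcup_{i=1}^n B_d(y_i,\lambda r)$ (volume doubling) and $h:=0\vee(3\min_i f_{y_i}-1)\wedge 1$ then yield a legitimate test function with $\mathcal E(h)\le 9\sum_i R(x,y_i)^{-1}\le C\,V_d(x,r)/r^\beta$. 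If you want to salvage the single-function approach, you would have to replace $R(x,\cdot)$ by something with genuine potential-theoretic monotonicity, e.g.\ a killed Green's function $g_{B_d(x,2r)}(x,\cdot)$, and then relate its sublevel sets to $d$-balls via {\rm(R($\beta$))}; but that is essentially a different proof.
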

\begin{proof}
Fix $x\in G$ and $r>r_0.$ Let $c_*=(C_++r_0)/r_0,A=B_d(x,c_*r)\setminus B_d(x,r).$ Since $z_1\not\sim z_2$ for any $z_1\in B_d(X,r)$ and $z_2\in B_d(x,r+c_+)^c,$ $R(B_d(x,\lambda r), A)=R(B_d(x,\lambda r), B_d(x,r)^c)$ for any $\lambda<1,$ so we consider $R(B_d(x,\lambda r),A).$ Let $f_y$ be functions for $y\in A$ such that $f_y(x)=1,f_y(y)=0$ and $\mathcal{E}(f_y,f_y)=R(x,y).$
We first show there exists $\lambda$ such that $f_y(z)\ge 2/3$ for any $z\in B_d(x,\lambda r)$ and $y\in A.$ Since $f_y$ is harmonic on (finite set) $B_d(x,\lambda r)\setminus\{x\},$
\begin{align*}
(1-f_y(z))^2\le (1-f_y(z_0))^2 & \le C\frac{d(x,z_0)^\beta}{V_d(x,d(x,z_0))}\mathcal{E}(f_y,f_y) \\
& \le C'\left(\frac{d(x,z_o)}{d(x,y)}\right)^\beta \frac{V_d(x,d(x,y))}{V_d(x,d(x,z_0))} 
\end{align*}
for some $C,C'>0,z_0\not\in B_d(x,\lambda r)$ such that $z_0\sim z'$ for some $z'\in B_d(x,\lambda r)$ and any $z\in B_d(x,\lambda r)$ by (R($\beta$)). Since $d(x,z_0)\ge(\lambda r-c_+)\vee r_0\ge c_*^{-1}\lambda r$ and $(VG(\alpha))$ holds, for sufficiently small $\lambda$, $f_y(z)\ge 2/3.$ for any $z\in B_d(x,\lambda r)$ and $y\in A.$ In the same way, we also get $f_y(z)\le 1/3$ for any $z\in B_d(y,\lambda r).$ We assume $\lambda<1/4.$\\
Since (VD)$_d$ holds, there exists $n=n_\lambda>0$ such that for any $x\in  G$ and $r>0$, there exist $\{y_i\}_{i=1}^n\subset A$ such that $\sum_{i=1}^n B_d(y_i,\lambda r) \supset A.$ Let $g=\min_{1\le i\le n}f_i$ and $h=(1\vee(3g-1))\wedge 0.$
Then $h|_{B_d(x,\lambda_r)}\equiv 1, h|_{A}\equiv 0$ and so 
\[R(B_d(x,\lambda r),A)^{-1}\le \mathcal{E}(h) \le \mathcal{E}(3g-1)=9\mathcal{E}(g). \] 
Let $z_1,z_2\in X$ such that $z_1\sim z_2$ and $g(z_1)\ge g(z_2)=h_j(z_2)$ for some $j,$ then
\[(g(z_1)-g(z_2))^2 \le (g(z_1)-h_j(z_2))^2 \le \sum_{i=1}^n (h_i(z_1)-h_i(z_2))^2 \]
and it follows that 
\[\mathcal{E}(g)\le\sum_{i=1}^n \mathcal{E}(h_i).\]
Moreover, (VD)$_d$ implies
\[\mathcal{E}(h_i)=R(x,y_i)^{-1}\le C''\frac{V_d(x,d(x,y_i))}{d(x,y_i)^\beta} \]
for some $C''.$ By these inequalities, we obtain $(ARL(\beta)).$
\end{proof}

In the following theorem and proposition, we can apply the original proof in~\cite{BCK}. 
\begin{thm}{(\cite[Theorem 3.1]{BCK})}
Assume {\rm(VG($\alpha$))} and {\rm(R($\beta$))} hold for $\beta>\alpha\ge1.$ Then there exists $C>0$ such that for any $x\in X$ and $n\in\mathbb{N},$
\begin{equation}
h_n(x,x)\le \frac{C}{V_d(x,n^{(1/\beta)})}. \tag{DUHK($\beta$)}
\end{equation}
\end{thm}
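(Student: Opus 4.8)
The plan is to recognise that, with the hypotheses (VG($\alpha$)) and (R($\beta$)) in force for $\beta>\alpha\ge1$, we are exactly in the framework of \cite[Theorem 3.1]{BCK}, so that the proof given there applies. Two points need attention: the two ``electrical'' scaling estimates that in \cite{BCK} are read off from Ahlfors regularity, and the fact that here $d$ is not the graph metric. The latter is harmless because the argument of \cite{BCK} uses $d$ only through its balls, their $V_d$-volumes, and effective resistances between them, and because the extra properties of $d$ recorded in Proposition \ref{pr-qsmet}(2) and Lemma \ref{le-Rus} --- namely (dL), (NdU), (BF) and the uniform shrinking of $d$ --- control precisely the metric bookkeeping that differs from the graph-metric case.

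First I would establish, for $r>r_0$, the resistance estimate $R(x,B_d(x,r)^c)\asymp r^\beta/V_d(x,r)$. For a vertex $y$ with $d(x,y)=\rho\ge r$, (R($\beta$)) gives $R(x,y)\asymp\rho^\beta/V_d(x,\rho)$; since $\beta>\alpha$, (VG($\alpha$)) forces $\rho^\beta/V_d(x,\rho)\ge c\,r^\beta/V_d(x,r)$, and by (NdU), (dL) and connectedness of $(G,E)$ there is a vertex just outside $B_d(x,r)$ with $d(x,\cdot)\asymp r$, which yields the matching upper bound. The same computation, with volume doubling (valid by Lemma \ref{lem-us} since $d\qs R$ and $R$ is uniformly shrinking), gives $\sup_{y\in B_d(x,r)}R(y,B_d(x,r)^c)\le Cr^\beta/V_d(x,r)$, and hence $\mathbb{E}^x[\tau(x,r)]\le\mu(B_d(x,r))\,R(x,B_d(x,r)^c)\le Cr^\beta$ (a matching lower bound from a Green-function estimate on a smaller concentric ball then gives $\mathbb{E}^x[\tau(x,r)]\asymp r^\beta$).

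With these in hand the argument of \cite{BCK} runs: for a finite $A\subseteq B_d(x,r)$ any $f$ supported in $A$ obeys $f(y)^2\le R(y,A^c)\,\mathcal{E}_\mu(f)$, so the bottom Dirichlet eigenvalue satisfies $\lambda_1(A)^{-1}\le\sup_{y\in A}R(y,A^c)\,\mu(A)$; estimating $R(y,A^c)$ by the resistance from $y$ to the first point of $A^c$ lying in the ball whose $V_d$-volume first exceeds $\mu(A)$, and invoking the relative lower volume bound $V_d(y,\rho)\ge c\,(\rho/r)^\alpha V_d(x,r)$ coming from (VG($\alpha$)), one arrives at the relative Faber--Krahn inequality
\[\lambda_1(A)\ \ge\ \frac{c}{r^\beta}\Bigl(\frac{V_d(x,r)}{\mu(A)}\Bigr)^{\beta/\alpha}\qquad(A\subseteq B_d(x,r)\ \text{finite},\ r>r_0),\]
with exponent $\beta/\alpha>1$. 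By the standard implication from a relative Faber--Krahn inequality to an on-diagonal upper bound (Grigor'yan's mechanism, equivalently the $\ell^2$-iteration on $m\mapsto q^{B}_{2m}(x,x)$ for $B=B_d(x,n^{1/\beta})$, with $q^{B}$ the kernel of the walk killed on exiting $B$, combined with the exit-time decomposition as carried out in \cite{BCK}), this gives $h_{2n}(x,x)\le C/V_d(x,n^{1/\beta})$; since $h_{2n+1}(x,x)\le h_{2n}(x,x)$, volume doubling then delivers (DUHK($\beta$)) for every $n$.

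The step I expect to be the main obstacle is exactly the one flagged above: carrying out this bookkeeping correctly without the graph metric. Small $d$-balls may be single vertices and one cannot a priori move one edge inside a prescribed $d$-ball, so the whole argument is run only for radii $r>r_0$ (legitimate by (dL)); (NdU) is what lets one pass between graph distance and $d$-distance and locate vertices just outside a ball; (BF) is needed so that the sets $A$, the eigenvalues $\lambda_1(A)$, the exit times $\tau(x,r)$ and the Green functions are well defined; and uniform shrinking of $d$ is what makes $d$-balls genuinely double and the relative volume lower bound above legitimate. Verifying that the one-sided growth bound (VG($\alpha$)) together with (R($\beta$)) and the resistance bounds of Lemma \ref{le-Rus} supply all the volume regularity that \cite{BCK} extracts from two-sided Ahlfors regularity is the part that requires genuine care.
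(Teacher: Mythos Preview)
Your proposal is correct and matches the paper's approach: the paper simply records that the original proof of \cite[Theorem 3.1]{BCK} applies without modification, and your sketch accurately outlines that Faber--Krahn argument together with the metric bookkeeping (via (dL), (NdU), (BF), and uniform shrinking) needed when $d$ is not the graph metric. The exit-time estimate you include is not required for (DUHK($\beta$)) itself---it belongs to the companion Proposition~\ref{eb}---but its presence does no harm.
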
 

\begin{prop}{(\cite[Proposition 3.4]{BCK})}\label{eb}
Assume {\rm(BF), (VG($\alpha$))} and {\rm(R($\beta$))} hold for some $\beta>\alpha\ge1.$ If there exists $C,r_0>0$ such that 
\begin{equation}
R(x,B_d(x,r)^c)V(x,r)>Cr^\beta \tag{BRL($\beta$)}
\end{equation}
for any $x\in  G$ and $r>r_0.$ Then 
\begin{equation}
\mathbb{E}^x[\tau(x,r)]\asymp r^\beta \tag{E($\beta$)}
\end{equation}
for any $x\in G$ and $r>r_0.$
\end{prop}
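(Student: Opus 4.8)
The plan is to reproduce the argument of \cite[Proposition 3.4]{BCK}, carried out through the Green function $g_B$ of the walk killed on exiting $B=B_d(x,r)$, with the graph metric everywhere replaced by $d$. By \textrm{(BF)} the ball $B$ is a finite set, so $g_B$ exists, is symmetric, the killed walk dies in finite time (connectivity of $(G,E)$ and $B\neq G$), and $\mathbb{E}^z[\tau(x,r)]=\sum_{w\in B}g_B(z,w)\mu_w<\infty$. Two standard potential-theoretic facts from the $R$-network (see \cite{Kig01}) will be used: $g_B(x,x)=R(x,B_d(x,r)^c)$, and $g_B(x,w)\le g_B(x,x)$ for all $w$ (maximum principle, since $z\mapsto g_B(z,x)=g_B(x,x)\,\mathbb{P}^z(T_x<\tau(x,r))$ is bounded by its value at $x$).

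For the upper bound in \textrm{(E($\beta$))} I would write $\mathbb{E}^x[\tau(x,r)]=\sum_{w\in B}g_B(x,w)\mu_w\le g_B(x,x)\,\mu(B)=R(x,B_d(x,r)^c)\,V(x,r)$, and then bound the right-hand side by $Cr^\beta$. Because $(G,E)$ is connected and $d$ satisfies \textrm{(NdU)}, for $r>r_0$ there is a vertex $y\notin B$ adjacent to $B$ with $r\le d(x,y)\le c_\ast r$ for a fixed $c_\ast=1+C_+/r_0$; hence $R(x,B_d(x,r)^c)\le R(x,y)\le C d(x,y)^\beta/V_d(x,d(x,y))\le Cc_\ast^\beta r^\beta/V(x,r)$ by \textrm{(R($\beta$))} and monotonicity of $s\mapsto V_d(x,s)$, which gives exactly $R(x,B_d(x,r)^c)V(x,r)\le Cc_\ast^\beta r^\beta$.

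For the lower bound the key step is to bound $g_B(x,w)$ from below on a smaller ball. Using $g_B(x,w)/g_B(x,x)=\mathbb{P}^w(T_x<\tau(x,r))=1-q(w)$, where $q$ is the equilibrium potential with $q(x)=0$, $q\equiv1$ on $B_d(x,r)^c$ and $\mathcal{E}(q)=R(x,B_d(x,r)^c)^{-1}$, the test function $1-q/q(w)$ in the variational formula for $R(x,w)^{-1}$ yields $q(w)\le\bigl(R(x,w)/R(x,B_d(x,r)^c)\bigr)^{1/2}$, hence $g_B(x,w)\ge g_B(x,x)\bigl(1-(R(x,w)/R(x,B_d(x,r)^c))^{1/2}\bigr)$. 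Now for $w\in B_d(x,\lambda r)$ with $\lambda<1$, \textrm{(R($\beta$))} together with \textrm{(VG($\alpha$))} gives $R(x,w)\le C C''\lambda^{\beta-\alpha}r^\beta/V(x,r)$, while \textrm{(BRL($\beta$))} gives $R(x,B_d(x,r)^c)\ge C_1 r^\beta/V(x,r)$, so a sufficiently small $\lambda$ forces $R(x,w)/R(x,B_d(x,r)^c)\le 1/4$ uniformly, i.e. $g_B(x,w)\ge\tfrac12 R(x,B_d(x,r)^c)$ for all such $w$. Summing over $w\in B_d(x,\lambda r)$ and applying \textrm{(VG($\alpha$))} once more to compare $V(x,\lambda r)$ with $V(x,r)$, we get $\mathbb{E}^x[\tau(x,r)]\ge\tfrac12 R(x,B_d(x,r)^c)\,V(x,\lambda r)\ge c\,R(x,B_d(x,r)^c)\,V(x,r)\ge cC_1 r^\beta$ by \textrm{(BRL($\beta$))}.

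The main obstacle, compared with the graph-metric setting of \cite{BCK}, is keeping the geometry under control when $d$ is not the graph metric: one must guarantee that $d$-balls are honest finite sets possessing neighbouring vertices at $d$-distance comparable to the radius, and that $R(x,B_d(x,r)^c)$ is squeezed between single-vertex resistances governed by \textrm{(R($\beta$))}. This is precisely where \textrm{(BF)}, \textrm{(dL)}, \textrm{(NdU)} and the uniformly shrinking property (inherited from $R$ via \cref{lem-us}) enter, and where having \textrm{(R($\beta$))} in both directions and the polynomial volume growth \textrm{(VG($\alpha$))} is essential; once these geometric inputs are in place the potential-theoretic steps are exactly those of \cite{BCK}.
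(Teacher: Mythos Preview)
Your proposal is correct and follows exactly the route the paper takes: the paper gives no proof of its own here but simply defers to \cite[Proposition~3.4]{BCK}, and you have faithfully reproduced that Green-function argument (upper bound via $g_B(x,\cdot)\le g_B(x,x)=R(x,B^c)$, lower bound via the equilibrium-potential estimate $q(w)^2\le R(x,w)/R(x,B^c)$ combined with \textrm{(R($\beta$))}, \textrm{(VG($\alpha$))} and \textrm{(BRL($\beta$))}). You are also right to flag that the upper bound, as you run it, uses \textrm{(NdU)} and \textrm{(dL)} to produce a vertex $y\notin B_d(x,r)$ with $d(x,y)\le c_\ast r$; these are not listed among the hypotheses of the proposition but are available in the paper's setting from Proposition~\ref{pr-qsmet}, and are exactly the substitutes for the automatic fact ``nearest exterior vertex is at graph distance $r$'' in \cite{BCK}.
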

Now we give on-diagonal lower heat kernel estimate. The following type of result is well-known, but we prove it for completeness.
\begin{prop}
 Assume {\rm(E($\beta$))} and {\rm(VG($\alpha$))}, then there exists $C>0$ such that for any $x\in G$ and $n$,
\begin{equation}
   h_{2n}(x,x)\ge\frac{c}{V_d(x,n^{1/\beta})}. \tag{DLHK($\beta$)}
 \end{equation}
\end{prop}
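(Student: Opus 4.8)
The plan is to derive the on-diagonal lower bound (DLHK($\beta$)) from the expected exit-time estimate (E($\beta$)) via a standard two-step argument: first a lower bound on the return probability after a first-moment (Markov/Chebyshev) argument on exit times, then an iteration using the semigroup property. Throughout I will use that $h_n$ is symmetric, that $h_n(x,x)$ is (essentially) monotone in an averaged sense, and the volume growth condition (VG($\alpha$)) to pass between radii and times.

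First I would fix $x\in G$, set $r = (\lambda n)^{1/\beta}$ for a small constant $\lambda>0$ to be chosen, and use (E($\beta$)): $\mathbb{E}^x[\tau(x,r)]\asymp r^\beta \asymp \lambda n$. By Markov's inequality, $\mathbb{P}^x(\tau(x,r) > 2\lambda n) \le \mathbb{E}^x[\tau(x,r)]/(2\lambda n) \le C_0/2$; choosing $\lambda$ small enough that $C_0/2 \le 1/2$, we get $\mathbb{P}^x(\tau(x,r) \le n) \ge 1/2$ (after reparametrising constants so that the exit happens within $n$ steps). Hence with probability at least $1/2$, $X_n \in B_d(x,r)$ — more precisely, one concludes $\sum_{y\in B_d(x,r)} p_n(x,y) \ge 1/2$ for an appropriate comparable radius $r\asymp n^{1/\beta}$. (A minor technical point: one wants $X_n$ itself in a ball, not merely that the walk has not exited; this is handled by running the argument at time $n$ directly or by the usual observation that $\{\tau(x,r)>n\}\subseteq\{X_n\in B_d(x,r)\}$ up to enlarging $r$ by $C_+$, harmless under (NdU) and (VG($\alpha$)).)

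Next I would apply Cauchy–Schwarz together with the semigroup identity $p_{2n}(x,x) = \sum_{y\in G} p_n(x,y) p_n(y,x) = \sum_{y} \mu_y h_n(x,y)^2$. Restricting the sum to $y\in B_d(x,r)$ and using Cauchy–Schwarz on the probability measure $p_n(x,\cdot)$ restricted there:
\[
\Big(\sum_{y\in B_d(x,r)} p_n(x,y)\Big)^2 \le \mu(B_d(x,r)) \sum_{y\in B_d(x,r)} \frac{p_n(x,y)^2}{\mu_y} = V_d(x,r)\, p_{2n}(x,x).
\]
Combining with $\sum_{y\in B_d(x,r)} p_n(x,y)\ge 1/2$ gives $p_{2n}(x,x) \ge \frac{1}{4 V_d(x,r)} = \frac{1}{4 V_d(x, c n^{1/\beta})}$. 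Finally, (VG($\alpha$)) lets me compare $V_d(x, c n^{1/\beta})$ with $V_d(x, n^{1/\beta})$ up to a multiplicative constant (since $c n^{1/\beta}$ and $n^{1/\beta}$ differ by a bounded factor, and the ratio is controlled by $c^{-\alpha}$ times the constant in (VG($\alpha$))); dividing $p_{2n}(x,x)$ by $\mu_x$ to pass to $h_{2n}(x,x)$ — using $\mu_x\asymp 1$, or more generally that $\mu_x$ is uniformly bounded under condition $(p_0)$ and bounded weights — yields $h_{2n}(x,x) \ge c/V_d(x,n^{1/\beta})$, which is (DLHK($\beta$)).

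The main obstacle, and the only place requiring real care, is the first step: converting (E($\beta$)) into the statement that the walk is, with probability bounded below, still inside a ball of radius comparable to $n^{1/\beta}$ at time exactly $n$. The Markov inequality controls $\mathbb{P}^x(\tau(x,r)\le n)$, which bounds the probability of having left; one must be slightly careful that having left and come back is not an issue here because we only need a lower bound on $\mathbb{P}^x(X_n\in B_d(x,R))$ for a possibly larger $R$, and $\{\tau(x,r) > n\}$ is exactly the event of not having left $B_d(x,r)$ by time $n$, which is contained in $\{X_n\in B_d(x,r)\}$. So in fact the inclusion goes the easy way and no return argument is needed. The remaining bookkeeping — tracking how constants in (VG($\alpha$)) propagate through the radius comparisons, and checking $\mu_x$ is uniformly bounded so that $h$ and $p$ estimates are interchangeable — is routine. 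I would therefore present the proof as: (i) state the first-moment/Markov step, (ii) the Cauchy–Schwarz step, (iii) the (VG($\alpha$)) comparison, each in a couple of lines.
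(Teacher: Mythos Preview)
Your Cauchy--Schwarz step and the final (VG($\alpha$)) comparison are correct and match the paper exactly. The gap is in your first step: the Markov inequality goes the wrong way.

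What you need is a \emph{lower} bound on $\mathbb{P}^x(X_n\in B_d(x,r))$, and since $\{\tau(x,r)>n\}\subseteq\{X_n\in B_d(x,r)\}$, it suffices to bound $\mathbb{P}^x(\tau(x,r)\le n)$ from \emph{above}. Markov's inequality, applied to the nonnegative random variable $\tau(x,r)$, only yields upper bounds on $\mathbb{P}^x(\tau(x,r)>t)$, i.e.\ \emph{lower} bounds on $\mathbb{P}^x(\tau(x,r)\le t)$. Your computation correctly produces $\mathbb{P}^x(\tau(x,r)\le n)\ge 1/2$, but this says the walk has \emph{left} the small ball with probability at least $1/2$, which gives no information about $\mathbb{P}^x(X_n\in B_d(x,r))$. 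The sentence ``Hence with probability at least $1/2$, $X_n\in B_d(x,r)$'' is a non sequitur, and the subsequent parenthetical does not repair it: you are tacitly switching from $\{\tau\le n\}$ to $\{\tau>n\}$.

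The correct first step (which the paper uses, citing \cite[Lemma 3.7]{BCK}) requires both sides of (E($\beta$)) and the strong Markov property: from $c r^\beta\le \mathbb{E}^x[\tau(x,r)]\le n+\mathbb{P}^x(\tau(x,r)>n)\sup_y\mathbb{E}^y[\tau(x,r)]$ together with the upper bound $\sup_y\mathbb{E}^y[\tau(x,r)]\le C'r^\beta$ one extracts $\mathbb{P}^x(\tau(x,r)\le n)\le p+An/r^\beta$ for some fixed $p<1$. Taking $r=Cn^{1/\beta}$ with $C$ large then gives $\mathbb{P}^x(X_n\in B_d(x,Cn^{1/\beta}))\ge \tfrac{1-p}{2}$, after which your steps (ii) and (iii) go through verbatim.
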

 \begin{proof}
In the same way as \cite[Lemma 3.7]{BCK}, we can prove that there exist $p\in(0,1)$ and $A>0$ such that 
\begin{equation}
\mathbb{P}^x(\tau(x,r)\le n)\le p+An/r^\beta \label{eq1}
\end{equation}
for any $x\in G,r>r_0,$ and $n\in\mathbb{Z}_+.$
Hence
\[ \sum_{y\not\in B_d(x,Cn^{1/\beta})}\hspace{-15pt}h_n(x,y)\mu_y=\mathbb{P}^x(X_n\not\in B_d(x,Cn^{1/\beta}))\le \mathbb{P}^x(\tau(x,Cn^{1/\beta})\le n) \le \frac{1+p}{2} \]
for some $C>0.$ Therefore
\begin{align*}
  h_{2n}(x,x)& =\sum_{y\in G}h_n^2(x,y)\mu_y\ge \sum_{y\in B_d(x,Cn^{1/\beta})}h_n^2(x,y)\mu_y\\
& \ge\frac{1}{V_d(x,Cn^{1/\beta})}\left( \sum_{y\in B_d(x,Cn^{1/\beta})}1\cdot h_n(x,y)\mu_y\right)^2\\
& \ge\left(\frac{1-p}{2}\right)^2\frac{1}{V_d(x,Cn^{1/\beta})} 
\end{align*}
and (VG$(\alpha)$) implies desired result.
\end{proof}
\begin{proof}[Proof of Theorem \ref{th-d}]
By using (VG($\alpha)$), all but (BRU($\beta$)) have shown in above statements (recall that $d(x,y)\asymp 1$ follows from (dL) and (NdU), and note that (BRL($\beta$)) for Proposition \ref{eb} immediately follows from (ARL($\beta$))). Since $d$ is uniformly shrinking, there exists $\alpha\in(0,1)$ such that for any $r>r_x,$ there exists $y\in B_d(x,\alpha^{-1}r)\setminus B_d(x,r)$, so $R(x,B_d(x,r)^c)\le R(x,y)$ with $r\le d(x,y)<\alpha^{-1}r.$ This with (R($\beta$)) and (VG($\alpha$)) implies (BRU($\beta$)).

\end{proof}

\begin{rem}
We have seen that if (BF),(VG($\alpha$)), (R($\beta$)) and (BRL($\beta$)) hold, then (DUHK($\beta$)) and (DLHK($\beta$)) hold without (dL) nor (NdU). This was already used, for example, see~\cite{KM}. 
\end{rem}
We have shown Theorem \ref{th-d}, but we give more precise heat kernel estimates by using $d$. The following Corollary is a discrete version of~\cite[Corollary 15.12]{Kig12}.
\begin{cor} \label{th-qssg}
Let $( G,\mu)$ be a weighted graph satisfying condition $(p_0)$. Moreover, assume $V_R(x,r)<\infty$ for any $x\in  G,r>0$, and $\mathrm{diam}(X,R)=\infty$, then the following conditions are equal.
\begin{enumerate}
\item $\mu$ satisfies {\rm(VD)$_R$}
\item There exists a distance $d$ on $ G$ and constants $\beta, c_1,c_2,c_3$ such that 
  \begin{itemize}
  \item $d\qs R.$
 
  \item For any $x,y\in G$ and $n\in\mathbb{N}$,
\[ h_n(x,y)\le\frac{c_1}{V_d(x,n^{1/\beta})}\exp\biggl(-\Bigl(\frac{d(x,y)^\beta}{c_1n} \Bigr)^\frac{1}{\beta-1}\biggr)\tag{UHK($\beta$)}. \]
  \item For any $x,y\in  G$ and $n\in\mathbb{N}$ such that $d(x,y)\le c_2n^{1/\beta}$,
\[ h_n(x,y)+h_{n+1}(x,y)\ge\frac{c_2}{V_d(x,n^{1/\beta})} \tag{NLHK($\beta$)} \]
   \item For any $x\in G$ and $n\in\mathbb{N},$ $h_n(x,x)\le c_3h_{2n}(x,x)$\hspace{10pt}{\rm(KD)}
  \end{itemize}
\end{enumerate}
\end{cor}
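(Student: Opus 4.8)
The plan is to prove the equivalence $(1)\Leftrightarrow(2)$ by combining the results already established in this section with the transfer of sub-Gaussian heat kernel estimates across quasisymmetric changes of metric, following the strategy of~\cite[Corollary 15.12]{Kig12}. The direction $(2)\Rightarrow(1)$ is the easier one: if (UHK($\beta$)) and (NLHK($\beta$)) hold, then comparing the two bounds at $y=x$ gives $h_n(x,x)\asymp V_d(x,n^{1/\beta})^{-1}$, which forces $V_d$ to be volume doubling with respect to $d$ (a standard consequence of (KD) together with the two-sided on-diagonal bound and the semigroup property, exactly as in~\cite{BCK}); since $d\qs R$, Lemma~\ref{lem-us}(2) — whose hypothesis $(X,R)$ uniformly shrinking is supplied by Lemma~\ref{le-Rus}(3) under $(p_0)$ — transfers (VD)$_d$ back to (VD)$_R$.

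For $(1)\Rightarrow(2)$, first I would invoke Proposition~\ref{pr-qsmet}: under (VD)$_R$, condition $(p_0)$, $V_R(x,r)<\infty$ and $\diam(X,R)=\infty$, we obtain a metric $d\qs R$ satisfying (dL), (NdU), (BF), (R($\beta$)) and (VG($\alpha$)) for some $\beta>\alpha\ge1$. By Lemma~\ref{le-Rus}(3) and Lemma~\ref{lem-us}(1), $d$ is also uniformly shrinking. Then the chain of results just proved applies: the (ARL($\beta$)) lemma gives the resistance lower bound, which in particular yields (BRL($\beta$)), so Proposition~\ref{eb} gives the exit-time estimate (E($\beta$)); then (DUHK($\beta$)) (the counterpart of~\cite[Theorem 3.1]{BCK}) and (DLHK($\beta$)) give the matching on-diagonal bounds $h_{2n}(x,x)\asymp V_d(x,n^{1/\beta})^{-1}$, from which (KD) is immediate using (VG($\alpha$)) to absorb the factor $V_d(x,(2n)^{1/\beta})/V_d(x,n^{1/\beta})\asymp 1$. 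The off-diagonal upper bound (UHK($\beta$)) and near-diagonal lower bound (NLHK($\beta$)) then follow from (E($\beta$)), (DUHK($\beta$)), (DLHK($\beta$)) and (VD)$_d$ by the now-classical arguments of~\cite{BCK}: the upper bound via a Davies-type/iteration argument controlling long jumps through exit times, and the near-diagonal lower bound by a chaining argument along a $d$-geodesic-like sequence of balls of radius comparable to $n^{1/\beta}$.

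The one point requiring genuine care — and the main obstacle — is that $d$ is \emph{not} the graph metric, so the cited arguments of~\cite{BCK}, which are written for graph-metric balls, must be re-examined; this is precisely why the section has already re-proved (ARL($\beta$)), (E($\beta$)), (DLHK($\beta$)) in the present generality. The relevant substitutes are the properties (dL), (NdU), (BF) from Proposition~\ref{pr-qsmet}(2): (BF) guarantees that $d$-balls are finite so that harmonicity arguments and the method of Laplacians on finite sets apply; (dL) and (NdU) guarantee that a single $d$-step and the graph adjacency relation are comparable, so that ``$z_1\not\sim z_2$ once $z_1\in B_d(x,r)$, $z_2\in B_d(x,r+C_+)^c$'' and similar separation statements hold, allowing the chaining and cutoff constructions to proceed with $d$-balls in place of graph-metric balls. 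With those replacements the proof is routine, so I would state the equivalence and indicate that (2)$\Rightarrow$(1) is as in~\cite{BCK} while (1)$\Rightarrow$(2) assembles Proposition~\ref{pr-qsmet}, the (ARL($\beta$)) lemma, Proposition~\ref{eb}, (DUHK($\beta$)) and (DLHK($\beta$)), transporting everything along $d\qs R$ via Lemma~\ref{lem-us}.
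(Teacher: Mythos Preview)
Your proposal is correct and follows essentially the same route as the paper: assemble Proposition~\ref{pr-qsmet}, the (ARL($\beta$)) lemma, Proposition~\ref{eb}, (DUHK($\beta$)), ($\Psi(\beta)$), (DLHK($\beta$)), the (UHK($\beta$)) proposition and \cite[Proposition~3.11]{BCK} for $(1)\Rightarrow(2)$, and for $(2)\Rightarrow(1)$ deduce (VD)$_d$ from the heat-kernel bounds plus (KD), then transfer to (VD)$_R$ via Lemmas~\ref{le-Rus} and~\ref{lem-us}. Two small slips to fix: in the $(2)\Rightarrow(1)$ step, Lemma~\ref{lem-us}(2) needs $(X,d)$ uniformly shrinking (not $(X,R)$), so you must first pass uniform shrinking from $R$ to $d$ via Lemma~\ref{lem-us}(1); and (NLHK($\beta$)) in \cite[Proposition~3.11]{BCK} is obtained from the resistance continuity estimate $|h_n(x,y)-h_n(x,x)|^2\le R(x,y)\,\mathcal{E}(h_n(x,\cdot))$ together with (R($\beta$)) and (DLHK($\beta$)), not by chaining.
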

For the last of this paper, we prove it.
\begin{lem}
Assume {\rm(dL), (NdU),} and {\rm(E($\beta$))} hold, then there exist $C>0$ such that for any $n$, $r>0$ and $x\in G,$
\begin{equation}
 \mathbb{P}^x(\tau(x,r)\le n)\le C\exp\left(-\left(\frac{r^\beta}{Cn}\right)^{\frac{1}{\beta-1}}\right). \tag{$\Psi$($\beta$)} 
\end{equation}
\end{lem}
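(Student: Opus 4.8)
The plan is to run the standard chaining (``exit-time iteration'') argument that turns the moment estimate (E($\beta$)) into a sub-Gaussian tail bound for exit times, paying attention to the fact that here $d$ is \emph{not} the graph metric: by (NdU) one step of the walk moves by at most $C_+$ in $d$, and by (dL) small $d$-balls may be singletons. \emph{Step 1 (a uniform one-step estimate).} First I would show that there are $\delta\in(0,1)$ and $\varepsilon\in(0,1)$, depending only on the comparison constants in (E($\beta$)) and on $\beta$, such that $\mathbb{P}^y(\tau(y,\rho)>\delta\rho^\beta)\ge\varepsilon$ for every $y\in G$ and every $\rho>0$. For $\rho\le r_0$ this is trivial because $\tau(y,\rho)\ge1>\delta\rho^\beta$ once $\delta\rho^\beta<1$. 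For $\rho>r_0$ I would use the strong Markov property: on $\{\tau(y,\rho)>s\}$ one has $X_s\in B_d(y,\rho)\subseteq B_d(X_s,2\rho)$, so the residual exit time is at most $\tau(X_s,2\rho)$, whence by (E($\beta$))
\[\mathbb{E}^y[\tau(y,\rho)]\le 2s+c\,(2\rho)^\beta\,\mathbb{P}^y(\tau(y,\rho)>s),\]
and taking $s=\delta\rho^\beta$ with $\delta$ small and comparing with the lower bound $\mathbb{E}^y[\tau(y,\rho)]\ge c'\rho^\beta$ of (E($\beta$)) gives the claim.

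\emph{Step 2 (chaining).} Fix $x\in G$ and $n,r>0$, and fix a radius $\rho\ge C_+$ to be chosen in Step 3. Define stopping times $\sigma_0=0$ and $\sigma_{j+1}=\inf\{m>\sigma_j\mid d(X_m,X_{\sigma_j})\ge\rho\}$. Since a single step moves by at most $C_+$ in $d$, each macroscopic step satisfies $d(X_{\sigma_{j+1}},X_{\sigma_j})<\rho+C_+\le 2\rho$; hence on the event $\{\tau(x,r)\le n\}$, where $d(X_m,x)\ge r$ for some $m\le n$, the triangle inequality forces at least $k_0:=\lfloor r/(2\rho)\rfloor$ macroscopic steps to have been completed by time $n$, i.e. $\sigma_{k_0}\le n$. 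By the strong Markov property at $\sigma_j$ and Step 1, conditionally on $\mathcal{F}_{\sigma_j}$ the increment $\sigma_{j+1}-\sigma_j$ exceeds $\delta\rho^\beta$ with probability at least $\varepsilon$, so the number of the first $k_0$ increments exceeding $\delta\rho^\beta$ stochastically dominates a $\mathrm{Binomial}(k_0,\varepsilon)$ variable, while on $\{\sigma_{k_0}\le n\}$ at most $n/(\delta\rho^\beta)$ of them can. A Chernoff bound for the lower tail of a binomial then yields a constant $c_0>0$ with
\[\mathbb{P}^x(\tau(x,r)\le n)\le\mathbb{P}\bigl(\mathrm{Binomial}(k_0,\varepsilon)\le \tfrac{n}{\delta\rho^\beta}\bigr)\le e^{-c_0 k_0}\qquad\text{whenever }\ \tfrac{n}{\delta\rho^\beta}\le\tfrac{\varepsilon}{2}k_0.\]

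\emph{Step 3 (optimisation and the degenerate range).} The constraint $n/(\delta\rho^\beta)\le(\varepsilon/2)k_0$ is of the form $\rho^{\beta-1}\ge\kappa\,n/r$ for a fixed constant $\kappa$; choosing $\rho$ of the order of the least admissible value, $\rho\asymp (n/r)^{1/(\beta-1)}$, makes $k_0\asymp r/\rho\asymp (r^\beta/n)^{1/(\beta-1)}$ and gives $\mathbb{P}^x(\tau(x,r)\le n)\le\exp(-c(r^\beta/n)^{1/(\beta-1)})$, which is ($\Psi(\beta)$) after adjusting $C$. If this choice of $\rho$ would fall below $C_+\vee r_0$ — equivalently, if $r^\beta/n$ is bounded above by a constant depending only on $C_+,r_0,\delta,\varepsilon,\beta$ — then the right-hand side of ($\Psi(\beta)$) is bounded below by a positive constant, so the inequality holds trivially after enlarging $C$; this also covers the case $r<r_0$, where $\tau(x,r)\equiv1$, and the case of $r$ small relative to $n$.

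\emph{Main obstacle.} The probabilistic core — extracting the one-step estimate from (E($\beta$)) and iterating it with a binomial Chernoff bound — is routine. The part requiring care is the bookkeeping caused by $d$ not being the graph metric: one must invoke (NdU) to control the overshoot of each macroscopic step (so that accumulating $d$-distance $r$ genuinely costs of order $r/\rho$ macroscopic steps, with constants uniform in $x,r,n$), use (dL) together with $\rho\ge C_+\vee r_0$ to guarantee that all intermediate radii are non-degenerate radii to which (E($\beta$)) and Step 1 apply, and use (BF) to ensure the relevant balls are finite so that the stopping times and expectations above are well defined.
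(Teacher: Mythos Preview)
Your chaining strategy is sound and is a legitimate variant of the paper's argument: the paper also chains with stopping times $\sigma_i$, but instead of your one-step stay estimate plus a binomial Chernoff bound it feeds the weaker bound $\mathbb{P}^y(\tau(y,b)\le n)\le p+An/b^\beta$ (their equation~\eqref{eq1}) into \cite[Lemma~3.14]{Bar98} and then optimises over the \emph{number} $l$ of macroscopic steps rather than the step size $\rho$. Your route is more self-contained; theirs is shorter if one is willing to quote Barlow's lemma. Both arrive at the same optimisation.

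There is, however, a genuine slip in your Step~3. You write that ``$\rho$ would fall below $C_+\vee r_0$'' is \emph{equivalent} to ``$r^\beta/n$ is bounded above''. It is not: $\rho\asymp(n/r)^{1/(\beta-1)}<C_+$ means $n\lesssim r$, which for large $r$ makes $r^\beta/n\gtrsim r^{\beta-1}$ \emph{large}, not bounded, so this regime cannot be absorbed into the constant $C$. (What \emph{is} equivalent to $r^\beta/n$ bounded is the other constraint you need, $\rho\lesssim r$, i.e.\ $k_0\ge1$; you have conflated the two endpoints of the admissible range for $\rho$.) The fix is short but uses (NdU) in a way you do not mention: since one step moves at most $C_+$ in $d$, after $n$ steps $d(X_n,x)\le nC_+$, so $\mathbb{P}^x(\tau(x,r)\le n)=0$ whenever $n<r/C_+$. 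Thus if $n<r/C_+$ the bound is trivial; if $n\ge r/C_+$ then $(n/r)^{1/(\beta-1)}$ is bounded below by a constant depending only on $C_+,\beta$, so taking $\rho=A(n/r)^{1/(\beta-1)}$ with $A$ large enough forces $\rho\ge C_+$ and your Step~2 applies. With this correction your proof goes through.
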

\begin{proof}
Recall that (E($\beta$)) implies \eqref{eq1}.
Let $a>2r_0$ and assume $r^\beta n^{-1}\ge a.$ Also let $l\ge 1$ such that $b:=(r/l)-r_0>0.$ Now we introduce notions 
\[\sigma_0=0, \sigma_{i+1}=\inf \{m\ge \sigma_i:d(X_{\sigma_i},X_m)\} \]
for $i\ge0$ and $\zeta_i=\sigma_i-\sigma_{i-1}$ for $i\ge 1.$ Since 
\eqref{eq1} holds,
\[\mathbb{P}^x(\zeta_{i+1}\le n|\sigma_i)=\mathbb{P}^{X_{\sigma_i}}(\tau(X_{\sigma_i},b)\le n) \le p+An/b^\beta.\]
Remark that $d(X_0,X_{\sigma_l})<(b+r_0)l=r$ and so $\sigma_l=\sum_{i=1}^l\zeta_i<\tau(X,r)$ Therefore by \cite[Lemma3.14,]{Bar98}
\[\log\mathbb{P}^x(\tau(x,r)\le n)\le2\left(\frac{A'ln}{pb^\beta}\right)^{1/2}-l\log\left(\frac{1}{P}\right) =C_1\left(\frac{l^{\beta+1}n}{(r-lr_0)^\beta}\right)^{1/2}-C_2l \]
for some $C_1,C_2.$ Next we consider the following inequality
\begin{equation}
C_1\left(\frac{l^{\beta+1}n}{(r-lr_0)^\beta}\right)^{1/2}\le C_2l/2\label{eq2}
\end{equation}
If $l=1,$ then
\begin{align*}
& C_1\left(\frac{n}{(r-r_0)^\beta}\right)^{1/2}-C_2/2 \\
 \le& 4C_1\left( \frac{n}{r^\beta} \right)^{1/2}-C_2/2 \le 4C_1a^{-1/2}-C_2/2,\\
\end{align*}
hence \eqref{eq2} holds if $a$ is sufficiently large. On the other hand, if $l=\lceil \eta\frac{r}{r_0} \rceil-1 $ for $\eta\in(1/2,1),$ then
\begin{align}
& C_1\left(\frac{l^{\beta+1}n}{(r-lr_0)^\beta}\right)^{1/2}-C_2l/2 \notag \\
\ge & C_1\left(\frac{\eta(r/r_0)-1}{r-(\eta(r/r_0)-1)r_0}\right)^{\beta/2}\left(\eta\frac{r}{r_0}-1\right)^{1/2}n^{1/2}-\eta\frac{C_2}{r_0}r \notag \\
= & C_1\left(\frac{\eta r-r_0}{(1-\eta)r+r_0}\right)^{\beta/2}\left(\frac{\eta r-r_0}{r_0}\right)^{1/2}n^{1/2}-\eta\frac{C_2}{r_0}r \label{eq3}.
\end{align}
Now we fix $\eta$ such that 
\[\frac{C_1}{2^{\beta/2}}\left(\frac{2\eta-1}{1-\eta}\right)^{\frac{\beta+1}{2}}-C_2\frac{1}{1-\eta}>0 \]
(this holds if $(1-\eta)$ is sufficiently small). Therefore if $a\ge r_0/(1-\eta)$ then
\[\eqref{eq3}\ge \frac{C_1}{2^{\beta/2}}\left(\frac{2\eta-1}{1-\eta}\right)^{\frac{\beta+1}{2}}n^{1/2}-C_2\frac{1}{1-\eta}>0 \]
because $r\ge a$ and hence \eqref{eq2} does NOT hold.
Therefore if we fix such $\eta$ and sufficiently large $a,$ especially satisfying $ a\ge 2r_0/(1-\eta),$ then there exists $l$ such that $1\le l<  \lceil \eta\frac{r}{r_0} \rceil -1$, \eqref{eq2} holds, and 
\[C_1\left(\frac{2}{1-\eta}\right)^{\beta/2}\left(\frac{(2l)^{\beta+1}n}{r^\beta}\right)^{1/2}\ge C_1\left(\frac{(l+1)^{\beta+1}n}{(r-(l+1)r_0)^\beta}\right)^{1/2}\ge\frac{C_2}{2}(l+1)\ge \frac{C_2}{2}l \]
because $r-(1+l)r_0\ge(1-\eta)r-r_0\ge(1-\eta)r/2.$ Then 
\[\log\mathbb{P}^x(\tau(x,r)\le n)\le -\frac{C_2}{2}l\] and we obtain ($\Psi$($\beta$)) for $r^\beta n^{-1}\ge a.$ Adjusting the constants if necessary, ($\Psi$($\beta$)) also holds if $r^\beta n^{-1}<a.$
\end{proof}

\begin{prop}
If {\rm(DUHK($\beta$)), (VG($\alpha$)),($\Psi$($\beta$)), (NdU)} and {\rm(dL)} hold, then {\rm(UHK($\beta$))} hold. 
\end{prop}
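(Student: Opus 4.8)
The plan is to deduce (UHK($\beta$)) from (DUHK($\beta$)) and the exit-time tail bound ($\Psi(\beta)$) by the usual self-improvement argument (as in the proof of the corresponding statement in~\cite{BCK}), inserting the changes forced by $d$ not being the graph metric. Throughout I use the standing hypotheses of this section --- controlled weight $(p_0)$ and bounded weights --- so that $\mu_x\asymp1$; together with (dL) and (BF) this makes every $B_d(x,r)$ a finite set and gives $V_d(x,r)\ge c_0>0$ for $r\ge r_0$, hence $p_n$ and $h_n=p_n/\mu_\bullet$ may be used interchangeably, reversibility reads $p_n(x,y)=(\mu_y/\mu_x)p_n(y,x)$, and all first-exit times $\tau_d(\cdot,\cdot)$ below are well defined. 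I treat separately the ranges $d(x,y)\le Kn^{1/\beta}$ and $d(x,y)\ge Kn^{1/\beta}$, with $K$ a large constant fixed at the end.

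For the near-diagonal range, the semigroup identity together with Cauchy--Schwarz gives $h_n(x,y)\le h_{2\lceil n/2\rceil}(x,x)^{1/2}h_{2\lfloor n/2\rfloor}(y,y)^{1/2}$, which by (DUHK($\beta$)) is at most $c\,(V_d(x,n^{1/\beta})V_d(y,n^{1/\beta}))^{-1/2}$. Since (VG($\alpha$)) directly yields volume doubling for $\mu$ with respect to $d$, one has $V_d(x,n^{1/\beta})\asymp V_d(y,n^{1/\beta})$ whenever $d(x,y)\le Kn^{1/\beta}$, so $h_n(x,y)\le C_K\,V_d(x,n^{1/\beta})^{-1}$ there; as $\exp(-(d(x,y)^\beta/(c_1n))^{1/(\beta-1)})$ is bounded below on that set, (UHK($\beta$)) follows on this range for any fixed $K$ once $c_1$ is enlarged.

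For $d(x,y)\ge Kn^{1/\beta}$ I would argue by induction on $n$ (the base case $n=1$ being immediate, since $p_1(x,y)\ne0$ forces $x=y$ or $x\sim y$, hence $d(x,y)\le C_+$ by (NdU)). In the inductive step, with $R=d(x,y)$, $n_1=\lceil n/2\rceil$, $n_2=\lfloor n/2\rfloor$, write $p_n(x,y)=\sum_z p_{n_1}(x,z)p_{n_2}(z,y)$ and split the $z$-sum, using a small fixed $\epsilon>0$, into the part with $d(x,z)\le n_1^{1/\beta}$, the part with $d(z,y)\ge\epsilon R$, and the part with $d(z,y)<\epsilon R$. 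In each part I would extract the prefactor $V_d(x,n^{1/\beta})^{-1}$ from the first factor --- by (DUHK($\beta$)) with Cauchy--Schwarz and volume doubling when $d(x,z)\le n_1^{1/\beta}$, and by the inductive hypothesis otherwise --- and a Gaussian factor from the second: when $d(z,y)$ is bounded below by a multiple $\rho$ of $R$ one uses reversibility together with $\{X_{n_2}\notin B_d(y,\rho)\}\subseteq\{\tau_d(y,\rho)\le n_2\}$ and ($\Psi(\beta)$) to bound $\sum_{z:d(z,y)\ge\rho}p_{n_2}(z,y)$ by $C\exp(-(\rho^\beta/(Cn_2))^{1/(\beta-1)})$, and when $d(z,y)<\epsilon R$ one has instead $d(x,z)>(1-\epsilon)R$ and applies the inductive hypothesis's Gaussian factor to the first factor. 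The point is that in this last part the inherited exponent is $((1-\epsilon)^\beta R^\beta/n_1)^{1/(\beta-1)}/c_1^{1/(\beta-1)}\ge(1+\delta)(R^\beta/n)^{1/(\beta-1)}/c_1^{1/(\beta-1)}$ with $(1+\delta)=(1-\epsilon)^{\beta/(\beta-1)}2^{1/(\beta-1)}>1$ for $\epsilon$ small (because $2^{1/(\beta-1)}>1$). Thus each of the three pieces is at most $V_d(x,n^{1/\beta})^{-1}$ times a constant (possibly a multiple of $c_1$) times $\exp(-\kappa(R^\beta/n)^{1/(\beta-1)})$, where $\kappa$ is either an absolute positive constant or $(1+\delta)/c_1^{1/(\beta-1)}$; choosing $\epsilon$ small, then $c_1$ large, then $K$ large makes $(R^\beta/n)^{1/(\beta-1)}\ge K^{\beta/(\beta-1)}$ large enough to absorb all these surpluses and recover $p_n(x,y)\le c_1 V_d(x,n^{1/\beta})^{-1}\exp(-(R^\beta/n)^{1/(\beta-1)}/c_1^{1/(\beta-1)})$, closing the induction.

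The main obstacle is exactly this constant bookkeeping. The pieces handled via the inductive hypothesis feed back a factor $c_1$ in front of $V_d(x,n^{1/\beta})^{-1}$, and the piece where the walk's first half already reaches near $y$ feeds back essentially the target Gaussian exponent; the induction can close only because ($\Psi(\beta)$) at radius comparable to $R$ provides an absolute-rate Gaussian factor in the other pieces, and because splitting the time in two produces the strict gain $2^{1/(\beta-1)}>1$ in that last piece's exponent --- which is why the auxiliary parameter $\epsilon$ is needed --- so that all surpluses are consumed by taking the near/off-diagonal crossover $K$ large, in terms of the constants of (DUHK($\beta$)), (VG($\alpha$)), ($\Psi(\beta)$) and $C_+$ from (NdU), with the near-diagonal estimate re-verified for that $K$. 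The remaining ingredients are routine adaptations to the non-graph metric: (NdU) bounds the overshoot of $X$ across a $d$-ball boundary by $C_+$, so the exit-time inclusions used above hold as stated, and (dL), (BF) give finiteness of all balls, legitimising ($\Psi(\beta)$), (E($\beta$)) and the sums.
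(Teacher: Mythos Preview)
Your iterative argument is correct and is essentially the self-improvement scheme of \cite{BCK}, but it is not the route the paper takes. The paper proves (UHK($\beta$)) directly, without induction, following \cite{Gri} and \cite[Theorem~17.5]{Kig12}: for $r\le d(x,y)/2$ one bounds $h_{2n}(x,y)$ by the two overlapping events $\{X_n\notin B_d(x,r)\}$ and $\{X_n\notin B_d(y,r)\}$, each of which factors as an exit probability (controlled by ($\Psi(\beta)$)) times $\sup_{n\le i\le 2n}\sup_z h_i(z,\cdot)$ over $z$ on the relevant boundary; the latter supremum is handled by Cauchy--Schwarz and (DUHK($\beta$)), giving $C(V_d(x,n^{1/\beta})V_d(y,n^{1/\beta}))^{-1/2}$. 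The volume mismatch is then absorbed not by a near/off-diagonal split but by the elementary inequality $1+s/n^{1/\beta}\le C_\epsilon\exp\bigl(\epsilon(s^\beta/n)^{1/(\beta-1)}\bigr)$ combined with (VG($\alpha$)), which converts the polynomial factor into an $\epsilon$-small exponential loss; choosing $\epsilon$ small leaves a net negative exponent from ($\Psi(\beta)$), and odd $n$ is reduced to even $n$ via $h_{2i+1}(x,y)\le\max_{z\sim y}h_{2i}(x,z)$ together with (NdU). This buys the paper a shorter proof with no constant feedback loop: the sole balancing is $\epsilon$ against the ($\Psi(\beta)$) rate, whereas in your scheme the induction closes only after the more delicate sequence of choices you describe (and note that the literal order ``$c_1$ then $K$'' does not quite work with a single constant $c_1$; one must fix the exponent rate first, then $K$, and only then the prefactor, or equivalently decouple the two roles of $c_1$). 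Your approach, on the other hand, is more modular and does not rely on the $C_\epsilon$--$\epsilon$ trade-off lemma.
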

\begin{rem}
  This type result is well-known (see \cite{Bar98} for example.) Our proof is based on \cite[Section 12.3]{Gri} and \cite[Theorem 17.5]{Kig12}, these are for the Volume doubling settings.  
\end{rem}
\begin{proof}
We only consider cases of $x\ne y.$ Let $x,y\in  G$ and $n\in\mathbb{N}.$ Then for any $r\le d(x,y)/2,$
\begin{align}
h_{2n}(x,y) &\le \frac{\mathbb{P}^x(X_{2n}=y,X_n\not\in B_d(x,r)}{\mu_y} +\frac{\mathbb{P}^x(X_{2n}=y,X_n\not\in B_d(x,r))}{\mu_y} \notag \\
&\le \sum_{i=1}^n \sum_{z\in\ol{\partial}B_d(x,r)}\mathbb{P}^x(\tau(x,r)=i,X_i=z)h_{2n-i}(x,z)\notag\\
&\hspace{10pt}+\sum_{i=1}^n \sum_{z\in\ol{\partial}B_d(y,r)}\mathbb{P}^y(\tau(y,r)=i,X_i=z)h_{2n-i}(y,z)\notag \\
& \le \mathbb{P}^x(\tau(x,r)\le n)\sup_{n\le i \le 2n-1}\sup _{z\in\ol{\partial}B_d(y,r)}h_i(z,y)\notag\\
&\hspace{10pt}+\mathbb{P}^y(\tau(y,r)\le n)\sup_{n\le i \le 2n-1}\sup _{z\in\ol{\partial}B_d(y,r)}h_i(z,y).\label{eq4}
\end{align}
On the other hand, there exists $C>0$ such that
\begin{align*}
h_{2i}(x,y)=\sum_{z\in  G}h_i(x,z)h_i(y,z)\mu_z &\le \left(\sum_z h^2_{i}(x,z)\mu_z\right)^{1/2}\left(\sum_z h^2_{i}(y,z)\mu_z\right)^{1/2}\\
&= h_i(x,x)^{1/2}h_i(y,y)^{1/2} \\
&\le \frac{C}{\sqrt{V_d(x,n^{1/\beta})V_d(y,n^{1/\beta})}} 
\end{align*}
because Chapman-Kolmogorov and Cauchy-Schwarz inequalities, and (DUHK($\beta$)) hold. Remark that for any $\epsilon>0,$ there exists $C_\epsilon$ such that 
\[ 1+\frac{s}{n^{1/\beta}}\le C_\epsilon \exp \left( \epsilon\left(\frac{s^\beta}{n}\right)^{\frac{1}{\beta-1}}\right) \]
for any $n$ and $s>0$ (see [Kig,Lemma 17.7] ). Since $(VG)_\alpha$ holds, for any $y,z\in B_d(x,(\frac{3}{2}+\frac{2c_+}{r_0})d(x,y))$, 
\[h_{2i}(y,z)\le\frac{C_\epsilon'}{V_d(x,n^{1/\beta})}\exp\left(\frac{\alpha}{C_\epsilon''}\epsilon\left(\frac{d^\beta(x,y)}{n}\right)^{\frac{1}{\beta-1}}\right)\]
for some $C_\epsilon',C_\epsilon''>0.$ For odd integers, using the following assumption
\begin{equation}
h_{2i+1}(x,y)=\sum_{z\in G}h_{2i}(x,z)p(z,y)\le\max_{z\sim y}h_{2i}(x,z)\label{eq5}
\end{equation}
and $\ol{(\ol{B_d(x,2d(x,y)/3)})}\subset B_d(x,(\frac{3}{2}+\frac{2c_+}{r_0})d(x,y)),$ we get
\begin{align*}
\sup_{n\le i \le 2n-1}\sup _{z\in\ol{\partial}B_d(y,r)}h_i(x,z) &\le \frac{C_\epsilon'}{V_d(x,n^{1/\beta})}\exp\left(\frac{\alpha}{C_\epsilon''}\epsilon\left(\frac{d^\beta(x,y)}{n}\right)^{\frac{1}{\beta-1}}\right)\\
\sup_{n\le i \le 2n-1}\sup _{z\in\ol{\partial}B_d(y,r)}h_i(y,z)&\le \frac{C_\epsilon''}{V_d(x,n^{1/\beta})}\exp\left(\frac{\alpha}{C_\epsilon''}\epsilon\left(\frac{d^\beta(x,y)}{n}\right)^{\frac{1}{\beta-1}}\right)
\end{align*}
with modifying constants $C_\epsilon',C_\epsilon''.$ Therefore we conclude that
\[h_{2n}(x,y)\le\frac{CC_\epsilon'}{V_d(x,d(x,y))}\left(\left(\frac{\alpha}{C_\epsilon''}\epsilon-(c_*2^\beta)^{\frac{-1}{\beta-1}}\right)\left(\frac{d^\beta(x,y)}{n}\right)^{\frac{1}{\beta-1}}\right) \]
with ($\Psi$($\beta$)). Taking sufficiently small $\epsilon$ and adjusting constants, we get (UHK($\beta$)) for even $n$. Also using \eqref{eq5} and adjusting constants again, we get (UHK($\beta$)) for any $n.$
\end{proof}

\begin{prop}{(\cite[Proposition 3.11]{BCK})}
Assume {\rm(VG($\alpha$)), (R($\beta$)), (DLHK($\beta$))} hold for some $\beta>\alpha\ge1.$ Then {\rm(NLHK($\beta$))} holds.
\end{prop}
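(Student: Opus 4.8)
The plan is to deduce (NLHK($\beta$)) from the on-diagonal lower bound (DLHK($\beta$)) by a standard near-diagonal argument: at the scale $n^{1/\beta}$ the heat kernel, viewed as a function of one space variable, oscillates by much less than its diagonal value. Fix $x\in G$, and, in order to absorb the near-periodicity of the walk, put $F_m:=h_m(x,\cdot)+h_{m+1}(x,\cdot)$; writing $P$ for the Markov operator $Pf(z)=\sum_w p(z,w)f(w)$ (self-adjoint on $\ell^2(\mu)$), one has $F_m=PF_{m-1}$. The goal becomes $F_n(y)\ge c\,V_d(x,n^{1/\beta})^{-1}$ whenever $d(x,y)\le c_2n^{1/\beta}$ (the case $x=y$ being immediate from (DLHK($\beta$)) and (VG($\alpha$))). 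I would obtain this by combining a diagonal lower bound with an off-diagonal oscillation bound. For the diagonal: one of $n,n+1$ is even, say $2j$ with $j\ge n/2$, so $F_n(x)\ge h_{2j}(x,x)\ge c\,V_d(x,j^{1/\beta})^{-1}\ge c'\,V_d(x,n^{1/\beta})^{-1}$ by (DLHK($\beta$)) and (VG($\alpha$)).

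For the oscillation bound I would use the variational characterization of effective resistance, $|F_n(x)-F_n(y)|^2\le R(x,y)\,\Mc{E}(F_n)$, together with an energy-decay estimate $\Mc{E}(F_n)\le C\,\bigl(n\,V_d(x,n^{1/\beta})\bigr)^{-1}$. The latter is where symmetrizing to $F_m$ pays off. Using $F_m=PF_{m-1}$ and the identity $\langle h_i(x,\cdot),h_j(x,\cdot)\rangle_\mu=h_{i+j}(x,x)$, one computes $\Mc{E}(F_m)=\langle (I-P)F_m,F_m\rangle_\mu=h_{2m}(x,x)+h_{2m+1}(x,x)-h_{2m+2}(x,x)-h_{2m+3}(x,x)$, so the sum over $m\in[a,n]$ telescopes to at most $h_{2a}(x,x)+h_{2a+1}(x,x)\le 2h_{2a}(x,x)$ (since $h_{2k}(x,x)$ is nonincreasing and $h_{2a+1}(x,x)\le h_{2a}(x,x)$ by Cauchy--Schwarz); moreover $\Mc{E}(F_m)-\Mc{E}(F_{m+1})=\langle (I-P)^2(I+P)F_m,F_m\rangle_\mu\ge 0$ because $P$ is self-adjoint with spectrum in $[-1,1]$, so $m\mapsto\Mc{E}(F_m)$ is nonincreasing. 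Taking $a=\lceil n/2\rceil$ and feeding in the on-diagonal upper bound (DUHK($\beta$)) (available here since (VG($\alpha$)) and (R($\beta$)) hold, by \cite[Theorem 3.1]{BCK}) together with (VG($\alpha$)) gives $\Mc{E}(F_n)\le C\,\bigl(n\,V_d(x,n^{1/\beta})\bigr)^{-1}$. Now insert this and (R($\beta$)), in the form $R(x,y)\asymp d(x,y)^\beta V_d(x,d(x,y))^{-1}$, into the resistance inequality and use (VG($\alpha$)) once more, as $V_d(x,n^{1/\beta})\le C\,(n^{1/\beta}/d(x,y))^\alpha V_d(x,d(x,y))$, to get $|F_n(x)-F_n(y)|\le C\,\delta^{(\beta-\alpha)/2}\,V_d(x,n^{1/\beta})^{-1}$ whenever $d(x,y)\le\delta n^{1/\beta}$. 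Since $\beta>\alpha$, choosing $\delta=c_2$ small enough makes the right-hand side at most half the diagonal lower bound, so $F_n(y)\ge F_n(x)-|F_n(x)-F_n(y)|\ge (c'/2)\,V_d(x,n^{1/\beta})^{-1}$, which is (NLHK($\beta$)).

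The main obstacle — and the reason for working with $F_m=h_m+h_{m+1}$ rather than with $h_m$ itself — is the near-periodicity of the walk. The naive route, locating a time $m$ with $\Mc{E}(h_m(x,\cdot))$ small and running the oscillation estimate there, fails: $\sum_m\Mc{E}(h_m(x,\cdot))$ can diverge because the spectral measure of $P$ at $x$ may carry too much mass near $-1$ (as on bipartite recurrent graphs), and one would then also have to transfer the bound from the pigeonholed time back to the prescribed $n$. Passing to $F_m$ cures both problems at once, since the spectral weight governing $\Mc{E}(F_m)$ is $(1-\lambda)(1+\lambda)^2$, which vanishes at both $\lambda=\pm1$: this is precisely what makes the window sum telescope boundedly and $\Mc{E}(F_m)$ monotone, so the energy estimate is available directly at the given time $n$. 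The remaining ingredients — the resistance inequality, the bounded degree coming from controlled weight, and (DUHK($\beta$)) — require no new work.
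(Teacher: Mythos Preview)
Your proof is correct and follows essentially the same approach as the paper, which simply defers to \cite[Proposition 3.11]{BCK}: the near-diagonal lower bound is obtained by combining the on-diagonal lower bound with the resistance inequality $|F_n(x)-F_n(y)|^2\le R(x,y)\,\Mc{E}(F_n)$, the energy decay $\Mc{E}(F_n)\lesssim (n\,V_d(x,n^{1/\beta}))^{-1}$ coming from (DUHK($\beta$)), and the volume comparison (VG($\alpha$)). Your use of $F_m=h_m+h_{m+1}$ to handle parity and your telescoping/monotonicity argument for the energy are exactly the standard devices in this setting.
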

The Proof of the above proposition is in the same way as~\cite{BCK}.
\begin{proof}[Proof of Corollary \ref{th-qssg}]
((1)$\Rightarrow$(2)) It has shown by above statements without (KD) . We can also get (KD) by (DLHK($\beta$)), (DUHK($\beta$)) and (VD)$_d.$\\
((2)$\Rightarrow$(1)) (UHK($\beta$)), (NLHK($\beta$)) and (KD) shows (VD)$_d$. By  Lemma \ref{lem-us} and Lemma \ref{le-Rus}, $d$ is uniformly shrinking. Again using Lemma \ref{lem-us}, we get (VD)$_R.$ 
\end{proof}

\end{document}